\documentclass[11pt,oneside]{amsart}

\pdfoutput=1
\usepackage[centering]{geometry}
\usepackage{graphicx,amssymb,bm,tikz-cd,mathrsfs,comment,xcolor}

\usepackage[shortlabels]{enumitem}

\usepackage[hyperfootnotes=true,psdextra]{hyperref}
\hypersetup{hypertexnames=false,colorlinks=true,citecolor=couleur_cite,linkcolor=couleur_link,urlcolor=couleur_url,
pdfstartview=FitH, pdfauthor=Valentin Blomer and Gergely Harcos and Peter Maga and Djordje Milicevic, pdftitle=The non-spherical sup-norm problem for GL(n) and generalized spherical functions}

\definecolor{couleur_cite}{rgb}{0,0,1}
\definecolor{couleur_link}{rgb}{0,0,1}
\definecolor{couleur_url}{rgb}{0,0,1}

\numberwithin{equation}{section}

\newcounter{counter}

\allowdisplaybreaks[1]

\theoremstyle{remark}
\newtheorem{remark}{Remark}

\theoremstyle{plain}
\newtheorem{theorem}{Theorem}
\newtheorem{lemma}{Lemma}

\DeclareMathOperator{\ad}{ad}
\DeclareMathOperator{\adj}{adj}
\DeclareMathOperator{\diag}{diag}
\DeclareMathOperator{\dist}{dist}
\DeclareMathOperator{\rk}{rk}
\DeclareMathOperator{\sgn}{sgn}
\DeclareMathOperator{\tr}{tr}
\DeclareMathOperator{\Hom}{Hom}
\DeclareMathOperator{\End}{End}
\DeclareMathOperator{\Ind}{Ind}
\DeclareMathOperator{\Pol}{Pol}
\DeclareMathOperator{\supp}{supp}

\newcommand\GL{\mathrm{GL}}
\newcommand\SL{\mathrm{SL}}
\newcommand\PGL{\mathrm{PGL}}
\newcommand\OO{\mathrm{O}}
\newcommand\UU{\mathrm{U}}

\newcommand\SO{\mathrm{SO}}
\newcommand\PO{\mathrm{PO}}
\newcommand\id{\mathrm{id}}
\newcommand\BB{\mathrm{B}}

\newcommand\NN{\mathbb{N}}
\newcommand\ZZ{\mathbb{Z}}
\newcommand\QQ{\mathbb{Q}}
\newcommand\RR{\mathbb{R}}
\newcommand\CC{\mathbb{C}}

\newcommand{\mcA}{\mathcal{A}}
\newcommand{\mcB}{\mathcal{B}}
\newcommand{\mcD}{\mathcal{D}}
\newcommand{\mcE}{\mathcal{E}}
\newcommand{\mcL}{\mathcal{L}}

\newcommand{\mcQ}{\mathcal{Q}}
\newcommand{\mcP}{\mathcal{P}}
\newcommand{\mcS}{\mathcal{S}}
\newcommand{\mcT}{\mathcal{T}}
\newcommand{\mcY}{\mathcal{Y}}
\newcommand{\mcZ}{\mathcal{Z}}

\newcommand\mfa{\mathfrak{a}}
\newcommand\mfk{\mathfrak{k}}
\newcommand\mft{\mathfrak{t}}
\newcommand\mfB{\mathfrak{B}}

\newcommand{\ba}{\mathbf{a}}
\newcommand{\bb}{\mathbf{b}}

\newcommand{\mbm}{\mathbf{m}}
\newcommand{\br}{\mathbf{r}}
\newcommand{\mbs}{\mathbf{s}}
\newcommand{\bt}{\mathbf{t}}
\newcommand{\bu}{\mathbf{u}}
\newcommand{\bv}{\mathbf{v}}

\newcommand{\by}{\mathbf{y}}
\newcommand{\bz}{\mathbf{z}}

\newcommand\wigd{\mathsf{d}}

\newcommand\ov{\overline}
\newcommand\bs{\backslash}
\newcommand\eps{\varepsilon}

\renewcommand\leq{\leqslant}
\renewcommand\geq{\geqslant}

\newcommand\Gtemp{\widehat{G}_\mathrm{temp}}
\newcommand\Gadm{\widehat{G}_\mathrm{adm}}
\newcommand\Gqsf{\widehat{G}_\mathrm{qsf}}
\newcommand\GGqsf{\widehat{H}_\mathrm{qsf}}

\DeclareFontFamily{U}{mathx}{\hyphenchar\font45}
\DeclareFontShape{U}{mathx}{m}{n}{
      <5> <6> <7> <8> <9> <10>
      <10.95> <12> <14.4> <17.28> <20.74> <24.88>
      mathx10
      }{}
\DeclareSymbolFont{mathx}{U}{mathx}{m}{n}
\DeclareFontSubstitution{U}{mathx}{m}{n}
\DeclareMathAccent{\widecheck}{0}{mathx}{"71}
\DeclareMathAccent{\wideparen}{0}{mathx}{"75}

\makeatletter
\NewCommandCopy\@@pmod\pmod
\DeclareRobustCommand{\pmod}{\@ifstar\@pmods\@@pmod}
\def\@pmods#1{\mkern4mu({\operator@font mod}\mkern 6mu#1)}
\makeatother

\title[The non-spherical sup-norm problem for $\GL(n)$]{The non-spherical sup-norm problem for $\GL(n)$ and generalized spherical functions}

\author{Valentin Blomer}
\address{Mathematisches Institut, Endenicher Allee 60, D-53115 Bonn, Germany}
\email{blomer@math.uni-bonn.de}

\author{Gergely Harcos, P\'eter Maga}
\address{Alfr\'ed R\'enyi Institute of Mathematics, POB 127, Budapest H-1364, Hungary}\email{gharcos@renyi.hu, magapeter@gmail.com}
\address{MTA--HUN-REN RI Lend{\"u}let Automorphic Research Group}\email{gharcos@renyi.hu}
\address{MTA--HUN-REN RI Lend{\"u}let Analytic Number Theory and Representation Theory Research Group}\email{magapeter@gmail.com}

\author{Djordje Mili\'cevi\'c}
\address{Bryn Mawr College, Department of Mathematics, 101 North Merion Avenue, Bryn Mawr, PA 19010, USA}
\email{dmilicevic@brynmawr.edu}

\thanks{VB supported by DFG through SFB-TRR 358/1 2023 - 491392403 and EXC-2047/1 - 390685813 and by ERC
Advanced Grant 101054336. GH supported by the MTA--HUN-REN RI Lend{\"u}let Automorphic Research Group and NKFIH (National Research, Development and Innovation Office) grant K~143876. PM supported by the MTA--HUN-REN RI Lend\"ulet Analytic Number Theory and Representation Theory Research Group. DM supported in part by the Simons Foundation Award MPS-TSM-00008085 and the Charles Simonyi Endowment.}

\AtBeginDocument{%
   \def\MR#1{}
}

\keywords{non-spherical sup-norm problem, automorphic forms, generalized spherical functions, pre-trace formula, Paley--Wiener theorem}
\subjclass[2020]{Primary 11F72; Secondary 11F55, 11F70, 22E30, 43A90.}

\begin{document}

\begin{abstract}
We solve the sup-norm problem for minimal weight vectors in an arbitrary cuspidal representation $\pi $ of $\GL(n,\ZZ)\bs\GL(n,\RR)$ with a uniform power saving bound in terms of the archimedean data of $\pi$, including its spectral parameters and the dimension of its minimal $K$-type. As a key ingredient, we establish new uniform decay bounds for generalized spherical functions.
\end{abstract}

\maketitle

\section{Introduction}
A central problem in analytic number theory is to understand the fundamental asymptotic behavior of automorphic forms (regarded as joint eigenfunctions of invariant operators) as the complexity of the underlying physical system increases. In this paper, we solve the sup-norm problem for minimal weight vectors in any cuspidal representation of $\GL(n,\ZZ)\bs\GL(n,\RR)$ and develop tools including a versatile spectral localizer and localization analysis of the spherical functions that enter the general $\tau$-spherical transform, which open the doors to a broader analysis of non-spherical automorphic forms on $\GL(n,\RR)$.

\subsection{The sup-norm problem}
The sup-norm problem asks to bound the $L^{\infty}$-norm of an $L^2$-normalized eigenfunction $\phi$ on a Riemannian manifold (or orbifold) $X$ in terms of its Laplace eigenvalue $\lambda$. While this is a purely analytic question, a typical situation of arithmetic interest is the case when $X$ is an arithmetic locally symmetric space, i.e. $X = \Gamma \bs G/K$ for some reductive real Lie group $G$, a maximal compact subgroup $K$ and an arithmetic lattice $\Gamma$. This space is equipped with many more operators respecting the $G$-action and commuting with the Laplace operator: the algebra of invariant differential operators on $X$ has $\rk_{\RR}(G)$ generators, and in addition there is an infinite commutative family of Hecke operators acting on $X$. From the point of view of automorphic forms, it is most natural to consider simultaneous eigenfunctions of all these operators, since they generate the underlying irreducible automorphic representation. Applications of various kinds of sup-norm bounds in such arithmetic situations range from subconvexity for $L$-functions \cite{IS}, bounds for the number of nodal domains \cite{GRS}, equidistribution of zeros \cite{Ru} and bounds for Faltings' height function through covers \cite{JK}. We refer to \cite{MR4046009, BHMM, MR4055171, MR4089374, MR4150475, MR4190047, MR4193473, MR4307129, MR4552668, AnambyDas2026, MR4683839, MR4728730, MR4735817, MR4731074, MR4886334, MR4752128, steiner2023thetafunctionsfourthmoments, MR4878397, MR5038903, MR5119486, assing2025orbitmethodnumbertheory} for a non-exhaustive list of recent achievements on this topic in a variety of different settings.

If $X$ is compact, the generic bound is\footnote{See \S\ref{subsec:notations} for asymptotic notations.}
\begin{equation}\label{equation}
\| \phi \|_{\infty} \ll_X \lambda^{(\dim X - \rk X)/4}.
\end{equation}
If $X$ is non-compact, the behavior of Whittaker functions in transitional regions leads to exceptionally high peaks of $\phi$ near the cusps that distort the typical behavior of $\phi$, a phenomenon that has been analyzed by Brumley and Templier~\cite{MR4150475} in the case of $G = \GL(n)$. It is therefore reasonable and customary to fix a compact set $\Omega \subseteq X$ and study the restricted sup-norm $\| \phi|_{\Omega} \|_{\infty}$, which does satisfy \eqref{equation}. The sup-norm problem asks for a power-saving improvement relative to this generic bound.
In \cite{BM}, the authors achieved this in the case $G = \PGL_n(\RR)$, $K = \PO(n)$, $\Gamma = \PGL_n(\ZZ)$, 
where $\dim X = n(n+1)/2 - 1$ and $\rk X = n-1$.

In this paper, we open a new chapter (of which only a special predecessor \cite{BHMM} is available) by dropping the condition that the eigenfunction is spherical, i.e. right $K$-invariant. Instead, we consider eigenfunctions on $G$ with arbitrary $K$-types $\tau$, and our sup-norm bounds in Theorems~\ref{thm1} and~\ref{thm2} are not only uniform but in fact power-saving in $\dim \tau$. Automorphic forms with nontrivial $K$-types include for instance the symmetric powers of $\GL(2)$ holomorphic forms, some of the most prominently studied automorphic forms in higher rank. They are also central players in contexts ranging from Matsushima's formula in low-dimensional topology~\cite{LinLipnwski2022}, equidistribution of holonomy~\cite{SarnakWakayama1999,DeverMilicevic2023}, counting hyperbolic lattice points ordered by arbitrary norms, and more.

The analytic theory of spherical automorphic forms and in particular the spherical sup-norm problem naturally lead to the Fourier transform between functions on $K\bs G/K$ and on $\mfa^{\ast}_{\CC}/W$. The corresponding kernel function is the elementary spherical function $\phi_{\mu}(g)$, studied in great detail since the pioneering work of Harish-Chandra. The corresponding spectral transform of the full automorphic spectrum on $G$ is far less understood. As an analytic tool of independent interest, the present paper develops the analysis of the kernel function in the Fourier transform between $K$-central functions $f$ satisfying
\[f = \ov{\chi_{\tau}}\ast f\ast\ov{\chi_{\tau}}\]
for the normalized character $\chi_\tau$ of an arbitrary $K$-type $\tau \in \widehat{K}$ and functions on $\mfa^{\ast}_{\CC}/W$. Theorems~\ref{thm3} and~\ref{thm4} in \S\ref{14}, Theorem~\ref{thm3b} in \S\ref{thm3bsubsection}, and Theorem~\ref{thm4b} in \S\ref{n3-nonminimal-subsec}, feature uniform bounds. On the way, we develop the Paley--Wiener type Theorem~\ref{Thm5} coupled with the spectral localizer provided by Theorems~\ref{theorem:K-type-barrier} and~\ref{theorem:concrete-paley-wiener-function} that should be of use in many other applications of the $\tau$-spherical transform.

\subsection{Non-spherical cusp forms and their archimedean data}
We take some time to prepare the scene and refer to \S\ref{spherical}--\S\ref{sec:generalized-spherical-trace-function-SL} for the details and the definitions of all notations. Let $(\pi,V_{\pi})$ be a cuspidal automorphic representation of $\GL_n$ appearing in $L^2(\GL_n(\ZZ) \bs \GL_n(\RR),\omega)$ with some unitary central character $\omega$. Twisting $\pi$ by $|\det|^{it}$ with any $t\in\RR$ does not alter the size of the vectors (functions) in $V_\pi$, hence we may and do assume that $\omega$ is trivial. Then $(\pi,V_{\pi})$ can be thought\footnote{We could equally work with $\PGL_n(\RR)$, but $\SL_n^{\pm}(\RR)$ has some advantages for the general theory.} of as a representation of $G:=\SL_n^{\pm}(\RR)$ left-invariant by $\Gamma := \SL_n^{\pm}(\ZZ)$. The maximal compact subgroup of $G$ is $K:=\OO(n)$, and $V_{\pi}$ admits an orthogonal Hilbert space decomposition
\[
V_{\pi}=\bigoplus_{\tau\in\widehat{K}} \bigoplus_{j=1}^{m_{\pi}(\tau)} V_{\pi,\tau,j},
\]
where each $V_{\pi,\tau,j}$ is isomorphic to $V_{\tau}$ as a representation of $K$. There is a unique minimal $\tau_{\pi}\in\widehat{K}$ (in the sense of \S\ref{sec:generalized-spherical-trace-function-SL}) for which $m_{\pi}(\tau_{\pi})\neq 0$, and then in fact $m_{\pi}(\tau_{\pi})=1$. We write $V_{\pi,\tau_{\pi}}:=V_{\pi,\tau_{\pi},1}$.

Let us assume that $\pi$ is not a discrete series representation\footnote{This assumption is only to ease terminology. We could remove it by allowing $P'=M'=G$ below.} (which is automatic for $n\geq 3$). Then $\pi$ is a generalized principal series in the following sense. Let $P' = M'A'N' \subseteq G$ be a parabolic subgroup in its Langlands decomposition containing
a fixed minimal parabolic $P = MAN$. A (generalized) principal $P'$-series representation of $G$ is of the form
\begin{equation}\label{eq:def-U-sigma'-mu'}
U^{\sigma',\mu'}=\Ind_{P'}^G(\sigma'\otimes e^{\mu'}\otimes\mathbf{1}),
\end{equation}
where $\sigma'$ is a discrete series representation of $M'$ and $\mu':\mfa'\to\CC$ is a linear functional. Since discrete series representations for $\SL_m^\pm(\RR)$ only occur when $m \leq 2$, there exists $0 \leq r \leq n/2$ such that $M'$ 
may be written as the product of $r$ factors (``blocks'') isomorphic to $\SL^{\pm}_2(\RR)$, where the restriction of $\sigma'$ to each 
of these $r$ blocks is the discrete series $\delta_{\ell}$ for some positive integer $\ell$,
and $s:=n- 2r$ is the number of $\{\pm 1\}$ blocks. This provides us with a vector ${\bm \ell} = (\ell_1, \ldots, \ell_r)$, and correspondingly we write $\mu' = (\mu'_1, \ldots, \mu'_r, \mu'_{r+ 1} , \ldots, \mu'_{r+s})$. We may and do assume that
\begin{equation}\label{eq:mu'-sum-0}
2\sum_{j=1}^{r} \mu'_j + \sum_{j=r+1}^{r+s} \mu'_j=0.
\end{equation}
To measure the size of this archimedean data, we introduce\footnote{For a discrete series representation $\pi=\delta_\ell$, we set $\bm\ell:=(\ell)$, $\mu':=(0)$, $\|\pi\|_\infty:=\ell$.}
\begin{equation}\label{eq:unorm}
\|U^{\sigma',\mu'}\|_{\infty} := \| \bm \ell\|_{\infty} + \| \mu' \|_{\infty},
\end{equation}
with the convention that $\|\bm \ell\|_\infty=0$ when $r=0$. By \cite[Th.~14.10.20]{GH}, we can assume that $\pi=U^{\sigma',\mu'}$, where $\sigma'$ is a discrete series representation of a suitable $M'$ and $|\Re\mu'|<1/2$.

We equip the coset space $\Gamma\bs G$ with the unique right $G$-invariant probability measure, and we consider an orthonormal basis 
$\Phi:=(\phi_1,\dotsc,\phi_{\dim \tau_{\pi}})$ of $V_{\pi,\tau_{\pi}}$ viewed as a vector-valued function on $G$. Such a $\Phi$ will be referred to as $L^2$-normalized and of minimal $K$-type. We put
\[
\|\Phi(x)\|:=\sqrt{\sum_{j=1}^{\dim\tau_{\pi}} |\phi_j(x)|^2},\qquad x\in G,
\]
and we fix a compact set $\Omega\subseteq G$. Our ultimate goal is to estimate
\[
\|\Phi|_{\Omega}\|_{\infty}:=\sup_{x\in\Omega} \|\Phi(x)\|
\]
in terms of $\|\pi\|_\infty$.

We start by explaining the baseline bound, often called the trivial bound. Here ``trivial'' does not refer to easiness, but the fact that it does not use arithmeticity. For $\pi=U^{\sigma',\mu'}$ as above, we introduce
\begin{equation}\label{spectralcontent}
\Delta(\pi):=\int_{B(\pi)} d\varpi,
\end{equation}
where $B(\pi)$ is an ``etalon ball'' around $\pi$ in the unitary dual $\widehat{G}$ (see \eqref{eq:def-unit-ball-in-dual} below for a formal definition), and the integral on the right-hand side is meant with respect to the Plancherel measure. Then the trivial bound is
\begin{equation}\label{triv}
\| \Phi|_{\Omega} \|_{\infty} \ll_n \left(\Delta(\pi) \cdot \dim \tau_{\pi}\right)^{1/2},
\end{equation}
which is a refinement and an analogue of \eqref{equation}. We note that the right-hand side can be crudely bounded as $\ll_n (1+\|\pi\|_\infty)^{3n^2/8}$; see \eqref{eq:Delta-KS} and \eqref{eq:dim-tau-size} for the details.

\subsection{Non-spherical sup-norm bounds}
Our first main result establishes a power-saving on the right-hand side of \eqref{triv} by exploiting the underlying arithmetic structure.

\begin{theorem}\label{thm1} There exists some $\delta > 0$ depending only on $n$ such that the following holds. For any compact set $\Omega\subseteq G$, and any $L^2$-normalized vector-valued automorphic form $\Phi$ generating $\pi$ and of minimal $K$-type $\tau_{\pi}$, we have
\[
\| \Phi|_{\Omega} \|_{\infty} \ll_{\Omega} (\Delta(\pi) \cdot \dim \tau_{\pi})^{1/2 - \delta}.
\]
\end{theorem}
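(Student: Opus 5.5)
The plan is to run the amplified pre-trace formula in its $\tau$-spherical form, modelled on the spherical argument of \cite{BM} but with the elementary spherical function replaced by the generalized ($\tau$-)spherical function.

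\textbf{Test function.} Fix $\tau=\tau_\pi$ and use the Paley--Wiener theorem (Theorem~\ref{Thm5}) together with the spectral localizer (Theorems~\ref{theorem:K-type-barrier} and~\ref{theorem:concrete-paley-wiener-function}). These produce a compactly supported $f$ on $G$ with the following properties:
\begin{itemize}
\item $f=\ov{\chi_\tau}\ast f\ast\ov{\chi_\tau}$;
\item $f$ is supported in a small fixed neighbourhood of $K$;
\item the $\tau$-spherical transform of $f$ is nonnegative on the tempered and complementary parts of $\widehat G$ that meet $\tau$;
\item this transform is $\gg 1$ on $B(\pi)$;
\item the transform is negligible on representations whose minimal $K$-type is not $\tau$, or which are far from $\pi$.
\end{itemize}

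\textbf{Pre-trace inequality.} Spectrally expanding the automorphic kernel $K_f(x,y)=\sum_{\gamma\in\Gamma} f(x^{-1}\gamma y)$ and using positivity gives
\[\|\Phi(x)\|^2\ll \sum_{\gamma\in\Gamma}\tr f(x^{-1}\gamma x),\]
with $\tr f(e)\asymp \Delta(\pi)\dim\tau$. The trivial bound \eqref{triv} is recovered from the $\gamma=\pm1$ term. The real input is a uniform pointwise bound for the kernel. Writing $\tr f(g)$ via the inverse $\tau$-spherical transform, I need Theorems~\ref{thm3} and~\ref{thm4} (and their extensions, Theorems~\ref{thm3b} and~\ref{thm4b}) to give
\[|\tr f(g)|\ll \Delta(\pi)\dim\tau\,(1+\|\pi\|_\infty\,d(g,Z))^{-\eta}\]
for some $\eta>0$. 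Here $Z$ ranges over the proper subgroups where decay fails: centralizers of nontrivial elements, conjugated into $K$-orbits. The bound must be uniform in both $\mu'$ and $\bm\ell$.

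\textbf{Amplification and counting.} Insert a Hecke amplifier of length $L$ exactly as in \cite{BM}, using the $\GL_n$ Hecke relations for $T(p)$ and $T(p,p)$ on primes $p\sim L$. This leads to counting matrices $\gamma\in\Gamma\mathrm{diag}(\dots)\Gamma$ with $\gamma$ within distance $\delta$ of $xZx^{-1}$. These are the same Diophantine counting lemmas as in \cite{BM}, since $\Omega$ is compact and only the geometry of $\gamma$ matters. Choosing $L$ a small power of $\|\pi\|_\infty$ saves a power of $\Delta(\pi)\dim\tau$. This needs $\log(\Delta(\pi)\dim\tau)\asymp\log(2+\|\pi\|_\infty)$, which holds by \eqref{eq:Delta-KS} and \eqref{eq:dim-tau-size} together with a matching lower bound.

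\textbf{Main obstacle.} The hard step is the uniform decay of generalized spherical functions away from the bad set, uniformly in the $K$-type. When $\|\bm\ell\|$ dominates $\|\mu'\|$, oscillation comes from the $K$-type rather than the spectral parameter. Decay must then be measured in the combined size $\|\pi\|_\infty$, and the bad set may be larger, since $\tau$-spherical functions concentrate near subgroups stabilizing highest weights. I would first express the $\tau$-spherical trace function as an integral over $K$ of $e^{(\mu-\rho)H(gk)}\,\tr(\tau(k)\dots)$. I would then apply stationary phase in both $k$ and the spectral variable, using the localizer to pass from pointwise spherical-function bounds to bounds on $\tr f$. I would also check that the enlarged bad set still has Diophantine counting with a power saving.
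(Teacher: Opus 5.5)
Your architecture matches the paper's: a Paley--Wiener test function in $I_c(G,\tau_\pi)$ built from Theorems~\ref{Thm5} and~\ref{theorem:concrete-paley-wiener-function}, positivity via $f=g\ast\check g$, the Hecke amplifier of \cite{BM-IMRN}, a split of the $\gamma$-sum by distance to a bad set, and the counting of \cite{BM}. But the step everything hinges on is misidentified and left open.

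\textbf{The bad set.} It is not an enlarged family of centralizers. Theorem~\ref{thm3b}, with Remark~\ref{rmk-to-thm3b}, gives power decay of $\psi^{\varpi}_{\tau_\pi}(x)$ as $x$ moves away from $K$ itself, uniformly in the archimedean data. This yields \eqref{eq:f-decay}: $f(x)\ll\mcZ(1+\mcZ^{c}\dist(x,K))^{-1/3}$. Theorem~\ref{thm4} does exhibit a larger set $S\supset K$ off which $\psi$ decays \emph{rapidly}. However, the amplification argument only needs some power decay, and that already holds off $K$. The paper then splits at $\dist(x^{-1}\tilde\gamma x,K)\gtrless L^{-M}$. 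The far $\gamma$ are counted trivially and each is bounded via \eqref{eq:f-decay}, which is affordable once $L=\mcZ^{\alpha}$ with $\alpha$ small. The near $\gamma$ lie in $\mcS(Q,\ell_1^j,\ell_2^j,M)$ with $Q=x^{-T}x^{-1}$. That is verbatim the counting problem of \cite{BM}, handled by Lemma~\ref{BM-lemma} plus a Linnik-type bound. As you wrote it, with $Z$ unspecified and a promise to ``check'' Diophantine counting for an enlarged bad set, the argument does not close. If the bad set really contained non-compact directions (e.g.\ $A'N'$, as in $S$), the \cite{BM} counting would not apply, and you give no substitute. Note also that the scale inside the decay factor is only a power $\mcZ^{c}$, not $\|\pi\|_\infty$; that suffices.

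\textbf{The decay mechanism and the test function.} The mechanism you propose for the decay is not the one that works, and part of your list of test-function properties is unattainable.
\begin{itemize}
\item In the regime $\|\bm\ell\|_\infty\geq\|\mu'\|_\infty^{6/17}$ the paper uses no oscillation at all. By Lemma~\ref{lemma:hat-F-D}, the integrand in Camporesi's formula is pointwise $\leq\dim\tau\,(1-D(xk)^2/4)^{\ell_1/2}$ (Bergman-kernel localization). A third-derivative sublevel-set estimate, Lemma~\ref{lemma:D-is-often-large}, shows $|D(xk)|<h$ only on a set of measure $\ll(h/\dist(x,K))^{1/3}$.
\item Stationary phase in $k$ (second derivatives of $\log\Delta_{m-1}$ along $\mfk_m$) is used only for large $\Im\mu'$.
\item There is no stationary phase in the spectral variable. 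One bounds $|f(x)|\leq\int_{\Gtemp(\tau_\pi)}h(\Lambda_\varpi)\,|\psi^\varpi_{\tau_\pi}(x^{-1})|\,d\varpi$ with absolute values.
\item The real role of Theorem~\ref{theorem:K-type-barrier} is this: $h$ grows like $e^{R\|\Re\Lambda\|}$ and is controlled only for $\Re\Lambda\preceq\Re\Lambda_\pi$, and the barrier guarantees this on all of $\Gtemp(\tau_\pi)$.
\item The transform cannot be made ``negligible on representations whose minimal $K$-type is not $\tau$''. Here $h$ is a function of the infinitesimal character. Tempered $\varpi$ near $\Lambda_\pi$ that contain $\tau_\pi$ as a non-minimal $K$-type, possibly with multiplicity $>1$, therefore contribute in full.
\item That is exactly why the proof needs three further ingredients: Lemma~\ref{lemma:multiplicitybound}; Theorem~\ref{thm3b} for $K$-types within $\|U\|_\infty^{1/52}$ of the minimal one; and a truncation at $d(\Lambda_\pi,\Lambda_\varpi)<\|\pi\|_\infty^{1/100}$. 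The tail is handled by the decay of $h$ together with \eqref{eq:psi-baseline-bound}.
\end{itemize}
Finally, for general $n$ the amplifier uses all $T_{(j,0,\dots,0)}(p)$ with $1\leq j\leq n$, not just $T(p)$ and $T(p,p)$.
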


\begin{remark} As noted earlier, this result solves the non-spherical sup-norm problem for all $L^2$-normalized vector-valued cusp forms on $\GL_n(\ZZ)\bs \GL_n(\RR)$ of minimal $K$-type with a unitary central character. The passage to $\Gamma\bs G$ is provided by a suitable unitary determinant twist.
\end{remark}

\begin{remark}\label{rem2} In Theorem~\ref{thm1} and all subsequent formal statements, we can replace $\Omega$ by $K\Omega K$. Hence we shall assume throughout the paper that $\Omega$ is bi-$K$-invariant.
\end{remark}

The exponent saving $\delta>0$ that our proof produces is very small in this broad generality. Let us therefore focus on the special case $n=3$, $r=s=1$ in the $\dim\tau$ aspect, i.e. assuming that $\mu'\in I$ for a bounded domain $I$. For instance, $\GL_3$ representations associated with the symmetric square of all holomorphic cusp forms of large weight $\ell$, for which $\mu'=0$, belong to this family. In this situation, the right-hand side of \eqref{triv} is $\asymp\ell^2$ (see \eqref{eq:Delta-KS} and \eqref{eq:dim-tau-size} below), and our second main result provides a strong power-saving over that.

\begin{theorem}\label{thm2} In the situation described above, we have
$$\| \Phi|_{\Omega} \|_{\infty} \ll_{\eps,\Omega,I} \ell^{2 - \frac{1}{14} + \eps}$$
for all $\eps > 0$. 
\end{theorem}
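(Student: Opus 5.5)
We would follow the amplified pre-trace formula strategy, specialized to $n=3$, $r=s=1$ and $\mu'\in I$, with the $\dim\tau$ aspect as the only growing parameter. Fix $x\in\Omega$. The test function is bi-$K$-finite of type $\tau_\pi$, that is $f=\ov{\chi_{\tau}}\ast f\ast\ov{\chi_{\tau}}$ with $\tau=\tau_\pi$, and it is built from the spectral localizer of Theorems~\ref{theorem:K-type-barrier} and~\ref{theorem:concrete-paley-wiener-function} together with the Paley--Wiener Theorem~\ref{Thm5}. Its $\tau$-spherical transform should be nonnegative on the relevant part of the spectrum, $\gg 1$ at $\pi$, and $f$ should be supported in a small fixed neighbourhood of $K$. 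We then insert Hecke amplification $\mathcal{T}=\sum_{m}x_m T_m$ with the standard $\GL_3$ amplifier of length $L$, using $T_p$ and $T_{p^2}$ (equivalently $T_{p,1}$ and $T_{1,p}$). By positivity, the spectral side dominates $|{\sum x_m\lambda_\pi(m)}|^2\|\Phi(x)\|^2 \gg L^{2-\eps}\|\Phi(x)\|^2$.

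On the geometric side we get a sum over $\gamma\in\Gamma\,\mathrm{diag}(\ldots)\Gamma$ of $|x_m|$-weights times $\tr f(x^{-1}\gamma x)$. The kernel is $\tr f(g)=\int \tilde h(\mu)\,\Phi^{\tau}_\mu(g)\,d\varpi(\mu)$, where $\Phi^\tau_\mu$ is the generalized spherical trace function. The needed pointwise input is Theorem~\ref{thm4b} (or Theorem~\ref{thm3b}) for the $n=3$ minimal-type case. It says that $|\tr f(g)|$ is at most $\dim\tau\cdot\Delta\asymp\ell^4$ when $g$ is very close to $K$, and decays like $\ell^4(1+\ell\,\dist(g,K))^{-\kappa}$ with some explicit exponent $\kappa>0$. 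Together with the concentration of $\chi_\tau$ near the fixed-point set of the $K$-action, this decay is the source of the exponent $1/14$.

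The counting step estimates, for each amplifier length $m\le L^{O(1)}$ and each small radius $\delta$, the number of $\gamma$ with $\det$-normalized entries of size $m$ and $\dist(x^{-1}\gamma x, K)\le\delta$. We use the Diophantine counting lemmas developed for $\BB$ and $\SL_3$ in the spherical case (\cite{BM}, adapted as in \cite{BHMM}), in which the rigidity comes from $\gamma$ being near an orthogonal matrix and hence nearly commuting with its transpose. We split into dyadic ranges of $\delta\in[\ell^{-1},1]$, separate $\gamma$ near the center (the identity term gives $\ell^4\cdot L$), and treat the tails by decay. Summing gives a bound of the form
\[
\|\Phi(x)\|^2 \ll \ell^{\eps}\Bigl(\frac{\ell^4}{L}+\ell^{4-a}L^{b}\Bigr),
\]
and choosing $L$ optimally should yield the exponent $2\cdot 2-2/14$, i.e. $\|\Phi(x)\|\ll\ell^{2-1/14+\eps}$.

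The main obstacle is the kernel decay in the intermediate range $\ell^{-1}\ll\dist(g,K)\ll 1$. Unlike the spherical case, $\Phi^\tau_\mu$ does not decay in all directions away from $K$. It is large along a subvariety where $g$ approximately stabilizes the highest weight line of $\tau$ (the image of $\SO(2)\times\{\pm1\}$-type directions). The counting must therefore be done against this thinner, lower-dimensional exceptional set rather than a ball. We would first try to parametrize this exceptional set explicitly as $K$-conjugates of a one-parameter torus, and then count integral $\gamma$ near it by reducing to a quadratic-form divisor problem. The choice of $L$ and the final $1/14$ come from balancing this term against the diagonal.
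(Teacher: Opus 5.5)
Your frame is right: the Paley--Wiener localizer, the amplified pre-trace inequality \eqref{finalpretrace}, and a dyadic split. But the two ingredients that actually produce an explicit exponent are missing or misidentified. As a result, $1/14$ is read off from the statement rather than derived: your display $\ell^4/L+\ell^{4-a}L^b$ never specifies $a$ or $b$.

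\textbf{The kernel input.} What the proof uses is \eqref{eq:f-decay-2}, obtained by inserting Theorem~\ref{thm4b} into \eqref{inversion}: $f(g)\ll_{\eps,I}\ell^{4+\eps}\bigl(1+\ell\,\dist(g,\{\pm\id\})^2\bigr)^{-1}$. This is polynomial decay toward the centre, at a specific rate, not decay away from $K$. A bound of shape $\ell^4(1+\ell\,\dist(g,K))^{-\kappa}$ gives full weight $\ell^4$ to every $\gamma$ with $\tilde\gamma$ near $K$. You would then need a genuinely arithmetic count of lattice points near $K$. The counting of \cite{BM} that you propose to import does exactly that, but it yields only the tiny inexplicit $\delta$ of Theorem~\ref{thm1}, not $1/14$.

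Your alternative, counting near the ``exceptional set'' where the kernel does not decay rapidly, does not help. That set is a neighbourhood of $S$ from Theorem~\ref{thm4}, and $S\supseteq K$, since every element of $\OO(3)$ is $K$-conjugate into $\pm(\SO(2)\times\{1\})$. So being near $S$ is a weaker condition than being near $K$, and the paper never uses the rapid decay off $S$ for this theorem. Also, contrary to your claim, the trace function does decay polynomially in every direction away from $K$, by Theorem~\ref{thm3}.

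\textbf{The counting.} The missing idea is that the counting is done analytically, via two lemmas.
\begin{itemize}
\item Lemma~\ref{count1} is elementary and counts $\gamma\in S(m,l)$ with $\dist(\tilde\gamma,\{\pm\id\})\le\Delta$. Fix the diagonal entries, use $\gamma_{12}\gamma_{21}\equiv\gamma_{11}\gamma_{22}\pmod{l}$ (and likewise for the other two off-diagonal pairs), and apply the divisor bound.
\item Lemma~\ref{count2} counts $\gamma\in S(m,l)$ with $\dist(\tilde\gamma,K)\le\Delta$ by running the $K$-trivial pretrace formula for $\SL_3(\ZZ)$ backwards. It combines a bi-$K$-invariant bump of radius $\Delta$, the Hecke operator $T_{m/l,l,1}$, the full spectral decomposition, the averaged local bound \eqref{avsup}, the Hecke bound with $\theta=1/2$, and convexity for the degenerate Eisenstein series. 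The result is $(m/l)^{2+\eps}\Delta^5+(m/l)^{1+\theta+\eps}$.
\end{itemize}
Since $\dist(\tilde\gamma,K)\le\dist(\tilde\gamma,\{\pm\id\})$, the second count can be paired with the weight $(1+\ell\Delta^2)^{-1}$ coming from the $\pm\id$-decay.

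\textbf{The optimization.} With $m/l\asymp L^{2j}$, Lemma~\ref{count1} handles the range $L^{-j}\ll\Delta\le\ell^{1/4}L^{-1/4-j}$. In each remaining dyadic range, Lemma~\ref{count2} gives a contribution $\ll L^{\eps}(L^8/\ell+L^{17/2+6\theta}/\ell^{3/2})$. Requiring this to be $\ll L^{1+\eps}$ forces $\ell\ge L^7$, hence $L=\ell^{1/7}$ and $\|\Phi(x)\|^2\ll\ell^{4-1/7+\eps}$. Without these two lemmas and the $\pm\id$-decay, your outline does not reach any explicit exponent.
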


We highlight an important methodological novelty in the proof of Theorem~\ref{thm2}. As is known, matrix counting constitutes the arithmetic heart of the usual treatments of the sup-norm problem. While the determinant condition for $\SL_2$ is so simple that the counting can be done in an elementary fashion, experience has shown that adequate higher rank matrix counting by arithmetic methods is often extremely difficult. In the proof of Theorem~\ref{thm2}, we take an entirely different approach and apply the ($K$-trivial!) pretrace formula backwards to analyze the counting problem by analytic rather than arithmetic means. This step is not involutory because, on the one hand, we are dealing with two different types of pretrace formula (with and without $K$-types) and, on the other hand, we take absolute values in between. Since the analytic expression involves another (averaged) sup-norm problem, we are led to apply the pretrace formula a third time. A prototype of applying spectral summation formulae in various directions (with Cauchy--Schwarz in between) is the treatment of the $\GL_2$ spectral large sieve by Deshouillers--Iwaniec~\cite{DeIw}. The proof of Theorem~\ref{thm2} implements this in higher rank.

\subsection{Generalized spherical functions}\label{14}
The analytic core of the usual approaches to the sup-norm problem is an asymptotic analysis of spherical functions. This is a classical problem of independent interest when dealing with analytic aspects of Lie groups. The classical bi-$K$-invariant spherical function with spectral parameter $\mu \in \mfa_{\CC}^{\ast}$ is given by
\begin{equation}\label{spher}
\phi_{\mu}(x) = \int_K e^{(\mu-\rho)(H(xk))}\,dk,\qquad x\in G,
\end{equation}
where $dk$ is the Haar probability measure on $K$. One would expect that for large $\mu$ the integrand is highly oscillatory and hence the integral is small, but of course $\phi_{\mu}(1) = 1$, regardless of the value of $\mu$. Uniform decay properties that also hold for $\mu$ near or on the Weyl chamber walls were obtained in \cite{BP}:
\begin{equation}\label{BPbound}
\phi_{\mu}(kak) \ll (1 + \| \mu \| \cdot \| \log a \|)^{-1/2}
\end{equation}
as long as $\mu$ has bounded real part and $\log a \in \mfa$ remains bounded. The key input in the proof is that the phase $k \mapsto H(ak)$ for fixed $a \in A$ has no degenerate stationary points \cite[Cor.~1.5]{DKV}, so that a second derivative test in one suitable direction gives the desired power saving with exponent $1/2$. Starting with the work of Harish-Chandra~\cite{HC2,HC4}, bounds and asymptotics for spherical functions have a long history, with important contributions by Duistermaat--Kolk--Varadarajan~\cite{DKV} (see also the classical book \cite{MR954385}), and we refer to \cite{MR1870604, MR1873133, MR3144230, MR3506604, MR4297181, MR4931487, brumley2026quantumergodicitybenjaminischrammlimit} for a list of some newer developments.

In this paper, we consider for the first time more general spherical functions that are relevant for a trace formula with non-trivial $K$-types. We proceed to describe the setup. Let $(\tau,V_\tau)$ be an irreducible unitary representation of $K$, and let $(U,V_U)$ be a Hilbert space representation of $G$ with finite $\tau$-multiplicity. Let $(P_1,\dotsc,P_q)$ be a basis of $\Hom_K(V_U,V_\tau)$ orthonormal with respect to the scalar product $\langle P,Q\rangle:=(1/\dim \tau)\tr(P^*Q)$. We consider the $\End(V_\tau)$-valued function given by
\begin{equation}\label{eq:phidef}
\varphi_\tau^U(x):=\sum_{u=1}^q P_u U(x)P_u^*,\qquad x\in G.
\end{equation}
This function is independent of the basis $(P_1,\dotsc,P_q)$. We shall focus on the situation when $U = U^{\sigma', \mu'}$ and $\tau$ is the minimal $K$-type occurring in $U|_K$ (in the sense of \S\ref{sec:generalized-spherical-trace-function-SL}). Camporesi~\cite{C} gives an explicit formula for this function reminiscent of \eqref{spher} that we recall in \eqref{Eis}--\eqref{Fdef2} below, and we also provide a self-contained derivation of this formula. With notation to be explained in \S\ref{spherical}, it is of the shape
\begin{equation}\label{shape}
\varphi_\tau^{U}(x)=\int_K F_\tau^{\sigma'}(xk)\tau(k^{-1})\,e^{(\mu'-\rho')(H'(xk))}\,dk.
\end{equation}
We are interested in the trace of this function,
\[
\psi^{U}_{\tau} := \tr(\varphi_\tau^U)=\tr(P^U_\tau UP^U_\tau),
\]
where $P^U_\tau$ is the projection of $V_U$ onto its $\tau$-isotypic subspace. The two expressions on the right-hand side are equal by \eqref{eq:psi-expanded-in-P-basis} below. Note that, unlike the classical spherical function \eqref{spher}, the function $\psi^{U}_{\tau}$ is not bi-$K$-invariant, but only $K$-conjugation invariant. In particular, it does not suffice to analyze it on diagonal matrices, but we need to understand it on general upper triangular matrices. If $U$ is unitary, then by $q=[U:\tau]=1$
(which is proved in \S\ref{sec:generalized-spherical-trace-function-SL}), we have the trivial bound
$|\psi^{U}_{\tau}(x)| \leq \dim \tau$ for all $x\in G$. In general, $U$ is not unitary, but we can still infer from Camporesi's formula the analogous baseline bound\footnote{For a proof, combine \eqref{tracefunctionbound} and \eqref{hatFtausigma2} with the fact that
the $\|\cdot\|_{2,1}$ norm is multiplicative under tensor products. A generalization for arbitrary $K$-types is provided by \eqref{eq:psi-baseline-bound}.}
\begin{equation}\label{eq:psi-trivial-bound}
\psi^{U}_{\tau}(x) \ll_{\Re\mu',\Omega} \dim \tau,\qquad x\in\Omega.
\end{equation}
Again, we expect decay as soon as $x$ ``moves away'' (e.g. from $K$). The results in this section make this quantitative. To this end, we fix a convenient, bi-$K$-invariant distance function on $G$:
\[\dist(x,x'):=\|x-x'\|,\qquad x,x'\in G,\]
where $\|\cdot\|$ is the Frobenius norm on $\RR^{n\times n}$. We also recall the notation $\|U\|_\infty$ introduced in \eqref{eq:unorm}. In analogy with Theorems~\ref{thm1} and~\ref{thm2}, we first establish a bound in full generality, then we move to the special case of $n=3$ and $r=s=1$.

\begin{theorem}\label{thm3} Let $U=U^{\sigma',\mu'}$ be a generalized principal series representation with (unique) minimal $K$-type $\tau$, and let $\Omega\subseteq G$ be a compact set. Then, for any $x\in\Omega$, we have
\[\psi^{U}_{\tau}(x)\ll_{\Re\mu',\Omega}\dim \tau\cdot\left(1 +\|U\|_{\infty}^{3/17}\dist(x, K)\right)^{-1/3}.\]
The implied constant depends continuously on $\Re\mu'$.
\end{theorem}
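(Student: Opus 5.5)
We plan to start from Camporesi's formula \eqref{shape}. Taking traces gives
\[
\psi^U_\tau(x)=\int_K \tr\bigl(F_\tau^{\sigma'}(xk)\tau(k^{-1})\bigr)\,e^{(\mu'-\rho')(H'(xk))}\,dk ,
\]
and we treat this as an oscillatory integral over $K$. The difficulty is that there are two sources of large parameters. The first is the exponential factor, with frequency $\Im\mu'$ and phase $k\mapsto\langle\Im\mu',H'(xk)\rangle$. The second is the $K$-type amplitude $\tr(F_\tau^{\sigma'}(xk)\tau(k^{-1}))$, which oscillates at a rate governed by $\|\bm\ell\|_\infty$ through the highest weight of $\tau$. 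Since $\|U\|_\infty=\|\bm\ell\|_\infty+\|\mu'\|_\infty$, one of the two is $\geq\|U\|_\infty/2$, and we split into these two cases. Throughout, write $x=k_1 a n k_2$; by conjugation invariance of $\psi^U_\tau$ we may reduce to $x$ upper triangular. For $x\in\Omega$, $\dist(x,K)$ is comparable to the size of $\log a$ plus that of the unipotent part.

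\textbf{Case $\|\mu'\|_\infty$ dominant.} We follow the Blomer--Pohl strategy behind \eqref{BPbound}. The phase $k\mapsto H'(xk)$ has only nondegenerate critical manifolds (a version of \cite[Cor.~1.5]{DKV} for the parabolic $P'$). So in a suitable one-parameter direction $k\exp(tX)$ the second derivative of the phase is $\gg\|\Im\mu'\|\,\dist(x,K)$ away from a small set. We then apply the second derivative (van der Corput) test in that one variable and integrate trivially over the rest. To do this we need derivative bounds on the amplitude. Each derivative of $F^{\sigma'}_\tau(xk)\tau(k^{-1})$ costs a factor $\|\bm\ell\|_\infty$, which in this case is $\ll\|\mu'\|$. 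The sup of the trace of the amplitude is $\ll\dim\tau$, as in \eqref{eq:psi-trivial-bound}.

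This will not give exponent $1/2$ cleanly, because the amplitude derivatives are of the same order as the phase. We therefore balance them. We introduce a parameter $\|U\|^{\alpha}$: either $\|\Im\mu'\|\geq\|\bm\ell\|^{1+\alpha}$, so the phase wins, or we fall into the second case. Optimizing the thresholds is what should produce the odd exponents $3/17$ and $-1/3$. The $-1/3$ suggests using a third-derivative test, or equivalently a first-derivative bound with a $\delta$-neighbourhood excised and balanced.

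\textbf{Case $\|\bm\ell\|_\infty$ dominant.} Here we use the explicit structure of $\tau$, namely a minimal $K$-type with highest weight built from $\bm\ell$. We expand $\tr(F\,\tau(k^{-1}))$ via the Weyl character formula, or by realizing $\tau$ inside a tensor power of the standard representation using the $\|\cdot\|_{2,1}$-multiplicativity mentioned in the footnote. This writes the amplitude as a product of $r$ factors of the form $(\text{entry of }xk)^{\ell_j}$ from the $\SL_2^\pm$ blocks, times a $k$-dependent matrix coefficient. Thus we get a phase $\ell_j\arg(\cdots)$ and modulus $|\cdots|^{\ell_j}$. When $x$ is at distance $d$ from $K$, the modulus is maximized on a proper submanifold of $K$ and decays like $e^{-c\ell d^2}$ transversally to it, by a Laplace-type estimate. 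Alternatively, the phase has nonzero derivative of size $\ell d$. Either way we obtain a saving of $(1+\ell d^2)^{-1/2}$ or better per block. This is then converted into the stated bound after combining with the previous case.

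\textbf{Main obstacle.} The main obstacle is the uniformity when $\mu'$ and $\bm\ell$ are both large and comparable, or when $\mu'$ lies near walls, where stationary points of the two oscillations can interact. We would try first to localize dyadically in $k$ near the critical set of the combined phase $\langle\Im\mu',H'(xk)\rangle+\sum_j\ell_j\theta_j(xk)$. On each piece we would use a third-derivative test along a direction in which the Hessian of $H'$ is nondegenerate. The loss from amplitude derivatives would be absorbed by taking pieces of size $\|U\|^{-\beta}$, and finally $\beta$ would be optimized; we expect this to yield $\|U\|^{3/17}$.
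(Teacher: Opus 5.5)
\textbf{There is a genuine gap: the uniform sublevel-set (measure) estimate that drives both regimes is missing, and what replaces it is asserted rather than proved.} The outline is right. You split into two regimes by a power threshold between $\|\bm\ell\|_\infty$ and $\|\mu'\|_\infty$. In one regime you run a one-direction second-derivative test on the phase $\Im\mu'(H'(xk))$, with each amplitude derivative costing $1+\|\bm\ell\|_\infty$. In the other you use concentration of the Bergman-kernel factor.

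\textbf{The $\bm\ell$-dominant regime.} You claim the modulus decays like a Gaussian transversally to a submanifold, saving $(1+\ell d^2)^{-1/2}$ per block. The modulus is governed by the blocks of the Levi component $m'(xk)$, not by entries of $xk$. The paper converts it into an explicit function of the columns of $xk$, using only the first block: $\|\hat F_\tau^{\sigma'}(m'(xk))\|_{2,1}\le\dim\tau\,(1-D(xk)^2/4)^{\ell_1/2}$, where $D$ is the normalized difference of the squared lengths of the first two columns. Gaussian decay in $D$ gives a saving only via a bound on $\mu_K(\{k:|D(xk)|<h\})$ that is uniform in $x\in\Omega$. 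That bound is the crux.

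The zero set of $k\mapsto D(xk)$ is not a submanifold along which $D$ grows linearly. For example, at $x=\diag(a,a,a^{-2})$ and $k=\id$ one has $D=0$ with all first derivatives vanishing. Such degeneracies must be controlled uniformly as $x\to K$. The paper does this in Lemma~\ref{lemma:D-derivatives}: for each $x$ there is an orthonormal basis of $\mfk$ such that one of the first three $t_1$-derivatives of $D$ is $\ge c_1\dist(x,K)$ on a whole box. Combined with an elementary sublevel-set lemma, this gives $\mu_K(\{k:|D(xk)|<h\})\ll(h/\dist(x,K))^{1/3}$. Dyadic summation then yields $(1+\|\bm\ell\|_\infty^{1/2}\dist(x,K))^{-1/3}$.

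That measure estimate, not a third-derivative van der Corput test, is where the exponent $-1/3$ comes from. No cancellation from the $K$-type oscillation is ever used. So the ``combined phase'' obstacle in your last paragraph never arises.

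\textbf{The $\mu'$-dominant regime.} The same ingredient is missing here. ``Second derivative $\gg\|\Im\mu'\|\dist(x,K)$ away from a small set'' is only useful together with a measure bound for the small set. That bound must be uniform in $x$ and in the direction of $\Im\mu'$, which is singular (it vanishes on $\mfa_1$) and may approach further walls. Pointwise nondegeneracy from \cite[Cor.~1.5]{DKV} does not provide it. A uniform Blomer--Pohl-type statement would remove the exceptional set: at every $k$, some direction with $|f'|+|f''|\gg\|\Im\mu'\|\dist(x,K)$. But you would have to prove or cite it in this setting.

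The paper proceeds as follows:
\begin{enumerate}
\item It writes the phase as $\sum_j\tilde\mu_j\log\Delta_j(xk)$.
\item It takes the least $m$ with $|\tilde\mu_{m-1}|\ge\alpha^{m-n-1}\|\tilde\mu\|_\infty$, and uses only directions in $\mfk_m$, which move just $\Delta_1,\dots,\Delta_{m-1}$.
\item It shows that one of the first two derivatives of the normalized phase is $\gg E_m(xk)$ (Lemma~\ref{lemma:Delta-derivatives}).
\item It controls the set $\{k:E_m(xk)\lesssim h\}$ by the same $D_{j,j'}$ sublevel estimate, arriving at \eqref{eq:stationary-phase-upshot}.
\end{enumerate}
Only with both bounds in hand do the threshold $\|\bm\ell\|_\infty\le\|\mu'\|_\infty^{6/17}$ and the choice $h=\|\mu'\|_\infty^{-3/17}$ produce $\|U\|_\infty^{3/17}$. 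As it stands, your proposal yields no explicit exponent.
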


This should be seen as a generalization of \eqref{BPbound} to minimal $K$-types. While the exact numerical exponents have little significance, the key feature is complete uniformity in all archimedean data and distance from $K$. In fact for the proof of Theorem~\ref{thm1} we need a more general version of Theorem~\ref{thm3} where $\tau$ can be an arbitrary $K$-type sufficiently close to the minimal $K$-type. This is Theorem~\ref{thm3b} in \S\ref{thm3bsubsection}.

The theorem will be proved in two steps. First we apply absolute values in \eqref{shape} and investigate when the (trace of the) integrand is not negligibly small. The function $F_\tau^{\sigma'}$ inherits properties of the $\SL_2$-Bergman kernel 
$$\left(\begin{matrix} a & b\\ c & d\end{matrix}\right) \mapsto \left( \frac{2}{a + i b - i c + d}\right)^{\ell}$$
which for large $\ell$ localizes quite sharply at orthogonal matrices. The idea is to show that if $x$ is away from $K$ and $\ell$ is large, then either $\| F_\tau^{\sigma'}(xk) \|$ for a certain $k\in K$ is already negligibly small, or there is a direction in which it becomes negligibly small. Unlike the case of classical spherical functions where a second derivative test suffices, this requires an analysis up to the third derivative. In this way we obtain a bound that saves a factor 
$\| \bm \ell\|_{\infty}^{-1/6} \dist(x, K)^{-1/3}$ uniformly in $\mu'$. This yields the result
unless $\| \bm \ell\|_{\infty} \leq \| \mu' \|_{\infty}^{6/17}$, in which case we apply a stationary phase argument in \eqref{shape} using a second derivative test.

In the more special case, we have the following.

\begin{theorem}\label{thm4} Let $n=3$ and $r=s=1$, so that $G=\SL_3^\pm(\RR)$ and $K=\OO(3)$. Let $U$, $\tau$, $\Omega$ be as in Theorem~\ref{thm3}, so that
\[
A'=\left\{\diag(y,y,y^{-2}):y\in\RR_{>0}\right\}.
\]
Consider the following subset of $G$ invariant under $K$-conjugation:
\[
S := \left\{ k g k^{-1} : g\in \pm A' \begin{pmatrix} \SO(2) & \ast \\& 1 \end{pmatrix},\ k\in K \right\}.
\]
Then, for all $\eps,A>0$ and $x\in\Omega$, we have
\[\psi^{U}_{\tau} (x)\ll_{\eps, A,\Re\mu',\Omega}
\frac{\ell^{1+\eps}}{1 + \ell \cdot d_{\{\pm\id\}}^2+ (\sqrt{\ell} \cdot d_S)^A}, \]
where $d_{\{\pm\id\}}:=\dist(x,\{\pm\id\})$ and $d_S:=\dist(x,S)$. The implied constant depends continuously on $\Re\mu'$.
\end{theorem}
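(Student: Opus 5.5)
We plan to start from Camporesi's formula \eqref{shape}, specialized to $n=3$, $r=s=1$. Here $M'\cong \SL_2^\pm(\RR)\times\{\pm1\}$, $\sigma'$ restricted to the $\SL_2^\pm$ block is $\delta_\ell$, and $\tau$ is the minimal $K$-type, with $\dim\tau\asymp\ell$. The function $F_\tau^{\sigma'}(g)$ depends only on the $M'$-component of $g$ in the decomposition $g=\kappa(g)\,m'(g)\,a'(g)\,n'(g)$, and its trace norm is controlled by the $\SL_2$ Bergman kernel
\[
\Bigl|\tfrac{2}{a+ib-ic+d}\Bigr|^{\ell},
\]
evaluated at the (normalized) $2\times2$ block of $m'(g)$. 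We first write $|a+d+i(b-c)|^2/4 = 1+\tfrac14\|m-m^{-T}\|^2$ (for $\det m=1$), or an equivalent formula. This shows that the kernel is bounded by $\exp(-c\ell\,\dist(m,\SO(2))^2)$ as long as $m$ is bounded.

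We then take absolute values in \eqref{shape}:
\[
|\psi_\tau^U(x)|\le \int_K \|F_\tau^{\sigma'}(xk)\|_{2,1}\, e^{(\Re\mu'-\rho')(H'(xk))}\,dk ,
\]
where the exponential is $O_{\Omega,\Re\mu'}(1)$. The trace has size at most $\dim\tau\asymp\ell$ times the Bergman factor $\exp(-c\ell\,\dist(m'(xk),\SO(2)\times\{\pm1\})^2)$. Everything is therefore reduced to estimating the measure of the set
\[
\mathcal K_\eta(x):=\{k\in K:\ \dist(m'(xk),\,O(2)\text{-part})\le \eta\}
\]
at scale $\eta\approx \ell^{-1/2+\eps}$, up to negligible tails. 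This gives
\[
\psi\ll \ell\cdot\mathrm{vol}\,\mathcal K_{\ell^{-1/2+\eps}}(x)+\ell^{-A}.
\]

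The geometric heart of the proof is the following claim. First, $\mathcal K_\eta(x)$ is empty unless $x$ lies within $O(\eta)$ of $S$. Indeed, if $m'(xk)\in\SO(2)$ exactly, then $xk\in K\,(\SO(2)\times\{\pm1\})A'N'$, so $x$ is $K$-conjugate to an element of $\pm A'\begin{pmatrix}\SO(2)&*\\&1\end{pmatrix}$ up to $K$ on the right. Combined with the conjugation invariance of $\psi$, this produces the set $S$, and it yields the factor $(\sqrt\ell\,d_S)^{-A}$ after a quantitative (Lipschitz) version on the compact set $\Omega$. Second, we must show $\mathrm{vol}\,\mathcal K_\eta(x)\ll \ell^{\eps}(1+\ell d_{\{\pm\id\}}^2)^{-1}$, that is, a measure saving of $\eta^2/d^2$. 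The plan is to parametrize $K=\OO(3)$ near the solution locus and check that the map $k\mapsto m'(xk)$, modulo $\SO(2)$, has a Jacobian whose transversal part in two independent directions is $\gg d_{\{\pm\id\}}$. Each such direction then costs a factor $\eta/d$, and we use the trivial bound when $d\le\eta$. Since $\SO(3)/\SO(2)$ is two-dimensional, two directions are exactly what is available.

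The main obstacle is this nondegeneracy: lower-bounding the two transversal derivatives uniformly by $\dist(x,\pm\id)$ rather than by $\dist(x,K)$. Near points of $S\cap K$ other than $\pm\id$, for example rotations in $\SO(2)$, one may expect degeneracy. I would handle it by an explicit computation with $x=\kappa\,\diag(y,y,y^{-2})\begin{pmatrix}R&v\\&1\end{pmatrix}\kappa^{-1}$, writing $k$ in Euler angles and expanding $m'(xk)$ to second order in the angle and in $(y-1,\,R-I,\,v)$. If the first-order terms vanish in some regime, we would fall back to a second- or third-derivative test, as in Theorem~\ref{thm3}. Finally, continuity in $\Re\mu'$ is inherited from the exponential weight.
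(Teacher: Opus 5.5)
Your first reduction discards the information the theorem depends on, so the proposal has a genuine gap that cannot be repaired inside your framework.

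\textbf{What goes wrong.} You bound the integrand by $\|F_\tau^{\sigma'}(xk)\|_{2,1}=\|\hat F_\tau^{\sigma'}(m'(xk))\|_{2,1}$. The actual integrand is $\tr\bigl(F_\tau^{\sigma'}(xk)\tau(k^{-1})\bigr)=\tr\bigl(\tau(k^{-1}\kappa'(xk))\hat F_\tau^{\sigma'}(m'(xk))\bigr)$. Note that $F_\tau^{\sigma'}(g)=\tau(\kappa'(g))\hat F_\tau^{\sigma'}(m'(g))$ does depend on $\kappa'(g)$; only its $\|\cdot\|_{2,1}$-norm does not. A bound that sees only $m'(xk)$ cannot produce either decay factor.

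First, if $x\in K$ then $m'(xk)\in K'$ for every $k$. So $\mathcal K_\eta(x)=K$ and your bound is exactly $\dim\tau=2\ell+1$. Take $x$ a rotation by angle $\pi/2$: then $d_{\{\pm\id\}}\asymp 1$ and $d_S=0$, and the theorem asserts $\ell^{\eps}$. Indeed $\psi^U_\tau(x)=\tr\tau(x)=O(1)$ there. Since $k\mapsto m'(xk)$ is identically in $K'$, no derivative test of any order yields the transversality you want. The ``degeneracy near $S\cap K$'' you anticipate is therefore fatal for this method, not a technicality.

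Second, $\mathcal K_0(x)\neq\emptyset$ if and only if $x\in KA'N'K$, which is a two-sided coset set rather than a conjugation-invariant one. Conjugation invariance of $\psi^U_\tau$ merely translates $\mathcal K_\eta(x)$, so it cannot turn $k_1gk_2$ into $kgk^{-1}$. For example, let $k_0$ be the rotation by $\pi/2$ in the $(e_1,e_3)$-plane and $x:=\diag(y,y,y^{-2})k_0$ with $y\neq1$. Then $m'(xk)\in K'$ along the whole curve $k_0^{-1}K'$, so $\mathrm{vol}\,\mathcal K_\eta(x)\gg\eta^2$ and your bound is $\gg1$. Yet $x\notin S$: its only invariant plane is $\langle e_1,e_3\rangle$, on which it acts by $e_1\mapsto -y^{-2}e_3$, $e_3\mapsto ye_1$. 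This is not a similarity, whereas every element of $S$ acts as a similarity on some invariant plane. Hence $d_S(x)$ is a fixed positive number, and the theorem gives $O(\ell^{-A})$.

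\textbf{The missing idea.} You need to keep the matrix coefficient of $\tau$. Write
$$\psi^U_\tau(x)=\int_K T_\tau^{\sigma'}(k^{-1}xk)\,e^{(\mu'-\rho')(H'(k^{-1}xk))}\,dk,\qquad T_\tau^{\sigma'}(g)=\tr\bigl(\tau(\kappa'(g))\hat F_\tau^{\sigma'}(m'(g))\bigr).$$
Since $\hat F_\tau^{\sigma'}$ is supported on the weight $\pm\ell$ block, only the corner entries of $\tau(\kappa'(g))$ enter. After normalizing $\det m''=1$, these have modulus $\cos(\tfrac12\beta_g)^{2\ell}$, where $\beta_g$ is the middle Euler angle of $\kappa'(g)$. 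This gives $|T_\tau^{\sigma'}(g)|\leq(2\ell+1)\cos(\tfrac12\beta_g)^{2\ell}\Theta(m'')^{\ell}$.

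Hence $T_\tau^{\sigma'}(g)$ is negligible unless \emph{both} $m''$ is near $\SO(2)$ and $\kappa'(g)$ is near $\pm L'$, i.e.\ $g\in\pm L'A'N'+O(\sqrt{\log(2\ell)/\ell})$. Because $g=k^{-1}xk$ is a genuine conjugate, this is where $S$ comes from.

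The factor $(1+\ell d_{\{\pm\id\}}^2)^{-1}$ then comes from the Euler angle $\beta$ of the conjugating element $k$, not from a Jacobian of $m'$:
\begin{itemize}
\item the integrand is constant in $\gamma$ by $L'$-conjugation invariance;
\item for $\sin\beta\asymp B$, the requirement $k^{-1}xk\in\pm L'A'N'+O(\sqrt{\log(2\ell)/\ell})$ forces $\dist(x,\{\pm\id\})\ll\sqrt{\log(2\ell)}/(B\sqrt{\ell})$;
\item the region $\sin\beta\asymp B$ has measure $\asymp B^2$, so summing $\ell B^2$ dyadically over the admissible $B$ gives $\psi^U_\tau(x)\ll\log(2\ell)/d_{\{\pm\id\}}^2$.
\end{itemize}
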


We note that the bound is uniform in $\Im\mu'$, but it does not exhibit any savings as $\Im\mu'$ gets large. For large $\ell$, we see that $\psi^{U}_{\tau}(x)$ decays rapidly as soon as $x$ moves away from $S$, and it decays on a polynomial scale as soon as $x$ moves away from the identity, and then a little more quickly if $x$ moves in addition away from $R$. It is interesting to see the various submanifolds at which some special behavior occurs. As with Theorem~\ref{thm1}, for the proof of Theorem~\ref{thm2} we need a more general version of Theorem~\ref{thm4} where $\tau$ can be an arbitrary $K$-type sufficiently close to the minimal $K$-type. This is Theorem~\ref{thm4b} in \S\ref{n3-nonminimal-subsec}.

\subsection{Plan of the paper and notations}\label{subsec:notations}
Section~\ref{spherical} to \ref{section_for_n_equals3} are devoted to the analysis of generalized spherical functions. This includes in particular the proofs of Theorems~\ref{thm3} to~\ref{thm4b}. Section~\ref{PWsection} features in Theorem~\ref{Thm5} a Paley--Wiener theorem that generalizes the classical result of Delorme and Flensted-Jensen~\cite[Th.~2]{DFJ} to more general (e.g. non-connected) groups and more general (e.g. non-smooth) test functions, coupled with a spectral localizer provided by Theorems~\ref{theorem:K-type-barrier} and~\ref{theorem:concrete-paley-wiener-function}. Sections~\ref{sec6} and \ref{counting} return to the sup-norm problem and prove Theorems~\ref{thm1} and~\ref{thm2}. 

In accordance with Remark~\ref{rem2}, we shall assume throughout the paper that $\Omega\subseteq G$ is a compact set satisfying $K\Omega K=\Omega$. As already used above, $A\ll_D B$ (or, interchangeably, $A=O_D(B)$) means that for some constant $C>0$ depending only on $D$, the bound $|A|\leq CB$ holds. We write $A\asymp B$ if $A\ll B$ and $B\ll A$ simultaneously hold. All implied constants in $\ll$ or $O(\dotsc)$ may depend on $n$ and $\Omega$ without being displayed in the notation. Moreover, whenever we indicate the dependence of an implied constant on $\Re\mu'$, we always understand it as a continuous dependence.

For most of this paper, we set $G:=\SL_n^{\pm}(\RR)$, $\Gamma:=\SL_n^{\pm}(\ZZ)=\GL_n(\ZZ)$, $K=\OO(n)$. Some parts of the theoretical background, however, hold among more general circumstances, such as for Lie groups in the Harish-Chandra class. These exceptional occasions will be indicated clearly, and then we will write $G$ for the more general underlying group and $K$ for its maximal compact subgroup. 

Given irreducible unitary representations $\tau$, $\sigma$, $\omega$ of some compact groups, we occasionally write $d_{\tau}:=\dim\tau$, $d_{\sigma}:=\dim\sigma$, $d_{\omega}:=\dim \omega$.

A general convention is that when we integrate over a group, then it is meant with respect to a fixed Haar measure. In case of compact groups, we always mean the Haar probability measure.

\section{Background on generalized spherical functions}\label{spherical}
In this section we provide the necessary background to define generalized spherical functions and state Camporesi's formula, given in \eqref{Eis} below. Most relevant for us will be the trace of this endomorphism-valued function, discussed in \S\ref{sec27}. For the convenience of the reader, we provide a self-contained proof of \eqref{Eis}. The scope here is more general, so for this section, we allow $G$ to be any Lie group in the Harish-Chandra class (see \cite[\S3]{HC1}, \cite[\S1]{CM}, \cite[Ch.~4, \S3]{KV}). As emphasized by \cite[\S1]{CM}, connected semisimple Lie groups with finite center and groups of $\RR$-points of Zariski-connected reductive algebraic groups defined over $\RR$ belong to the Harish-Chandra class. Moreover, if $G$ belongs to this class, then so do Levi components of parabolic subgroups of $G$. In particular, $\SL_n^\pm(\RR)$ belongs to the Harish-Chandra class, because it is the Levi component of a (maximal) parabolic subgroup of $\SL_{n+1}(\RR)$.

\subsection{Generalized principal series for the minimal parabolic}\label{Pseries}
Let $G=KAN$ be an Iwasawa decomposition. Thus $K\subseteq G$ is a maximal compact subgroup, and $P=MAN$ is a minimal parabolic subgroup, with $M=K\cap P$. We shall use without further remark that $MA$ normalizes $N$, while $M$ and $A$ centralize each other. Let $\mfa$ be the Lie algebra of $A$. We shall write the Iwasawa decomposition of group elements as
\begin{equation}\label{iw1}
x=\kappa(x)e^{H(x)}n(x),\qquad x\in G,
\end{equation}
where $\kappa(x)\in K$, $H(x)\in\mfa$, $n(x)\in N$ are uniquely determined. For future reference, we note the identities
\begin{equation}\label{iw5}
\kappa(x^{-1}\kappa(xk))=k,\qquad H(x^{-1}\kappa(xk))=-H(xk),
\end{equation}
which follow from the implication
\[xk\in\kappa(xk)e^{H(xk)}N\qquad\Longrightarrow\qquad x^{-1}\kappa(xk)\in kN e^{-H(xk)}=k e^{-H(xk)}N.\]
A (generalized) principal $P$-series representation of $G$ is of the form\footnote{Also called an \emph{elementary} representation of $G$. Our notation harmonizes with \cite[p.~168]{K} and \cite[p.~660]{KV}. Hence our $U^{\sigma,\mu}$ is $U^{\sigma,\mu-\rho}$ in the notation of \cite[p.~449]{W1} and \cite[p.~271]{C}.}
\[U^{\sigma,\mu}=\Ind_P^G(\sigma\otimes e^{\mu}\otimes\mathbf{1}),\]
where $(\sigma,V_\sigma)$ is an irreducible unitary representation of $M$ and $\mu:\mfa\to\CC$ is a linear functional. We shall think of this as a representation on $L^2(K,\sigma)$, the Hilbert space of square-integrable functions $f:K\to V_{\sigma}$ satisfying
\[f(km)=\sigma(m^{-1})f(k),\qquad k\in K,\quad m\in M.\]
The representation is given by
\[(U^{\sigma,\mu}(x)f)(k):=e^{-(\mu+\rho)(H(x^{-1}k))}f(\kappa(x^{-1}k)),
\qquad x\in G,\ \ f\in L^2(K,\sigma),\ \ k\in K.\]
Restricting this to $K$ yields the left regular representation of $K$ on $L^2(K,\sigma)$, therefore
\begin{equation}\label{indres1}
U^{\sigma,\mu}|_K=\Ind_M^K\sigma.
\end{equation}
The representation $U^{\sigma,\mu}$ embeds unitarily into the representation $U^\mu$ on $L^2(K)$ given by\footnote{Also called a \emph{standard} representation of $G$. Our $U^\mu$ is $U^{\mu-\rho}$ in the notation of Warner~\cite[p.~445]{W1}.}
\[(U^\mu(x)f)(k):=e^{-(\mu+\rho)(H(x^{-1}k))}f(\kappa(x^{-1}k)),\qquad x\in G,\ \ f\in L^2(K),\ \ k\in K.\]
Indeed, as explained\footnote{Warner~\cite[p.~447]{W1} assumes that $G$ is a connected semisimple Lie group, but the argument works equally well for $G$ in the Harish-Chandra class.} in \cite[p.~447]{W1}, the right regular representation of $M$ on $L^2(K)$ commutes with $U^\mu$, and the $\check{\sigma}$-isotypic subspace of $L^2(K)$ is a sum of $d_\sigma$ representations unitarily equivalent to $U^{\sigma,\mu}$.
We note for future reference that the Hilbert space adjoint of $U^\mu(x)$ is $U^{-\bar\mu}(x^{-1})$. 
This can be deduced from \eqref{iw5} and Harish-Chandra's formula
\[\int_K f(k)\,dk=\int_K f(\kappa(xk))\,e^{-2\rho(H(xk))}\,dk,\]
valid for all $f\in L^2(K)$ and $x\in G$. Indeed, using these relations, we obtain for arbitrary $f,g\in L^2(K)$ and $x\in G$ that
\begin{align*}
\langle U^\mu(x)f,g\rangle
&=\int_K e^{-(\mu+\rho)(H(x^{-1}k))}f(\kappa(x^{-1}k))\,\ov{g(k)}\,dk\\
&=\int_K e^{-(\mu+\rho)(H(x^{-1}\kappa(xk)))}f(\kappa(x^{-1}\kappa(xk)))\,\ov{g(\kappa(xk))}\,e^{-2\rho(H(xk))}\,dk\\
&=\int_K e^{(\mu-\rho)(H(xk))}f(k)\,\ov{g(\kappa(xk))}\,dk=\langle f,U^{-\bar\mu}(x^{-1})g\rangle.
\end{align*}

\subsection{Spherical functions}
Let $(\tau,V_\tau)$ be an irreducible unitary representation of $K$, and let $(\bv_1,\dotsc,\bv_{d_\tau})$ be an orthonormal basis of $V_\tau$. Let $(U,V_U)$ be a Hilbert space representation of $G$ with finite $\tau$-multiplicity $q:=[U:\tau]$,
and let $(P_1,\dotsc,P_q)$ be a basis of $\Hom_K(V_U,V_\tau)$ orthonormal with respect to the scalar product $\langle P,Q\rangle=(1/d_\tau)\tr(P^*Q)$. Then the vectors $P_u^*\bv_r\in V_U$ form an orthonormal basis of the $\tau$-isotypic subspace of $V_U$. We consider the $\End(V_\tau)$-valued function given by
\[\varphi_\tau^U(x):=\sum_{u=1}^{q} P_u U(x)P_u^*,\qquad x\in G.\]
This function is independent of the orthonormal basis $(P_1,\dotsc,P_q)$. A key feature is that
\begin{equation}\label{eq:psi-expanded-in-P-basis}
\psi_\tau^U(x) := \tr(\varphi_\tau^U(x))
=\sum_{r=1}^{d_\tau}\sum_{u=1}^q\langle P_uU(x)P_u^*\bv_r,\bv_r\rangle
=\sum_{r=1}^{d_\tau}\sum_{u=1}^q\langle U(x)P_u^*\bv_r,P_u^*\bv_r\rangle.
\end{equation}
Another useful property is that
\begin{equation}\label{varphisymmetry}
\varphi_{\tau}^U(k_1xk_2)=\tau(k_1)\varphi_{\tau}^U(x)\tau(k_2),
\qquad k_1\in K,\quad x\in G,\quad k_2\in K.
\end{equation}
We shall prove below that if $U$ is infinitesimally equivalent to a subrepresentation of $U^{\sigma,\mu}$, then there is an explicit projection $P_{\tau,\sigma}^U\in\End_M(V_\tau)$ such that for all $x\in G$,
\begin{equation}\label{varphi1}
\varphi_\tau^U(x)=\frac{d_\tau}{d_\sigma}\int_K\tau(\kappa(xk))P_{\tau,\sigma}^U\tau(k^{-1})\,e^{(\mu-\rho)(H(xk))}\,dk.
\end{equation}
For describing $P_{\tau,\sigma}^U$, let us note that Frobenius reciprocity yields (cf. \eqref{indres1}) that
\begin{equation}\label{frobenius}
\Hom_M(V_\tau,V_\sigma)\cong\Hom_K(V_\tau,L^2(K,\sigma)).
\end{equation}
The isomorphism is given by the following canonical maps $T\leftrightarrow\widetilde T$:
\[\begin{alignedat}{2}
\widetilde T:&\bv\mapsto(k\mapsto T\tau(k^{-1})\bv),&\qquad &\bv\in V_\tau,\quad k\in K,\\
T:&\bv\mapsto(\widetilde T\bv)(1),&\qquad&\bv\in V_\tau.
\end{alignedat}\]
Now we observe that $\Hom_K(V_\tau,V_U)$ is a subspace of $\Hom_K(V_\tau,L^2(K,\sigma))$. Let $\{\widetilde T_\xi\}$ be an orthonormal basis of this subspace, and let $\{T_\xi\}$ be the corresponding orthonormal system in $\Hom_M(V_\tau,V_\sigma)$. In other words, $\{T_\xi\}$ is an orthonormal basis of the image of $\Hom_K(V_\tau,V_U)$ in $\Hom_M(V_\tau,V_\sigma)$ under Frobenius reciprocity. Then the sought projection is given by
\begin{equation}\label{varphi2}
P_{\tau,\sigma}^U:=\sum_\xi T_\xi^* T_\xi\in\End_M(V_\tau).
\end{equation}
This projection is independent of the orthonormal basis $\{T_\xi\}$.

\subsection{Proof of \eqref{varphi1} and \eqref{varphi2}}
We shall prove \eqref{varphi1} and \eqref{varphi2} by embedding $d_\sigma$ copies of $U$ into $U^\mu$, and performing the calculation with the help of the matrix coefficients
\[a_{ij}(k):=\langle\tau(k^{-1})\bv_i,\bv_j\rangle,\qquad k\in K.\]
These functions form an orthogonal basis of the $\tau$-isotypic subspace of $L^2(K)$ under the left regular representation of $K$, and the entries of each column of $(a_{ij})$ generate a copy of $\tau$ under this action:
\begin{equation*}
L(k)a_{ij}=\sum_t a_{it}(k^{-1})a_{tj},\qquad k\in K.
\end{equation*}
The functions $a_{ij}\in L^2(K)$ also form an orthogonal basis of the $\check\tau$-isotypic subspace of $L^2(K)$ under the right regular representation of $K$, and the entries of each row of $(a_{ij})$ generate a copy of $\check\tau$ under this action:
\begin{equation*}
R(k)a_{ij}=\sum_t a_{tj}(k)a_{it},\qquad k\in K.
\end{equation*}
We can assume that the matrix-valued function $(a_{ij}(m^{-1}))$ $(m\in M)$ has a diagonal block decomposition, each block being the matrix of an irreducible unitary representation of $M$. We can further assume that the first block has entries $\langle\sigma(m)\bv_i,\bv_j\rangle$ for $1\leq i,j\leq d_\sigma$, and it repeats $[\tau:\sigma]$ times. Following \cite[p.~447]{W1}, we consider the following interwiners of $U^\mu$:
\[R_{ij}:=d_\sigma\int_M a_{ij}(m^{-1})R(m)\,dm,\qquad 1\leq i,j\leq d_\sigma.\]
Then $U^\mu$ restricted to $R_{ii}L^2(K)$ is the $i$-th copy of $U^{\sigma,\mu}$ in $U^\mu$, and $R_{ij}$ maps $R_{ii}L^2(K)$ isometrically onto $R_{jj}L^2(K)$. Let us calculate the image of the matrix coefficient $a_{rs}$ under this intertwiner:
\begin{align*}
R_{ij}a_{rs}(k)&=d_\sigma\int_M a_{ij}(m^{-1})a_{rs}(km)\,dm\\
&=d_\sigma\int_M a_{ij}(m^{-1})\sum_t a_{rt}(k)a_{ts}(m)\,dm\\
&=d_\sigma\sum_t a_{rt}(k)\int_M\ov{a_{ji}(m)}a_{ts}(m)\,dm.
\end{align*}
By Schur orthogonality, the inner integral is $d_\sigma^{-1}$ when $(s,t)$ equals $(i,j)$ shifted by $(u-1)d_\sigma$ for some $1\leq u\leq [\tau:\sigma]$; otherwise, it is zero. Hence $R_{ij}$ maps $a_{r,i+(u-1)d_\sigma}$ to $a_{r,j+(u-1)d_\sigma}$, and it annihilates all other matrix coefficients $a_{rs}$. In particular, the left $\tau$-isotypic subspace of $R_{ii}L^2(K)$ is spanned by $a_{r,i+(u-1)d_\sigma}$ with $1\leq r\leq d_\tau$ and $1\leq u\leq [\tau:\sigma]$, and $R_{ij}$ maps this space isometrically onto the left $\tau$-isotypic subspace of $R_{jj}L^2(K)$. It follows that the $(\tau,\check\sigma)$-isotypic subspace of $L^2(K)$ is spanned by the matrix coefficients $a_{rs}$ with $1\leq r\leq d_\tau$ and $1\leq s\leq [\tau:\sigma]d_\sigma$. Recall the notation $q=[U:\tau]$. After permuting the initial $[\tau:\sigma]d_\sigma$ elements of the basis $(\bv_1,\dotsc,\bv_{d_\tau})$ in blocks of $d_\sigma$, we can achieve that the $\tau$-isotypic component of the $i$-th copy of $U$ in $U^\mu$ is spanned by $a_{r,i+(u-1)d_\sigma}$ with $1\leq r\leq d_\tau$ and $1\leq u\leq q$, and here $1\leq i\leq d_\sigma$ is arbitrary. Then, $P_{\tau,\sigma}^U$ considered in \eqref{varphi2} is the projection of $V_\tau$ onto the span of $(\bv_1,\dotsc,\bv_{qd_\sigma})$. Now we can verify \eqref{varphi1} by checking that the two sides have the same matrix coefficients:
\begin{align*}
\langle\varphi_\tau^U(x)\bv_i,\bv_j\rangle
&=\frac{d_\tau}{d_\sigma}\sum_{s=1}^{qd_\sigma}\langle U^{\mu}(x)a_{is},a_{js}\rangle\\
&=\frac{d_\tau}{d_\sigma}\sum_{s=1}^{qd_\sigma}\langle a_{is},U^{-\ov{\mu}}(x^{-1})a_{js}\rangle\\
&=\frac{d_\tau}{d_\sigma}\sum_{s=1}^{qd_\sigma}\int_K a_{is}(k)\,\ov{a_{js}(\kappa(xk))}\,e^{(\mu-\rho)(H(xk))}\,dk\\
&=\frac{d_\tau}{d_\sigma}\sum_{s=1}^{qd_\sigma}\int_K 
\langle\tau(k^{-1})\bv_i,\bv_s\rangle\,\langle\bv_s,\tau(\kappa(xk)^{-1})\bv_j\rangle\,e^{(\mu-\rho)(H(xk))}\,dk\\
&=\frac{d_\tau}{d_\sigma}\sum_{s=1}^{d_\tau}\int_K 
\langle P_{\tau,\sigma}^U\tau(k^{-1})\bv_i,\bv_s\rangle\,\langle\bv_s,\tau(\kappa(xk)^{-1})\bv_j\rangle\,e^{(\mu-\rho)(H(xk))}\,dk\\
&=\frac{d_\tau}{d_\sigma}\int_K 
\langle P_{\tau,\sigma}^U\tau(k^{-1})\bv_i,\tau(\kappa(xk)^{-1})\bv_j\rangle\,e^{(\mu-\rho)(H(xk))}\,dk\\
&=\frac{d_\tau}{d_\sigma}\int_K 
\langle \tau(\kappa(xk))P_{\tau,\sigma}^U\tau(k^{-1})\bv_i,\bv_j\rangle\,e^{(\mu-\rho)(H(xk))}\,dk.
\end{align*}

\subsection{Nonminimal parabolic subgroups} 
We introduce nonminimal parabolic subgroups and the corresponding (non-unique) Iwasawa decompositions. Let $P'=M'A'N'$ be a parabolic subgroup containing $P=MAN$, given in Langlands decomposition. Hence $M'\supseteq M$, $A'\subseteq A$, and $N'\subseteq N$. Then $K'=K\cap M'$ is a maximal compact subgroup of $M'$, and we have an Iwasawa decomposition $M'=K'A_1N_1$, where $A=A'A_1$ and $N=N'N_1$. Correspondingly, the Lie algebra of $A$ decomposes as a direct sum $\mfa=\mfa'+\mfa_1$. We shall use without further remark that $M'A'$ normalizes $N'$, while $M'$ and $A'$ centralize each other. For the elements of $M'$, we have the Iwasawa decomposition
\begin{equation}\label{iw2}
m'=\kappa_1(m')e^{H_1(m')}n_1(m'),\qquad m'\in M',
\end{equation}
where $\kappa_1(m')\in K'$, $H_1(m')\in\mfa_1$, $n_1(m')\in N_1$ are uniquely determined. By $G=KM'A'N'$, we can also write
\begin{equation}\label{iw3}
x=\kappa'(x)m'(x)e^{H'(x)}n'(x),\qquad x\in G,
\end{equation}
with $\kappa'(x)\in K$, $m'(x)\in M'$, $H'(x)\in\mfa'$, $n'(x)\in N'$. Here $H'(x)$ and $n'(x)$ are uniquely determined, but $\kappa'(x)$ and $m'(x)$ are ambiguous. If we use a given pair $(\kappa'(x),m'(x))$, then for any $k'\in K'$, we can equally use the pair $(\kappa'(x)k'^{-1},k'm'(x))$. The decompositions \eqref{iw1}, \eqref{iw2}, \eqref{iw3} are not independent of each other, namely for all $x\in G$ and $k'\in K'$, we have
\begin{equation}\label{endgame}
\kappa'(x)\kappa_1(m'(x)k')=\kappa(xk'),\qquad H_1(m'(x)k')=H_1(m'(xk')),\qquad H'(x)=H'(xk').
\end{equation}
Note that these relations are meaningful, despite the ambiguity of $\kappa'(x)$ and $m'(x)$. In order to prove \eqref{endgame}, we start from \eqref{iw3}:
\[xk'\in\kappa'(x)m'(x)e^{H'(x)}N'k'=\kappa'(x)m'(x)k'e^{H'(x)}N'.\]
Then, we apply \eqref{iw2} for $m'=m'(x)k'$:
\[xk'\in\kappa'(x)\kappa_1(m'(x)k')e^{H_1(m'(x)k')+H'(x)}N.\]
Finally, a comparison with \eqref{iw1} reveals that
\begin{equation}\label{iw4}
\kappa'(x)\kappa_1(m'(x)k')=\kappa(xk'),\qquad H_1(m'(x)k')+H'(x)=H(xk').
\end{equation}
The first relation in \eqref{iw4} is the first relation in \eqref{endgame}. Using the second relation in \eqref{iw4}, we obtain by the replacement $(x,k')\to (xk',1)$ that
\[H_1(m'(x)k')+H'(x)=H_1(m'(xk'))+H'(xk').\]
Since $\mfa=\mfa_1+\mfa'$ is a direct sum decomposition, the second and third relations in \eqref{endgame} follow.

\subsection{Generalized principal series for nonminimal parabolics}\label{P'series} 
A (generalized) principal $P'$-series representation of $G$ is of the form\footnote{Camporesi~\cite{C} uses this with $\mu'=i\nu'$, where $\nu'\in\mfa'^*$. On the other hand, Camporesi~\cite{C} denotes our $U^{\sigma',i\nu'}$ by $U^{\sigma',\nu'}$.}
\[U^{\sigma',\mu'}=\Ind_{P'}^G(\sigma'\otimes e^{\mu'}\otimes\mathbf{1}),\]
where $(\sigma',V_{\sigma'})$ is in the discrete series of $M'$ and $\mu':\mfa'\to\CC$ is a linear functional. We shall think of this as a representation on $L^2(K,\sigma')$, the Hilbert space of square-integrable functions $f:K\to V_{\sigma'}$ satisfying
\[f(kk')=\sigma'(k'^{-1})f(k),\qquad k\in K,\quad k'\in K'.\]
The representation is given by
\[
(U^{\sigma',\mu'}(x)f)(k):=e^{-(\mu'+\rho')(H'(x^{-1}k))}\sigma'(m'(x^{-1}k))^{-1}f(\kappa'(x^{-1}k))
\]
for $x\in G$, $f\in L^2(K,\sigma')$, and $k\in K$. Restricting this to $K$ yields the left regular representation of $K$ on $L^2(K,\sigma')$, therefore
\begin{equation}\label{indres}
U^{\sigma',\mu'}|_K=\Ind_{K'}^K\sigma'|_{K'}.
\end{equation}

By the subrepresentation theorem \cite[Th.~8.21]{CM}, there exists $\sigma\in\widehat{M}$ and a linear functional $\mu_1:\mfa_1\to\CC$ such that the discrete series representation $\sigma'$ of $M'$ is infinitesimally equivalent with a subrepresentation of $\Ind_{MA_1N_1}^{M'}(\sigma\otimes e^{\mu_1}\otimes\mathbf{1})$. Therefore, writing $\mu=\mu_1+\mu'$ in the obvious sense, the $P'$-principal series $U^{\sigma',\mu'}$ is infinitesimally equivalent with a subrepresentation of the $P$-principal series $U^{\sigma,\mu}$. Practically speaking, $V_{\sigma'}$ embeds into $L^2(K',\sigma)$ in an $M'$-equivariant fashion (up to infinitesimal equivalence), while $L^2(K,\sigma')$ embeds into $L^2(K,\sigma)$ in a $G$-equivariant fashion (up to infinitesimal equivalence). This induces canonical embeddings (cf. \eqref{frobenius})
\[\begin{alignedat}{3}
&\Hom_{K'}(V_\tau,V_{\sigma'})&&\hookrightarrow\Hom_{K'}(V_\tau,L^2(K',\sigma))\ &\cong\Hom_M(V_\tau,V_\sigma),\\
&\Hom_K(V_\tau,L^2(K,\sigma'))&&\hookrightarrow\Hom_K(V_\tau,L^2(K,\sigma))&\cong\Hom_M(V_\tau,V_\sigma),
\end{alignedat}\]
with equal images on the right-hand side\footnote{More precisely, the resulting embeddings of $\Hom_{K'}(V_\tau,V_{\sigma'})$ and $\Hom_K(V_\tau,L^2(K,\sigma'))$ into $\Hom_M(V_\tau,V_\sigma)$ are connected by Frobenius reciprocity on the left-hand side.}. We see now that \eqref{varphi1} and \eqref{varphi2} hold for $U=U^{\sigma',\mu'}$ as long as $\{T_\xi\}$ is an orthonormal basis of this particular subspace of $\Hom_M(V_\tau,V_\sigma)$. We can obtain a good $\{T_\xi\}$ from an orthonormal basis of $\Hom_{K'}(V_\tau,V_{\sigma'})$. Let $(\omega,V_\omega)$ run through a set of representatives for $\widehat{K'}$. For each $\omega$, let $\{P_j^{\tau,\omega}\}$ be an orthonormal basis of $\Hom_{K'}(V_\tau,V_\omega)$, and let $\{\widetilde{T}_i^{\omega,\sigma'}\}$ be an orthonormal basis of $\Hom_{K'}(V_\omega,V_{\sigma'})$. By Schur's lemma, the products $\widetilde{T}_i^{\omega,\sigma'}P_j^{\tau,\omega}$ form an orthonormal basis of 
$\Hom_{K'}(V_\tau,V_{\sigma'})$. The orthonormal basis $\{\widetilde{T}_i^{\omega,\sigma'}\}$ of $\Hom_{K'}(V_\omega,V_{\sigma'})$ corresponds to an orthonormal system $\{T_i^{\omega,\sigma'}\}$ in $\Hom_M(V_\omega,V_\sigma)$ via the canonical embedding (cf. \eqref{frobenius}) 
\[\Hom_{K'}(V_\omega,V_{\sigma'})\hookrightarrow\Hom_{K'}(V_\omega,L^2(K',\sigma))\cong\Hom_M(V_\omega,V_\sigma).\]
Hence in fact the products $T_i^{\omega,\sigma'}P_j^{\tau,\omega}$ form an orthonormal basis of the image of $\Hom_{K'}(V_\tau,V_{\sigma'})$ in $\Hom_M(V_\tau,V_\sigma)$. That is, if $\{T_\xi\}=\{T_i^{\omega,\sigma'}P_j^{\tau,\omega}\}$, then \eqref{varphi1} and \eqref{varphi2} hold for $U=U^{\sigma',\mu'}$:
\begin{equation}\label{Eis2}
\varphi_\tau^{U^{\sigma',\mu'}}(x)=\frac{d_\tau}{d_\sigma}\int_K\tau(\kappa(xk))
\sum_{\omega\in\widehat{K'}}\sum_{i,j}P_j^{\tau,\omega*}T_i^{\omega,\sigma'*}T_i^{\omega,\sigma'}P_j^{\tau,\omega}
\tau(k^{-1})\,e^{(\mu-\rho)(H(xk))}\,dk.
\end{equation}

\subsection{Camporesi's formula}
Finally, we can state and prove ``Camporesi's formula'' \cite[(4.8)--(4.9)]{C},
\begin{equation}\label{Eis}
\varphi_\tau^{U^{\sigma',\mu'}}(x)=\int_K F_\tau^{\sigma'}(xk)\tau(k^{-1})\,e^{(\mu'-\rho')(H'(xk))}\,dk.
\end{equation}
In the formula, the function $F_\tau^{\sigma'}:G\to\End(V_\tau)$ is defined by
\begin{equation}\label{Fdef}
F_\tau^{\sigma'}(x):=\tau(\kappa'(x))\hat F_\tau^{\sigma'}(m'(x)),\qquad x\in G,
\end{equation}
where $\hat F_\tau^{\sigma'}:M'\to\End(V_\tau)$ is the ``block matrix'' valued function
\begin{equation}\label{Fdef2}
\hat F_\tau^{\sigma'}(m'):=\sum_{\omega\in\widehat{K'}}\frac{d_\tau}{d_\omega}
\sum_jP_j^{\tau,\omega\ast}\varphi_{\omega}^{\sigma'}(m')P_j^{\tau,\omega},\qquad m'\in M'.
\end{equation}
We note that only those $\omega$'s contribute to this sum that occur both in $\sigma'$ and $\tau$. Using that $P_j^{\tau,\omega}\in\Hom_{K'}(V_\tau,V_\omega)$ and (cf. \eqref{varphisymmetry})
\[\varphi_{\omega}^{\sigma'}(k_1'm'k_2')=\omega(k_1')\varphi_{\omega}^{\sigma'}(m')\omega(k_2'),
\qquad k_1'\in K',\quad m'\in M',\quad k_2'\in K',\]
we obtain readily the invariance property
\[\hat F_\tau^{\sigma'}(k_1'm'k_2')=\tau(k_1')\hat F_\tau^{\sigma'}(m')\tau(k_2')
\qquad k_1'\in K',\quad m'\in M',\quad k_2'\in K'.\]
In fact the definition \eqref{Fdef} implicitly assumes this invariance property, because $\kappa'(x)$ and $m'(x)$ are not uniquely determined: if we use a given pair $(\kappa'(x),m'(x))$, then for any $k'\in K'$, we can equally use the pair $(\kappa'(x)k'^{-1},k'm'(x))$. It is straightforward to see that
\[\kappa'(kxk')m'(kxk')=k\kappa'(x)m'(x)k',\qquad k\in K,\quad x\in G,\quad k'\in K',\]
whence also
\begin{equation}\label{inv}
F_\tau^{\sigma'}(kxk')=\tau(k)F_\tau^{\sigma'}(x)\tau(k'),\qquad k\in K,\quad x\in G,\quad k'\in K'.
\end{equation}

We need to prove that the right-hand sides of \eqref{Eis2} and \eqref{Eis} are equal. First, we apply \eqref{varphi1} and \eqref{varphi2} with $(\sigma',\omega)$ in the role of $(U,\tau)$, and with $\{T_i^{\omega,\sigma'}\}$ in the role of $\{T_\xi\}$. We infer that, for all $m'\in M'$,
\[\varphi_\omega^{\sigma'}(m')=\frac{d_\omega}{d_\sigma}\int_{K'}\omega(\kappa_1(m'k'))
\sum_i T_i^{\omega,\sigma'*}T_i^{\omega,\sigma'}\omega(k'^{-1})\,e^{(\mu_1-\rho_1)(H_1(m'k'))}\,dk'.\]
Plugging this into the definitions \eqref{Fdef}--\eqref{Fdef2}, it suffices to verify that
\begin{equation}\label{Eis3}
\int_K\tau(\kappa(xk))P_j^{\tau,\omega*}T_i^{\omega,\sigma'*}T_i^{\omega,\sigma'}P_j^{\tau,\omega}
\tau(k^{-1})\,e^{(\mu-\rho)(H(xk))}\,dk
\end{equation}
equals
\begin{multline*}\int_K\int_{K'}\tau(\kappa'(xk))P_j^{\tau,\omega*}\omega(\kappa_1(m'(xk)k'))T_i^{\omega,\sigma'*}T_i^{\omega,\sigma'}
\omega(k'^{-1})P_j^{\tau,\omega}\tau(k^{-1})\\
\times e^{(\mu_1-\rho_1)(H_1(m'(xk)k'))+(\mu'-\rho')(H'(xk))}\,dk'\,dk.
\end{multline*}
We can simplify the exponential factor by applying the second part of \eqref{iw4} with $xk$ in the role of $x$:
\[H_1(m'(xk)k')+H'(xk)=H(xkk'),\]
so that
\[(\mu_1-\rho_1)(H_1(m'(xk)k'))+(\mu'-\rho')(H'(xk))=(\mu-\rho)(H(xkk')).\]
Since $P_j^{\tau,\omega}\in\Hom_{K'}(V_\tau,V_\omega)$, we also have that
\[\omega(k'^{-1})P_j^{\tau,\omega}=P_j^{\tau,\omega}\tau(k'^{-1}),\qquad
P_j^{\tau,\omega*}\omega(\kappa_1(m'(xk)k'))=\tau(\kappa_1(m'(xk)k'))P_j^{\tau,\omega*}.\]
Hence we are left with showing that \eqref{Eis3} equals
\[\int_K\int_{K'}\tau(\kappa'(xk)\kappa_1(m'(xk)k'))P_j^{\tau,\omega*}T_i^{\omega,\sigma'*}T_i^{\omega,\sigma'}
P_j^{\tau,\omega}\tau(k'^{-1}k^{-1})\,e^{(\mu-\rho)(H(xkk'))}\,dk'\,dk.\]
Applying the first part of \eqref{iw4} with $xk$ in the role of $x$, we are left with showing that \eqref{Eis3} equals
\[\int_K\int_{K'}\tau(\kappa(xkk'))P_j^{\tau,\omega*}T_i^{\omega,\sigma'*}T_i^{\omega,\sigma'}
P_j^{\tau,\omega}\tau((kk')^{-1})\,e^{(\mu-\rho)(H(xkk'))}\,dk'\,dk.\]
However, this becomes clear if we pull the $k$-integration inside and make the change of variable $k\leftarrow kk'$
for all $k'$. The proof of \eqref{Eis} is complete.

\subsection{The trace function}\label{sec27}
Taking the trace of \eqref{Eis} and using \eqref{inv}, we obtain a formula for the generalized spherical trace function:
\begin{align}
\notag \psi_\tau^{U^{\sigma',\mu'}}(x)
&=\int_K \tr\bigl(F_\tau^{\sigma'}(xk)\tau(k^{-1})\bigr)\,e^{(\mu'-\rho')(H'(xk))}\,dk\\
\notag &=\int_K \tr\bigl(\tau(k^{-1})F_\tau^{\sigma'}(xk)\bigr)\,e^{(\mu'-\rho')(H'(xk))}\,dk\\
\label{eq:formula_nonminimal_spherical_trace_function} &=\int_K \tr\bigl(F_\tau^{\sigma'}(k^{-1}xk)\bigr)\,e^{(\mu'-\rho')(H'(xk))}\,dk.
\end{align}
This leads us to analyze the trace function
\begin{equation}\label{eq:T-sigma'}
T_\tau^{\sigma'}(x):=\tr\bigl(F_\tau^{\sigma'}(x)\bigr)=\tr\bigl(\tau(\kappa'(x))\hat F_\tau^{\sigma'}(m'(x))\bigr),\qquad x\in G.
\end{equation}
Since $\tau(\kappa'(x))$ is an orthogonal matrix,
\begin{equation}\label{tracebound}
\bigl|T_\tau^{\sigma'}(x)\bigr|\leq \bigl\| F_\tau^{\sigma'}(x)\bigr\|_{2,1} = \bigl\|\hat F_\tau^{\sigma'}(m'(x))\bigr\|_{2,1},
\end{equation}
where ${\|\cdot\|}_{2,1}$ stands for the sum of the Euclidean norms of the colums of a square matrix. 
Note that even though $m'(x)$ is only defined up to left-multiplication by $K'$, the norm 
$\|\hat F_\tau^{\sigma'}(m'(x))\bigr\|_{2,1}$ is well-defined. Moreover, this function is left $K$-invariant. Indeed, for any $x\in G$ and $k\in K$, the well-defined product $\kappa(kx)m'(kx)$ equals $k\kappa(x)m'(x)$, hence it is legitimate to use $\kappa(kx)=k\kappa(x)$ and $m'(kx)=m'(x)$. We conclude the useful bound
\begin{equation}\label{tracefunctionbound}
\Bigl|\psi_\tau^{U^{\sigma',\mu'}}(x)\Bigr|\leq
\int_K \bigl\|\hat F_\tau^{\sigma'}(m'(xk))\bigr\|_{2,1}\,e^{(\Re\mu'-\rho')(H'(xk))}\,dk.
\end{equation}

\section{Generalized spherical functions for $\SL_n^{\pm}(\RR)$}\label{sec:generalized-spherical-trace-function-SL}
In this section, we calculate $\varphi_\tau^\pi(x)$ explicitly in the case when $\pi$ is a \emph{generic or tempered} irreducible unitary representation of
\[G(n):=\SL_n^{\pm}(\RR)\]
and $\tau$ is the minimal $K$-type of $\pi$ (as defined in the next subsection). Here of course $K=\OO(n)$. We shall see that $\tau$ is unique and occurs with multiplicity one in $\pi$.

The case of $n=1$ is trivial, hence we shall focus on $n\geq 2$. If $\pi$ is generic, then by \cite[Th.~14.10.20]{GH}, it is either a discrete series representation or a generalized principal series representation $U^{\sigma',\mu'}$ with $|\Re\mu'|<1/2$. If $\pi$ is tempered, then by \cite[Cor.~6.2]{Trombi} and the main theorem of \cite{Tadic} (see also (S1) and (U0) there), it is either a discrete series representation or a generalized principal series representation $U^{\sigma',\mu'}$ with $\Re\mu'=0$. Hence it suffices to cover all discrete series representations $\pi=\delta_\ell$ of $G(2)$ and all generalized principal series representations $U=U^{\sigma',\mu'}$ of $G(n)$ for $n\geq 2$ (with no restriction on $\mu'$).

\subsection{Parametrizing $\widehat{K}$.}\label{parametrizingK}
The theory of highest weights allows us to conveniently label the irreducible representations of $K=\OO(n)$ and $L=\SO(n)$. Our main references are \cite[\S114]{Zh}, \cite[Ch.~VII]{Bo}, \cite[Ch.~VI, \S7]{BD}, \cite[\S5.5.5]{GW}. As is customary, we say that an irreducible representation of $K$ is of Type I (resp.\ Type II) if its restriction to $L$ is irreducible (resp. reducible). Similarly, we say that an irreducible representation of $L$ is of Type I (resp.\ Type II) if its induction to $K$ is reducible (resp. irreducible).

Let $\bm{\lambda}=(\lambda_1,\dotsc,\lambda_{\lfloor{n/2}\rfloor})$ be any (non-strictly) decreasing, nonnegative integer vector. First assume that $n$ is odd. There are two representations $\tau_{\bm{\lambda}}^\pm\in\widehat{K}$ of highest weight $\bm{\lambda}$ (satisfying $\tau_{\bm{\lambda}}^-=\tau_{\bm{\lambda}}^+\otimes\det$); each of them restricts to the unique element of $\widehat{L}$ of highest weight $\bm{\lambda}$. These are all the irreducible representations of $K$ and $L$, and they are all of Type I. The sign $\pm$ in the notation $\tau_{\bm{\lambda}}^\pm$ tells us how the negative identity acts on the representation. By Weyl's dimension formula \cite[Th.~7.1.9]{GW},
\begin{equation}\label{eq:dimformula_1}
\dim\tau_{\bm{\lambda}}^\pm =
\prod_{i<j} \frac{\lambda_i-\lambda_j+j-i}{j-i} \prod_{i\leq j} \frac{\lambda_i+\lambda_j+n-i-j}{n-i-j},
\qquad\text{$n$ odd}.
\end{equation}
Now assume that $n$ is even. If $\lambda_{n/2}=0$, then there are two representations $\tau_{\bm{\lambda}}^\pm\in\widehat{K}$ of highest weight $\bm{\lambda}$ (satisfying $\tau_{\bm{\lambda}}^-=\tau_{\bm{\lambda}}^+\otimes\det$); each of them restricts to the unique element of $\widehat{L}$ of highest weight $\bm{\lambda}$. These are the Type I representations of $K$ and $L$. The sign $\pm$ in the notation $\tau_{\bm{\lambda}}^\pm$ tells us how the reflection $\diag(1,\dotsc,1,-1)$ acts on the highest weight space of the representation. Weyl's dimension formula gives for this case that
\begin{equation}\label{eq:dimformula_2}
\dim\tau_{\bm{\lambda}}^\pm =
\prod_{i<j} \frac{(\lambda_i-\lambda_j+j-i) (\lambda_i+\lambda_j+n-i-j)} {(j-i)(n-i-j)},
\qquad\lambda_{n/2}=0.
\end{equation}
If $\lambda_{n/2}>0$, then there is a unique representation $\tau_{\bm{\lambda}}\in\widehat{K}$ of highest weight $\bm{\lambda}$ (satisfying $\tau_{\bm{\lambda}}=\tau_{\bm{\lambda}}\otimes\det$), and its restriction to $L$ is the direct sum of the two elements of $\widehat{L}$ whose highest weights are $(\lambda_1,\dotsc,\lambda_{n/2-1},\pm\lambda_{n/2})$. These are the Type II representations of $K$ and $L$, and Weyl's dimension formula yields
\begin{equation}\label{eq:dimformula_3}
\dim\tau_{\bm{\lambda}} =
2\prod_{i<j} \frac{(\lambda_i-\lambda_j+j-i) (\lambda_i+\lambda_j+n-i-j)} {(j-i)(n-i-j)},
\qquad\lambda_{n/2}>0.
\end{equation}

Our notion of ``minimal $K$-type'' for an admissible representation of $G(n)$ is based on the standard dominance order of highest weights. In general, if $\bm{\mu},\bm{\mu}'\in\ZZ^{\lfloor n/2\rfloor}$ are two weights for $K$, we define
\begin{equation}\label{eq:standard-dominance}
\bm{\mu}\preceq\bm{\mu}'\qquad\overset{\text{def}}{\Longleftrightarrow}\qquad
\mu_1+\dotsb+\mu_k\leq\mu'_1+\dotsb+\mu'_k\quad\text{for all $1\leq k\leq\lfloor n/2\rfloor$}.
\end{equation}
Then, if $\tau\in\widehat{K}$ has highest weight $\bm{\lambda}$, and $\tau'\in\widehat{K}$ has highest weight $\bm{\lambda}'$, we define
\[\tau\preceq\tau'\qquad\overset{\text{def}}{\Longleftrightarrow}\qquad
\tau=\tau'\quad\text{or}\quad\text{$\bm{\lambda}\preceq\bm{\lambda}'$ and $\bm{\lambda}\neq\bm{\lambda}'$}.\]
In particular, if $\tau\preceq\tau'$ and $\tau\neq\tau'$, then the corresponding Vogan norms \cite[Def.~5.4.18]{Vogan} satisfy $\|\tau\|<\|\tau'\|$:
\[\sum_{i=1}^{\lfloor n/2\rfloor}(\lambda_i+n-2i)^2<\sum_{i=1}^{\lfloor n/2\rfloor}(\lambda'_i+n-2i)^2.\]
This follows readily by rewriting the difference of the two sides via Abel summation.

We shall use below that every weight $\bm{\mu}$ of a representation $\tau\in\widehat{K}$ is dominated by the highest weight $\bm{\lambda}$. More precisely, $\bm{\mu}$ lies in the convex hull of the permutations of the weights $(\pm\lambda_1,\dotsc,\pm\lambda_{\lfloor{n/2}\rfloor})$ as follows from \cite[Th.~7.41]{Hall} or \cite[Prop.~2.2]{KLV} applied to the irreducible components of $\tau|_L$.

\subsection{Discrete series}
\label{discrete-series-subsection}
Let $\ell$ be a positive integer, and let $\pi=\delta_{\ell}$ be the $\ell$-th discrete series representation of $G(2)$. Then $\pi$ has a unique minimal $K$-type $\tau=\omega_{\ell}$ (of highest weight $\ell$), which occurs with multiplicity one. (We keep both notations for ease of reference across multiple sources.) Let us use the notations
\begin{equation}\label{rotationmatrix}
k(t):=\begin{pmatrix} \cos t & \sin t \\ -\sin t & \cos t \end{pmatrix},\qquad
w:=\begin{pmatrix} -1 & \\ & 1 \end{pmatrix}.
\end{equation}
There is an orthonormal basis $\{\bv_\ell,\bv_{-\ell}\}$ of $V_\tau$ such that
\begin{equation}
\label{bvj-j}
\tau(k(t))\bv_j=e^{ijt}\bv_j,\qquad\tau(w)\bv_j=\bv_{-j}
\end{equation}
for $j \in \{\ell, -\ell\}$. It follows from \cite[Th.~2.5.3]{Bump} that $\pi$ embeds into the minimal principal series $U^{\sigma,\mu}$ when $\sigma(\epsilon_1,\epsilon_2)=\epsilon_1^a\epsilon_2^b$ with $a+b\equiv\ell\pmod{2}$ and $\mu(y,-y):=(\ell-1)y$. So there are two choices for $\sigma\in\widehat{M}$, and we opt for $\sigma(\epsilon_1,\epsilon_2):=\epsilon_2^\ell$. Since $\tau$ occurs in $U^{\sigma,\mu}$ with multiplicity one, $\varphi_\tau^\pi(x)$ is the same as \eqref{varphi1} for $U:=U^{\sigma,\mu}$. In this formula, $P:=P_{\tau,\sigma}^U$ is the projection of $V_\tau$ onto its $\sigma$-isotypic subspace spanned by $\bv_\ell+\bv_{-\ell}$. In particular,
\[P\bv_\ell=P\bv_{-\ell}=\frac{\bv_\ell+\bv_{-\ell}}{2}.\]
Since every $m\in M$ normalizes $N$ and centralizes $A$, we see that $\kappa(xkm)=\kappa(xk)m$ and $H(xkm)=H(xk)$. Using also $\tau(m)P\tau(m^{-1})=P$, we infer that the integrand in \eqref{varphi1} is invariant under $k\mapsto km$. Hence it suffices to integrate over $K/M$ instead of $K$, yielding
\[\varphi_\tau^\pi(x)=\frac{2}{\pi}\int_{-\pi/2}^{\pi/2}\tau(\kappa(xk(t)))P\tau(k(-t))\,e^{(\mu-\rho)(H(xk(t)))}\,dt.\]

First assume that $x=\bigl(\begin{smallmatrix} a & b \\ c & d \end{smallmatrix}\bigr)\in\SL_2(\RR)$. Writing
\[A:=a\cos t - b\sin t,\qquad C:=c\cos t - d\sin t,\]
we have the Iwasawa decomposition
\[xk(t)=\begin{pmatrix}A&\ast\\C&\ast\end{pmatrix}=
\begin{pmatrix} \frac{A}{\sqrt{A^2+C^2}} & \frac{-C}{\sqrt{A^2+C^2}} \\ \frac{C}{\sqrt{A^2+C^2}} & \frac{A}{\sqrt{A^2+C^2}} \end{pmatrix}\begin{pmatrix}\scriptstyle\sqrt{A^2+C^2}&\\&\frac{1}{\sqrt{A^2+C^2}}\end{pmatrix}
\begin{pmatrix} 1 & \ast \\ & 1 \end{pmatrix},\]
whence
\[\kappa(xk(t))=\begin{pmatrix} \frac{A}{\sqrt{A^2+C^2}} & \frac{-C}{\sqrt{A^2+C^2}} \\ \frac{C}{\sqrt{A^2+C^2}} & \frac{A}{\sqrt{A^2+C^2}} \end{pmatrix} ,\qquad e^{H(xk(t))}=\begin{pmatrix}\scriptstyle\sqrt{A^2+C^2}&\\&\frac{1}{\sqrt{A^2+C^2}}\end{pmatrix}.\]
It follows that
\begin{align*}
\tau(\kappa(xk(t)))P\tau(k(-t))\bv_j\,e^{(\mu-\rho)(H(xk(t)))}
&=\tau(\kappa(xk(t)))e^{-ijt}\frac{\bv_\ell+\bv_{-\ell}}{2}\cdot\sqrt{A^2+C^2}^{\,\ell-2}\\
&=\frac{e^{-ijt}}{2}\cdot\frac{(A-iC)^\ell \bv_\ell+(A+iC)^\ell \bv_{-\ell}}{A^2+C^2},
\end{align*}
therefore
\[\varphi_\tau^\pi(x)\bv_j=\frac{1}{\pi}\int_{-\pi/2}^{\pi/2}
e^{-ijt}\frac{(A-iC)^\ell \bv_\ell+(A+iC)^\ell \bv_{-\ell}}{A^2+C^2}\,dt.\]
We are interested in the matrix coefficients $\langle\varphi_\tau^\pi(x)\bv_{\pm\ell},\bv_{\pm\ell}\rangle$, which form complex conjugate pairs by the above formula:
\[\ov{\langle\varphi_\tau^\pi(x)\bv_\ell,\bv_{\pm\ell}\rangle}=\langle\varphi_\tau^\pi(x)\bv_{-\ell},\bv_{\mp\ell}\rangle.\]
We make the substitution $r=\tan t$, so that
\[dt=\frac{dr}{1+r^2},\qquad \cos t=\frac{1}{\sqrt{1+r^2}},\qquad \sin t=\frac{r}{\sqrt{1+r^2}}
,\qquad e^{it}=\frac{1+ir}{\sqrt{1+r^2}}.\]
With this substitution, the sought matrix coefficients can be expressed as
\[\langle\varphi_\tau^\pi(x)\bv_\ell,\bv_\ell\rangle=\frac{1}{\pi}\int_{-\infty}^\infty\frac{(a-br-ic+idr)^{\ell-1}}{(a-br+ic-idr)(1+ir)^\ell}\,dr\]
and
\[\langle\varphi_\tau^\pi(x)\bv_\ell,\bv_{-\ell}\rangle
=\frac{1}{\pi}\int_{-\infty}^\infty\frac{(a-br+ic-idr)^{\ell-1}}{(a-br-ic+idr)(1+ir)^\ell}\,dr.\]
In both integrals, we shift the integration contour (which is the directed real axis) indefinitely below. In the lower half-plane, the first integrand has the single pole
\[\rho:=\frac{a+ic}{b+id},\]
therefore the residue theorem yields that
\[\langle\varphi_\tau^\pi(x)\bv_\ell,\bv_\ell\rangle
=-2i\ \underset{r=\rho}{\mathrm{res}}\frac{(a-br-ic+idr)^{\ell-1}}{(a-br+ic-idr)(1+ir)^\ell}
=\left(\frac{2}{a-ib+ic+d}\right)^\ell.\]
The second integrand is holomorphic in the lower half-plane, hence
\[\langle\varphi_\tau^\pi(x)\bv_\ell,\bv_{-\ell}\rangle=0.\]
Now assume that $x=\bigl(\begin{smallmatrix} a & b \\ c & d \end{smallmatrix}\bigr)\in\SL_2^-(\RR)$. Then we use that $\varphi_\tau^\pi(x)=\tau(w)\varphi_\tau^\pi(\widetilde x)$, where $\widetilde x:=wx=\bigl(\begin{smallmatrix} -a & -b \\ c & d \end{smallmatrix}\bigr)\in\SL_2(\RR)$. It follows that
\[\langle\varphi_\tau^\pi(x)\bv_\ell,\bv_{-\ell}\rangle=\langle\varphi_\tau^\pi(\widetilde x)\bv_\ell,\bv_\ell\rangle
=\left(\frac{2}{-a+ib+ic+d}\right)^\ell,\]
and $\langle\varphi_\tau^\pi(x)\bv_{-\ell},\bv_\ell\rangle=\langle\varphi_\tau^\pi(\widetilde x)\bv_{-\ell},\bv_{-\ell}\rangle$ is the complex conjugate of this. Similarly,
\[\langle\varphi_\tau^\pi(x)\bv_\ell,\bv_\ell\rangle=\langle\varphi_\tau^\pi(\widetilde x)\bv_\ell,\bv_{-\ell}\rangle=0,\qquad
\langle\varphi_\tau^\pi(x)\bv_{-\ell},\bv_{-\ell}\rangle=\langle\varphi_\tau^\pi(\widetilde x)\bv_{-\ell},\bv_\ell\rangle=0.\]

To summarize, if we identify $\varphi_\tau^\pi(x)=\varphi_{\omega_{\ell}}^{\delta_{\ell}}(x)$ with its matrix with respect to the basis $\{\bv_\ell,\bv_{-\ell}\}$, then
\begin{equation}\label{basicphi}
\varphi_{\omega_{\ell}}^{\delta_{\ell}}\left(\begin{pmatrix} a & b \\ c & d\end{pmatrix}\right)=\begin{cases} \begin{pmatrix} \left(\frac{2}{a-ib+ic+d}\right)^{\ell} & 0 \\ 0 & \left(\frac{2}{a+ib-ic+d}\right)^{\ell} \end{pmatrix},& ad-bc=1; \\ \begin{pmatrix} 0 & \left(\frac{2}{-a-ib-ic+d}\right)^{\ell} \\ \left(\frac{2}{-a+ib+ic+d}\right)^{\ell} & 0 \end{pmatrix}, & ad-bc=-1. \end{cases}
\end{equation}

For later reference, we remark that a similar but slightly more complicated formula exists for all $K$-types $\tau=\omega_m$ contained in $\pi=\delta_\ell$. That is, let $m\geq\ell$ be of the same parity as $\ell$, and choose an orthonormal basis $\{\bv_m,\bv_{-m}\}$ of $V_\tau$ such that \eqref{bvj-j} holds for $j\in\{m,-m\}$. Then, for $x=\bigl(\begin{smallmatrix} a & b \\ c & d \end{smallmatrix}\bigr)\in\SL_2(\RR)$, we obtain by an analogous calculation as above that the diagonal matrix entry
$\langle\varphi_\tau^\pi(x)\bv_m,\bv_m\rangle$ equals
\begin{align}
\notag&-2i\ \underset{r=\rho}{\mathrm{res}}\frac{(a-br-ic+idr)^{(m+\ell)/2-1}(1-ir)^{(m-\ell)/2}}{(a-br+ic-idr)^{(m-\ell)/2+1}(1+ir)^{(m+\ell)/2}}\\
\label{eq:genmatrixcoeff}&=\left(\frac{2}{a-ib+ic+d}\right)^m
\sum_{j=0}^{\frac{m-\ell}{2}}
\binom{\frac{m+\ell}{2}-1}{j}\binom{\frac{m-\ell}{2}}{j}
\left(\frac{2-a^2-b^2-c^2-d^2}{4}\right)^j,
\end{align}
while the anti-diagonal matrix entries vanish. For $x=\bigl(\begin{smallmatrix} a & b \\ c & d \end{smallmatrix}\bigr)\in\SL_2^-(\RR)$, the diagonal matrix entries vanish, while the lower left matrix entry is as in the display above, but with $(-a,-b)$ in the role of $(a,b)$. In particular, the triangle inequality coupled with the binomial theorem yields that
\[|\langle\varphi_\tau^\pi(x)\bv_{\pm m},\bv_{\pm m}\rangle|
\leq m^{(m-\ell)/2}\left(\frac{4}{a^2+b^2+c^2+d^2+2}\right)^{\ell/2}.\]
We shall use this bound in a certain range $m\in[\ell,\ell+\Delta]$, where $\Delta\geq 2$. Since the left-hand side is also bounded by one (cf. next paragraph), we can raise both sides to exponent $2/(\Delta\log(\ell+\Delta))<1$ and conclude that
\begin{equation}\label{eq:goodbound}
|\langle\varphi_\tau^\pi(x)\bv_{\pm m},\bv_{\pm m}\rangle|
\leq e\left(\frac{4}{a^2+b^2+c^2+d^2+2}\right)^{\ell/(\Delta\log(\ell+\Delta))}.
\end{equation}

It is clear from the definition \eqref{eq:phidef} that
\[\langle\varphi_\tau^\pi(x)\bv_{\pm m},\bv_{\pm m}\rangle=\langle\pi(x)\bv_{\pm m},\bv_{\pm m}\rangle,\]
hence these functions are bounded by $1$ in absolute value. It is less obvious that the Sobolev norm of order $r$ of these functions (with respect to any orthonormal basis of $\mathfrak{sl}_2(\CC)$) is bounded by $O_r(m^r)$. This follows by a simple induction argument from the following basic fact. Consider the orthogonal basis
\[
H:=\begin{pmatrix}0&1\\-1&0\end{pmatrix},\qquad
R:=\begin{pmatrix}1&i\\i&-1\end{pmatrix},\qquad
L:=\begin{pmatrix}1&-i\\-i&-1\end{pmatrix},
\]
and let $\bv\in V_\pi$ be a unit vector of weight $j$ (hence $|j|\geq\ell$). Then 
\begin{itemize}
\item $d\pi(H)\bv=ij\bv$,
\item $d\pi(R)\bv$ has weight $j+2$ and length $\sqrt{(j+\ell)(j-\ell+2)}$,
\item $d\pi(L)\bv$ has weight $j-2$ and length $\sqrt{(j-\ell)(j+\ell-2)}$.
\end{itemize}

\subsection{Generalized principal series}\label{minimalKtype}
Let $U=U^{\sigma',\mu'}$, where $\sigma'$ is a discrete series representation of
\begin{equation}\label{Mprimedecomposition}
M':=G(2)^r\times G(1)^s\qquad\text{with}\qquad n=2r+s,
\end{equation}
and $\mu':\mfa'\to\CC$ is an arbitrary linear functional. Correspondingly, we set
\[K':=\OO(2)^r\times\OO(1)^s.\]
We also set $t:=\lfloor s/2\rfloor$, so that $\lfloor n/2\rfloor = r+t$. The generalized spherical function $\varphi_\tau^U(x)$ admits the integral representation \eqref{Eis}, so we need to identify the minimal $\tau\in\widehat{K}$ occurring in $U$ and the function $\hat F_\tau^{\sigma'}(m')$ given by \eqref{Fdef2}. As it turns out, $\tau$ is uniquely determined, and it occurs in $U$ with multiplicity one.

Without loss of generality,
\begin{equation}\label{sigma'}
\sigma'=\delta_{\ell_1}\boxtimes\dotsb\boxtimes\delta_{\ell_r}
\boxtimes\underbrace{\sgn\boxtimes\dotsb\boxtimes\sgn}_{\text{$u$ times}}
\boxtimes\underbrace{1\boxtimes\dotsb\boxtimes1}_{\text{$s-u$ times}},
\end{equation}
where
\begin{equation}\label{eq:ordering-ell-s}
\ell_1\geq\dotsb\geq\ell_r\geq 1
\end{equation}
are integers, and $\delta_\ell$ is the $\ell$-th discrete series representation of $G(2)$. Twisting $\sigma'$ by $\sgn(\det)$ has the effect of twisting $U$ by $\sgn(\det)$ and replacing $u$ by $s-u$, hence we can also assume that $u\leq t$. Using \eqref{indres}, Frobenius reciprocity, and Schur's lemma, we obtain for any $\tau\in\widehat{K}$ that
\begin{align}
\Hom_K(\tau,U|_K)
\notag&=\Hom_K(\tau,\Ind_{K'}^K\sigma'|_{K'})\\
\notag&\cong\Hom_{K'}(\tau|_{K'},\sigma'|_{K'})\\
\label{schur-Hom}&\cong\bigoplus_{\omega\in\widehat{K'}}\Hom_{K'}(\tau|_{K'},\omega)\otimes\Hom_{K'}(\omega,\sigma'|_{K'}).
\end{align}
So we need to find the minimal $\tau$'s that share a common $K'$-type with $\sigma'$. The $K'$-types occurring in $\sigma'$ are of the form
\begin{equation}\label{eq:omega-tensor}
\omega=\omega_{m_1}\boxtimes\dotsb\boxtimes\omega_{m_r}
\boxtimes\underbrace{\sgn\boxtimes\dotsb\boxtimes\sgn}_{\text{$u$ times}}
\boxtimes\underbrace{1\boxtimes\dotsb\boxtimes1}_{\text{$s-u$ times}},
\end{equation}
where each $m_p$ is from $\{\ell_p,\ell_p+2,\dotsc\}$ and $\omega_m$ is the irreducible representation of $\OO(2)$ of highest weight $m\in\ZZ_{\geq 1}$. Rearranging the last $s$ factors of $\omega$, we can form $u$ subproducts $\sgn\boxtimes 1$, which 
appear in the representations of $\OO(2)$ of odd highest weight but do not appear in the representations of $\OO(2)$ of even highest weight. This shows that $\tau$ has a weight that dominates
\begin{equation}\label{eq:highest-weight}
(\ell_1,\dotsc,\ell_r,\underbrace{1,\dotsc,1}_{\text{$u$ times}},\underbrace{0,\dotsc,0}_{\text{$t-u$ times}})
\end{equation}
pointwise, and hence the highest weight of $\tau$ dominates \eqref{eq:highest-weight} in the standard dominance order $\preceq$. So let us focus on the $\tau$'s whose highest weight equals \eqref{eq:highest-weight}. Then we only need to work with $(m_1,\dotsc,m_r)=(\ell_1,\dotsc,\ell_r)$, that is,
\begin{equation}\label{eq:unique-omega}
\omega=\omega_{\ell_1}\boxtimes\dotsb\boxtimes\omega_{\ell_r}
\boxtimes\underbrace{\sgn\boxtimes\dotsb\boxtimes\sgn}_{\text{$u$ times}}
\boxtimes\underbrace{1\boxtimes\dotsb\boxtimes1}_{\text{$s-u$ times}}.
\end{equation}
The restriction of $\tau$ to $\OO(2r)\times\OO(s)$ has a unique irreducible component $\tau_1\boxtimes\tau_2$, where $\tau_1$ has highest weight $(\ell_1,\dotsc,\ell_r)$, and $\tau_2$ has highest weight
\[(\underbrace{1,\dotsc,1}_{\text{$u$ times}},\underbrace{0,\dotsc,0}_{\text{$t-u$ times}}).\]
That is, $\tau_2$ is either $\wedge^u V$ or $\wedge^{s-u}V$, where $V$ is the standard representation of $\OO(s)$. Moreover, $\omega$ only occurs in the named irreducible component $\tau_1\boxtimes\tau_2$, which implies that $\tau_2$ contains
\begin{equation}\label{omega}
\underbrace{\sgn\boxtimes\dotsb\boxtimes\sgn}_{\text{$u$ times}}
\boxtimes\underbrace{1\boxtimes\dotsb\boxtimes1}_{\text{$s-u$ times}}.
\end{equation}
This forces $\tau_2=\wedge^u V$, and then $\tau_2$ contains the character \eqref{omega} exactly once. Similarly, $\tau_1$ contains the representation $\omega_{\ell_1}\boxtimes\dotsb\boxtimes\omega_{\ell_r}$ exactly once. The sign of $\tau_2$ is $(-1)^u$ when $s$ is odd, and $+1$ when $s$ is even and $u<t$. (In the remaining case when $s=2u$, there is no sign associated with $\tau_2$.)

In summary, given a representation $U=U^{\sigma',\mu'}$ with $\sigma'$ as in \eqref{sigma'} and $u\leq t$, the minimal $\tau\in\widehat{K}$ occurring in $U$ is uniquely determined, namely it has highest weight \eqref{eq:highest-weight} and sign $(-1)^{\ell_1+\dotsb+\ell_r+u}$ when $n$ is odd, and sign $+1$ when $n$ is even and $u<t$. (In the remaining case when $n=2r+2u$, there is no sign associated with $\tau$.) Moreover, the lowest $K'$-type $\omega$ that appears in $\sigma'$, shown in \eqref{eq:unique-omega}, occurs both in $\sigma'$ and $\tau$ with multiplicity one. So we have identified the unique minimal $K$-type of $U$ (under the partial order $\preceq$ defined in \S\ref{parametrizingK}), and we see from \eqref{schur-Hom} that it occurs in $U$ with multiplicity one. As a by-product, we also see that \eqref{Fdef2} simplifies in the current situation as
\begin{equation}\label{hatFtausigma2}
\hat F_\tau^{\sigma'}\left(\diag(x_1,\dotsc,x_r,\epsilon_1,\dotsc,\epsilon_s)\right)=\frac{d_\tau}{2^r}\bigotimes_{p=1}^r \varphi_{\omega_{\ell_p}}^{\delta_{\ell_p}} (x_p) \cdot \prod_{k=1}^u \epsilon_k,
\end{equation}
where $\varphi_{\omega_{\ell_p}}^{\delta_{\ell_p}}(x_p)$ is given by \eqref{basicphi}, and the right-hand side is understood as a $2^r\times 2^r$ matrix forming a diagonal block of a $d_\tau \times d_\tau$ matrix, with the $2^r\leq d_{\tau}$ rows and columns corresponding to the unique representation $\omega\in\widehat{K'}$ that occurs both in $\sigma'$ and $\tau$, which is given by \eqref{eq:unique-omega} and occurs both in $\sigma'$ and $\tau$ with multiplicity one.

The complementary case $u>t$ in \eqref{sigma'} can be reduced to the previous case $u\leq t$ by twisting $\sigma'$ and $U$ by $\sgn(\det)$. Under this change, the $K$-types occurring in $U$ get twisted by $\det$, hence we arrive at the following conclusion after ``untwisting''. The minimal $\tau\in\widehat{K}$ occurring in $U$ is uniquely determined, namely it has weight 
\[
(\ell_1,\dotsc,\ell_r,\underbrace{1,\dotsc,1}_{\text{$s-u$ times}},\underbrace{0,\dotsc,0}_{\text{$t-s+u$ times}})
\]
and sign $(-1)^{\ell_1+\dotsb+\ell_r+u}$ when $n$ is odd, and sign $-1$ when $n$ is even. Moreover, we have $[U:\tau]=1$ and \eqref{hatFtausigma2} as before.

\subsection{Explicit expressions for $n=3$}\label{oldsubsection}
For the proof of Theorem~\ref{thm4}, we spell out the matrix-valued function $\hat F_\tau^{\sigma'}$ defined by \eqref{Fdef2} and the trace function $T_{\tau}^{\sigma'}$ defined by \eqref{eq:T-sigma'} completely explicitly in the case of $n=3$ and $r=s=1$. In this case, $M'=G(2)\times G(1)$ is embedded into $G=G(3)$, with maximal compact subgroup $K'=\OO(2)\times\OO(1)$ embedded into $K=\OO(3)$. For ease of reading, we begin by considering the case of minimal $K$-type addressed in \S\ref{minimalKtype}, which is of critical importance.
Thus $\sigma'=\delta_{\ell}\boxtimes\sgn^{u}$ with some $\ell\in\ZZ_{\geq 1}$ and $u\in\{0,1\}$, and  $\tau=\tau_{\ell}^{\iota}$ with $\iota:=(-1)^{\ell+u}$.

To make everything explicit, we begin by fixing a Wigner basis in the representation space $V_{\tau}=V_{\tau_{\ell}^{\iota}}$ as follows.
Using the normalization from \cite[(2.3)]{Buttcane2018}, we may parametrize $L=\SO(3)$ by Euler angles as
\begin{equation}
\label{euler-angles}
k[\alpha,\beta,\gamma]:=\begin{pmatrix}\cos\alpha&-\sin\alpha&0\\\sin\alpha&\cos\alpha&0\\0&0&1\end{pmatrix}\begin{pmatrix}\cos\beta&0&\sin\beta\\0&1&0\\-\sin\beta&0&\cos\beta\end{pmatrix}\begin{pmatrix}\cos\gamma&-\sin\gamma&0\\\sin\gamma&\cos\gamma&0\\0&0&1\end{pmatrix}
\end{equation}
with $0\leq\alpha,\gamma<2\pi$ and $0\leq\beta\leq\pi$. Recalling the discussion in \S\ref{parametrizingK}, the restriction $\tau|_L$ is the unique irreducible representation of $L$ of highest weight $\ell$ and dimension $2\ell+1$, which has a Wigner $\mathscr{D}$-matrix with entries indexed by $|m|,|m'|\leq\ell$ and given as in \cite[(2.4)]{Buttcane2018} by
\[\mathscr{D}^{\ell}_{m',m}(k[\alpha,\beta,\gamma]):=e^{-im'\alpha}\wigd^{\ell}_{m',m}(\cos\beta)e^{-im\gamma}\]
in terms of the Wigner polynomial $\wigd^{\ell}_{m',m}(\cos\beta)$ and the Jacobi polynomial $P^{(a,b)}_n(x)$ (for $n\in\mathbb{Z}_{\geq 0}$) \cite[(3.72), (3.67)]{BiedenharnLouck1981}:
\begin{align*}
\wigd^{\ell}_{m',m}(\cos\beta):&=\left[\frac{(\ell+m)!(\ell-m)!}{(\ell+m')!(\ell-m')!}\right]^{1/2}\left(\sin\frac{\beta}2\right)^{m-m'}\left(\cos\frac{\beta}2\right)^{m'+m}P^{(m-m',m+m')}_{\ell-m}(\cos\beta),\\
P_n^{(a,b)}(x):&=\sum_{\nu=0}^n\binom{n+a}{\nu}\binom{n+b}{n-\nu}\left(\frac{x+1}{2}\right)^{\nu}\left(\frac{x-1}{2}\right)^{n-\nu}.
\end{align*}
We extend $\mathscr{D}^{\ell}_{m',m}$ from $L$ to $K$, so that it serves as a Wigner matrix entry of $\tau$:
\[\mathscr{D}^{\ell}_{m',m}(-k[\alpha,\beta,\gamma]):=\iota\mathscr{D}^{\ell}_{m',m}(k[\alpha,\beta,\gamma]).\]
Further, we identify $k\in\OO(2)$ with $k{}^{\sharp}=\big(\begin{smallmatrix}k&0\\0&1\end{smallmatrix}\big)\in K'$. Then, combining the definitions \eqref{rotationmatrix} and \eqref{euler-angles} with the results of \cite[p.~688]{Buttcane2018}, we compute
\begin{align*}
\mathscr{D}^{\ell}_{m',m}(k(t)^{\sharp})&=\mathscr{D}^{\ell}_{m',m}(k[2\pi-t,0,0])=\mathbf{1}_{m=m'}e^{imt},\\
\mathscr{D}^{\ell}_{m',m}(w^{\sharp})&=\mathscr{D}^{\ell}_{m',m}(-k[\pi,\pi,0])=\mathbf{1}_{m=-m'}(-1)^u.
\end{align*}
Thus, denoting by
\[\mathfrak{W}_{\ell}:=(\bv_{\ell},\dotsc,\bv_0,(-1)^u\bv_{-1},\dotsc,(-1)^u\bv_{-\ell})\]
the orthonormal basis of $V_{\tau}$ used in the Wigner matrix representation $\mathscr{D}^{\ell}:K\to\UU(2\ell+1)$, the bases $(\bv_{j},\bv_{-j})$ ($0<j\leq\ell$) agree with those used in \S\ref{discrete-series-subsection} in the sense that $(\bv_{j},\bv_{-j})$ is an orthonormal 
basis of $V_{\omega_j}$ embedded (with multiplicity one) inside $V_{\tau}$ satisfying \eqref{bvj-j}. For $k=\epsilon l\in K$ with $l=k[\alpha,\beta,\gamma]\in L$, $\epsilon\in\{\pm 1\}$, identifying $\tau(k)$ with its matrix with respect to the basis $\mathfrak{W}_{\ell}$ we have that
\[ \tau(k)=\tau_{\ell}^{\iota}(\epsilon l) =\epsilon^{\ell+u}\mathscr{D}^{\ell}(l)=\epsilon^{\ell+u} \left(\begin{matrix}e^{-i\ell(\alpha + \gamma)} \cos(\frac{1}{2} \beta)^{2\ell} & * &e^{i\ell(-\alpha + \gamma)} \sin(\frac{1}{2} \beta)^{2\ell}\\ * & * & *\\ e^{i\ell(\alpha - \gamma)} \sin(\frac{1}{2} \beta)^{2\ell} & * &e^{i\ell(\alpha + \gamma)} \cos(\frac{1}{2} \beta)^{2\ell} \end{matrix}\right).\]

Now, working with the same orthonormal basis $\mathfrak{W}_{\ell}$, \eqref{basicphi} and \eqref{hatFtausigma2} yield that
\[\hat F_\tau^{\sigma'}\left(\begin{pmatrix}a&b&\\c&d&\\&&\epsilon\end{pmatrix}\right)=\frac{2\ell+1}{2}\epsilon^u\cdot
\begin{cases}
\begin{pmatrix} \left(\frac{2}{a-ib+ic+d}\right)^{\ell} & \mathbf{0} & 0 \\ \mathbf{0} & \mathbf{0} & \mathbf{0} \\ 0 & \mathbf{0} & \left(\frac{2}{a+ib-ic+d}\right)^{\ell} \end{pmatrix}, & ad-bc=1,\\ 
(-1)^{u}\begin{pmatrix} 0 & \mathbf{0} & \left(\frac{2}{-a-ib-ic+d}\right)^{\ell} \\ \mathbf{0} & \mathbf{0} & \mathbf{0} \\ \left(\frac{2}{-a+ib+ic+d}\right)^{\ell} & \mathbf{0} & 0 \end{pmatrix}, & ad-bc=-1.
\end{cases}\]
We conclude that with the above parametrization, for $k=\epsilon_1k[\alpha,\beta,\gamma]\in K$ and $m'=\epsilon_2\big(\begin{smallmatrix}m''&0\\0&1\end{smallmatrix}\big)\in M'$, $\epsilon_i\in\{\pm 1\}$, $m''=\big(\begin{smallmatrix}a&b\\c&d\end{smallmatrix}\big)\in G(2)$, the trace defined by \eqref{eq:T-sigma'} equals
\begin{equation}
\label{n-3-tr-explicit}
\begin{aligned}
&T_\tau^{\sigma'}(km')=\tr\big(F_{\tau}^{\sigma'}(km')\big)=\tr\bigl(\tau(k)\hat F_\tau^{\sigma'}(m')\bigr)\\
& = (2\ell + 1)(\det(km'))^{\ell+u}\cdot \begin{cases}\Re\Big[ e^{i\ell(\alpha + \gamma)} \cos\big(\frac{1}{2} \beta\big)^{2\ell}\left( \frac{2}{a+ib-ic+d}\right)^{\ell}\Big], &\det m'' = 1,\\[6pt]
\Re\Big[ e^{i\ell(\alpha-\gamma)} \sin\big(\frac{1}{2} \beta\big)^{2\ell}\left( \frac{2}{a+ib+ic-d}\right)^{\ell}\Big], &\det m''= -1.\end{cases}
\end{aligned}
\end{equation}
We note that the two cases are in harmony in light of the elementary property
\begin{equation}
\label{transformation-harmony}
k[\alpha,\beta,\gamma]\begin{pmatrix}a&b&\\c&d&\\&&1\end{pmatrix}=-k[\pi+\alpha,\pi-\beta,2\pi-\gamma]\begin{pmatrix} -a&-b&\\c&d&\\&&1\end{pmatrix}.
\end{equation}

We conclude by noting that the expressions $e^{i(\alpha+\gamma)}\cos(\frac12\beta)^2$ and $e^{i(\alpha-\gamma)}\sin(\frac12\beta)^2$ arising in \eqref{n-3-tr-explicit} may also be conveniently expressed directly in terms of the matrix entries of $l=k[\alpha,\beta,\gamma]$ in the Euler angles parametrization \eqref{euler-angles}. Indeed, writing $l=(l_{ij})_{1\leq i,j\leq 3}$ as
\[ l=\begin{pmatrix} \cos\alpha\cos\beta\cos\gamma-\sin\alpha\sin\gamma&-\cos\alpha\cos\beta\sin\gamma-\sin\alpha\cos\gamma&\cos\alpha\sin\beta\\
\sin\alpha\cos\beta\cos\gamma+\cos\alpha\sin\gamma&-\sin\alpha\cos\beta\sin\gamma+\cos\alpha\cos\gamma&\sin\alpha\sin\beta\\
-\sin\beta\cos\gamma&\sin\beta\sin\gamma&\cos\beta\end{pmatrix}, \]
we find that
\begin{alignat*}{2}
 l_{11}+l_{22}&=\cos(\alpha+\gamma)(1+\cos\beta),&\qquad  l_{21}-l_{12}&=\sin(\alpha+\gamma)(1+\cos\beta),\\
-l_{11}+l_{22}&=\cos(\alpha-\gamma)(1-\cos\beta),&\qquad -l_{21}-l_{12}&=\sin(\alpha-\gamma)(1-\cos\beta),
\end{alignat*}
whence
\begin{equation}
\label{n3-alphaplusgamma}
\begin{split}
 l_{11}+l_{22}+i(l_{21}-l_{12})& =2e^{i(\alpha+\gamma)}\cos(\tfrac12\beta)^2,\\
-l_{11}+l_{22}-i(l_{21}+l_{12})& =2e^{i(\alpha-\gamma)}\sin(\tfrac12\beta)^2.
\end{split}
\end{equation}

Now, we proceed to describe the modifications to the above description in the nonminimal case, along the lines of
\S\ref{thm3bsubsection}. As before, $n=3$, $r=s=1$, $\sigma'=\delta_{\ell}\boxtimes\sgn^{u}$ with some $\ell\in\ZZ_{\geq 1}$ and $u\in\{0,1\}$. Moreover, $\tau=\tau_{\lambda}^{\iota}$ with $\iota:=(-1)^{\ell+u}$ and $\lambda\geq\ell$, and the representations $\omega\in\widehat{K'}$ occurring in both $\sigma'$ and $\tau$ are precisely $\omega=\omega_m\boxtimes\sgn^u$ with $\ell\leq m\leq\lambda$ and $m\equiv\ell\pmod 2$, and each of them occurs in both $\sigma'$ and $\tau$ with multiplicity one.

The description of the Wigner matrices $\mathscr{D}_{m',m}^{\lambda}(k)$ remains verbatim the same, with $\lambda$ in place of $\ell$ and with the evaluation $\mathscr{D}_{m',m}^{\lambda}(w^{\sharp})=\mathbf{1}_{m'=-m}(-1)^{u'}$ with $u':=u+\ell-\lambda$ in place of $u$. Let us denote by
\[\mathfrak{W}_{\lambda}':=(\mathbf{v}_{\lambda},\dots,\mathbf{v}_0,(-1)^{u'}\mathbf{v}_{-1},\dots,(-1)^{u'}\mathbf{v}_{-\lambda})\] the orthonormal basis of $V_{\tau}$ used in the Wigner matrix representation $\mathscr{D}^{\lambda}:K\to\UU(2\lambda+1)$, and let us identify $\tau(k)$ (for $k=\epsilon l$, $\epsilon\in\{\pm 1\}$, $l=k[\alpha,\beta,\gamma]\in L$) with its matrix with respect to $\mathfrak{W}_{\lambda}'$. Then we have
\[ \tau(k)=\tau_{\lambda}^{\iota}(\epsilon l)=\epsilon^{\ell+u}\mathscr{D}^{\lambda}(l), \]
with diagonal and anti-diagonal entries given for every $-\lambda\leq m\leq\lambda$ by
\begin{align*}
\tau(k)_{m,m}&=\epsilon^{\ell+u}e^{-im(\alpha+\gamma)}R_{\lambda,|m|}(-\sin^2\tfrac12\beta,\cos^2\tfrac12\beta),\\ 
\tau(k)_{-m,m}&=\epsilon^{\ell+u}e^{im(\alpha-\gamma)}(-1)^mR_{\lambda,|m|}(\cos^2\tfrac12\beta,-\sin^2\tfrac12\beta),
\end{align*}
where
\[ R_{\lambda,m}(u,v):=\sum_{\kappa=0}^{\lambda-m}\binom{\lambda-m}{\kappa}\binom{\lambda+m}{\kappa}u^{\kappa}v^{\lambda-\kappa}. \]
Arguing as in \eqref{eq:genmatrixcoeff}--\eqref{eq:goodbound}, we deduce by the triangle inequality and the binomial theorem that
\[ |\tau(k)_{\pm m,m}|\leq (2\lambda)^{\lambda-|m|}(\cos\tfrac12\beta^{\ast})^{2|m|}, \]
where we denote $\beta^{\ast}=\beta$ or $\pi-\beta$ according to the sign indicated in $\pm m$. For $|m|\in[\lambda-\Delta,\lambda]$ with some $\Delta\geq 2$, raising this bound to power $1/(\Delta\log(2\lambda))<1$ and combining with the trivial matrix coefficient bound $|\tau(k)_{\pm m,m}|\leq 1$, we conclude that
\begin{equation}
\label{eq:goodbound-tau}
|\tau(k)_{\pm m,m}|=\big|R_{\lambda,|m|}(\mp\sin^2\tfrac12\beta^{\ast},\pm\cos^2\tfrac12\beta^{\ast})\big|
\leq e\cdot(\cos\tfrac12\beta^{\ast})^{2|m|/(\Delta\log(2\lambda))}.
\end{equation}

Following our subsequent general discussion around \eqref{superblocks-eq}, we see that the matrix of $\hat{F}_{\tau}^{\sigma'}$ relative to the basis $\mathfrak{W}'_{\lambda}$ has zero $(i,j)$-entry unless $i,j\in\{\pm m\}$ for an integer $\ell\leq m\leq \lambda$ satisfying $m\equiv \ell \pmod 2$. If $m$ is such an integer, then by \eqref{eq:genmatrixcoeff} and the discussion below it, the $(\pm m,\pm m)$-entries form the diagonal $2\times 2$ block
\begin{align*}
\hat F_{\tau,(\pm m,\pm m)}^{\sigma'}&\left(\begin{pmatrix}a&b&\\c&d&\\&&\epsilon\end{pmatrix}\right)=\frac{2\lambda+1}2\epsilon^uQ_{m,\ell}\left(\begin{pmatrix}a&b\\c&d\end{pmatrix}\right) \\ 
& \times
\begin{cases}
\begin{pmatrix} \left(\frac{2}{a-ib+ic+d}\right)^{m} & 0  \\ 0 & \left(\frac{2}{a+ib-ic+d}\right)^{m} \end{pmatrix}, & ad-bc=1,\\ 
(-1)^{u'}\begin{pmatrix} 0  & \left(\frac{2}{-a-ib-ic+d}\right)^{m}  \\ \left(\frac{2}{-a+ib+ic+d}\right)^{m} & 0 \end{pmatrix}, & ad-bc=-1,
\end{cases}
\end{align*}
where
\[ Q_{m,\ell}\left(\begin{pmatrix}a&b\\c&d\end{pmatrix}\right):=\sum_{j=0}^{\frac{m-\ell}{2}}\binom{\tfrac{m+\ell}2-1}j\binom{\tfrac{m-\ell}2}j\left(\frac{2-a^2-b^2-c^2-d^2}4\right)^j. \]

Combining the above inputs, we find that for $k=\epsilon_1 l\in K$, $l=k[\alpha,\beta,\gamma]\in L$,$m'=\epsilon_2\big(\begin{smallmatrix}m''&0\\0&1\end{smallmatrix}\big)\in M'$, $\epsilon_i\in\{\pm 1\}$, $m''=\big(\begin{smallmatrix}a&b\\c&d\end{smallmatrix}\big)\in G(2)$, and 
$\varsigma:=\det(m'')\in\{\pm 1\}$, the trace function
\begin{equation}
\label{n-3-tr-explicit-nonminimal}
\begin{aligned}
T_{\tau}^{\sigma'}(km')=&\tr\bigl(\tau(k) \hat{F}_{\tau}^{\sigma'}(m')\bigr)=\frac{2\lambda+1}2(\det(km'))^{\ell+u}\\
&\times \varsigma^{\lambda}\sum_{\substack{\ell\leq m\leq\lambda\\m\equiv\ell\pmod*{2}}}\sum_{\pm}\tau(l)_{\pm m,\pm\varsigma m}\cdot\langle\varphi_{\tau}^{\delta_{\ell}}(m'')\mathbf{v}_{\pm\varsigma m},\mathbf{v}_{\pm m}\rangle
\end{aligned}
\end{equation}
is explicitly given by
\begin{equation}
\label{n-3-tr-explicit-nonminimal-2}
\begin{aligned}
&T_{\tau}^{\sigma'}(km')=(2\lambda+1)(\det(km'))^{\ell+u}\sum_{\substack{\ell\leq m\leq\lambda\\m\equiv\ell\pmod*{2}}}Q_{m,\ell}(m'')\\
&\,\,\times\begin{cases}\Re\Big[e^{im(\alpha+\gamma)}\left(\tfrac{2}{a+ib-ic+d}\right)^m\Big]R_{\lambda,m}(-\sin^2\tfrac12\beta,\cos^2\tfrac12\beta),&\det(m'')=1,\\
(-1)^{\lambda}\Re\Big[e^{im(\alpha-\gamma)}\left(\tfrac{2}{a+ib+ic-d}\right)^m\Big]R_{\lambda,m}(\cos^2\tfrac12\beta,-\sin^2\tfrac12\beta),&\det(m'')=-1,\end{cases}
\end{aligned}
\end{equation}
and the two cases are again in harmony in view of \eqref{transformation-harmony}.

\section{Bounds for spherical functions I: the general case}
\label{thm3-section}
In this section, we prove Theorem~\ref{thm3}. As in the previous section, we shall work with
\[G = \SL_n^{\pm}(\RR),\qquad K=\OO(n).\]
We recall that $U=U^{\sigma',\mu'}$ is a generalized principal series representation with (unique) minimal $K$-type $\tau$, and $\Omega\subseteq G$ is a bi-$K$-invariant compact set. In \S\ref{thm3bsubsection}, we shall allow more general $K$-types $\tau$.

\subsection{Notations}
Let $\mfk$ denote the Lie algebra of $K$. It is spanned by the matrices 
\[X_{i,j}\in\RR^{n\times n},\qquad 1\leq i<j\leq n,\]
given by
\[
(X_{i,j})_{i',j'}=\begin{cases}1,\qquad&\text{if $(i',j')=(i,j)$},
\\ -1,&\text{if $(i',j')=(j,i)$}, 
\\ 0, &\text{otherwise}.
\end{cases}
\]
For $n=2$, set $\|X_{1,2}\|:=1$. For $n\geq 3$, consider the normalized Killing form
\[
\langle X,Y\rangle :=\frac{1}{2(2-n)} \tr(\ad X \circ \ad Y)=-\frac{1}{2}\tr(XY), \qquad X,Y\in\mfk,
\]
where $\ad$ stands for the adjoint action on $\mfk$, and set $\|X\|:=\sqrt{\langle X, X\rangle}$. In any case, $(X_{i,j})_{1\leq i<j\leq n}$ is an orthonormal basis of $\mfk$. Accordingly, we shall often think of $\mfk$ as a $\dim K$-dimensional Euclidean space, and call its unit vectors \emph{direction}s.

We introduce the following basic neighborhoods of the identity in $K$:
\begin{equation}\label{defU}
U(r):=\left\{\exp X: X\in\mfk,\ \|X\|<r\right\}, \qquad r>0.
\end{equation}
We fix some $R>0$ which is small enough to guarantee that the exponential map is a bijection from $\{X\in\mfk:\|X\|<R\}$ onto $U(R)$. We assume that $R>0$ is small enough to make $U(R)$ nearly Euclidean in the sense that
\begin{equation}\label{eq:lebesgue-haar}
\int_{U(R)} f(k) \,dk = \int_{\|X\|<R} f(\exp X)\, c(X)\,d\lambda(X),
\end{equation}
where $\lambda$ is the Lebesgue measure and $c\asymp 1$ is a smooth function on $\mfk$.

We recall that $\Omega \subseteq G$ is a fixed bi-$K$-invariant compact set. Unless noted otherwise, all implicit and explicit constants may depend on $\Omega$.

\subsection{Mixing properties}
The key results in this section are Lemmata~\ref{lemma:D-derivatives} and~\ref{lemma:Delta-derivatives} saying that certain functions on $G$ have at least some derivatives that are not too small. They will be used in \S\ref{sec43} and \S\ref{sec45}, respectively.

For any matrix $M\in\RR^{n\times n}$, denote by $M_{i,\cdot}\in\RR^{1\times n}$ its $i$-th row and by $M_{\cdot,j}\in\RR^{n\times 1}$ its $j$-th column. The left action of $K$ on the column space $\RR^{n\times 1}$ is given by isometries:
\[
\langle kM_{\cdot,j},kM_{\cdot,j'} \rangle = \langle M_{\cdot,j},M_{\cdot,j'}\rangle, \qquad k\in K,\quad 1\leq j,j'\leq n.
\]
In particular, the $m$-dimensional volume of the parallelepiped spanned by any $m$ columns of $M$ equals the same quantity for $kM$. However, these quantities might be very different for $M$ and $Mk$. Now we formulate two incarnations of this phenomenon.

In order to measure the relative difference of the lengths of the first two columns of $x$, we introduce
\begin{equation}\label{eq:D-def}
D(x):=\frac{D_{\text{n}}(x)}{D_{\text{d}}(x)}:=\frac{\|x_{.,1}\|^2-\|x_{.,2}\|^2}{\|x_{.,1}\|^2+\|x_{.,2}\|^2}, \qquad x\in G.
\end{equation}
We shall also use, for $x\in G$, $Y\in\mfk$, $t\in\RR$, the notation
\begin{align*}
D_{x,Y}(t)&:=D(x \exp(tY)),\\
D_{x,Y,\text{n}}(t)&:=D_{\text{n}}(x \exp(tY)),\\
D_{x,Y,\text{d}}(t)&:=D_{\text{d}}(x \exp(tY)).
\end{align*}

\begin{lemma}\label{lemma:D-derivatives-upper-bound}
For any $x\in \Omega$, any direction $Y\in\mfk$ and any integer $j\geq 0$, we have
\[
D_{x,Y}^{(j)}(t)|_{t=0} \ll_{j} \dist(x,K).
\]
\end{lemma}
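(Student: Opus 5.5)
The plan is to write $D_{x,Y}(t)$ as a ratio of two quadratic forms in the first two columns of $x\exp(tY)$. The key observations are that the numerator vanishes identically in $t$ when $x\in K$, and that the numerator is controlled linearly by $x^{\top}x-\id$.

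Put $S:=x^{\top}x$ and $E(t):=\exp(tY)$. Column $j$ of $xE(t)$ has squared norm $(E(t)^{\top}SE(t))_{jj}$. Hence
\[
D_{x,Y,\text{n}}(t)=\bigl(E(t)^{\top}(S-\id)E(t)\bigr)_{11}-\bigl(E(t)^{\top}(S-\id)E(t)\bigr)_{22},
\]
where we used that $E(t)^{\top}E(t)=\id$, so the identity contributions cancel. Similarly,
\[
D_{x,Y,\text{d}}(t)=\bigl(E(t)^{\top}SE(t)\bigr)_{11}+\bigl(E(t)^{\top}SE(t)\bigr)_{22}.
\]

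The first step is to bound the derivatives of the numerator. Since $E(t)$ is a real-analytic family of orthogonal matrices whose $j$-th derivatives at $t=0$ are $Y^{j}$, with norm $\ll_j 1$ because $Y$ is a unit direction, differentiating the product rule gives
\[
D_{x,Y,\text{n}}^{(j)}(0)\ll_j\|S-\id\|.
\]
Now write $x=k p$ with $k\in K$ and $p$ positive definite (polar decomposition). Then $S=p^{2}$, and $\dist(x,K)\geq\|p-\id\|$ up to constants, since $\dist$ is bi-$K$-invariant and the closest point of $K$ to $p$ is essentially the identity. For $x\in\Omega$ we have $\|p\|\ll 1$, so
\[
\|S-\id\|=\|(p-\id)(p+\id)\|\ll\|p-\id\|\ll\dist(x,K).
\]
To justify $\|p-\id\|\ll\dist(x,K)$ cleanly, I would use that for $k'\in K$, $\|p-k'\|^{2}=\|p\|^{2}+n-2\tr(pk')$, and $\tr(pk')\leq\tr p$ since $p$ is positive. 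Hence the minimum is attained at $k'=\id$, giving $\dist(x,K)=\|p-\id\|$ exactly.

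The second step handles the denominator and the quotient rule. The denominator satisfies $D_{x,Y,\text{d}}(t)\geq 2\sigma_{\min}(x)^{2}\gg 1$ uniformly on $\Omega$, since singular values are bounded below on a compact subset of $G$. Its derivatives at $0$ are $\ll_j\|S\|\ll 1$. Because $D_{x,Y,\text{d}}$ is bounded away from zero, all derivatives of $1/D_{x,Y,\text{d}}$ at $0$ are $\ll_j 1$ (by the Faà di Bruno formula). The Leibniz rule then gives
\[
D_{x,Y}^{(j)}(0)=\sum_{i\leq j}\binom{j}{i}D_{x,Y,\text{n}}^{(i)}(0)\,\bigl(1/D_{x,Y,\text{d}}\bigr)^{(j-i)}(0)\ll_j\dist(x,K).
\]

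There is no real obstacle here. The only point needing care is the comparison $\|S-\id\|\ll\dist(x,K)$, and the polar-decomposition computation above settles it directly.
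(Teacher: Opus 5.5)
Your proof is correct, and it takes a genuinely different route from the paper's in how the distance to $K$ enters. The paper uses left $K$-invariance to put $x$ in upper triangular (Iwasawa) form with diagonal $y_i$ and entries $x_{i,i'}$. It observes that the derivatives of the numerator, and for $j\geq 1$ also of the denominator, are polynomials in $y_i-1$ and $x_{i,i'}$ with zero constant term. It then compares these coordinates with $\dist(x,K)$ through a Cartan decomposition and a cited lemma from \cite{BlomerHarcosMaga2019}, establishing the two-sided relation \eqref{distanceproperty}.

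You instead work with the left $K$-invariant Gram matrix $S=x^{\top}x$ and write the numerator as an expression linear in $S-\id$. The polar decomposition together with $\tr(pk')\leq\tr p$ then gives the exact identity $\dist(x,K)=\|(x^{\top}x)^{1/2}-\id\|$. In the quotient rule you only need the denominator to be bounded below with bounded derivatives, not nearly constant in $t$. Your argument is therefore coordinate-free and self-contained, with no external lemma.

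What the paper's route buys is reuse in the next step. The triangular normal form and the comparison $\dist(x,K)\ll\max(|y_i-1|,|x_{i,i'}|)$ from \eqref{distanceproperty} are exactly what the explicit Taylor expansions in Lemma~\ref{lemma:D-derivatives} are built on. Your method gives the analogous coordinate-free two-sided bound $\dist(x,K)\asymp\|S-\id\|$ for free, since $\|p-\id\|\leq\|S-\id\|$ follows from $\|(p+\id)^{-1}\|_{\mathrm{op}}\leq 1$. However, the later case analysis is carried out in the triangular coordinates.
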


\begin{proof}
Since both sides of the stated inequality are left $K$-invariant in $x$, we may assume that
\begin{equation}\label{m}
x=\begin{pmatrix} y_1 & & (x_{i,i'})_{1\leq i<i'\leq n} \\ & \ddots & \\ & & y_n \end{pmatrix}\qquad \text{with some $y_i\asymp 1$ and $x_{i,i'}\ll 1$}.
\end{equation}
By \eqref{eq:D-def} and the fact that $D_{\text{d}}(x)\asymp 1$ for $x\in\Omega$, it suffices to prove that
\[
D_{x,Y,\text{n}}^{(j)}(t)|_{t=0} \ll_{j} \dist(x,K), \qquad j \geq 0,
\]
and
\[
D_{x,Y,\text{d}}^{(j)}(t)|_{t=0} \ll_{j} \dist(x,K), \qquad j \geq 1. 
\]
One can readily see that the left-hand sides of these can be written as
\[
\Pol_{Y,j}\bigl(y_1-1,\dotsc,y_{n}-1,x_{1,2},\dotsc,x_{n-1,n}\bigr),
\]
with some polynomials $\Pol_{Y,j}$ of bounded degree (in terms of $j$) and bounded coefficients (in terms of $j$ and $\Omega$) and vanishing constant coefficient. Indeed, if each $y_i$ equals $1$ and each $x_{i, i'}$ equals $0$, then $x \in K$, hence $D_{x,Y,\text{n}}$ is identically zero and $D_{x,Y,\text{d}}$ is constant.

We finish the proof by showing that
\begin{equation}\label{distanceproperty}
\max\left(\max_{1 \leq i \leq n} |y_i - 1|,\max_{1\leq i<i'\leq n}|x_{i,i'}|\right)\asymp\dist(x,K).
\end{equation}
To see this, we write $x=k_1ak_2$ with $a=\diag(a_1,\dotsc,a_n)$ a positive diagonal matrix and $k_1,k_2\in K$. Then
\[\dist(x,K)=\dist(a,K)\leq\|a-\id_n\|\ll\max_{1\leq i\leq n}(a_i-1)\leq\dist(a,K).\]
Therefore, $\dist(x,K)\asymp\|\log a\|$, and \eqref{distanceproperty} follows from \cite[Lem.~3]{BlomerHarcosMaga2019}.
\end{proof}

\begin{lemma}\label{lemma:D-derivatives}
There exists a constant $0<c_1:=c_1(\Omega)<R$ 
with the following property. For any $x\in\Omega$, 
there exists an orthonormal basis $(Y_1,\dotsc,Y_{\dim K})$ of $\mfk$ such that for any $t_1,\dotsc,t_{\dim K}\in [-c_1,c_1]$ we have 
\[
\max_{1\leq j\leq 3}\left| \frac{\partial^j}{\partial t_1^j} D\left(x \exp\left(\sum_{l=1}^{\dim K} t_l Y_l\right) \right) \right|\geq c_1 \cdot \dist(x,K).
\]
\end{lemma}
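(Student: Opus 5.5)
The plan is to reduce to a statement about the Gram matrix $S:=x^{\top}x$ and to choose $Y_1$ as a rotation in a single $2$-plane, so that $t\mapsto D(x\exp(tY_1))$ becomes an explicit trigonometric expression. The other basis vectors $Y_2,\dotsc,Y_{\dim K}$ are then an arbitrary orthonormal completion, and their influence is absorbed by continuity.

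\textbf{Reduction to the Gram matrix.} Note that $D(xk)$ depends only on $k^{\top}Sk$. Indeed, $\|(xk)_{\cdot,j}\|^2=(k^{\top}Sk)_{jj}$, so
\[
D(xk)=\frac{(k^{\top}Sk)_{11}-(k^{\top}Sk)_{22}}{(k^{\top}Sk)_{11}+(k^{\top}Sk)_{22}}.
\]
Write $S=\exp(2H)$ with $H$ symmetric and traceless (the determinant is $\pm1$). By the argument in the proof of Lemma~\ref{lemma:D-derivatives-upper-bound}, $\|H\|\asymp\dist(x,K)$. Put $\varepsilon:=\|H\|$ and $\hat H:=H/\varepsilon$. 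On $\Omega$ we have $S=I+2H+O(\varepsilon^2)$. Uniformly in $k$ and in $\varepsilon\ll1$, this gives
\[
D(xk)=\varepsilon\bigl((k^{\top}\hat Hk)_{11}-(k^{\top}\hat Hk)_{22}\bigr)+O(\varepsilon^2),
\]
and the same expansion holds for all $t$-derivatives, with smooth dependence.

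\textbf{Choice of direction.} For $Y$ the infinitesimal rotation in the plane spanned by orthonormal vectors $e_1,v$:
\begin{itemize}
\item If $v=e_2$, i.e.\ $Y=X_{1,2}$, the difference $(\cdot)_{11}-(\cdot)_{22}$ is a pure frequency-$2$ trigonometric polynomial $A\cos2t+B\sin2t$.
\item If $v\perp e_1,e_2$ (possible only for $n\geq3$), column $2$ is fixed. Column $1$ moves as $\cos t\,e_1+\sin t\,v$, so the leading term is $C+A'\cos 2t+B'\sin2t$.
\end{itemize}
In either case the first two derivatives cannot both vanish at any point unless the frequency-$2$ part vanishes. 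That happens exactly when the compression of $k^{\top}\hat Hk$ to the plane in question is a scalar matrix. If this degeneracy occurred for $v=e_2$ and for every $v\perp e_1,e_2$, then all $2\times2$ compressions of $k^{\top}\hat Hk$ to planes through $e_1$ would be scalar. That forces $k^{\top}\hat Hk$ to be scalar, and tracelessness then gives $\hat H=0$, contradicting $\|\hat H\|=1$.

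\textbf{Quantitative choice by compactness.} The map $(\hat H,k,v)\mapsto|A|+|B|$ is continuous, and $(\hat H,k)$ ranges over a compact set. Hence, for each $x$, we can take $k=1$ and pick a plane (hence a direction $Y_1$, completed to an orthonormal basis) for which the frequency-$2$ amplitude is at least some $c>0$ depending only on $n$. For a trigonometric polynomial $C+A\cos2t+B\sin2t$, the quantity $\max(|f'|,|f''|)\geq2\sqrt{A^2+B^2}$ at every $t$. So the leading term already gives $\max_{j\leq2}|\partial_{t_1}^jD|\gg\varepsilon$ at $t=0$, and the third derivative is available as slack.

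\textbf{Uniformity over the box and the main obstacle.} To pass to all $(t_1,\dotsc,t_{\dim K})\in[-c_1,c_1]^{\dim K}$, apply Lemma~\ref{lemma:D-derivatives-upper-bound} at the point $x\exp(\sum t_lY_l)$. This gives Lipschitz bounds $\ll\varepsilon$ for all relevant derivatives, so the lower bound $\gg\varepsilon$ persists on a box of size $c_1$ independent of $x$.

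The main obstacle is making the linearization uniform: the $O(\varepsilon^2)$ errors must be controlled together with the nonlinear denominator. The plan is to handle this by splitting into two regimes.
\begin{itemize}
\item For $\varepsilon\leq\varepsilon_0$ small, the leading term dominates.
\item For $\varepsilon\geq\varepsilon_0$, use plain compactness of $\Omega\setminus\{\dist(x,K)<\varepsilon_0\}$. The same plane argument applied to $S$ itself (not scalar, since $x\notin K$) gives a nonvanishing derivative of order at most $3$. Its lower bound is continuous in $x$, so a uniform positive minimum exists on this compact set.
\end{itemize}
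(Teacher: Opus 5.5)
\textbf{There is a genuine gap in the degeneracy step of your ``choice of direction''.} You allow two families of directions: rotation in $\mathrm{span}(e_1,e_2)$, and rotation in $\mathrm{span}(e_1,v)$ with $v\perp e_1,e_2$ (which keeps column $2$ fixed). Vanishing of all their frequency-$2$ amplitudes only says the following: $M_{12}=0$, $M_{11}=M_{22}$, and $e_1^{\top}Mv=0$, $v^{\top}Mv=M_{11}$ for all unit $v\perp e_1,e_2$. It says nothing about the cross terms $e_2^{\top}Mv$.

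So the inference ``scalar compressions for $v=e_2$ and every $v\perp e_1,e_2$ $\Rightarrow$ scalar compressions to all planes through $e_1$'' is false. A plane $\mathrm{span}(e_1,\alpha e_2+\beta v)$ involves $e_2^{\top}Mv$; besides, rotating in it also moves column $2$, so it is not of your two types anyway.

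Concretely, take $n=3$ and $x:=S^{1/2}$ with $S=s\,\id+u(E_{23}+E_{32})$, where $E_{ij}$ are matrix units, $u\neq 0$, and $s^3-su^2=1$. The compressions of $S$ to $\mathrm{span}(e_1,e_2)$ and to $\mathrm{span}(e_1,e_3)$ are both $s\,\id_2$. Hence $D(x\exp(tY))\equiv 0$ for $Y=X_{1,2}$ and $Y=X_{1,3}$, although $x\notin K$ and $\dist(x,K)\asymp|u|$ for small $u$. At the linear level this is $\hat H\propto E_{23}+E_{32}$, a unit traceless matrix for which every amplitude you consider vanishes. So the compactness step yields no $c>0$. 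The same $x$ with $|u|$ not small also defeats your large-$\varepsilon$ argument.

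\textbf{The repair.} Add the third family the paper also uses: rotations in $\mathrm{span}(e_2,v)$ with $v\perp e_1,e_2$, which fix column $1$. These are the paper's directions $X_{2,i}$, which handle the case where some $x_{2,i}$ dominates. Degeneracy for all three families then gives $Me_1=M_{11}e_1$, $Me_2=M_{11}e_2$ and $M|_{\{e_1,e_2\}^{\perp}}=M_{11}\id$. So $M$ is scalar, and tracelessness (respectively $\det S=1$) gives the contradiction.

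In the non-linearized regime you should also note two things.
\begin{itemize}
\item Along each such curve, either the denominator of $D$ is constant (for $X_{1,2}$), or $D$ is a strictly monotone function of the single moving diagonal entry. Hence a nonzero frequency-$2$ amplitude still forces $D'(0)\neq 0$ or $D''(0)\neq 0$.
\item The sharp constant in $\max(|f'|,|f''|)\geq c\sqrt{A^2+B^2}$ is $c=4/\sqrt5$, not $2$. This is harmless.
\end{itemize}

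\textbf{Comparison with the paper.} With this fix your route is a genuine alternative to the paper's. The paper works in upper-triangular coordinates and runs an explicit cascade of Taylor coefficients along $X_{1,i}$, $X_{2,i}$ and $X_{1,i}+X_{i,i'}$, the last one needing a third derivative; this gives effective constants. Your Gram-matrix linearization plus compactness is coordinate-free and shorter, and it gives non-explicit constants. It also needs only derivatives of order at most two.
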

\begin{proof} 
Since both sides of the stated inequality are left $K$-invariant in $x$, we may assume \eqref{m} as before.
Let 
$$\mcY: = \{X_{1,i} : 2 \leq i \leq n\} \cup \{X_{2,i} : 2 \leq i \leq n\} \cup \{X_{1,i} + X_{i,i'} : 3\leq i < i' \leq n\} \subseteq \mfk.$$
By Lemma~\ref{lemma:D-derivatives-upper-bound}, the first four derivatives of $D_{\xi,Y}$ are $\ll\dist(x,K)$ uniformly over $\xi\in xK \subseteq \Omega$ and all directions $Y\in\mfk$. Hence it suffices to prove that
for each $x \in \Omega$ we have
\begin{equation}\label{eq:D-derivatives}
\max_{1 \leq j \leq 3} \max_{Y \in \mcY} \bigl| D_{x,Y}^{(j)}(0) \bigr|
\gg\dist(x, K).
\end{equation}
We compute the derivatives in question by brute force, and assume throughout that $t$ is sufficiently small in terms of $\Omega$.

For $2 \leq i \leq n$, we have
$$\exp(t X_{1,i}) = \left(\begin{matrix}1 - \frac{1}{2} t^2 & & t & \\ & \boxed{\id_{i-2}} & \\ - t & & 1 - \frac{1}{2}t^2 & \\ &&&\boxed{ \id_{n-i} } \end{matrix}\right) + O(t^3)$$
where all missing entries are 0, so that we have the column decompositions
\[
x \exp(t X_{1,2}) = \left( \begin{matrix} x_{., 1} - x_{.,2} t - \frac{1}{2} x_{.,1}t^2 &|& x_{., 2} + x_{., 1} t - \frac{1}{2} x_{., 2} t^2 &|& \cdots \ \; \end{matrix}\right) + O(t^3)
\]
and
\[
x \exp(t X_{1,i}) = \left( \begin{matrix} x_{., 1} - x_{.,i} t - \frac{1}{2} x_{.,1}t^2 &|& x_{., 2} &|& \cdots \ \; \end{matrix}\right) + O(t^3), \qquad i\geq 3.
\]
It follows that
\begin{align}
\notag D_{x,X_{1,2}}(t)& = 
\frac{(y_1 - x_{1,2} t - \frac{1}{2} y_1t^2)^2 - (x_{1,2} + y_1 t - \frac{1}{2} x_{1,2} t^2)^2-y_2^2+2y_2^2t^2 +O(t^3)}{(y_1 - x_{1,2} t - \frac{1}{2} y_1t^2)^2 + (x_{1,2} + y_1 t - \frac{1}{2} x_{1,2} t^2)^2 +y_2^2+O(t^3)}\\
\label{x12} & = \frac{y_1^2 - y_2^2 - x_{1,2}^2}{y_1^2 + y_2^2 + x_{1,2}^2} - \frac{4 y_1 x_{1,2}}{y_1^2 + y_2^2 + x_{1,2}^2} t - \frac{2(y_1^2 - y_2^2 - x_{1,2}^2)}{y_1^2 + y_2^2 + x_{1,2}^2}t^2 + O(t^3),
\end{align}
while for $i\geq 3$ also that
\begin{align}
\notag D_{x,X_{1,i}}(t) = \ &\frac{y_1^2 - y_2^2 - x_{1,2}^2 - 2 y_1 x_{1,i} t + (\|x_{.,i}\|^2 - y_1^2 )t^2+O(t^3)}{y_1^2 + y_2^2 + x_{1,2}^2 - 2 y_1 x_{1,i} t + (\|x_{.,i}\|^2 - y_1^2 )t^2+O(t^3)}\\ 
\label{x1j} = \ &\frac{y_1^2 - y_2^2 - x_{1,2}^2}{y_1^2 + y_2^2 + x_{1,2}^2} - \frac{4(y_2^2 + x_{1,2}^2) y_1 x_{1,i}}{(y_1^2 + y_2^2 + x_{1,2}^2)^2} t \\
&\notag - \frac{2( y_2^2 + x_{1,2}^2) (4 y^2_1 x^2_{1,i} + (y_1^2 + y_2^2 + x_{1,2}^2) ( y_1^2 - \|x_{.,i}\|^2 ) )}{(y_1^2 + y_2^2 + x_{1,2}^2)^3} t^2 + O(t^3).
\end{align}

Similarly, for $2 \leq i \leq n$, we obtain the following approximations:
$$\exp(t X_{2,i}) = \left(\begin{matrix}1 & & & & \\ & 1 - \frac{1}{2} t^2 & & t & \\ & & \boxed{\id_{i-3}} & \\ & - t & & 1 - \frac{1}{2}t^2 & \\ &&&&\boxed{ \id_{n-i} } \end{matrix}\right) + O(t^3)$$
and
$$x \exp(t X_{2,i}) = \left( \begin{matrix} x_{., 1} &|& x_{., 2} - x_{., i} t - \frac{1}{2} x_{., 2}t^2 &|& \cdots \ \;\end{matrix}\right) + O(t^3),$$
whence
\begin{align}
\notag D_{x,X_{2,i}}(t) =\ & \frac{y_1^2-y_2^2-x_{1,2}^2+2(x_{1,2}x_{1,i}+y_2x_{2,i})t+(y_2^2+x_{1,2}^2-\|x_{.,i}\|^2)t^2+O(t^3)}{y_1^2+y_2^2+x_{1,2}^2-2(x_{1,2}x_{1,i}+y_2x_{2,i})t-(y_2^2+x_{1,2}^2-\|x_{.,i}\|^2)t^2+O(t^3)}\\
\label{x2j} = \ & \frac{y_1^2 - y_2^2 - x_{1,2}^2}{y_1^2 + y_2^2 + x_{1,2}^2} + \frac{ 4y_1^2 ( x_{1,2}x_{1,i} + y_2 x_{2,i})}{(y_1^2 + y_2^2 + x_{1,2}^2)^2} t \\
\notag & + \frac{2 y_1^2 (4( x_{1,2}x_{1,i} + y_2 x_{2,i})^2 + (y_1^2 + y_2^2 + x_{1,2}^2)
(y_2^2 + x_{1,2}^2 - \|x_{.,i}\|^2))}{(y_1^2 + y_2^2 + x_{1,2}^2)^3} t^2 + O(t^3).
\end{align}

Finally, for $3 \leq i < i' \leq n$, we have
$$\exp(t (X_{1,i} + X_{i,i'})) = \left(\begin{matrix}1 - \frac{1}{2} t^2 & & t - \frac{1}{3} t^3& & \frac{1}{2} t^2\\ & \boxed{\id_{i-2}} & \\ - t + \frac{1}{3} t^3& & 1 - t^2 & & t - \frac{1}{3} t^3 \\ &&&\boxed{ \id_{i'-i-1} }\\ \frac{1}{2}t^2 & & - t + \frac{1}{3} t^3 & & 1 - \frac{1}{2}t^2 & \\ &&&&& \boxed{ \id_{n-i'} } \end{matrix}\right) + O(t^4)$$
and
$$x \exp(t (X_{1,i} + X_{i,i'})) = \left( \begin{matrix} x_{., 1} - x_{., i} t + \frac{1}{2} (x_{., i'} - x_{., 1})t^2 + \frac{1}{3} x_{., i}t^3 &|& x_{., 2} &|& \cdots \ \; \end{matrix}\right)+ O(t^4).$$
We shall only use this formula when the non-diagonal entries of the first $i-1$ rows of $x$ are sufficiently small in terms of $\Omega$. Accordingly, $\Pol_i(t)\in\RR[t]$ will temporarily denote any polynomial of degree at most three (not necessarily the same one at each occurrence) whose coefficients are
\[
\ll \max_{1\leq k<i} \max_{k<l\leq n} |x_{k,l}|.
\]
We conclude that
\begin{align}
\notag D_{x,X_{1,i} + X_{i,i'}}(t) &= 
\frac{y_1^2 - y_2^2 - (y_1^2 - y_i^2)t^2 - y_i x_{i,i'} t^3 + \Pol_i(t)+ O(t^4)}{y_1^2 + y_2^2 - (y_1^2 - y_i^2) t^2 - y_i x_{i,i'} t^3 + \Pol_i(t)+ O(t^4)}\\
\label{xdouble}& = \frac{y_1^2 - y_2^2}{y_1^2 + y_2^2} - \frac{2 y_2^2 (y_1^2- y_i^2)}{(y_1^2 + y_2^2)^2} t^2 - \frac{2 y_2^2 y_i x_{i, i'}}{(y_1^2 + y_2^2)^2} t^3 + \Pol_i(t) + O(t^4).
\end{align}

Armed with these computations, we can now complete the proof of \eqref{eq:D-derivatives}. Let us introduce (cf. \eqref{distanceproperty})
$$\Delta(x) := \max \left(\max_{2 \leq i \leq n} |y_1 - y_i|, \max_{1 \leq i < i' \leq n} |x_{i,i'}|\right) \asymp \dist(x, K).$$
We consider consecutively the following derivatives using \eqref{x12}--\eqref{xdouble}. 
From \eqref{x12} and \eqref{x1j} we infer
$$\max_{Y \in \mcY} \left| D_{x,Y}' (0) \right| \gg \max_{2 \leq i \leq n} |x_{1,i}|.$$
If the right-hand side is $\asymp \Delta(x)$, we are done. Otherwise, we use \eqref{x2j} to conclude
$$\max_{Y \in \mcY} \left| D_{x,Y}' (0) \right| \gg \max_{3 \leq i \leq n} |x_{2,i}|.$$
Either we are done at this point, or otherwise we consider the third Taylor term in \eqref{xdouble} consecutively for $i = 3, 4, \dotsc,n-1$ to obtain eventually
$$\max_{Y \in \mcY} \left| D_{x,Y}''' (0) \right| \gg \max_{3 \leq i < i' \leq n} |x_{i,i'}|.$$
Either we are done at this point, or otherwise the second Taylor term in \eqref{x1j} gives
$$\max_{Y \in \mcY} \left| D_{x,Y}'' (0) \right| \gg \max_{2 \leq i \leq n} |y_1 - y_i|.$$
This completes the proof of \eqref{eq:D-derivatives} and hence that of Lemma~\ref{lemma:D-derivatives}. 
\end{proof}

For $1\leq m\leq n$, we denote by $x_{(m)}$ the $n\times m$ matrix given by the last $m$ columns of $x$. Then 
\begin{equation}\label{deltam}
\Delta_m(x):= \det \left(x_{(m)}^T x_{(m)}\right), \qquad x\in G,
\end{equation}
is the square of the $m$-dimensional volume of the parallelepiped spanned by the last $m$ columns of $x$. This quantity is invariant under the right action of the group
\[
K_{m}:=\left\{\begin{pmatrix} \id_{n-m} & \mathbf{0} \\ \mathbf{0} & k' \end{pmatrix}:k'\in\OO(m)\right\},
\]
because
\begin{equation}\label{eq:xkm-xmk'}
(x k)_{(m)} = x_{(m)}k',\qquad k = \begin{pmatrix} \id_{n-m} & \mathbf{0} \\ \mathbf{0} & k' \end{pmatrix}\in K_m.
\end{equation}
A related quantity somewhat similar to $D$ is
\begin{equation}\label{eq:Em-def}
E_{m}(x):= \bigl\|x_{(m)}^T x_{(m)} - \Delta_m(x)^{1/m} \id_m\bigr\|_{\infty},\qquad x\in G,
\end{equation}
where $\| \cdot \|_{\infty}$ denotes the largest entry in absolute value of the matrix in question. This quantity is left $K$-invariant. Moreover, it is essentially right $K_m$-invariant, by which we mean that
\begin{equation}\label{eq:E_m-K_m-invariance}
E_{m}(x)\asymp E_{m}(xk),\qquad k\in K_{m},
\end{equation}
holds with an implied constant depending only on $n$ (not on $\Omega$). 
Indeed, for $k$ and $k'$ as in \eqref{eq:xkm-xmk'}, we see that
\begin{align*}
E_{m}(xk) & = \bigl\| k'^Tx_{(m)}^T x_{(m)}k' - \Delta_m(x)^{1/m} \id_m \bigr\|_{\infty} \\ & = \bigl\| k'^{-1} ( x_{(m)}^T x_{(m)} - \Delta_m(x)^{1/m} \id_m) k' \bigr\|_{\infty} .
\end{align*}
For later reference, we record the following consequence of the triangle inequality:
\begin{equation}\label{eq:E_m-related-to-column-lengths}
\|x_{.,j}\|^2 - \|x_{.,j'}\|^2 \ll E_m(x),\qquad n-m+1 \leq j < j' \leq n.
\end{equation}

For convenience, we introduce $\mfk_{m}\subseteq \mfk$ for the Lie algebra of $K_{m}$, which is spanned by
\[
\{X_{i,j}:n-m+1\leq i<j\leq n\}.
\]
Our next lemma is similar to Lemma~\ref{lemma:D-derivatives-upper-bound}.
\begin{lemma}\label{lemma:E-m-derivatives-upper-bound} Let $2\leq m \leq n$ and $Y\in\mfk_{m}$ be any direction. Then for $x\in\Omega$ and any $j\geq 1$, we have
\[
\left.\frac{d^j}{dt^j} \log \Delta_{m-1}(x\exp(tY))\right|_{t=0} \ll_{j} E_{m}(x).
\]
\end{lemma}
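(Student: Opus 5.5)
The plan is to isolate the $t$-dependence through the Gram matrix $S:=x_{(m)}^Tx_{(m)}$, using \eqref{eq:xkm-xmk'}. Write $Y$ as the image of some $Y'\in\mathfrak{o}(m)$ with $\|Y'\|=1$, and put $k'(t):=\exp(tY')\in\OO(m)$. Then $(x\exp(tY))_{(m)}=x_{(m)}k'(t)$. Let $J\in\RR^{m\times(m-1)}$ be the matrix selecting the last $m-1$ columns. The last $m-1$ columns of $x\exp(tY)$ are then $x_{(m)}k'(t)J$, so that
\[
\Delta_{m-1}(x\exp(tY))=\det\bigl(J^Tk'(t)^TSk'(t)J\bigr).
\]
Now set $\lambda:=\Delta_m(x)^{1/m}$ and $E:=S-\lambda\id_m$, so that $\|E\|_\infty=E_m(x)$ by \eqref{eq:Em-def}. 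Since $k'(t)$ is orthogonal and $J^TJ=\id_{m-1}$, we obtain
\[
J^Tk'(t)^TSk'(t)J=\lambda\id_{m-1}+B(t),\qquad B(t):=J^Tk'(t)^TEk'(t)J.
\]
The entries of $B(t)$ are bilinear in the entries of $k'(t)$, with coefficients being entries of $E$. Since all $t$-derivatives of $k'(t)$ are $\ll_j1$ (as $\|Y'\|=1$), we have $B^{(j)}(t)\ll_j E_m(x)$ for all $j\geq0$ and $|t|\leq1$.

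Next, by compactness of $\Omega$ (and bi-$K$-invariance), $\Delta_m(x)$ and $\Delta_{m-1}(x\exp(tY))$ are bounded above and below by positive constants, uniformly for $x\in\Omega$ and $t\in\RR$. These are Gram determinants of linearly independent columns depending continuously on $x\exp(tY)\in\Omega$. Hence $\lambda\asymp1$, and
\[
\log\Delta_{m-1}(x\exp(tY))=(m-1)\log\lambda+\log\det\bigl(\id_{m-1}+\lambda^{-1}B(t)\bigr).
\]
The first term is independent of $t$, so only the second contributes to the derivatives.

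We then split into two cases according to the size of $E_m(x)$, with a small constant $c=c(n,\Omega)>0$.

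If $E_m(x)\geq c$, the claim is trivial. The function $t\mapsto\log\Delta_{m-1}(x\exp(tY))$ is smooth, and its derivatives at $0$ are $\ll_j1$ uniformly over $x\in\Omega$ and directions $Y$: it is a smooth function on the compact set $\Omega\times\{\text{directions}\}$, with $\Delta_{m-1}$ bounded away from $0$. So these derivatives are $\ll_j1\ll E_m(x)$.

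If $E_m(x)<c$, then $\|\lambda^{-1}B(t)\|$ is small. The function $f(Z):=\log\det(\id_{m-1}+Z)$ is then real-analytic near $Z=0$, with all partial derivatives bounded there. By Fa\`a di Bruno's formula, $\frac{d^j}{dt^j}f(\lambda^{-1}B(t))$ for $j\geq1$ is a finite sum of terms. Each term is a derivative of $f$, evaluated at $\lambda^{-1}B(t)$ (hence bounded), times a product of at least one factor $\lambda^{-1}B^{(i)}(t)$ with $i\geq1$, while the remaining factors are bounded. Each such term is therefore $\ll_jE_m(x)$, which proves the lemma.

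The main point requiring care is the uniformity of the constants: that $\lambda\asymp1$ and that $\Delta_{m-1}$ is bounded below on $\Omega$. This is where bi-$K$-invariance and compactness of $\Omega$ enter. The rest is a routine chain-rule computation.
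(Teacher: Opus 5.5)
Your proof is correct and uses essentially the same idea as the paper: the paper's block $F_Y(t)$ is your $k'(t)J$, and in both arguments the restricted Gram matrix equals $\lambda\,\id_{m-1}$ plus a perturbation linear in $\mcE(x)=x_{(m)}^Tx_{(m)}-\lambda\,\id_m$, while the scalar part is independent of $t$, so every derivative carries a factor of size $E_m(x)$. The only difference is packaging: the paper Taylor-expands $H\mapsto P_{Y,j}(H)$ to first order around $\lambda\,\id_m$, using $P_{Y,j}(\lambda\,\id_m)=0$ and absorbing the quadratic remainder via the boundedness of $\mcE(x)$, whereas you factor out $\lambda$, apply Fa\`a di Bruno in $t$, and add a harmless (but unnecessary) case split on the size of $E_m(x)$.
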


\begin{proof}
For $t \in \RR$ we define $F_Y(t) \in \RR^{m\times (m-1)}$ to be the bottom-right $m\times (m-1)$ block of $\exp(tY)$, and for any $H \in \text{Pos}_m(\RR)$ we define 
\[
P_{Y,j}(H):= \left.\frac{d^j}{dt^j} \log \det(F_Y(t)^T H F_Y(t))\right|_{t=0}.
\]
Then
\[
P_{Y,j}(x_{(m)}^T x_{(m)}) = \left.\frac{d^j}{dt^j} \log \Delta_{m-1}(x\exp(tY))\right|_{t=0}
\]
is the quantity on the left-hand side of the statement. For future reference, we observe that every $m\times m$ real symmetric matrix $H$ can be written as
\[
H=\sum_{1\leq u\leq v\leq m} H_{u,v} S_{u,v}, \qquad H_{u,v}\in\RR,
\]
where
\[
(S_{u,v})_{u',v'}:=\begin{cases}1,\qquad&\text{if $(u',v')$ equals $(u,v)$ or $(v,u)$},
\\ 0, &\text{otherwise}.
\end{cases}
\]

In order to prove the lemma, assume first that $E_{m}(x)=0$, that is, $x_{(m)}^T x_{(m)}=\lambda\cdot \id_m$, where $\lambda:=\Delta_m(x)^{1/m}$. Then, independently of $t\in\RR$, we have
\[
F_Y(t)^T x_{(m)}^T x_{(m)} F_Y(t) = \lambda\cdot F_Y(t)^T F_Y(t) = \lambda\cdot \id_{m-1}.
\]
Consequently, $P_{Y,j}(x_{(m)}^T x_{(m)})=0$ for all $j\geq 1$. For a general $x \in \Omega$, we can write $x_{(m)}^T x_{(m)} = \lambda \cdot \id_m + \mcE(x)$, where $\lambda: = \Delta_m(x)^{1/m}\asymp 1$. Since $P_{Y,j}$ is a smooth function, we can expand it into a Taylor series about $\lambda \cdot \id_m$ as
\[
P_{Y,j}(x_{(m)}^T x_{(m)}) = P_{Y,j}(\lambda \cdot \id_m) + \sum_{1\leq u\leq v\leq m} \left( \frac{\partial P_{Y,j}}{\partial H_{u,v}} \bigg|_{H = \lambda \id_m} \right) \mcE(x)_{u,v} + O(\|\mcE(x)\|_\infty^2).
\]
Since $P_{Y,j}(\lambda \cdot \id_m) = 0$ as before, and the partial derivatives evaluated at $\lambda \cdot \id_m$ are bounded, the $(u,v)$-sum is $\ll\max_{u,v} |\mcE(x)_{u,v}| = E_{m}(x)$ by definition. Since $x\in \Omega$, the entries of $\mcE(x)$ are bounded in terms of $\Omega$, hence also $\|\mcE(x)\|_\infty^2 \ll E_m(x)$. 
The proof is complete.
\end{proof}

\begin{lemma}\label{lemma:Delta-derivatives} There exists a constant $0<c_2:=c_2(\Omega)<R$
with the following property. For any $x\in\Omega$, there is an orthonormal basis $(Y_1,\dotsc,Y_{\dim K_{m}})$ of $\mfk_{m}$, such that for any $t_1,\dotsc,t_{\dim K_{m}}\in [-c_2,c_2]$ we have
\[
\max_{1\leq j\leq 2} \left| \frac{\partial^j}{\partial t_1^j} \log \Delta_{m-1}\left(x \exp\left(\sum_{l=1}^{\dim K_{m}} t_l Y_l\right) \right) \right| \geq c_2 \cdot E_{m}(x).
\]
\end{lemma}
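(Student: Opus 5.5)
We follow the template of Lemma~\ref{lemma:D-derivatives}. By left $K$-invariance of both sides we may replace $x$ by $kx$. By the essential right $K_m$-invariance \eqref{eq:E_m-K_m-invariance} we may also replace $x$ by $xk$ with $k\in K_m$, adjusting the basis $(Y_l)$ by conjugation with $k$. Hence we may diagonalize $H:=x_{(m)}^Tx_{(m)}$ as $H=\diag(h_1,\dotsc,h_m)$ with $h_i\asymp 1$. In this normalization, $E_m(x)\asymp\max_{i}|h_i-\lambda|\asymp\max_{i<i'}|h_i-h_{i'}|$ with $\lambda=\Delta_m(x)^{1/m}$, since $\lambda$ is the geometric mean of the $h_i$.

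By Lemma~\ref{lemma:E-m-derivatives-upper-bound}, all derivatives up to order three of $t\mapsto\log\Delta_{m-1}(\xi\exp(tY))$ are $\ll E_m(\xi)\asymp E_m(x)$, uniformly for $\xi\in xK_m$ and all directions $Y\in\mfk_m$. A Taylor argument over a box of side $c_2$ then reduces the lemma to a single-point statement: at the normalized $x$ there is a direction $Y_1\in\mfk_m$ with
\[
\max_{1\leq j\leq 2}\Bigl|\tfrac{d^j}{dt^j}\log\Delta_{m-1}(x\exp(tY_1))\big|_{t=0}\Bigr|\gg E_m(x).
\]
We complete $Y_1$ to an orthonormal basis of $\mfk_m$. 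We then shrink $c_2$ so that moving $t_2,\dotsc$ changes the derivatives in $t_1$ by at most half of this lower bound. This step uses smoothness in the base point together with the bound by $E_m$, exactly as in Lemma~\ref{lemma:D-derivatives}.

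For the pointwise claim, $\Delta_{m-1}(x\exp(tY))=\det(F_Y(t)^THF_Y(t))$. By Cauchy--Binet this is the Gram determinant of the last $m-1$ columns after rotation; equivalently, it is $\det H$ times the squared norm of the first row of $\exp(-tY)$ restricted to the block, weighted by $H^{-1}$. Concretely,
\[
\Delta_{m-1}(x\exp(tY))=\det H\cdot u(t)^TH^{-1}u(t),
\]
where $u(t)$ is the first column of the $m\times m$ block of $\exp(tY)$. Take $Y=X_{n-m+1,n-m+i}$, which rotates coordinates $1$ and $i$ of the block. Then $u(t)=\cos t\,e_1-\sin t\,e_i$, and
\[
\log\Delta_{m-1}=\log\det H+\log\bigl(h_1^{-1}\cos^2t+h_i^{-1}\sin^2t\bigr).
\]
The second derivative at $t=0$ equals $2(h_i^{-1}-h_1^{-1})h_1\asymp h_1-h_i$. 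The first derivative vanishes, which explains why only $j\leq2$ is needed. Choosing $i$ with $|h_1-h_i|$ maximal handles the case where $h_1$ is extremal.

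The main obstacle is that we cannot also permute which coordinate plays the role of the first: the quantity $\Delta_{m-1}$ singles out index $1$ of the block. After diagonalizing, however, we can use a further element of $K_m$ (a permutation) to move the coordinate with largest $|h-\lambda|$ into position $1$; permutations lie in $\OO(m)$. Then $\max_i|h_1-h_i|\geq|h_1-\lambda|\gg E_m(x)$. The remaining work is bookkeeping to check that these conjugations are compatible with the essential invariance \eqref{eq:E_m-K_m-invariance} and with uniform constants.
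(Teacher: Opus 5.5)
You correctly reduce the box statement to a single-point estimate using Lemma~\ref{lemma:E-m-derivatives-upper-bound}, as the paper does. The genuine gap is the normalization to a diagonal $H=x_{(m)}^Tx_{(m)}$, which is not legitimate.

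The lemma concerns the function $k'\mapsto\log\Delta_{m-1}(xk')$ near $k'=\id$ in $K_m$. The function $\Delta_{m-1}$ is right-invariant only under the subgroup $\OO(1)\times\OO(m-1)$ of $K_m$, not under all of $K_m$. If you replace $x$ by $xk$ with $k\in K_m$ and conjugate the basis, you get $\log\Delta_{m-1}\bigl(x\exp(\sum_l t_lY_l)\,k\bigr)$, which is a different function. The estimate \eqref{eq:E_m-K_m-invariance} only controls the right-hand side.

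The permutation step fails for the same reason. It is also unnecessary: for diagonal $H$ one already has $\max_i|h_1-h_i|\geq\frac12\max_{i,i'}|h_i-h_{i'}|$.

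What you may legitimately assume is that the last $m-1$ columns of the block $V$ are mutually orthogonal. The first column can still have nonzero inner products $\mbs$ with them. Your computation, in which the first derivative vanishes, covers only the case $\mbs=0$.

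A concrete example: take $m=2$ with $\|V_{\cdot,1}\|=\|V_{\cdot,2}\|=1$ and $\langle V_{\cdot,1},V_{\cdot,2}\rangle=\epsilon$. Then $E_2(x)\asymp\epsilon$ and $\mfk_2$ is one-dimensional. We have $\log\Delta_1(x\exp(tX_{n-1,n}))=\log(1+\epsilon\sin 2t)$, whose second derivative at $t=0$ is only $-4\epsilon^2$. The lower bound at $x$ comes entirely from the first derivative $2\epsilon$. Your argument instead analyzes $xk$, with $k$ the rotation by $\pi/4$, and that point lies a fixed distance outside the box around $x$.

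The missing idea is a case split. When $\mbs$ is not small, the first derivative does the work. When $\mbs$ is small but nonzero, you need a threshold argument showing the second derivative is still $\gg E_m(x)$ up to an $O(\|\mbs\|)$ error. This is exactly the paper's structure:
\begin{itemize}
\item $C_1=2(\mbs\adj(H_0))\br^T$ handles the case $\|\mbs\|\geq\delta^2E_m(x)$;
\item $C_2=\det(H_0)\,\br H_0^{-1/2}(\langle V_{\cdot,1},V_{\cdot,1}\rangle\id_{m-1}-H_0)H_0^{-1/2}\br^T+O(\|\mbs\|)$ handles the remaining cases, first large off-diagonal entries of $H_0$ and then large diagonal differences.
\end{itemize}

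Your identity $\Delta_{m-1}(x\exp(tY))=\Delta_m(x)\,u(t)^TH^{-1}u(t)$ is correct for arbitrary $H$, and it would give a clean alternative route. Write $\langle a,b\rangle_{H^{-1}}:=a^TH^{-1}b$ and take $u(t)=\cos t\,e_1+\sin t\,v$ with $v\perp e_1$ a unit vector. At $t=0$ the first derivative of $\log\bigl(u^TH^{-1}u\bigr)$ is $2\langle e_1,v\rangle_{H^{-1}}/\langle e_1,e_1\rangle_{H^{-1}}$. The second derivative is $2(\langle v,v\rangle_{H^{-1}}-\langle e_1,e_1\rangle_{H^{-1}})/\langle e_1,e_1\rangle_{H^{-1}}-4\langle e_1,v\rangle_{H^{-1}}^2/\langle e_1,e_1\rangle_{H^{-1}}^2$.

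The off-diagonal part of the first row of $H^{-1}$ is proportional to $\mbs H_0^{-1}$. If its norm is $\geq\delta E_m(x)$, take $j=1$. Otherwise $H^{-1}$ is still $\gg E_m(x)$ away from scalar, which forces a unit $v\perp e_1$ with $|\langle v,v\rangle_{H^{-1}}-\langle e_1,e_1\rangle_{H^{-1}}|\gg E_m(x)$, so $j=2$ works. As written, however, your proposal does not contain this step.
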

\begin{proof}
By Lemma~\ref{lemma:E-m-derivatives-upper-bound}, the first three derivatives of $\log \Delta_{m-1}(\xi\exp(tY))$ are $\ll E_{m}(x)$ uniformly over $\xi\in xK_{m} \subseteq \Omega$ and all directions $Y\in\mfk_{m}$. Hence it suffices to prove that
for each $x \in \Omega$ we have
\begin{equation}\label{eq:maxmax}
\max_{1 \leq j \leq 2} \max_{\substack{Y\in\mfk_{m}\\ \|Y\|=1}} \left| \left.\frac{\partial^j}{\partial t^j} \log \Delta_{m-1}(x \exp(tY))\right|_{t=0} \right| \gg E_{m}(x).
\end{equation}
Also, since $\Delta_{m-1}(x)\asymp 1$ uniformly over $x\in K$, we can drop the logarithm, Indeed, for a smooth function $f:\RR\to\RR$ with values $\asymp 1$, if $f'$ is large at a point, then $(\log f)'=f'/f$ is also large, while if $f'$ is small, but $f''$ is large, then $(\log f)''=f''/f-(f'/f)^2$ is large. 

Since both sides are invariant under $x\mapsto kx$ for any $k\in K$, 
we may assume that the first $n-m$ rows of $x_{(m)}$ are $0$. 
In this way, we can restrict our attention to the bottom-right $m\times m$ block of $x$ that we denote by
\[
V:=(V_{i,i'})_{1\leq i,i'\leq m}.
\]
Let further
\[
R:= \begin{pmatrix}
0 & r_2 & \dots & r_m \\
-r_2 & 0 & \dots & 0 \\
\vdots & \vdots & \ddots & \vdots \\
-r_m & 0 & \dots & 0
\end{pmatrix} \in \mathfrak{so}(m)
\]
with $r_2,\dotsc,r_m\in\RR$ satisfying $\sum_{j=2}^m r_j^2=1$. Let us assume that $t\in\RR$ is sufficiently small in terms of $\Omega$. We define
\[
H(t):=(V \exp(tR))_{(m-1)}^T (V \exp(tR))_{(m-1)},
\]
so that $$\det H(t)=\Delta_{m-1}(V\exp (tR)).$$
We write
\[
H(t)=H_0 + H_1 t + H_2 t^2 + O(t^3),\qquad \det H(t)= C_0 + C_1 t + C_2 t^2 + O(t^3),
\]
where
\[H_i:= H^{(i)}(0)/i!,\qquad C_i:= (\det H)^{(i)}(0)/i!.\]
The upshot is that \eqref{eq:maxmax} is equivalent to the following statement: there exists an $R\in\mathfrak{so}(m)$ normalized as above such that
\begin{equation}\label{eq:maxC1C2}
\max(C_1,C_2) \gg E_{m}(x).
\end{equation}
The rest of the proof is devoted to verifying this statement.

We shall calculate $C_1$ and $C_2$ with the help of Jacobi's formula: for any differentiable function $A:\RR\to\RR^{m\times m}$,
\begin{equation}\label{eq:jacobi-formula}
(\det(A(t)))' = \det(A(t)) \cdot \tr\left(A(t)^{-1}\cdot A'(t) \right) = \tr \left(\adj(A(t)) \cdot A'(t)\right).
\end{equation}
As a first application, we see that
\begin{equation}\label{eq:C_1}
C_1 = \tr(\adj(H_0)\cdot H_1).
\end{equation}
We claim that the row vectors
\[\br:=\begin{pmatrix}
r_2 & \cdots & r_m
\end{pmatrix}
\qquad \text{and} \qquad \mbs:=\begin{pmatrix} \langle V_{\cdot,1}, V_{\cdot,2} \rangle & \cdots & \langle V_{\cdot,1}, V_{\cdot,m} \rangle \end{pmatrix}\]
satisfy
\begin{equation}\label{eq:H_1}
H_1=\br^T\mbs+\mbs^T\br.
\end{equation}
Indeed, for $1\leq i,i' \leq m-1$ we have
\[
\begin{split}
(H_1)_{i,i'}&=\left.\frac{d}{dt}\langle (V\exp(tR))_{\cdot,i+1}, (V\exp(tR))_{\cdot,i'+1} \rangle \right|_{t=0}
\\ & = \left. \frac{d}{dt} \langle V_{\cdot,i+1}+ V_{\cdot,1} r_{i+1} t + O(t^2), V_{\cdot,i'+1} + V_{\cdot,1} r_{i'+1} t + O(t^2) \rangle \right|_{t=0}
\\ & = r_{i+1} \langle V_{\cdot,1}, V_{\cdot,i'+1} \rangle + r_{i'+1} \langle V_{\cdot,i+1}, V_{\cdot,1} \rangle.
\end{split}
\]
Now substituting \eqref{eq:H_1} into \eqref{eq:C_1}, using the additivity and the cyclic invariance of the trace and finally the symmetry of $H_0=V_{(m-1)}^T V_{(m-1)}$, we see that
\begin{equation}\label{eq:C_1-result}
C_1=\tr(\adj(H_0) (\br^T\mbs+\mbs^T\br)) = 2 (\mbs \adj(H_0)) \br^T.
\end{equation}

Similarly, we can compute the second derivative using Jacobi's formula \eqref{eq:jacobi-formula}:
\begin{align}
\notag \det(H(t))''|_{t=0} & = \big(\det(H(t)) \cdot \tr(H(t)^{-1} H'(t))\big)'|_{t=0}
\\ \notag & = \left(\det(H(0)) \left((\tr(H(0)^{-1}H'(0))^2 + \tr(H(0)^{-1}H''(0)-(H(0)^{-1}H'(0))^2\right)\right)
\\ \label{eq:C_2} & = 2 \det(H_0) \cdot \tr(H_0^{-1} H_2) + O(\|\mbs\|).
\end{align}
Here we used \eqref{eq:C_1}, \eqref{eq:H_1}, and the identity
\[
(H(t)^{-1})' = -H(t)^{-1} H'(t) H(t)^{-1}
\]
that follows from the Leibniz rule applied to $H(t)H(t)^{-1}$. We claim that
\begin{equation}\label{eq:H_2}
H_2=\langle V_{\cdot,1}, V_{\cdot,1} \rangle \br^T \br - \frac{1}{2} H_0 \br^T \br - \frac{1}{2} \br^T \br H_0.
\end{equation}
Indeed, for any $1\leq i,i'\leq m-1$, we have 
\begin{align*}
(H_2)_{i,i'}&=\frac{1}{2}\left.\frac{d^2}{dt^2}\langle (V\exp(tR))_{\cdot,i+1}, (V\exp(tR))_{\cdot,i'+1} \rangle \right|_{t=0}
\\ &= \langle (VR)_{\cdot,i+1}, (VR)_{\cdot,i'+1} \rangle + \frac{1}{2} \langle V_{\cdot,i+1}, (VR^2)_{\cdot,i'+1} \rangle + \frac{1}{2} \langle (VR^2)_{\cdot,i+1}, V_{\cdot,i'+1} \rangle
\\ & = \langle r_{i+1} V_{\cdot, 1}, r_{i'+1} V_{\cdot,1} \rangle + \frac{1}{2} \langle V_{\cdot,i+1}, - V r_{i'+1} \br^T \rangle + \frac{1}{2} \langle - V r_{i+1} \br^T, V_{\cdot,i'+1} \rangle
\\ & = r_{i+1} r_{i'+1} \langle V_{\cdot,1},V_{\cdot,1}\rangle - \frac{1}{2} V_{\cdot,i+1}^T V r_{i'+1} \br^T - \frac{1}{2} r_{i+1} \br V^T V_{\cdot,i'+1} 
\\ & = \langle V_{\cdot,1},V_{\cdot,1}\rangle r_{i+1} r_{i'+1} - \frac{1}{2} ((H_0)_{i,\cdot}) (\br^T \br)_{\cdot,i'} - \frac{1}{2} (\br^T \br)_{i,\cdot} ((H_0)_{\cdot,i'})
\\ & = (\langle V_{\cdot,1},V_{\cdot,1}\rangle \br^T \br)_{i,i'} - \frac{1}{2} ( H_0 \br^T \br)_{i,i'} - \frac{1}{2} (\br^T \br H_0)_{i,i'}.
\end{align*}
Substituting \eqref{eq:H_2} into \eqref{eq:C_2}, we get that
\begin{align}
\notag C_2 & = \det(H_0) \cdot \br \langle (V_{\cdot,1},V_{\cdot,1}\rangle H_0^{-1} -\id_{m-1})\br^T + O(\|\mbs\|)
\\ \label{eq:C_2-result} & = \det(H_0) \cdot \br H_0^{-1/2} \left( \langle V_{\cdot,1},V_{\cdot,1} \rangle \id_{m-1} - H_0\right) H_0^{-1/2} \br^T + O(\|\mbs\|).
\end{align}

Let $\delta>0$ be a sufficiently small constant depending only on $\Omega$.
First assume that
\[
|\langle V_{\cdot,1}, V_{\cdot,i'} \rangle|\geq \delta^2 E_{m}(x) \qquad \text{for some} \qquad 2\leq i'\leq m.
\]
Then the unit vector $\br:=\mbs\adj(H_0)/\|\mbs\adj(H_0)\|$ yields \eqref{eq:maxC1C2} via \eqref{eq:C_1-result}:
\[
C_1=2\|\mbs\adj(H_0)\|\gg \delta^2 E_{m}(x).
\]
Now assume that
\[
|\langle V_{\cdot,1}, V_{\cdot,i'} \rangle| < \delta^2 E_{m}(x), \qquad 2\leq i'\leq m,
\]
and
\[
|\langle V_{\cdot,i}, V_{\cdot,i'} \rangle|\geq \delta E_{m}(x) \qquad \text{for some} \qquad 2\leq i<i'\leq m.
\]
Then the symmetric matrix $\langle V_{\cdot,1},V_{\cdot,1} \rangle \id_{m-1} - H_0$ inside \eqref{eq:C_2-result} has a (non-diagonal) entry of absolute value $\gg \delta E_{m}(x)$, hence \eqref{eq:C_2-result} gives for a suitable unit row vector $\br$ that 
\[
|C_2|\geq \alpha_1 \delta E_{m}(x) - \alpha_2 \|\mbs\| \geq (\alpha_1 \delta - \alpha_2 \delta^2) E_{m}(x)
\]
with some $\alpha_1,\alpha_2>0$ depending only on $\Omega$. 
Hence if $\delta>0$ is small enough in terms of $\Omega$, we arrive at \eqref{eq:maxC1C2} again. Finally, assume that
\[
|\langle V_{\cdot,i},V_{\cdot,i'} \rangle|<\delta E_{m}(x), \qquad 1\leq i<i'\leq m.
\]
This implies that
\[
(V^T V)_{i,i'} \ll \delta E_{m}(x), \qquad 1\leq i<i'\leq m,
\]
and
\[
\det(V^T V)=\langle V_{\cdot,1},V_{\cdot,1} \rangle \dotsb \langle V_{\cdot,m},V_{\cdot,m} \rangle + O(\delta E_{m}(x)).
\]
We can use these two bounds to estimate (cf. \eqref{eq:Em-def})
\[
E_{m}(x)=\|V^TV - \det(V^T V)^{1/m}\id_m\|_{\infty}.
\]
The result is a kind of converse to \eqref{eq:E_m-related-to-column-lengths}, in the current situation:
\begin{align*}
E_{m}(x) & \ll \max_{2\leq i\leq m} \left| \langle V_{\cdot,i}, V_{\cdot,i} \rangle - (\langle V_{\cdot,1},V_{\cdot,1} \rangle \dotsb \langle V_{\cdot,m},V_{\cdot,m} \rangle)^{1/m} \right|
\\ & \ll \max_{2\leq i\leq m} \left| \langle V_{\cdot,i}, V_{\cdot,i} \rangle - \langle V_{\cdot,1}, V_{\cdot,1} \rangle \right|.
\end{align*}
Then the symmetric matrix $\langle V_{\cdot,1},V_{\cdot,1} \rangle \id_{m-1} - H_0$ inside \eqref{eq:C_2-result} has a (diagonal) entry of absolute value $\gg E_{m}(x)$, hence \eqref{eq:C_2-result} gives for a suitable unit row vector $\br$ that 
\[
|C_2|\geq \alpha_3 E_{m}(x) - \alpha_4 \|\mbs\| \geq (\alpha_3 - \alpha_4 \delta^2) E_{m}(x)
\]
with some $\alpha_3,\alpha_4>0$ depending only on $\Omega$. 
Hence if $\delta>0$ is small enough in terms of $\Omega$, the inequality \eqref{eq:maxC1C2} holds as in the earlier two cases.
\end{proof}

\subsection{A support-based bound}\label{sec43}
The key result in this subsection is Lemma~\ref{lemma:thm3-big-ell} which gives a bound for $\psi_{\tau}^{U}(x)$ by sacrificing cancellation in the integral over $K$ and using the fact that the integrand is often small. This is useful for large $\bm \ell$. Recall our earlier convention \eqref{eq:ordering-ell-s} and the definition \eqref{eq:D-def}. 

\begin{lemma}\label{lemma:hat-F-D} If $r\geq 1$ in \eqref{Mprimedecomposition}, then
\begin{equation}\label{eq:lemma5bound}
\|\hat F_{\tau}^{\sigma'} (m'(x))\|_{2,1}\leq \dim \tau \cdot \big(1 - D(x)^2/4 \big)^{\ell_1/2},\qquad x\in G.
\end{equation}
\end{lemma}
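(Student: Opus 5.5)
We will use the explicit block description \eqref{hatFtausigma2}. Write $x=\kappa'(x)m'(x)e^{H'(x)}n'(x)$ with $m'(x)=\diag(x_1,\dotsc,x_r,\epsilon_1,\dotsc,\epsilon_s)$, $x_p\in G(2)$. By \eqref{hatFtausigma2}, $\hat F_\tau^{\sigma'}(m'(x))$ is $d_\tau/2^r$ times a $2^r\times 2^r$ block which is a tensor product of the $2\times 2$ matrices \eqref{basicphi}; all other entries vanish. Each factor in \eqref{basicphi} has exactly one nonzero entry in each column, so the tensor product does too. Hence the block has $2^r$ nonzero columns, each with Euclidean norm equal to the product over $p$ of the absolute values of the entries of $\varphi_{\omega_{\ell_p}}^{\delta_{\ell_p}}(x_p)$. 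These absolute values are $|2/(a\mp ib\pm ic+d)|^{\ell_p}$ or their $\det=-1$ analogues. Therefore
\[
\|\hat F_\tau^{\sigma'}(m'(x))\|_{2,1}=d_\tau\prod_{p=1}^r\left|\frac{2}{a_p\pm ib_p\mp ic_p+ d_p}\right|^{\ell_p},
\]
with signs as in \eqref{basicphi}. The product is symmetric in the two columns, since conjugation leaves absolute values unchanged.

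Each factor is at most $1$. For $x_p=\bigl(\begin{smallmatrix}a&b\\c&d\end{smallmatrix}\bigr)$ with $ad-bc=1$,
\[
|a+d+i(b-c)|^2=(a+d)^2+(b-c)^2=a^2+b^2+c^2+d^2+2\geq 4,
\]
and the same holds when $ad-bc=-1$ after replacing $a$ by $-a$. So it suffices to bound the $p=1$ factor by $(1-D(x)^2/4)^{\ell_1/2}$, which amounts to
\[
\frac{4}{\|x_1\|^2+2}\leq 1-\frac{D(x)^2}{4}.
\]
Here $\|x_1\|^2=a^2+b^2+c^2+d^2$ is well defined under $x_1\mapsto k'x_1$ with $k'\in\OO(2)$, which is exactly the ambiguity of $m'(x)$.

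Next we relate $D(x)$ to $x_1$. The first two columns of $x$ are $\kappa'(x)$ applied to the first two columns of $m'(x)e^{H'(x)}n'(x)$. Because $N'$ is upper triangular in block form and $e^{H'(x)}$ acts on the first $\OO(2)$-block by a scalar $y>0$, the first two columns equal $\kappa'(x)\bigl(\begin{smallmatrix}y\,x_1\\0\end{smallmatrix}\bigr)$. Since $\kappa'(x)$ is orthogonal, $D(x)=(\alpha-\beta)/(\alpha+\beta)$, where $\alpha=a^2+c^2$ and $\beta=b^2+d^2$ are the squared column lengths of $x_1$; the factor $y$ cancels.

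It remains to prove the two-variable inequality. Put $S=\alpha+\beta$. Hadamard gives $\alpha\beta\geq(ad-bc)^2=1$, so $S^2-(\alpha-\beta)^2=4\alpha\beta\geq 4$ and hence $1-D^2\geq 4/S^2$. We need $4/(S+2)\leq 1-D^2/4$, and it is enough to show
\[
\frac{4}{S+2}\leq 1-\frac{1-4/S^2}{4}\cdot\text{(worst case)},
\]
that is, with $D^2\leq 1-4/S^2$,
\[
1-\frac{D^2}{4}\geq \frac34+\frac1{S^2}.
\]
So we want $4/(S+2)\leq 3/4+1/S^2$ for $S\geq 2$. At $S=2$ both sides equal $1$. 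For $S\geq 2$, clear denominators: $16S^2\leq(3S^2+4)(S+2)$ becomes $3S^3-10S^2+4S+8\geq0$, which factors as $(S-2)(3S^2-4S-4)=(S-2)^2(3S+2)\geq0$. This closes the proof.

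The main obstacle is the relation $D(x)=$ column-length ratio of $x_1$. It needs care with the ordering of the blocks in $M'$: the first $G(2)$ block must correspond to $\ell_1$ and to the first two columns, per \eqref{eq:ordering-ell-s}. It also needs the unipotent $N'$ not to disturb the first two columns, which holds because the first block sits in the top-left corner.
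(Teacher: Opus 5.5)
Your proof is correct and follows essentially the same route as the paper. You use the tensor-product structure from \eqref{hatFtausigma2} to get $\|\hat F_\tau^{\sigma'}(m'(x))\|_{2,1}=\dim\tau\cdot\prod_p\bigl(4/(\|x_p\|^2+2)\bigr)^{\ell_p/2}$, drop all factors except $p=1$, identify $D(x)$ with the column-length ratio of $x_1$, and finish with an elementary inequality based on $\alpha\beta\geq 1$. The paper identifies $D(x)$ with that ratio by reducing to $x\in M'$ via left $K$- and right $A'N'$-invariance, which is equivalent to your direct column computation.

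The only other difference is cosmetic and lies in the last step. You get $D^2\leq 1-4/S^2$ directly from $S^2-(\alpha-\beta)^2=4\alpha\beta\geq 4$ and close with $(S-2)^2(3S+2)\geq 0$. The paper instead confines $u,v$ to an interval determined by $\lambda=(S-2)/(S+2)$. That yields the same bound $|D|\leq 2\sqrt{\lambda}/(1+\lambda)=\sqrt{1-4/S^2}$ before the paper relaxes it to $2\sqrt{\lambda}$.
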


\begin{remark} By \eqref{tracebound}, the left-hand side equals $\|F_{\tau}^{\sigma'}(x)\|_{2,1}$, hence it provides an upper bound for the trace $T_{\tau}^{\sigma'}(x)$ of $F_{\tau}^{\sigma'}(x)$.
\end{remark}

\begin{proof} By the discussion below \eqref{tracebound}, the function $\|\hat F_{\tau}^{\sigma'} (m'(\cdot))\|_{2,1}$ is left $K$-invariant and right $A'N'$-invariant. The same is obviously true for the function $D$, whence we may assume that
\[
x=m'(x)=\diag\left(\begin{pmatrix}a_1 & b_1 \\ c_1 & d_1 \end{pmatrix},\dotsc,\begin{pmatrix}a_r & b_r \\ c_r & d_r \end{pmatrix},\pm 1,\dotsc,\pm 1\right) \in M'.
\]
Using \eqref{hatFtausigma2}, the multiplicativity of the $\|\cdot\|_{2,1}$ norm under tensor products, and \eqref{basicphi}, we see
\[
\|\hat F_{\tau}^{\sigma'} (m'(x))\|_{2,1}\leq \dim\tau \cdot \prod_{p=1}^r \left(\frac{4}{a_p^2+b_p^2+c_p^2+d_p^2+2}\right)^{\ell_p/2}.
\]
The denominators here are at least $4$ by the determinant condition $a_pd_p-b_pc_p=\pm 1$. Therefore, dropping all but the first factor, it follows that
\[
\|\hat F_{\tau}^{\sigma'} (m'(x))\|_{2,1} \leq \dim \tau \cdot \left(\frac{4}{a_1^2+b_1^2+c_1^2+d_1^2+2}\right)^{\ell_1/2}.
\]
Hence it suffices to show that
\[\frac{4}{a_1^2+b_1^2+c_1^2+d_1^2+2}\leq 1-\frac{1}{4}\left(\frac{a_1^2+c_1^2-b_1^2-d_1^2}{a_1^2+b_1^2+c_1^2+d_1^2}\right)^2.\]

Let us introduce the notation
\[u:=a_1^2+c_1^2,\qquad v:=b_1^2+d_1^2.\]
Then
\[uv=(a_1c_1+b_1d_1)^2+(a_1d_1-b_1c_1)^2\geq 1,\]
and we are left with proving that
\begin{equation}
\label{conclusion}
\frac{1}{4}\left(\frac{u-v}{u+v}\right)^2\leq\frac{u+v-2}{u+v+2}.
\end{equation}
Let $\lambda\in[0,1)$ be the number on the right-hand side. Then from $uv\geq 1$ we obtain that
\[\frac{u+u^{-1}-2}{u+u^{-1}+2}\leq\lambda\qquad\text{and}\qquad \frac{v+v^{-1}-2}{v+v^{-1}+2}\leq\lambda,\]
which in turn implies that
\[\frac{1-\sqrt{\lambda}}{1+\sqrt{\lambda}}\leq u,v\leq\frac{1+\sqrt{\lambda}}{1-\sqrt{\lambda}}.\]
Therefore,
\[\left|\frac{u-v}{u+v}\right|\leq\frac{(1+\sqrt{\lambda})^2-(1-\sqrt{\lambda})^2}{(1+\sqrt{\lambda})^2+(1-\sqrt{\lambda})^2}\
=\frac{2\sqrt{\lambda}}{1+\lambda}\leq 2\sqrt{\lambda},\]
and the required conclusion \eqref{conclusion} follows.
\end{proof}

We will also make use of the following real-analytic fact.
\begin{lemma}\label{lemma:from-real-analysis} Assume $f:I\to\RR$ is a $j$ times differentiable real-valued function on a real interval $I$ such that $|f^{(j)}(t)|\geq \Delta$ with some $\Delta>0$. Then for any $h\geq 0$ we have 
\[
\lambda(\{t\in I: |f(t)| < h \}) \leq (2^{j+1}-2) h^{1/j} \Delta^{-1/j},
\]
where $\lambda$ denotes the Lebesgue measure.
\end{lemma}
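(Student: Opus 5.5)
We will prove this sublevel-set estimate by induction on $j$. Throughout, $I$ may be taken to be any interval, and we write $c_j:=2^{j+1}-2$, so that $c_1=2$ and $c_{j+1}=2c_j+2$.

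\emph{Base case $j=1$.} Here $f'$ has constant sign: it is a derivative satisfying $|f'|\geq\Delta>0$, so by Darboux's theorem it cannot change sign. Hence $f$ is strictly monotone. Moreover, for $s<t$ in $I$ the mean value theorem gives $|f(t)-f(s)|\geq\Delta(t-s)$. The set $\{t\in I:|f(t)|<h\}$ is therefore an interval on which $f$ ranges over a subset of $(-h,h)$. Its length is at most $2h/\Delta=c_1h\Delta^{-1}$.

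\emph{Inductive step from $j$ to $j+1$.} Assume $|f^{(j+1)}|\geq\Delta$ on $I$. Applying Darboux's theorem to $f^{(j+1)}$ shows that $f^{(j)}$ is strictly monotone. We introduce a parameter $\beta>0$ to be chosen later. Since $f^{(j)}$ is monotone and, by the base case, the set $J:=\{t\in I:|f^{(j)}(t)|<\beta\}$ is an interval of length at most $2\beta/\Delta$.

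The complement $I\setminus J$ consists of at most two intervals, and on each of them $|f^{(j)}|\geq\beta$. The inductive hypothesis applied on each piece bounds the sublevel set there by $c_jh^{1/j}\beta^{-1/j}$. Altogether,
\[
\lambda(\{t\in I:|f(t)|<h\})\leq \frac{2\beta}{\Delta}+2c_j h^{1/j}\beta^{-1/j}.
\]

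We now choose $\beta:=h^{1/(j+1)}\Delta^{j/(j+1)}$, which balances the two terms. Then $2\beta/\Delta=2h^{1/(j+1)}\Delta^{-1/(j+1)}$. Also $h^{1/j}\beta^{-1/j}=h^{1/j-1/(j(j+1))}\Delta^{-1/(j+1)}=h^{1/(j+1)}\Delta^{-1/(j+1)}$. The total bound is therefore $(2+2c_j)h^{1/(j+1)}\Delta^{-1/(j+1)}=c_{j+1}h^{1/(j+1)}\Delta^{-1/(j+1)}$, as required.

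The edge case $h=0$ is trivial, since the sublevel set is then empty; so we may take $h>0$, which makes $\beta>0$.

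The only delicate point is justifying monotonicity of $f^{(j)}$ when $f^{(j+1)}$ is merely assumed to exist rather than be continuous. Darboux's intermediate value property for derivatives handles this, so no continuity hypothesis is needed.
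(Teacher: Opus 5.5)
Your proof is correct and is essentially the paper's argument. The paper fixes the scale $C=h^{1/j}\Delta^{-1/j}$, thresholds $f^{(i)}$ at $hC^{-i}$ from the top derivative downward, and counts at most $2^{j-i}-1$ excluded intervals of length $2C$; your choice $\beta=h^{1/(j+1)}\Delta^{j/(j+1)}$ reproduces exactly these thresholds and this common scale once your induction is unrolled, and your appeal to Darboux's theorem cleanly justifies the constant sign of the top derivative, which the paper takes for granted.
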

\begin{proof} If $g$ is a differentiable function on an interval such that either $g'\geq B$ or $g'\leq -B$ with some $B>0$, then $g$ is increasing or decreasing, respectively. Given $C > 0$, one can easily see (e.g. via Lagrange's mean value theorem) that $|g|\geq CB$ holds outside a subinterval of length at most $2C$.

We apply this with $C:=h^{1/j} \Delta^{-1/j}$ successively for the sequence
\[
g:=f^{(i)},\qquad B:=h C^{-i-1}, \qquad i=j-1,\dotsc,0,
\]
on the intervals where $|g'|\geq B$ holds (which is $I$ for $i=j-1$). By induction,
\[
|f^{(i)}|\geq h C^{-i}, \qquad i=j-1,\dotsc,0,
\]
holds outside the union of at most $2^{j-i}-1$ intervals, each of length at most $2C$. For $i=0$, this implies the conclusion of the lemma.
\end{proof}

Let $\mu_K$ denote the Haar probability measure of $K$.
\begin{lemma}\label{lemma:D-is-often-large} For any $x\in\Omega\setminus K$, and any $h>0$, we have
\[
\mu_K\left(\{k\in K: |D(xk)| < h\}\right) \ll h^{1/3} \cdot \dist(x,K)^{-1/3}.
\]
\end{lemma}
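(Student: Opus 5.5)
The plan is to combine Lemma~\ref{lemma:D-derivatives}, which supplies a direction of non-degeneracy, with Lemma~\ref{lemma:from-real-analysis}, which turns that non-degeneracy into a sublevel-set bound. We will apply the latter slice by slice in local exponential coordinates on $K$ and then cover $K$ by finitely many translates of a small neighbourhood. Since $\dist(x,K)\ll 1$ on $\Omega$, the bound is trivial when $h\geq c\,\dist(x,K)$ for a fixed constant $c$. We may therefore also assume that $h$ is small compared to $\dist(x,K)$, although this is not actually needed.

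\textbf{Reduction to a local estimate.} Cover $K$ by $N\ll 1$ translates $k_0U(c_1/2)$, with $N$ depending only on $n$ and $\Omega$ through $c_1$. It then suffices to show, for each $k_0\in K$, that
\[
\mu_K\bigl(\{k\in k_0U(c_1/2): |D(xk)|<h\}\bigr)\ll h^{1/3}\dist(x,K)^{-1/3}.
\]
Set $x':=xk_0$. This point lies in $\Omega$, because $\Omega$ is bi-$K$-invariant, and $\dist(x',K)=\dist(x,K)$. Apply Lemma~\ref{lemma:D-derivatives} to $x'$ to obtain an orthonormal basis $(Y_1,\dotsc,Y_{\dim K})$ of $\mfk$. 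Consider the map $(t_1,\dotsc,t_{\dim K})\mapsto \exp(\sum_l t_lY_l)$ on the cube $[-c_1,c_1]^{\dim K}$. Because $c_1<R$, \eqref{eq:lebesgue-haar} shows that $\mu_K$ pulled back to the cube has density $\asymp 1$ with respect to Lebesgue measure. The cube also contains the preimage of $U(c_1/2)$ under this map, since the ball of radius $c_1/2$ in $\mfk$ lies inside the cube. It therefore suffices to bound the Lebesgue measure of
\[
\{\mathbf t\in[-c_1,c_1]^{\dim K}: |D(x'\exp(\textstyle\sum_l t_lY_l))|<h\}.
\]

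\textbf{Slicing.} Fix $t_2,\dotsc,t_{\dim K}$ and set $f(t_1):=D(x'\exp(\sum_l t_lY_l))$, a smooth function on $I=[-c_1,c_1]$. By Lemma~\ref{lemma:D-derivatives}, at every $t_1\in I$ we have $|f^{(j)}(t_1)|\geq c_1\dist(x,K)$ for some $j\in\{1,2,3\}$, but $j$ may vary with $t_1$. This is the main obstacle, because Lemma~\ref{lemma:from-real-analysis} requires a single $j$ on an interval. We handle it as follows. For each $j$, the set $I_j:=\{t_1: |f^{(j)}(t_1)|\geq c_1\dist(x,K)\}$ is closed, and $I=I_1\cup I_2\cup I_3$. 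We need $I_j$ to be a union of a bounded number of intervals. For this we use that $f$ is real-analytic: $D$ is a rational function in the matrix entries, and $\exp$ is analytic. Analyticity alone does not bound the number of components uniformly in the parameters, however. A cleaner route is to first shrink $c_1$ and apply a Taylor argument. By Lemma~\ref{lemma:D-derivatives-upper-bound}, $|f^{(4)}|\ll\dist(x,K)$, so on subintervals of length $\eta$ (with $\eta$ a small constant) the lower bound for $f^{(j)}$ with $j\le3$ at one point propagates to a bound $\geq \tfrac{c_1}{2}\dist(x,K)$ on the whole subinterval. For $j=3$ this is immediate from the bound on $f^{(4)}$. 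For $j=1,2$ it follows because $f^{(j+1)}\ll\dist(x,K)$ by Lemma~\ref{lemma:D-derivatives-upper-bound}, so $f^{(j)}$ changes by at most $O(\eta\dist(x,K))$ across the subinterval.

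We therefore partition $I$ into $\ll 1/\eta\ll 1$ subintervals $J$ of length $\eta$. On each $J$, some fixed $j\le 3$ satisfies $|f^{(j)}|\geq \tfrac{c_1}{2}\dist(x,K)$ throughout $J$. Lemma~\ref{lemma:from-real-analysis} then gives
\[
\lambda(\{t_1\in J:|f|<h\})\ll h^{1/j}\dist(x,K)^{-1/j}.
\]
Using $h/\dist(x,K)\ll 1$, which we may assume since otherwise the claim is trivial, we get $h^{1/j}\dist(x,K)^{-1/j}\ll h^{1/3}\dist(x,K)^{-1/3}$ for every $j\le 3$.

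\textbf{Conclusion.} Summing over the $O(1)$ subintervals $J$ and integrating over $(t_2,\dotsc,t_{\dim K})$ by Fubini bounds the measure of the slice set by $\ll h^{1/3}\dist(x,K)^{-1/3}$. Summing over the $N$ covering translates completes the proof.
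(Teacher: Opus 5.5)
Your proof is correct and follows the same route as the paper. You cover $K$ by finitely many translates of a small neighbourhood, apply Lemma~\ref{lemma:D-derivatives} at $xk_0$, slice in $t_1$ via Fubini, invoke Lemma~\ref{lemma:from-real-analysis}, and use $h\leq\dist(x,K)$ to reduce $\max_{j\leq 3}(h/\dist(x,K))^{1/j}$ to the exponent $1/3$. Your subdivision into short subintervals, where the upper bounds of Lemma~\ref{lemma:D-derivatives-upper-bound} (applied along the slice, as the paper does in proving Lemma~\ref{lemma:D-derivatives}) freeze a single $j$, usefully makes explicit a step the paper skips when it feeds the pointwise maximum over $j$ straight into Lemma~\ref{lemma:from-real-analysis}.
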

\begin{proof} We shall assume that $\dist(x,K)\geq h$, for otherwise the statement is obvious. Let $c_1$ be as in Lemma~\ref{lemma:D-derivatives}. By the compactness of $K$, we may find $k_1,\dotsc,k_m\in K$ with $m:=m(\Omega)$ such that (cf. \eqref{defU})
\[
K=\bigcup_{i=1}^m k_i U(c_1).
\]
Let us fix $1\leq i\leq m$ for a moment. We shall apply Lemma~\ref{lemma:D-derivatives} with $xk_i$ in the role of $x$, and we shall abbreviate
\[
D_i(\bt):=D\left(xk_i \exp\left(\sum_{l=1}^{\dim K} t_l Y_l\right)\right),\qquad \bt:=(t_1,\dotsc,t_{\dim K}).
\]
By \eqref{eq:lebesgue-haar} and Fubini's theorem,
\begin{align*}
\int_{k_i U(c_1)} \mathbf{1}_{|D(xk)|<h}\,dk & \ll \int_{[-c_1,c_1]^{\dim K}} \mathbf{1}_{|D_i(\bt)|<h}\, d\bt \\ & = \int_{-c_1}^{c_1} \dotsc \int_{-c_1}^{c_1} \lambda(\{t_1\in[-c_1,c_1]:|D_i(\bt)|<h\})\, dt_2 \dotsc \, dt_{\dim K}.
\end{align*}
Then, combining Lemmata~\ref{lemma:D-derivatives}~and~\ref{lemma:from-real-analysis}, and using that $\dist(x,K)\geq h$,
\[
\int_{k_i U(c_1)} \mathbf{1}_{|D(xk)|<h}\,dk \ll \max_{1\leq j\leq 3} \left(h^{1/j} \cdot \dist(x,K)^{-1/j}\right) = h^{1/3} \cdot \dist(x,K)^{-1/3}.
\]
Finally, summing up these bounds, the result follows.
\end{proof}

For later purposes, we generalize $D$ for any $1\leq j<j' \leq n$ as
\begin{equation}\label{eq:D-def-general}
D_{j,j'}(x):=\frac{\|x_{\cdot,j}\|^2 - \|x_{\cdot,j'}\|^2}{\|x_{\cdot,j}\|^2 + \|x_{\cdot,j'}\|^2}, \qquad x\in G.
\end{equation} 
Then Lemma~\ref{lemma:D-is-often-large} immediately generalizes (with the same conditions on $x$ and $h$) as
\begin{equation}\label{eq:D-is-often-large-general}
\mu_K\left(\{k\in K: |D_{j,j'}(xk)| < h\}\right) \ll h^{1/3} \cdot \dist(x,K)^{-1/3}.
\end{equation}
Indeed, if $p\in K$ is a permutation matrix which, acting on the right, moves the first two columns into positions $j$ and $j'$, respectively, then
\[
D_{j,j'}(xk)=D(xkp^{-1}),
\]
and the right $K$-invariance of the Haar measure proves \eqref{eq:D-is-often-large-general}.

\begin{lemma}\label{lemma:thm3-big-ell} For any $x\in\Omega$, we have
\begin{equation}\label{eq:thm3-big-ell}
\psi_{\tau}^{U}(x) \ll_{\Re\mu'} \dim \tau \cdot \left(1+\|\bm\ell\|_{\infty}^{1/2}\cdot\dist(x,K)\right)^{-1/3}.
\end{equation}
\end{lemma}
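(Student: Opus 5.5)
We start from the pointwise bound \eqref{tracefunctionbound}. Since $x\in\Omega$ and $\Omega$ is bi-$K$-invariant, we have $xk\in\Omega$, so the exponential factor $e^{(\Re\mu'-\rho')(H'(xk))}$ is $\ll_{\Re\mu'}1$ uniformly in $k\in K$, with continuous dependence on $\Re\mu'$. Hence
\[
\psi_\tau^U(x)\ll_{\Re\mu'}\int_K\bigl\|\hat F_\tau^{\sigma'}(m'(xk))\bigr\|_{2,1}\,dk .
\]
If $r=0$ then $\|\bm\ell\|_\infty=0$ and the claim is just the baseline bound \eqref{eq:psi-trivial-bound}. Likewise, whenever $\|\bm\ell\|_\infty^{1/2}\dist(x,K)\ll1$ the baseline bound suffices. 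So we may assume $r\geq1$, $x\notin K$, and $\ell_1=\|\bm\ell\|_\infty$ by \eqref{eq:ordering-ell-s}.

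Lemma~\ref{lemma:hat-F-D} bounds the integrand by
\[
\dim\tau\cdot\bigl(1-D(xk)^2/4\bigr)^{\ell_1/2}\leq\dim\tau\cdot\exp\bigl(-\ell_1D(xk)^2/8\bigr).
\]
We fix a threshold $h>0$ and split $K$ according to whether $|D(xk)|<h$ or not.

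\begin{itemize}
\item On the small set $\{k:|D(xk)|<h\}$ we use the trivial bound $\dim\tau$. By Lemma~\ref{lemma:D-is-often-large}, this set has Haar measure $\ll h^{1/3}\dist(x,K)^{-1/3}$.
\item On the complement the integrand is at most $\dim\tau\cdot e^{-\ell_1h^2/8}$.
\end{itemize}

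Together,
\[
\psi_\tau^U(x)\ll_{\Re\mu'}\dim\tau\cdot\Bigl(h^{1/3}\dist(x,K)^{-1/3}+e^{-\ell_1h^2/8}\Bigr).
\]

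We choose $h:=C\,\ell_1^{-1/2}\sqrt{\log(2+\ell_1\dist(x,K)^2)}$. The second term is then at most a negative power of $1+\ell_1^{1/2}\dist(x,K)$. The first term becomes $\bigl(\ell_1^{1/2}\dist(x,K)\bigr)^{-1/3}$ times a logarithmic factor.

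That logarithm is the main obstacle: the target bound \eqref{eq:thm3-big-ell} has exponent exactly $-1/3$ with no $\eps$-loss, so the logarithm has to be removed. To do this, I would integrate in dyadic shells rather than cutting at a single threshold. With $\mu_K$ the Haar probability measure, write
\[
\int_K e^{-\ell_1D(xk)^2/8}\,dk\leq\sum_{j\geq0}e^{-\ell_1(2^{j-1}h_0)^2/8}\,\mu_K\bigl(\{k:|D(xk)|<2^jh_0\}\bigr),
\]
where $h_0:=\ell_1^{-1/2}$ and the $j=0$ term covers $\{|D(xk)|<h_0\}$. Lemma~\ref{lemma:D-is-often-large} bounds the $j$-th term by
\[
\ll e^{-4^{j}/32}\,2^{j/3}\,h_0^{1/3}\dist(x,K)^{-1/3}.
\]
The series over $j$ converges absolutely, giving $\ll\ell_1^{-1/6}\dist(x,K)^{-1/3}$ with no logarithm. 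This holds in the range $h_0\leq\dist(x,K)$, where Lemma~\ref{lemma:D-is-often-large} is nontrivial; outside it we are already in the baseline regime. Combining with the baseline bound gives $\dim\tau\cdot\min\bigl(1,(\ell_1^{1/2}\dist(x,K))^{-1/3}\bigr)$, which is \eqref{eq:thm3-big-ell}.
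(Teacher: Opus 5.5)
Your final dyadic argument is correct and is essentially the paper's proof. Both combine \eqref{tracefunctionbound} with Lemma~\ref{lemma:hat-F-D}, decompose dyadically in $|D(xk)|$, apply Lemma~\ref{lemma:D-is-often-large} on each shell, and note that the resulting series is dominated by the scale $|D(xk)|\asymp\ell_1^{-1/2}$; the paper runs the shells $2^{-i-1}\leq|D(xk)|<2^{-i}$ all the way down, while you lump $|D(xk)|<\ell_1^{-1/2}$ into one piece, which is equivalent. Two cosmetic points: the single-threshold detour with the logarithm can be dropped, and the $j=0$ term should carry the factor $1$ rather than $e^{-1/32}$, which only changes the constant.
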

\begin{proof} Let $r$ be as in \eqref{Mprimedecomposition}. We shall assume that 
\[r\geq 1\qquad\text{and}\qquad\|\bm\ell\|_{\infty}^{1/2}\cdot\dist(x,K)\geq 1,\]
for otherwise we are done by the baseline bound \eqref{eq:psi-trivial-bound}. Combining \eqref{tracefunctionbound} and \eqref{eq:lemma5bound}, we see that
\[\psi_\tau^{U}(x)\ll_{\Re\mu'} \dim\tau \cdot \int_K \left(1-D(xk)^2/4\right)^{\ell_1/2}\,dk.\]
We decompose the integral according to dyadic ranges	for $D(xk)$. More precisely, we apply Lemma~\ref{lemma:D-is-often-large} to conclude for every $0<h\leq 1$ that
\[\int_{h/2\leq |D(xk)|<h}\left(1-D(xk)^2/4\right)^{\ell_1/2}\,dk\ll
\exp(-h^2\ell_1/32)\cdot h^{1/3} \cdot \dist(x,K)^{-1/3}.\]
We add up these bounds for all $h=2^{-i}$ with $i$ a nonnegative integer. The maximum of the right-hand side is achieved at $h\asymp\ell_1^{-1/2}$, while the terms decay at least exponentially as we move away from this value. Upon noting that $\ell_1 = \|\bm\ell\|_{\infty}$, the desired bound follows.
\end{proof}

\subsection{Preliminaries to the stationary phase analysis} 
We shall assume that $\Im\mu'\neq 0$, for otherwise the conclusion of Theorem~\ref{thm3} is clear by Lemma~\ref{lemma:thm3-big-ell}. We start by removing the ambiguity in the generalized Iwasawa decomposition \eqref{iw3} by requiring from now on that that $m'(x)$ is upper-triangular with positive diagonal entries.
\begin{lemma}\label{lemma:iwasawa-coordinates-are-analytic} The functions $\kappa'$, $m'$, $\exp(H')$, $n'$ are real-analytic.
\end{lemma}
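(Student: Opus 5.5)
We plan to make the generalized Iwasawa decomposition \eqref{iw3} completely explicit by means of Gram--Schmidt, now normalized so that $m'(x)$ is upper triangular with positive diagonal entries. Fix $x\in G$ and let $x=\kappa(x)e^{H(x)}n(x)$ be the ordinary Iwasawa decomposition \eqref{iw1}. Since $e^{H(x)}n(x)$ is upper triangular with positive diagonal, it factors uniquely as a product $b'\,a'\,n'$. Here $b'\in M'$ is block-diagonal upper triangular with positive diagonal entries. Its blocks have determinant $\pm1$; in fact the determinant is $+1$ on the $\SL_2^\pm$ blocks, and the $\{\pm1\}$ blocks equal $1$. Next, $a'\in A'$ records the block determinants, and $n'\in N'$. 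This factorization is unique and compatible with \eqref{iw3}, so we may set $\kappa'(x):=\kappa(x)$, $m'(x):=b'$, $e^{H'(x)}:=a'$, $n'(x):=n'$. Uniqueness of this normalization follows from uniqueness of the ordinary Iwasawa decomposition, because any two admissible choices differ by an element of $K'$ which must then be upper triangular with positive diagonal, hence equal to $1$.

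It then suffices to show that $\kappa$, $e^{H}$, $n$ are real-analytic, and that the map $e^{H(x)}n(x)\mapsto(b',a',n')$ is real-analytic. For the first point we use the Gram--Schmidt process. Write $e^{H(x)}n(x)=T(x)$, the upper triangular factor of the QR decomposition. Then $T(x)^TT(x)=x^Tx$, so $T(x)$ is the Cholesky factor of the positive definite matrix $x^Tx$. Its entries are given by explicit formulas: they are rational functions of the entries of $x^Tx$ and of square roots of ratios of leading principal minors, which are positive. Hence $T$ is real-analytic on $G$, and so is $\kappa(x)=xT(x)^{-1}$.

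For the second point, the block data are obtained from $T$ by elementary operations. Each diagonal block $T_p$ (of size $2$ or $1$) of $T$ has positive determinant $\det T_p$. We take $a'$ to be the scalar $(\det T_p)^{1/\dim}$ on block $p$, and $b'_p=T_p/(\det T_p)^{1/\dim}$. Finally $n'=(b'a')^{-1}T$. All of these are real-analytic, since positive real powers of positive analytic functions are analytic and inversion of triangular matrices is polynomial in the entries divided by the diagonal. We must also check that $a'$ lies in $A'$, i.e. that the normalization \eqref{eq:mu'-sum-0}-type constraint $\det=1$ holds; this follows from $\det T=|\det x|=1$.

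There is essentially no obstacle. The only delicate point is confirming that this normalized choice really satisfies the conventions of \eqref{iw3}: that $m'(x)\in M'$ with the correct sign structure, and that $n'(x)\in N'$. Both are checked directly from the block shapes.
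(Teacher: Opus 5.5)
Your proposal is correct and follows essentially the paper's route: analyticity of the ordinary Iwasawa coordinates via explicit Gram--Schmidt formulas, then $\kappa'(x)=\kappa(x)$ and normalizing each diagonal block of $e^{H(x)}n(x)$ by the square root of its determinant (the paper's geometric mean $\sqrt{a_jd_j}$) to read off $e^{H'(x)}$, $m'(x)$ and $n'(x)$. Incidentally, your Cholesky description via the leading principal minors of $x^Tx$ (Gram determinants of the first $j$ columns) is the one consistent with the upper triangular $N$ in \eqref{iw1}, whereas the paper's displayed formula for $\exp(H(x))$ uses the last $j$ columns, which corresponds to the opposite ordering; this has no effect on analyticity.
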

\begin{proof} It is well-known that the standard Iwasawa coordinates in \eqref{iw1} are analytic in a broad generality (see e.g. \cite[p.~181]{Helgason-groups-and-geometric-analysis}). In our particular situation $G=\SL_n^{\pm}(\RR)$, one can easily check the geometric interpretations
\[
\exp(H(x))=\diag\left(\sqrt{\frac{\Delta_n(x)}{\Delta_{n-1}(x)}},\dotsc,\sqrt{\frac{\Delta_2(x)}{\Delta_1(x)}},\sqrt{\Delta_1(x)}\right),
\]
where $\Delta_j(x)$, defined in \eqref{deltam}, 
stands for the square of the $j$-dimensional volume of the $j$-dimensional parallelepiped spanned by the last $j$ columns of $x$; and
\[
(n(x))_{i,j}= \frac{\langle x_{\cdot,i},x_{\cdot,j} \rangle}{\|x_{\cdot,i}\|},\qquad 1\leq i<j\leq n.
\]
Indeed, the first equation holds for $x\in A$, and it is left $K$- and right $N$-invariant; while the second equation holds for $x\in AN$ and is left $K$-invariant. Together they prove the analyticity of $\exp(H)$ and $n$, which is then inherited to $\kappa(x)=x n(x)^{-1} \exp(H(x))^{-1}$.

In order to switch to $x=\kappa'(x)m'(x)\exp(H'(x))n'(x)$, we first observe that our convention implies $\kappa'(x)=\kappa(x)$. Then writing $\exp(H(x))n(x)$ in the form
\[
\exp(H(x))n(x)=\begin{pmatrix} \boxed{\begin{matrix} a_1 & b_1 \\ & d_1 \end{matrix}} & * & \cdots & * \\ & \ddots & & & & \\ & & \boxed{\begin{matrix} a_r & b_r \\ & d_r \end{matrix}} & & \\ & & & \ddots \end{pmatrix}
\]
implies that
\[
n'(x)=\underbrace{\diag \left(\begin{pmatrix} a_1 & b_1 \\ & d_1 \end{pmatrix}^{-1},\dotsc,\begin{pmatrix} a_r & b_r \\ & d_r \end{pmatrix}^{-1},\dotsc\right)}_{\in M'A'} \exp(H(x)) n(x).
\]
Here $\diag(\dotsc)$ has $r$ $2\times 2$ blocks and $s$ $1\times 1$ blocks. Also, $\exp(H'(x))$ is obtained from $\exp(H(x))$ by replacing each diagonal entry $a_j,d_j$ with their geometric mean $\sqrt{a_jd_j}$; for later reference, we record that
\begin{equation}\label{eq:explicit-H'-0}
\begin{split}
\exp(H'(x))= \diag \Biggl(\sqrt[4]{\frac{\Delta_n(x)}{\Delta_{n-2}(x)}},\sqrt[4]{\frac{\Delta_n(x)}{\Delta_{n-2}(x)}},\dotsc,\sqrt[4]{\frac{\Delta_{s+2}(x)}{\Delta_{s}(x)}},\sqrt[4]{\frac{\Delta_{s+2}(x)}{\Delta_{s}(x)}},&\\ \sqrt{\frac{\Delta_{s}(x)}{\Delta_{s-1}(x)}},\dotsc,\sqrt{\frac{\Delta_2(x)}{\Delta_1(x)}},\sqrt{\Delta_1(x)}&\Biggr),
\end{split}
\end{equation}
Then $m'(x)=\kappa'(x)^{-1} x n'(x)^{-1} \exp(H'(x))^{-1}$.
\end{proof}

Our next lemma shows that each matrix coefficient of $\tau$ is a real-analytic function on $K$ whose derivatives can be bounded conveniently. We introduce the standard notation
\[
d\tau(X)\bv:= \left.\frac{d}{dt}\tau(\exp(tX))\bv \right|_{t=0},\qquad X\in\mfk,\quad \bv\in V_{\tau},
\]
for the derived action of $\mfk$ on $V_{\tau}$.

\begin{lemma}\label{lemma:matrix-coefficients-derivatives} For any direction $X\in\mfk$, we have
\[
\|d\tau(X) \|_{\mathrm{op}} \ll 1+\|\bm\ell\|_{\infty},
\]
where $\|\cdot\|_{\mathrm{op}}$ stands for the operator norm.
\end{lemma}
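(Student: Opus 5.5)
Here $\tau$ is the minimal $K$-type of $U=U^{\sigma',\mu'}$, whose highest weight is \eqref{eq:highest-weight} (or its twisted variant, when $u>t$). We bound $d\tau(X)$ through the weights of $\tau$. Since $\mfk$ is compact, $d\tau(X)$ is skew-Hermitian for every direction $X$, so it is unitarily diagonalizable with purely imaginary eigenvalues and $\|d\tau(X)\|_{\mathrm{op}}$ is its spectral radius. Every $X\in\mfk$ is $\mathrm{Ad}(K)$-conjugate into the Lie algebra $\mft$ of the standard maximal torus of $L=\SO(n)$, i.e. into block-diagonal form $\diag(\theta_1J,\dotsc,\theta_{\lfloor n/2\rfloor}J,(0))$ with $J=\bigl(\begin{smallmatrix}0&1\\-1&0\end{smallmatrix}\bigr)$. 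Conjugation by $k\in K$ does not change the operator norm, because $d\tau(\mathrm{Ad}(k)X)=\tau(k)d\tau(X)\tau(k)^{-1}$. The normalization of $\|\cdot\|$ via $-\frac12\tr(XY)$ gives $\|X\|^2=\sum_i\theta_i^2=1$. So it suffices to treat $X\in\mft$ with $\sum\theta_i^2=1$.

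For such $X$, the eigenvalues of $d\tau(X)$ are $i\sum_j\mu_j\theta_j$, where $\bm\mu$ runs over the weights of $\tau|_L$. If $\tau|_L$ is reducible (Type II), this applies to each of its two irreducible components. By the fact recalled at the end of \S\ref{parametrizingK}, every weight $\bm\mu$ lies in the convex hull of the signed permutations of the highest weight $\bm\lambda$. Consequently $\|\bm\mu\|_2\le\|\bm\lambda\|_2$, and Cauchy--Schwarz gives
\[
\Bigl|\sum_j\mu_j\theta_j\Bigr|\le\|\bm\lambda\|_2 .
\]
For the minimal $K$-type, $\bm\lambda=(\ell_1,\dotsc,\ell_r,1,\dotsc,1,0,\dotsc,0)$ has at most $\lfloor n/2\rfloor$ entries, each bounded by $\max(\|\bm\ell\|_\infty,1)$. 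Hence $\|\bm\lambda\|_2\ll_n 1+\|\bm\ell\|_\infty$, which is the claimed bound.

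The case $n=2$ needs a separate word, because there $\|X_{1,2}\|:=1$. In that case $d\tau(X_{1,2})$ acts on $\bv_{\pm j}$ by $\pm ij$ by \eqref{bvj-j}, with $j\le\ell$ or $j\le1$. The bound is immediate.

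The only step that needs real care is the convex-hull description of the weights, and that was already recorded in \S\ref{parametrizingK}. The rest is conjugation into the torus together with the triangle inequality and Cauchy--Schwarz. This also explains the role of the lemma: the matrix coefficients $\langle\tau(k)\bv,\bv'\rangle$ have $j$-th derivatives in any direction of size $\ll(1+\|\bm\ell\|_\infty)^j$, which is the input needed for the stationary phase analysis.
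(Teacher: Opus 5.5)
Your proof is correct and follows the paper's argument. You conjugate $X$ into the standard torus with $\sum_j\theta_j^2=1$, read off the eigenvalues $i\sum_j m_j\theta_j$ of $d\tau(X)$ on weight vectors, and bound these using the highest weight \eqref{eq:highest-weight}. The one cosmetic difference is the final estimate: you use $\|\bm\mu\|_2\le\|\bm\lambda\|_2$ plus Cauchy--Schwarz, whereas the paper uses the coordinatewise bound $|m_j|\le 1+\|\bm\ell\|_\infty$. Your separate $n=2$ case is unnecessary, since $-\frac12\tr(X_{1,2}^2)=1$ already agrees with the convention $\|X_{1,2}\|:=1$.
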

\begin{proof} Since $K$ is compact, $X$ lies in a maximal abelian subalgebra $\mft$ of $\mfk$. More specifically, since $\tr(X^2)=-2$, there are real numbers $\theta_1,\dotsc,
\theta_{\lfloor n/2 \rfloor}$ satisfying $\sum_{1\leq j\leq n/2}|\theta_j|^2=1$ such that for all $t\in\RR$, $\exp(tX)$ is conjugate to
\[
\begin{pmatrix}
k(t\theta_1) & & \\ & \ddots & \\ & & k(t\theta_{\frac{n}{2}})
\end{pmatrix}
\qquad \text{or} \qquad
\begin{pmatrix}
k(t\theta_1) & & & \\ & \ddots & & \\ & & k(t\theta_{\frac{n-1}{2}}) & \\ & & & 1
\end{pmatrix},
\]
for $n$ even or odd, respectively, where $k(t)$ is as in \eqref{rotationmatrix}.

Let $\bv_1,\dotsc,\bv_{\dim \tau}\in V_{\tau}$ be an orthonormal basis of weight vectors corresponding to $\mft$.
Then for any $1\leq u\leq \dim\tau$, there is a weight $(m_1,\dotsc,m_{\lfloor n/2 \rfloor})\in\ZZ^{\lfloor n/2 \rfloor}$ of $\tau$ such that
\[
\tau(\exp(tX)) \bv_u = \exp\Bigg(it\sum_{j=1}^{\lfloor n/2 \rfloor} m_j \theta_j \Bigg) \bv_u,\qquad t\in\RR.
\]
Therefore,
\[d\tau(X) \bv_u = i\Bigg(\sum_{j=1}^{\lfloor n/2 \rfloor} m_j \theta_j \Bigg) \bv_u,\]
and $\|d\tau(X)\|_{\mathrm{op}}$ is the size of the largest coefficient (eigenvalue) that occurs on the right-hand side. However, the
highest weight of $\tau$ is \eqref{eq:highest-weight}, hence $|m_j|\leq 1+\|\bm\ell\|_{\infty}$ for all $1\leq j\leq \lfloor n/2 \rfloor$. The desired bound follows.
\end{proof}
In fact we shall need a more general version of Lemma~\ref{lemma:matrix-coefficients-derivatives}, to be stated below. It will be helpful to use the following standard notations:
\[\bt = (t_1, \dotsc, t_j),\qquad \alpha=(\alpha_1,\dotsc,\alpha_j), \qquad |\alpha|:=\alpha_1+\dotsb+\alpha_j,\]
and
\[
\partial^{\alpha}:=\frac{\partial^{|\alpha|}}{\partial t_1^{\alpha_1}\dotsb \partial t_j^{\alpha_j}}.
\]
\begin{lemma}\label{lemma:matrix-coefficients-derivatives-2} 
Let $k:\RR^j\to K$ be a smooth function, and let $C\subseteq\RR^j$ be a compact set. Then for any multi-index $\alpha\in\ZZ_{\geq 0}^j$,
\[
\sup_{\bt\in C} \| \partial^\alpha\tau(k(\bt)) \|_{\mathrm{op}} \ll_{M,C,\alpha} (1+\|\bm\ell\|_{\infty})^{|\alpha|},
\]
where
\[
M: = \sup \left\{|\partial^{\beta}k(\bt)|:\bt\in C,\,|\beta|\leq|\alpha|\right\}.
\]
\end{lemma}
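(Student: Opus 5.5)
The plan is to reduce to Lemma~\ref{lemma:matrix-coefficients-derivatives} by a chain-rule argument. We do not differentiate the map $\bt\mapsto\tau(k(\bt))$ directly. Instead we use the group structure to express its derivatives as products of $\tau(k(\bt))$ with polynomials in operators $d\tau(X)$, where the directions $X\in\mfk$ are controlled by $M$.

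\textbf{Step 1: a first-order formula.} We write
\[
\partial_{t_i}\tau(k(\bt))=\tau(k(\bt))\,d\tau\bigl(Z_i(\bt)\bigr),\qquad Z_i(\bt):=k(\bt)^{-1}\partial_{t_i}k(\bt)\in\mfk.
\]
This holds because $k(\bt+se_i)=k(\bt)\exp\bigl(sZ_i(\bt)+O(s^2)\bigr)$. Here $Z_i$ is a smooth $\mfk$-valued function. Since $k(\bt)^{-1}=k(\bt)^{T}$ is orthogonal, every derivative $\partial^\beta Z_i$ is a bilinear expression in the derivatives of $k$ of order at most $|\beta|+1$. Consequently
\[
\sup_{C}\|\partial^\beta Z_i\|\ll_{M,\beta}1\qquad\text{whenever } |\beta|+1\leq|\alpha|.
\]

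\textbf{Step 2: induction on $|\alpha|$.} By induction we obtain
\[
\partial^\alpha\tau(k(\bt))=\tau(k(\bt))\sum_{\gamma}c_\gamma\,d\tau\bigl(W_{\gamma,1}(\bt)\bigr)\cdots d\tau\bigl(W_{\gamma,p_\gamma}(\bt)\bigr).
\]
The sum is finite, has boundedly many terms depending on $\alpha$, and satisfies $p_\gamma\leq|\alpha|$. Each $W_{\gamma,q}$ is some derivative $\partial^\beta Z_i$ with $|\beta|\leq|\alpha|-1$.

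The inductive step differentiates such a product once more. If the derivative falls on $\tau(k(\bt))$, Step 1 produces one extra factor $d\tau(Z_j)$. If it falls on a factor $d\tau(W)$, it becomes $d\tau(\partial_{t_j}W)$, using that $d\tau$ is linear and that $d\tau(W(\bt))$ depends smoothly on $\bt$ through the coordinates of $W$ in the basis $X_{i,j}$.

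\textbf{Step 3: the bound.} The factor $\tau(k(\bt))$ is unitary, so it has operator norm $1$. For each remaining factor, linearity and Lemma~\ref{lemma:matrix-coefficients-derivatives}, applied in the direction $W/\|W\|$, give
\[
\|d\tau(W)\|_{\mathrm{op}}\ll\|W\|\,(1+\|\bm\ell\|_\infty)\ll_{M,\alpha}1+\|\bm\ell\|_\infty.
\]
Taking the product over at most $|\alpha|$ factors, and summing over the boundedly many terms, yields the bound $(1+\|\bm\ell\|_\infty)^{|\alpha|}$ uniformly over $\bt\in C$.

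\textbf{Main obstacle.} The only delicate point is the bookkeeping in Step 2. We must check that differentiating $d\tau(W(\bt))$ never produces a factor $\|\bm\ell\|_\infty$. This holds because $\partial_{t_j}d\tau(W)=d\tau(\partial_{t_j}W)$, and the derivative of $W$ depends only on $k$, not on $\tau$. As a result, the power of $1+\|\bm\ell\|_\infty$ equals the number of $d\tau$ factors, which is at most $|\alpha|$.
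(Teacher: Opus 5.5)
Your proposal is correct and follows the paper's proof essentially verbatim. The paper likewise writes $\partial_i\tau(k(\bt))=\tau(k(\bt))\,d\tau(L_i(\bt))$ with the left logarithmic derivative $L_i=k^{-1}\partial_i k\in\mfk$, inducts on $|\alpha|$ to express $\partial^\alpha\tau(k(\bt))$ as a combination of terms $\tau(k(\bt))\,d\tau(X_1(\bt))\dotsb d\tau(X_r(\bt))$ with $r\leq|\alpha|$, and concludes from Lemma~\ref{lemma:matrix-coefficients-derivatives}; your extra bookkeeping (each $W$ is a derivative of some $Z_i$ of order at most $|\alpha|-1$, hence bounded in terms of $M$) simply makes explicit the dependence on $M$ that the paper leaves implicit.
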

\begin{proof} Applying the operator $\partial_i:=\partial/\partial t_i$ to the identity $k(\bt)^T k(\bt)=\id_n$, we see that the $i$-th left logarithmic derivative
\[L_i(\bt):=k(\bt)^{-1}\partial_i k(\bt)\]
is a smooth function $\RR^j\to\mathfrak{k}$. Moreover,
\[\partial_i\tau(k(\bt))=\tau(k(\bt))\,d\tau(L_i(\bt)),\qquad 1\leq i\leq j.\]
It follows by induction on $|\alpha|$ that $\partial^\alpha\tau(k(\bt))$ is a finite linear combination of functions of the form
\[\tau(k(\bt))\,d\tau(X_1(\bt))\dotsb\,d\tau(X_r(\bt)),\]
where $0\leq r\leq|\alpha|$ and each $X_i:\RR^j\to\mathfrak{k}$ is smooth. Hence Lemma~\ref{lemma:matrix-coefficients-derivatives} implies the stated estimate.
\end{proof}
For convenience, we rewrite \eqref{eq:formula_nonminimal_spherical_trace_function} with the help of \eqref{eq:T-sigma'} as
\begin{equation}\label{eq:formula_nonminimal_spherical_trace_function-rewritten}
\psi_\tau^{U}(x)
=\int_K T_\tau^{\sigma'}(k^{-1}xk)\,e^{(\Re\mu'-\rho')(H'(xk))}\,e^{i\Im\mu'(H'(xk))}\,dk.
\end{equation}
We introduce the notation
\begin{equation}\label{eq:A(x,k,mu')}
A(x,k,\mu'):= T_\tau^{\sigma'}(k^{-1}xk)\,e^{(\Re\mu'-\rho')(H'(xk))}.
\end{equation}
In the current and the next subsections we apply a stationary phase argument to estimate $\psi_\tau^{U}(x)$ for large $\Im\mu'$.

\begin{lemma}\label{lemma:sobolev} For any $x\in\Omega$ and any nonnegative integer $j$ we have
\begin{equation}\label{eq:lemma12bound}
\left\|k\mapsto A(x,k,\mu')\right\|_{S_j}\ll_{j,\Re\mu'} \dim \tau \cdot (1+\|\bm\ell\|_{\infty})^{j},
\end{equation}
where $S_j$ stands for the Sobolev norm of order $j$ with respect to any orthonormal basis of $\mfk$.
\end{lemma}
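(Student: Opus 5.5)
The plan is to differentiate the two factors of $A(x,k,\mu')=T_\tau^{\sigma'}(k^{-1}xk)\,e^{(\Re\mu'-\rho')(H'(xk))}$ separately and combine them with the Leibniz rule. For the Sobolev norm of order $j$ we must bound, uniformly in $k\in K$, the quantities $Y_1\dotsb Y_i A(x,\cdot,\mu')(k)$ with $i\leq j$ and $Y_1,\dotsc,Y_i$ from a fixed orthonormal basis of $\mfk$, acting as right-invariant derivatives $k\mapsto k\exp(tY)$. Since $K$ has finite volume, the $L^2$ Sobolev norm is dominated by the sup of these derivatives. It therefore suffices to show that the second factor has all derivatives $\ll_{j,\Re\mu'}1$, and that the $i$-th derivatives of the trace factor are $\ll_i \dim\tau\cdot(1+\|\bm\ell\|_\infty)^i$.

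\emph{The exponential factor.} As $x$ ranges over the compact set $\Omega$, $xk$ lies in $\Omega$ as well. By Lemma~\ref{lemma:iwasawa-coordinates-are-analytic}, or more concretely the explicit formula \eqref{eq:explicit-H'-0} in terms of the $\Delta_j$, the function $H'$ is real-analytic with $\Delta_j\asymp 1$ on $\Omega$. Hence every derivative of $k\mapsto e^{(\Re\mu'-\rho')(H'(xk))}$ up to order $j$ is $\ll_{j,\Re\mu'}1$, with continuous dependence on $\Re\mu'$.

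\emph{The trace factor.} We write $k^{-1}xk=\kappa'(k^{-1}xk)\,m'(k^{-1}xk)\,e^{H'}n'$. Then
\[
T_\tau^{\sigma'}(k^{-1}xk)=\tr\bigl(\tau(\kappa'(k^{-1}xk))\,\hat F_\tau^{\sigma'}(m'(k^{-1}xk))\bigr),
\]
using the normalization that $m'$ is upper triangular with positive diagonal, which makes $\kappa'$ and $m'$ real-analytic. Along a smooth parametrization $\bt\mapsto k(\bt)$ of a neighbourhood in $K$ (finitely many charts), the map $\bt\mapsto\kappa'(k(\bt)^{-1}xk(\bt))$ is a smooth $K$-valued map whose derivatives are bounded uniformly for $x\in\Omega$. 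Lemma~\ref{lemma:matrix-coefficients-derivatives-2} then gives $\|\partial^\alpha\tau(\kappa'(\cdot))\|_{\mathrm{op}}\ll(1+\|\bm\ell\|_\infty)^{|\alpha|}$.

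For $\hat F_\tau^{\sigma'}(m')$ we use \eqref{hatFtausigma2}. It equals $d_\tau 2^{-r}$ times a tensor product of the $2\times 2$ blocks \eqref{basicphi}, each entry of which is $\bigl(2/(a\mp ib\pm ic+d)\bigr)^{\ell_p}$. On the compact image of $\Omega$ the base $z=2/(a\mp ib\pm ic+d)$ is smooth and satisfies $|z|\leq1$. A derivative of order $i$ of $z^{\ell_p}$ is therefore a sum of terms of size $\ll\ell_p^{i}|z|^{\ell_p-i}\ll(1+\|\bm\ell\|_\infty)^i$. Applying Leibniz over the $r$ tensor factors, we see that each entry of the $2^r\times2^r$ block has derivatives $\ll(1+\|\bm\ell\|_\infty)^i$, with $r\ll1$.

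\emph{Combining the two pieces.} The trace of a product, $\tr(B\cdot C)$ with $C$ supported on a $2^r\times 2^r$ block, satisfies $|\tr(BC)|\leq 2^r\|B\|_{\mathrm{op}}\max|C_{uv}|\cdot2^r$. Together with the prefactor $d_\tau2^{-r}$ this gives a bound $\ll\dim\tau$ times the relevant derivative bounds. The Leibniz rule then yields $\partial^\alpha T\ll\dim\tau(1+\|\bm\ell\|_\infty)^{|\alpha|}$, and combining this with the exponential factor proves \eqref{eq:lemma12bound}.

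The main obstacle is bookkeeping rather than analysis: we must check that the chart maps through $\kappa'$ and $m'$ have $\ell$-independent derivative bounds. This holds because they depend only on $x\in\Omega$ and not on $\pi$, so all $\ell$-dependence enters through $\tau$ (via Lemma~\ref{lemma:matrix-coefficients-derivatives-2}) and the powers $z^{\ell_p}$.
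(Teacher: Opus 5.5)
Your proposal is correct and follows essentially the same route as the paper: the Leibniz rule over the exponential factor, the factor $\tau(\kappa'(\cdot))$ (handled via Lemma~\ref{lemma:matrix-coefficients-derivatives-2}) and the factor $\hat F_\tau^{\sigma'}(m'(\cdot))$ (handled via the explicit powers $(2/(a\pm ib+d))^{\ell_p}$ from \eqref{hatFtausigma2}), combined through a trace--norm inequality. The differences are only cosmetic: the paper reduces to derivatives at $k=\id$ using the bi-$K$-invariance of $\Omega$ and uses $|\tr(TU)|\leq\|T\|_{\mathrm{op}}\|U\|_{2,1}$, whereas you work in charts on $K$ and bound the trace entrywise on the fixed $2^r\times 2^r$ block.
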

\begin{proof} 
To bound the $S_j$ Sobolev norm of $k \mapsto A(x, k, \mu')$, it suffices to bound its mixed derivatives along any sequence of directions $X_1,\dotsc,X_j \in \mfk$ (chosen from an orthonormal basis). Since $\Omega$ is bi-$K$-invariant and $H'$ is left $K$-invariant, replacing $x$ with $k^{-1}xk \in \Omega$ allows us to evaluate the derivatives at the identity element $k = \id$. For the proof, we set
\[
E(\bt):=\exp(t_1X_1)\dotsb\exp(t_jX_j),
\]
and consider the following differential operators acting on $A$:
\[ \left. \partial^{\alpha} A(x, E(\bt), \mu') \right|_{\bt=\mathbf{0}}, \qquad |\alpha|\leq j.\]
Using the definitions \eqref{eq:A(x,k,mu')} and \eqref{Fdef}, we can write the function as
\[
A(x, E(\bt),\mu') = \tr \left(e^{(\Re\mu' - \rho')H'(x E(\bt))} \times \tau(\kappa'(E(\bt)^{-1} x E(\bt))) \times \hat{F}_\tau^{\sigma'}(m'(x E(\bt))) \right).
\]
By the Leibniz rule, it suffices to show that the relevant derivatives of each factor (those separated by ``$\times$'') are appropriately bounded in suitable matrix norms at the origin $\bt=\mathbf{0}$.

For the initial factor, Lemma~\ref{lemma:iwasawa-coordinates-are-analytic} and repeated application of the chain rule yield
\begin{equation}\label{eq:sobolev-proof-1}
\left. \partial^{\beta} e^{(\Re\mu' - \rho')H'(x E(\bt) )} \right|_{\bt=\mathbf{0}} \ll_{\beta,\Re\mu'} 1.
\end{equation}
For the middle factor, Lemma~\ref{lemma:iwasawa-coordinates-are-analytic} and Lemma~\ref{lemma:matrix-coefficients-derivatives-2} immediately give that
\begin{equation}\label{eq:sobolev-proof-2}
\left\| \left. \partial^{\beta} \tau(\kappa'(E(\bt)^{-1} x E(\bt))) \right|_{\bt=\mathbf{0}} \right\|_{\mathrm{op}} 
\ll_{\beta} (1+\|\bm\ell\|_\infty)^{|\beta|}.
\end{equation}
Regarding the third factor, we observe first that $m'(xE (\bt))$ is of the shape
\[
m'(x E(\bt)) = \diag\left( \begin{pmatrix} a_1(\bt) & b_1(\bt) \\ 0 & d_1(\bt) \end{pmatrix},\dotsc,\begin{pmatrix} a_r(\bt) & b_r(\bt) \\ 0 & d_r(\bt) \end{pmatrix},1,\dotsc,1 \right), 
\]
by our convention on $m'$. Each $2\times 2$ block has a positive diagonal and determinant $1$. All the matrix entries here are real-analytic by Lemma~\ref{lemma:iwasawa-coordinates-are-analytic}, and then \eqref{hatFtausigma2} gives
\[
\hat F_\tau^{\sigma'}(m'(xE(\bt))) = \frac{\dim \tau}{2^r} \bigotimes_{p=1}^r \varphi_{\omega_{\ell_p}}^{\delta_{\ell_p}} \left(\begin{pmatrix} a_p(\bt) & b_p(\bt) \\ 0 & d_p(\bt) \end{pmatrix}\right),
\]
where the right-hand side is understood as a diagonal $2^r\times 2^r$ block of a $\dim \tau \times \dim \tau$ matrix (cf. the comment after \eqref{hatFtausigma2}). In fact, by \eqref{basicphi}, the entire matrix is diagonal, and the first $2^r$ diagonal entries are all the expressions of the shape
$$ \frac{\dim \tau}{2^r} \prod_{p=1}^r \left( \frac{2}{a_p(\bt) \pm i b_p(\bt) + d_p(\bt)} \right)^{\ell_p}. $$
Hence by the Leibniz rule, we obtain readily that
\begin{equation}\label{eq:sobolev-proof-3}
\left\| \left. \partial^{\beta} \hat{F}_\tau^{\sigma'}(m'(x E(\bt))) \right|_{\bt=\mathbf{0}} \right\|_{2,1} \ll_{\beta} \dim\tau \cdot (1+\|\bm\ell\|_\infty)^{|\beta|}.
\end{equation}

Finally we need a straightforward fact from linear algebra: if $T,U$ are complex square matrices of the same size, then
\[
|\tr(TU)| \leq \|TU\|_{2,1} \leq \|T\|_{\mathrm{op}} \cdot \|U\|_{2,1}.
\]
Combining this with \eqref{eq:sobolev-proof-1}, \eqref{eq:sobolev-proof-2}, \eqref{eq:sobolev-proof-3}, the proof is complete.
\end{proof}

Now we focus our attention to the function $k\mapsto e^{i\Im\mu'(H'(xk))}$. By \eqref{eq:explicit-H'-0},
\[\begin{split}
H'(xk)=\log\diag \Biggl(\sqrt[4]{\frac{\Delta_n(xk)}{\Delta_{n-2}(xk)}},\sqrt[4]{\frac{\Delta_n(xk)}{\Delta_{n-2}(xk)}},\dotsc,\sqrt[4]{\frac{\Delta_{s+2}(xk)}{\Delta_{s}(xk)}},\sqrt[4]{\frac{\Delta_{s+2}(xk)}{\Delta_{s}(xk)}},&\\ \sqrt{\frac{\Delta_{s}(xk)}{\Delta_{s-1}(xk)}},\dotsc,\sqrt{\frac{\Delta_2(xk)}{\Delta_1(xk)}},\sqrt{\Delta_1(xk)}&\Biggr).
\end{split}\]
Then, with the conventions $\Delta_0(xk):=1$ and $\mu'_0:=0$, we have 
\begin{align}
\notag \Im\mu'(H'(xk))&=\sum_{j=1}^{r} \frac{1}{2}\Im\mu'_j(\log \Delta_{n-2j+2}(xk) - \log\Delta_{n-2j}(xk)) \\ \notag & \qquad + \sum_{j=r+1}^{n-r} \frac{1}{2}\Im\mu_j'(\log \Delta_{n-r-j+1}(xk) - \log \Delta_{n-r-j}(xk))
\\ \notag & = \sum_{j=1}^{n-2r} \frac{1}{2}(\Im\mu'_{n-r-j+1}-\Im\mu'_{n-r-j}) \log \Delta_j(xk)
\\ \notag & \qquad + \sum_{\substack{n-2r< j\leq n \\ j\equiv n\pmod*{2}}} \frac{1}{2}(\Im\mu'_{(n-j)/2+1}-\Im\mu'_{(n-j)/2}) \log \Delta_j(xk)
\\ \label{eq:explicit-mu'-H'} & = \sum_{j=1}^n \tilde{\mu}_j \log \Delta_j(xk),
\end{align}
where we set
\begin{equation}\label{eq:mu-linear-transform}
\tilde{\mu}_j:=\begin{cases} \frac{1}{2}(\Im\mu'_{n-r-j+1}-\Im\mu'_{n-r-j}), \qquad & 1\leq j\leq n-2r, \\ \frac{1}{2}(\Im\mu'_{(n-j)/2+1}-\Im\mu'_{(n-j)/2}), & n-2r< j\leq n,\ j \equiv n \pmod*{2}, \\ 0, &n-2r< j\leq n,\ j \not\equiv n \pmod*{2}. \end{cases}
\end{equation}
We record the following two consequences of \eqref{eq:mu'-sum-0} and \eqref{eq:mu-linear-transform}:
\begin{equation}\label{eq:mu-linear-transform-consequence1}
\sum_{j=1}^n w_j \tilde{\mu}_j = 0 \qquad \text{for some} \qquad w_j\asymp 1,
\end{equation}
and
\begin{equation}\label{eq:mu-linear-transform-consequence2}
\|\tilde{\mu}\|_{\infty}\asymp \|\Im \mu'\|_{\infty}.
\end{equation}
In particular, $\tilde{\mu}$ is nonzero, because $\Im \mu'$ was assumed to be nonzero.

We summarize \eqref{eq:formula_nonminimal_spherical_trace_function-rewritten}, \eqref{eq:A(x,k,mu')}, \eqref{eq:explicit-mu'-H'} and \eqref{eq:mu-linear-transform} as
\begin{equation}\label{eq:stationary-phase-prep-upshot}
\psi_\tau^{U}(x)=\int_K A(x,k,\mu') \cdot \exp\left(i\|\tilde{\mu}\|_{\infty}\cdot \sum_{j=1}^n \frac{\tilde{\mu}_j}{\|\tilde{\mu}\|_{\infty}} \log \Delta_j(xk)\right) \,dk.
\end{equation}
The coefficients $\tilde{\mu}_j/\|\tilde{\mu}\|_{\infty}$ lie in the interval $[-1,1]$.

\subsection{Stationary phase analysis}\label{sec45}
The guiding principle of the forthcoming argument is the following: unless $x$ is very close to $K$, we may typically find, for any $xk$ ($k\in K$), a direction $X\in\mfk$ in such a way that the $\Delta_j$'s are varying fast along $xk\exp(tX)$, a fact already encapsulated in Lemma~\ref{lemma:Delta-derivatives}. Hence there should be a substantial cancellation coming from the phase encoded in $\tilde{\mu}$.

However, there are several technical issues to overcome. First of all, moving in the certain direction $X$ modifies not only a certain $\Delta_j$, but potentially many others, and it is hard to guarantee that, weighted by $\tilde{\mu}$, these variations do not cancel each other. Secondly, we might have a small degree of freedom choosing $j$, potentially only a small number of $\tilde{\mu}_j$'s are on the scale of $\|\tilde{\mu}\|_{\infty}$, and we are essentially given which $\Delta_j$'s are to be perturbed. That is, we are limited in which directions we can use. A convenient idea to overcome these two difficulties is to choose the smallest possible $2\leq m\leq n$ with the property that $\tilde{\mu}_{m-1}$ is not too small in terms of $\|\tilde{\mu}\|_{\infty}$, and use directions from $\mfk_{m}$. Indeed, such a move only alters $\Delta_{m-1},\dotsc,\Delta_1$, and the $\tilde{\mu}$-weighted variation comes from $\tilde{\mu}_{m-1}\Delta_{m-1}+\dotsb+\tilde{\mu}_1\Delta_1$, where only the first term matters due to the dominating choice of $\tilde{\mu}_{m-1}$. In this first term, a big variation is guaranteed by Lemma~\ref{lemma:Delta-derivatives}, unless the last $m$ columns are about the same length and are close to orthogonal to each other. Then the third difficulty kicks in: this exceptional situation may arise. This issue will be handled in the spirit of Lemma~\ref{lemma:D-is-often-large} (actually using \eqref{eq:D-is-often-large-general}) by showing that the exceptional set, where we do not expect cancellation in the stationary phase analysis, is small on its own, hence a trivial estimate can be applied. Now let us see this in action.

Let $\alpha:=\alpha(\Omega)>1$ be a large parameter to be chosen later, and let $2\leq m\leq n$ be the smallest number with the property that
\begin{equation}\label{eq:m-choice}
|\tilde{\mu}_{m-1}|\geq \alpha^{m-n-1}\|\tilde{\mu}\|_{\infty}.
\end{equation}
Such an $m$ exists by \eqref{eq:mu-linear-transform-consequence1}, for otherwise $|\tilde\mu_1|,\dotsc,|\tilde\mu_{n-1}|$ would be much smaller than $|\tilde\mu_n|=\|\tilde{\mu}\|_{\infty}$. The usefulness of this definition will become clear in \eqref{stat-phase-lowerbound}. Let $dk'$ be the Haar probability measure on $K_m$, and let
\[ B(x,k,\mu'): = A(x,k,\mu') \cdot \exp\left(i\|\tilde{\mu}\|_{\infty}\cdot \sum_{j=1}^n \frac{\tilde{\mu}_j}{\|\tilde{\mu}\|_{\infty}} \log \Delta_j(xk)\right).\]
For any $L^1$-normalized smooth function $\omega:K_m\to\RR_{\geq 0}$, we can rewrite \eqref{eq:stationary-phase-prep-upshot} in a redundant fashion (with an extra integration over $K_m$) as
\begin{equation}\label{eq:psi-integral-Q_m-K_m}
\psi_\tau^{U}(x)=\int_K B(x,k,\mu')=\int_{K_m}\int_{K} B(x,kk',\mu')\,\omega(k')\,dk\,dk'.
\end{equation}
For the rest of the proof, we fix $\omega$ such that
\[
\supp(\omega)\subseteq\left\{\exp X: X\in\mfk_m,\,\|X\|<c_2\right\},
\]
where $c_2:=c_2(\Omega)$ is given by Lemma~\ref{lemma:Delta-derivatives}.

Now recall the definition \eqref{eq:Em-def}, and for $p>0$ consider the sets
\[
K^{(m)}_{\leq p}:= \left\{k\in K:\sup_{k'\in K_{m}}|E_{m}(xkk')|\leq p\right\},
\]
and
\[
K^{(m)}_{\geq p}:=\left \{k\in K:\inf_{k'\in K_{m}}|E_{m}(xkk')|\geq p\right\}.
\] 
By \eqref{eq:E_m-K_m-invariance}, the following bound holds with an implied constant depending only on $n$ (not on $\Omega$):
\[
\sup_{k'\in K_{m}}|E_{m}(xkk')| \ll \inf_{k'\in K_{m}}|E_{m}(xkk')|,\qquad k\in K.
\]
Hence there exists a constant $C$ 
depending only on $n$ (not on $\Omega$) such that 
\begin{equation}\label{eq:Q_m-union}
K=K^{(m)}_{\leq Ch} \cup K^{(m)}_{\geq h}, \qquad h>0.
\end{equation}

Let $h>0$ be a parameter to be chosen later. For the proof of Theorem~\ref{thm3} we may assume without loss of generality that $x \not\in K$, for otherwise the result follows from \eqref{eq:psi-trivial-bound}. In \eqref{eq:psi-integral-Q_m-K_m}, we switch the order of integration by Fubini. Then, by \eqref{eq:Q_m-union} and the triangle-inequality, we see that
\begin{equation}\label{eq:stationary-phase-splitted}
\left|\psi_\tau^{U}(x)\right| \leq I\left(K^{(m)}_{\leq Ch}\right) + I\left(K^{(m)}_{\geq h}\right),
\end{equation}
where 
\begin{equation}\label{eq:ISintegral}
I(S):= \int_S \left|\int_{K_m} B(x,kk',\mu')\,\omega(k')\,dk'\right| dk,\qquad S\subseteq K.
\end{equation}
In order to bound $I\left(K^{(m)}_{\leq Ch}\right)$, we estimate trivially
\[
B(x,kk',\mu') \ll_{\Re \mu'} \dim\tau,
\]
using \eqref{eq:A(x,k,mu')}, \eqref{tracebound} and \eqref{eq:lemma5bound}. By \eqref{eq:D-def-general}, \eqref{eq:E_m-K_m-invariance} and \eqref{eq:E_m-related-to-column-lengths}, we see that
\[
D_{n-m+1,n-m+2}(xk) \ll h,\qquad k\in K^{(m)}_{\leq Ch}.
\]
Hence from \eqref{eq:D-is-often-large-general} we infer that
\[
\mu_{K} \left( K^{(m)}_{\leq Ch} \right) \ll h^{1/3}\cdot \dist(x,K)^{-1/3}.
\]
In the end,
\begin{equation}\label{eq:I-E-small}
I\left(K^{(m)}_{\leq Ch}\right) \ll_{\Re \mu'} \dim \tau \cdot 
h^{1/3} \cdot \dist(x,K)^{-1/3}.
\end{equation}
We shall bound $I\left(K^{(m)}_{\geq h}\right)$ with the help of Lemma~\ref{lemma:Delta-derivatives}. As a preparation, we observe that the inner integral in \eqref{eq:ISintegral} depends continuously on $k$, hence we can fix a point $k\in K^{(m)}_{\geq h}$ such that
\[I\left(K^{(m)}_{\geq h}\right)\leq\left|\int_{K_m} B(x,kk',\mu')\,\omega(k')\,dk'\right|.\]
We consider the orthonormal basis $(Y_1,\dotsc,Y_{\dim K_{m}})$ of $\mfk_{m}$ provided by Lemma~\ref{lemma:Delta-derivatives} when applied to $xk$ in the role of $x$. We conclude that
\[
I\left(K^{(m)}_{\geq h}\right) \leq
\left|\int_{[-c_2,c_2]^{\dim K_m}} B\left(x,k\exp\left(\sum_{l=1}^{\dim K_{m}} t_l Y_l\right),\mu'\right)\varrho(\bt) \,d\bt \right|,
\]
where $\rho$ is a smooth density function whose derivatives are bounded in terms of $\Omega$. To analyse the $\bt$-integral by a stationary phase argument, we need a lower bound for some derivatives of 
\[\sum_{i=1}^n \frac{\tilde{\mu}_i}{\|\tilde{\mu}\|_{\infty}}
\log \Delta_i\left(xk\exp\left(\sum_{l=1}^{\dim K_{m}} t_l Y_l\right)\right).\]
We note that the summands $i \geq m$ are constant in $\bt$. Let $k\in K^{(m)}_{\geq h}$. Using Lemma~\ref{lemma:E-m-derivatives-upper-bound}, Lemma~\ref{lemma:Delta-derivatives}, \eqref{eq:E_m-K_m-invariance} and \eqref{eq:m-choice}, we obtain explicit constants $c_2,c_3>0$ depending on $\Omega$ such that
\begin{alignat}{2}\label{stat-phase-lowerbound}
\notag & & & \max_{1\leq j\leq 2} \left| \frac{\partial^j}{\partial t_1^j} \sum_{i=1}^n \frac{\tilde{\mu}_i}{\|\tilde{\mu}\|_{\infty}} \log \Delta_i\left(x k \exp\left(\sum_{l=1}^{\dim K_{m}} t_l Y_l\right)\right)\right|\\
\notag & = & & \max_{1\leq j\leq 2} \left| \frac{\partial^j}{\partial t_1^j} \sum_{i=1}^{m-1} \frac{\tilde{\mu}_i}{\|\tilde{\mu}\|_{\infty}} \log \Delta_i\left(x k \exp\left(\sum_{l=1}^{\dim K_{m}} t_l Y_l\right)\right)\right|\\
& \geq & &\ (c_2 \alpha^{m-n-1}-c_3 \alpha^{m-n-2}) E_{m}(x k) \gg h .
\end{alignat}
We choose $\alpha>c_3/c_2$ to justify the last inequality. Finally, we apply \cite[Prop.~5 on p.~342]{St}; see also its proof and \cite[Cor. \& (6) on p.~334]{St}. Together with Lemma~\ref{lemma:sobolev}, we obtain that
\begin{equation}\label{eq:I-E-large}
I\left(K^{(m)}_{\geq h}\right) \ll_{\Re\mu'} \dim \tau \cdot (1+\|\bm{\ell}\|_{\infty})
\left(\|\tilde{\mu}\|_{\infty}^{-1}h^{-1} + \|\tilde{\mu}\|_{\infty}^{-1/2}h^{-1/2}\right).
\end{equation}

Substituting \eqref{eq:I-E-small} and \eqref{eq:I-E-large} into \eqref{eq:stationary-phase-splitted}, we conclude that
\begin{equation}\label{eq:stationary-phase-upshot}
\begin{split}
\psi_\tau^{U}(x) \ll_{\Re\mu'}
&\dim \tau \cdot h^{1/3} \cdot \dist(x,K)^{-1/3}\\
& + \dim \tau \cdot (1+\|\bm{\ell}\|_{\infty})\left(\|\tilde{\mu}\|_{\infty}^{-1}h^{-1} + \|\tilde{\mu}\|_{\infty}^{-1/2}h^{-1/2}\right).
\end{split}
\end{equation}

\subsection{Proof of Theorem~\ref{thm3}}\label{thm3endgame}
We shall assume that $\|U\|_{\infty}>1$ and $x\not\in K$, for otherwise the conclusion follows from \eqref{eq:psi-trivial-bound}. First assume that $\|\bm{\ell}\|_{\infty}\leq\|\mu'\|_{\infty}^{6/17}$, which also implies that $\|\bm{\ell}\|_{\infty}\leq\|\mu'\|_{\infty}$. Then \eqref{eq:mu-linear-transform-consequence2} and \eqref{eq:stationary-phase-upshot} yield for any $h\gg\|\mu'\|_{\infty}^{-1}$ that
\[\psi_\tau^{U}(x)
\ll_{\Re\mu'} \dim\tau\cdot\left(h^{1/3} + \|\mu'\|_{\infty}^{-5/34}h^{-1/2}\right)\cdot\dist(x,K)^{-1/3}.\]
We choose $h:=\|\mu'\|_{\infty}^{-3/17}$ and conclude that
\[\psi_\tau^{U}(x) \ll_{\Re\mu'} \dim\tau\cdot\|U\|_{\infty}^{-1/17}\cdot\dist(x,K)^{-1/3}.\]
Now assume that $\|\bm{\ell}\|_{\infty}>\|\mu'\|_{\infty}^{6/17}$, which also implies that $\|\bm{\ell}\|_{\infty}\geq 1$. Then Lemma~\ref{lemma:thm3-big-ell} yields that
\[\psi_{\tau}^{U}(x) \ll_{\Re\mu'} \dim \tau \cdot \|\bm\ell\|_{\infty}^{-1/6}\cdot\dist(x,K)^{-1/3},\]
hence also that
\[\psi_{\tau}^{U}(x) \ll_{\Re\mu'} \dim \tau \cdot \|U\|_{\infty}^{-1/17}\cdot\dist(x,K)^{-1/3}.\]

\subsection{A variant for nonminimial $K$-types}\label{thm3bsubsection}
For the proof of Theorem~\ref{thm1}, we need to estimate the generalized spherical trace function $\psi_\tau^U$ for more general $K$-types $\tau$. This comes with some complications, the first of which is a potentially large multiplicity $[U:\tau]$. The next lemma provides a ``polynomial'' upper bound for this multiplicity. 

\begin{lemma}\label{lemma:multiplicitybound} Let $U=U^{\sigma',\mu'}$ be a generalized principal series representation. Let $\tilde\tau$ be the (unique) minimal $K$-type of $U$, and let $\tilde{\bm\lambda}$ denote its highest weight. If $\tau$ is an arbitrary $K$-type of highest weight $\bm\lambda$, then
\[[U:\tau]\ll\bigl(1+\|\bm\lambda-\tilde{\bm\lambda}\|_\infty\bigr)^{\lfloor n^2/4\rfloor}.\]
\end{lemma}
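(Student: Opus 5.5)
Our starting point is the Frobenius reciprocity decomposition \eqref{schur-Hom}:
\[
[U:\tau]=\sum_{\omega\in\widehat{K'}}[\tau|_{K'}:\omega]\cdot[\sigma'|_{K'}:\omega].
\]
The $K'$-types of $\sigma'$ are exactly those in \eqref{eq:omega-tensor}, and each occurs with multiplicity one, because every $\omega_m$ with $m\in\{\ell_p,\ell_p+2,\dotsc\}$ occurs exactly once in $\delta_{\ell_p}$. It follows that
\[
[U:\tau]=\sum_{\bm m}[\tau|_{K'}:\omega_{\bm m}],
\]
where $\omega_{\bm m}$ is the $K'$-type \eqref{eq:omega-tensor} with parameters $\bm m=(m_1,\dotsc,m_r)$ and $m_p\geq\ell_p$, $m_p\equiv\ell_p\pmod 2$. (In the complementary case $u>t$ we first twist by $\sgn(\det)$, as in \S\ref{minimalKtype}.) The proof then splits into two tasks: counting the admissible $\bm m$, and bounding each branching multiplicity.

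\emph{Step 1: the number of relevant $\bm m$.} Suppose $\omega_{\bm m}$ occurs in $\tau$. Restricting to the maximal torus of $K'$, the tuple $(m_1,\dotsc,m_r,1^{u},0^{t-u})$, after reordering, is a weight of $\tau$ (after pairing the $\sgn\boxtimes 1$ factors as in \S\ref{minimalKtype}). By the convex hull statement recalled in \S\ref{parametrizingK}, this weight is dominated by $\bm\lambda$; in particular each $m_p\leq\lambda_1$. This crude bound is not good enough, so we sharpen it. The sorted vector of the $m_p$ is dominated by $\bm\lambda$, while $m_p\geq\ell_p$ and $\tilde{\bm\lambda}=(\ell_1,\dotsc,\ell_r,1^u,0^{t-u})$. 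Comparing partial sums, we get $\sum_p(m_p-\ell_p)\ll\|\bm\lambda-\tilde{\bm\lambda}\|_\infty$, hence each $0\leq m_p-\ell_p\ll 1+\|\bm\lambda-\tilde{\bm\lambda}\|_\infty$. The number of such $\bm m$ is therefore $\ll(1+\|\bm\lambda-\tilde{\bm\lambda}\|_\infty)^{r}$. I expect the bookkeeping of the reordering here to be the main fiddly point; it is handled by the fact that the sorted $\bm m$ and $\tilde{\bm\lambda}$ are both sorted, together with \eqref{eq:ordering-ell-s}.

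\emph{Step 2: the branching multiplicity.} We bound $[\tau|_{K'}:\omega_{\bm m}]$ by the dimension of the corresponding weight space of $\tau$ for the torus of $K'$. Since $K'$ and $K$ share a maximal torus up to the $\OO(1)$ factors, this is at most the multiplicity of a weight in $\tau$, up to a factor $2^{\lfloor n/2\rfloor}$ coming from the component groups. The multiplicity of a weight $\bm\mu$ in $\tau|_L$ is bounded by Kostant's partition function evaluated at $\bm\lambda-\bm\mu$, summed over the Weyl group. The key observation is that this weight multiplicity is polynomially bounded in the distance of $\bm\mu$ from $\bm\lambda$, not in $\bm\lambda$ itself. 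Since $\bm\lambda-\bm\mu$ is a sum of positive roots with total size $\ll 1+\|\bm\lambda-\tilde{\bm\lambda}\|_\infty$ (by Step 1 and $\bm\mu$ close to $\tilde{\bm\lambda}$), the partition function is $\ll(1+\|\bm\lambda-\tilde{\bm\lambda}\|_\infty)^{N-\lfloor n/2\rfloor}$, where $N$ is the number of positive roots.

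The Weyl group terms are the obstacle. If bounding them with Kostant's formula gets messy, the first thing I would try is an alternative: iterate the classical interlacing branching rule $\OO(n)\downarrow\OO(n-1)\downarrow\dotsb$, or $\OO(n)\downarrow\OO(2)\times\OO(n-2)$. Each step allows $\ll(1+\|\bm\lambda-\tilde{\bm\lambda}\|_\infty)$ choices per interlacing coordinate, and only about $\lfloor n^2/4\rfloor-r$ coordinates are free, because the top coordinates are pinned near $\ell_p$ by Step 1. Multiplying the count from Step 1 by the bound from Step 2 then gives the exponent $\lfloor n^2/4\rfloor$.
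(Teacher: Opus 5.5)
There is a genuine gap in your Step 2. It rests on a false claim: $K'=\OO(2)^r\times\OO(1)^s$ does not share a maximal torus with $K$ ``up to the $\OO(1)$ factors''. The identity component of $K'$ is $\SO(2)^r$, of rank $r$, whereas $K$ has rank $r+t$ with $t=\lfloor s/2\rfloor$. Each pair of $\OO(1)$ factors sits in an $\OO(2)$ whose circle is missing from $K'$, and this costs far more than a factor $2$.

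What you actually get is the following. $[\tau|_{K'}:\omega_{\bm m}]$ is at most the dimension of the $\SO(2)^r$-weight space of weight $(m_1,\dotsc,m_r)$. That dimension is a sum of $T$-weight multiplicities over all $(m_1,\dotsc,m_r,\nu_1,\dotsc,\nu_t)$, where the $\nu_i$ are constrained only in parity: odd in the $u$ slots paired as $\sgn\boxtimes 1$, even in the other $t-u$. They are not fixed to $(1^u,0^{t-u})$ as you assert in Step 1; Step 1 itself survives, since it only uses the first $r$ partial sums.

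The loss is real already for $n=3$, $r=0$, $\sigma'$ trivial. Then $K'=M$ is finite and $\tilde{\bm\lambda}=(0)$. For even $\lambda$ and $\tau$ the action of $\OO(3)$ on harmonic polynomials of degree $\lambda$, one has $[U:\tau]=\dim\tau^M=\lambda/2+1$. Your Steps 1--2 would give $O(1)$: your exponent $r+|\Phi^+|-\lfloor n/2\rfloor$ is $0$ here, and in general it is not $\lfloor n^2/4\rfloor$ either.

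The missing idea is to pass through a subgroup of $K$ that contains the full maximal torus. The paper restricts to $K''=\OO(2)^{r+t}\times\OO(1)^{\epsilon}$, with $\epsilon$ the parity of $n$, pairing up the $\OO(1)$ factors. It then checks that $[U:\tau]=\sum_{\bm\mu}[\tau:\rho_{\bm\mu}]$, where $\bm\mu$ ranges over $\tilde{\bm\lambda}+2\ZZ_{\geq 0}^{r+t}$ and $\rho_{\bm\mu}=\omega_{\mu_1}\boxtimes\dotsb\boxtimes\omega_{\mu_{r+t}}\boxtimes 1^{\boxtimes\epsilon}$. Only $\bm\mu\preceq\bm\lambda$ contribute. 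Your Step 1 argument, now applied to all $r+t$ coordinates (using $\tilde{\bm\lambda}\leq\bm\mu$ pointwise), then gives $\|\bm\lambda-\bm\mu\|_\infty\ll\|\bm\lambda-\tilde{\bm\lambda}\|_\infty$. It also shows there are at most $(1+\|\bm\lambda-\tilde{\bm\lambda}\|_\infty)^{\lfloor n/2\rfloor}$ terms. Each term is bounded by a genuine weight multiplicity $\ll(1+\|\bm\lambda-\bm\mu\|_\infty)^{|\Phi^+|}$. This is where $\lfloor n^2/4\rfloor=|\Phi^+|+\lfloor n/2\rfloor$ comes from: the count has $\lfloor n/2\rfloor$ free coordinates, not $r$.

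Three smaller points:
\begin{enumerate}
\item The Weyl group terms in Kostant's formula are harmless. Each is $\mcP(\bm\nu)$ with $\bm\nu\preceq\bm\lambda-\bm\mu$, and $\mcP(\bm\nu)=0$ unless $\mathbf{0}\preceq\bm\nu$, which forces $\|\bm\nu\|_\infty\ll\|\bm\lambda-\bm\mu\|_\infty$.
\item For Type II $\tau$ you must also handle the $L$-constituent of highest weight $(\lambda_1,\dotsc,\lambda_{n/2-1},-\lambda_{n/2})$.
\item Your interlacing fallback does not help as stated: $K'$ does not sit in a Gelfand--Tsetlin chain, and $\OO(n)\downarrow\OO(2)\times\OO(n-2)$ is not an interlacing rule.
\end{enumerate}
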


\begin{proof} We keep using the notations and findings of \S\ref{minimalKtype}. Without loss of generality, $\sigma'$ is of the form \eqref{sigma'}, where the bounds \eqref{eq:ordering-ell-s} and $u\leq t$ are in
place. Then, $\tilde{\bm\lambda}$ is given by \eqref{eq:highest-weight}:
\[\tilde{\bm\lambda}=(\ell_1,\dotsc,\ell_r,\underbrace{1,\dotsc,1}_{\text{$u$ times}},\underbrace{0,\dotsc,0}_{\text{$t-u$ times}}).\]
It follows from \eqref{schur-Hom} that $[U:\tau]$ equals the sum of $[\tau:\omega]$ over the representations $\omega\in\widehat{K'}$ that occur in $\sigma'$. These $\omega$'s are listed by \eqref{eq:omega-tensor}, where $m_p\in\{\ell_p,\ell_p+2,\dotsc\}$. For convenience, we rearrange the last $s$ factors of $\omega$ in \eqref{eq:omega-tensor} as follows:
\[\omega=\omega_{m_1}\boxtimes\dotsb\boxtimes\omega_{m_r}
\boxtimes(\sgn\boxtimes 1)^{\boxtimes u}\boxtimes(1\boxtimes 1)^{\boxtimes(t-u)}\boxtimes 1^{\boxtimes\epsilon},\]
where $\epsilon\in\{0,1\}$ is the parity of $n$. Restricting $\tau$ to the intermediate subgroup
\[K'':=\OO(2)^{r+t}\times O(1)^\epsilon,\]
and then further restricting it to $K'$ reveals that
\[[\tau:\omega]=\sum_{\rho\in\widehat{K''}}[\tau:\rho][\rho:\omega].\]
Let us write $\omega_0$ for the trivial representation of $\OO(2)$. Then it is straightforward to check that $[\rho:\omega]$ above is either zero or one, and it is one if and only if
\[\rho=\omega_{m_1}\boxtimes\dotsb\boxtimes\omega_{m_r}\boxtimes\omega_{m'_1}\boxtimes\dotsb\boxtimes\omega_{m'_t}\boxtimes 1^{\boxtimes\epsilon},\]
where $m'_1,\dotsc,m'_u\geq 1$ are odd, and $m'_{u+1},\dotsc,m'_t\geq 0$ are even. Hence $[\tau:\omega]$ is the sum of $[\tau:\rho]$ over these $\rho$'s. Summing these up over all $\omega$'s, we obtain the pretty formula
\[[U:\tau]=\sum_{\bm\mu}[\tau:\rho_{\bm\mu}],\]
where $\bm\mu=(\mu_1,\dotsc,\mu_{r+t})$ runs through $\tilde{\bm\lambda}+2\ZZ_{\geq 0}^{r+t}$, and
\begin{equation}\label{rhodecomp}
\rho_{\bm\mu}:=\omega_{\mu_1}\boxtimes\dotsb\boxtimes\omega_{\mu_{r+t}}\boxtimes 1^{\boxtimes\epsilon}.
\end{equation}
Of course $\rho_{\bm\mu}$ can only occur in $\tau$ when $\bm\mu\preceq\bm\lambda$, hence in fact
\begin{equation}\label{Utaubound}
[U:\tau]\leq\sum_{\tilde{\bm\lambda}\leq\bm\mu\preceq\bm\lambda}[\tau:\rho_{\bm\mu}].
\end{equation}
Here $\tilde{\bm\lambda}\leq\bm\mu$ means a pointwise bound.

We proceed to estimate $[\tau:\rho_{\bm\mu}]$ in terms of the multiplicity $[\bm\lambda:\bm\mu]$ of $\bm\mu$ in the irreducible Lie algebra representation of $\mathfrak{so}(n,\CC)$ of highest weight $\bm\lambda$. Let $(e_i)$ be the standard basis of $\ZZ^{r+t}=\ZZ^{\lfloor n/2\rfloor}$, and consider the set $\Phi^+$ of positive roots of $\mathfrak{so}(n,\CC)$:
\begin{alignat*}{2}
\Phi^+&=\{e_i\pm e_j:1\leq i<j\leq n/2\},\qquad&&\text{$n$ even},\\
\Phi^+&=\{e_i\pm e_j:1\leq i<j\leq n/2\}\cup\{e_i:1\leq i\leq n/2\},\qquad&&\text{$n$ odd}.
\end{alignat*}
For any weight $\bm\nu\in\ZZ^{\lfloor n/2\rfloor}$, let $\mcP(\bm\nu)$ be the number of decompositions
\[\bm\nu=\sum_{\bm\alpha\in\Phi^+}k_{\bm\alpha}\bm\alpha\]
with $k_{\bm\alpha}\in\ZZ_{\geq 0}$. Let us assume that $\mathbf{0}\preceq\bm\nu$, for otherwise $\mcP(\bm\nu)=0$. Then, each term $k_{\bm\alpha}\bm\alpha$ above is $\preceq\bm\nu$, whence passing to partial sums reveals that
$k_{\bm\alpha}\ll\|\bm\nu\|_\infty$. This yields the convenient upper bound
\[\mcP(\bm\nu)\ll(1+\|\bm\nu\|_{\infty})^{|\Phi^+|}.\]
Suppose that $\tau$ is a Type I representation. Then $[\bm\lambda:\bm\mu]$ equals the sum of $[\tau:\rho]$ over those representations $\rho\in\widehat{K''}$ that can be obtained from \eqref{rhodecomp} by replacing some (or all) factors $\omega_0$ by the determinant representation of $\OO(2)$ and, when $n$ is odd, also possibily replacing the last factor $1$ by $\sgn$. In particular,
$[\tau:\rho_{\bm\mu}]\leq[\bm\lambda:\bm\mu]$. By Kostant's formula \cite[Cor.~5.83]{Knapp-Lie}, we conclude that
\[[\tau:\rho_{\bm\mu}]\ll\max\left\{\mcP(\bm\nu):\bm\nu\preceq\bm\lambda-\bm\mu\right\}
\ll(1+\|\bm\lambda-\bm\mu\|_\infty)^{^{|\Phi^+|}}.\]
Now suppose that $\tau$ is a Type II representation. So $n$ is even and $\lambda_{n/2}>0$. Then, by a similar argument as in the Type I case,
\[[\tau:\rho_{\bm\mu}]\leq[\bm\lambda:\bm\mu]+[\bm\lambda':\bm\mu],\]
where $[\bm\lambda':\bm\mu]$ is the multiplicity of $\bm\mu$ in the irreducible Lie algebra representation of $\mathfrak{so}(n,\CC)$ of highest weight
\[\bm\lambda':=(\lambda_1,\dotsc,\lambda_{n/2-1},-\lambda_{n/2}).\]
So Kostant's formula now yields that
\[[\tau:\rho_{\bm\mu}]\ll\max\left\{\mcP(\bm\nu):\text{$\bm\nu\preceq\bm\lambda-\bm\mu$ or $\bm\nu\preceq\bm\lambda'-\bm\mu$}\right\}.\]
For $\bm\nu\preceq\bm\lambda-\bm\mu$, we estimate $\mcP(\bm\nu)$ as before. Now let $\bm\nu\preceq\bm\lambda'-\bm\mu$ with $\mcP(\bm\nu)\neq 0$. Then $\mathbf{0}\preceq\bm\lambda'-\bm\mu$, hence by the definition of $\preceq$, it follows that
\[0\leq(\lambda_1-\mu_1)+\dotsb+(\lambda_{n/2-1}-\mu_{n/2-1})+(-\lambda_{n/2}-\mu_{n/2}).\]
Therefore,
\[0\leq\lambda_{n/2}+\mu_{n/2}\ll\|\bm\lambda-\bm\mu\|_\infty,\]
and so $\|\bm\lambda'-\bm\mu\|_\infty\ll\|\bm\lambda-\bm\mu\|_\infty$, and finally
\[\mcP(\bm\nu)\ll(1+\|\bm\lambda'-\bm\mu\|_\infty)^{(n-1)^2/4}\ll(1+\|\bm\lambda-\bm\mu\|_\infty)^{|\Phi^+|}.\]
To sum up, we proved in both the Type I and Type II cases that
\[[\tau:\rho_{\bm\mu}]\ll(1+\|\bm\lambda-\bm\mu\|_\infty)^{|\Phi^+|}.\]

Going back to \eqref{Utaubound}, we infer that
\[[U:\tau]\leq\sum_{\tilde{\bm\lambda}\leq\bm\mu\preceq\bm\lambda}(1+\|\bm\lambda-\bm\mu\|_\infty)^{|\Phi^+|}.\]
However, it is straightforward to verify that
\[\tilde{\bm\lambda}\leq\bm\mu\preceq\bm\lambda
\qquad\Longrightarrow\qquad
\|\bm\lambda-\bm\mu\|_\infty\ll\|\bm\lambda-\tilde{\bm\lambda}\|_\infty,\]
which also means that the number of $\bm\mu$'s occurring in the sum is 
$(1+\|\bm\lambda-\tilde{\bm\lambda}\|_\infty)^{\lfloor n/2\rfloor}$. As a result,
\[[U:\tau]\ll(1+\|\bm\lambda-\tilde{\bm\lambda}\|_\infty)^{|\Phi^+|+\lfloor n/2\rfloor}.\]
The exponent equals $|\Phi^+|+\lfloor n/2\rfloor=\lfloor n^2/4\rfloor$, so we are done.
\end{proof}

It is instructive to see now how the baseline bound \eqref{eq:psi-trivial-bound} generalizes to nonminimal $K$-types. In general, the function $\hat F_\tau^{\sigma'}$ defined by \eqref{Fdef2} is not as simple as in \eqref{hatFtausigma2}. Instead, the updated right-hand side of \eqref{hatFtausigma2} is composed of $[U:\tau]$ diagonal $2^r\times 2^r$ blocks of shape
\begin{equation}\label{superblocks-eq}
\frac{\dim\tau}{2^r}\bigotimes_{p=1}^r \varphi_{\omega_{m_p}}^{\delta_{\ell_p}} (x_p) \cdot \prod_{k=1}^u \epsilon_k,
\end{equation}
with certain combinations of $m_p\in\{\ell_p,\ell_p+2,\dotsc\}$ as also seen in the proof of Lemma~\ref{lemma:multiplicitybound}. Each block corresponds to an $\omega\in\widehat{K'}$ as in \eqref{eq:omega-tensor} that occurs both in $\sigma'$ and $\tau$, and each $\omega\in\widehat{K'}$ gives rise to $[\tau:\omega]$ blocks. As each tensor factor is a $2\times 2$ diagonal or antidiagonal matrix with entries of absolute value at most one, \eqref{tracefunctionbound} implies that
\begin{equation}\label{eq:psi-baseline-bound}
\psi^{U}_{\tau}(x) \ll_{\Re\mu',\Omega} [U:\tau]\cdot\dim \tau,\qquad x\in\Omega.
\end{equation}
For completeness, we remark that this bound is also valid when $U$ is a discrete series representation, simply because $U$ is unitary in that case.

Our next theorem improves upon this baseline bound, at least when $\tau$ is ``close'' to the minimal $K$-type of $U$. It extends Theorem~\ref{thm3} and is a crucial ingredient in the proof of Theorem~\ref{thm1}.

\begin{theorem}\label{thm3b} Let $U=U^{\sigma',\mu'}$ be a generalized principal series representation, and let $\tau$ be a $K$-type whose highest weight $\bm\lambda$ satisfies
\[\|\bm\lambda\|_\infty\leq\|\bm\ell\|_\infty+\|U\|_\infty^{1/52}.\]
Let $\Omega\subseteq G$ be a compact set. Then, for any $x\in\Omega$, we have
\[\psi^{U}_{\tau}(x)\ll_{\Re\mu',\Omega}[U:\tau]\cdot\dim \tau\cdot\left(1 +\|U\|_{\infty}^{1/6}\dist(x, K)\right)^{-1/3}.\]
\end{theorem}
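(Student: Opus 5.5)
We follow the two-step structure of the proof of Theorem~\ref{thm3}, now working with the block structure \eqref{superblocks-eq} of $\hat F_\tau^{\sigma'}$ in place of \eqref{hatFtausigma2}. The starting point is \eqref{eq:formula_nonminimal_spherical_trace_function}, which remains valid for arbitrary $\tau$. The trace $T_\tau^{\sigma'}(k^{-1}xk)$ is then a sum over the $\omega\in\widehat{K'}$ occurring in both $\sigma'$ and $\tau$, each contributing $[\tau:\omega]$ blocks; in total there are $[U:\tau]$ diagonal $2^r\times 2^r$ blocks. We aim to prove, for each block separately, the bound $\dim\tau\cdot(1+\|U\|_\infty^{1/6}\dist(x,K))^{-1/3}$, and then sum over the $[U:\tau]$ blocks. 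Accordingly, we assume $\|U\|_\infty$ large and $x\notin K$, since otherwise \eqref{eq:psi-baseline-bound} suffices.

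\textbf{Step 1 (support-based bound).} We replace Lemma~\ref{lemma:hat-F-D} by a version for the blocks with $m_p\in[\ell_p,\ell_p+2\Delta]$, where $\Delta:=\|U\|_\infty^{1/52}$. This range is forced, since the components $\omega$ of $\tau$ satisfy $m_p\leq\lambda_p\leq\|\bm\lambda\|_\infty\leq\ell_1+\Delta$. Using \eqref{eq:goodbound} instead of \eqref{basicphi}, each tensor factor is bounded by $e\,(4/(a^2+b^2+c^2+d^2+2))^{\ell/(\Delta\log(\ell+\Delta))}$. The argument of Lemma~\ref{lemma:hat-F-D} then gives the block bound $\dim\tau\cdot e^r(1-D(x)^2/4)^{\ell_1/(2\Delta\log(\ell_1+\Delta))}$. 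Here the factor $e^r$ comes from the other tensor factors; it is $O(1)$, because we only need to keep the first factor sharp and bound the others by $1$. The dyadic argument of Lemma~\ref{lemma:thm3-big-ell}, combined with Lemma~\ref{lemma:D-is-often-large}, then saves $(\ell_1/(\Delta\log\ell_1))^{-1/6}\dist(x,K)^{-1/3}$. This settles the case $\ell_1\geq\|U\|_\infty^{1/2}$, say, with room to spare.

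\textbf{Step 2 (stationary phase).} When $\|\bm\ell\|_\infty$ is small compared with $\|\mu'\|_\infty$, we rerun \S\ref{sec45} verbatim; the phase $e^{i\Im\mu'(H'(xk))}$ is unchanged. The only new input is the Sobolev bound of Lemma~\ref{lemma:sobolev} for a single block. Lemma~\ref{lemma:matrix-coefficients-derivatives} still gives $\|d\tau(X)\|_{\mathrm{op}}\ll 1+\|\bm\lambda\|_\infty\ll 1+\|\bm\ell\|_\infty+\Delta$. The derivatives of the entries $\langle\varphi^{\delta_\ell}_{\omega_m}\bv_{\pm m},\bv_{\pm m}\rangle$ are $O(m^j)$ by the Sobolev remark in \S\ref{discrete-series-subsection}. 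Hence each block has $S_1$-norm $\ll\dim\tau\,(1+\|\bm\ell\|_\infty+\Delta)$. The trivial bound on the exceptional set $K^{(m)}_{\leq Ch}$ remains $\ll\dim\tau$ per block. We therefore obtain \eqref{eq:stationary-phase-upshot} with $1+\|\bm\ell\|_\infty$ replaced by $1+\|\bm\ell\|_\infty+\|U\|_\infty^{1/52}$.

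\textbf{Step 3 (optimization).} We split according to whether $\|\bm\ell\|_\infty\leq\|\mu'\|_\infty^{\theta}$ for a suitable $\theta$. In the stationary phase regime, choosing $h$ as a suitable power of $\|\mu'\|_\infty$ balances $h^{1/3}$ against $\|\mu'\|_\infty^{-1/2}(\|\mu'\|_\infty^{\theta}+\|U\|_\infty^{1/52})h^{-1/2}$. In the complementary regime, Step 1 yields roughly $\|\bm\ell\|_\infty^{-1/6}\Delta^{1/6}$ up to logarithms. The target saving is $\|U\|_\infty^{-1/18}$, which is the exponent $1/6\cdot1/3$ after the $-1/3$ power; we check that the exponents $1/52$ and $1/6$ are compatible with it, absorbing any logarithm into the slack, and adjusting $\theta$ if necessary.

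The main obstacle is Step 1. One must make sure that \eqref{eq:goodbound} applies uniformly to every block, and in particular handle the $\SL_2^-$ and sign components. One must also check that the reduced exponent $\ell_1/(\Delta\log\ell_1)$ still produces a power saving after the dyadic decomposition. The Step 3 exponent bookkeeping is routine but needs care.
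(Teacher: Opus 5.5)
Your plan is essentially the paper's proof. The paper uses \eqref{eq:goodbound} on the first tensor factor to get the variant of \eqref{eq:thm3-big-ell} with $\ell_1$ replaced by $\ell_1/\Delta'$, reruns \S\ref{sec45} with $1+\|\bm\lambda\|_\infty$ in the Sobolev bound, and splits at $\|\bm\ell\|_\infty\le\|\mu'\|_\infty^{6/17}$ with $h=\|\mu'\|_\infty^{-3/17}$. Your unspecified $\theta$ must lie in the narrow window $(55/156,13/36]$, which is exactly what the exponents $1/52$ and $1/6$ are tuned for, so the bookkeeping is tight but goes through. One correction: the only bound available and needed is $\ell_1\le m_1\le\|\bm\lambda\|_\infty\le\ell_1+\Delta$; your claims $m_p\le\lambda_p$ and $m_p\le\ell_p+2\Delta$ for general $p$ are false, but this is harmless because the remaining tensor factors are simply bounded by $1$.
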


\begin{proof}
The proof is similar to the proof of Theorem~\ref{thm3} given earlier in this section, so we shall only indicate the changes necessary to cover the more general $K$-types $\tau$. We shall assume that $\|U\|_{\infty}$ is sufficiently large and $x\not\in K$, for otherwise the conclusion of Theorem~\ref{thm3} follows from \eqref{eq:psi-baseline-bound}. For convenience, we write
\[\Delta:=\|U\|_\infty^{1/52},\qquad\Delta':=\Delta\log(\|\bm\ell\|_\infty+\Delta).\]

We start from the decomposition of $\hat F_\tau^{\sigma'}$ into $[U:\tau]$ diagonal $2^r\times 2^r$ blocks of shape \eqref{superblocks-eq}, as described before Theorem~\ref{thm3b}. By our restriction on $\tau$, in \eqref{superblocks-eq} we have
\[\ell_p\leq m_p\leq\|\bm\lambda\|_\infty\leq\|\bm\ell\|_\infty+\Delta,\qquad p\in\{1,\dotsc,r\}.\]
In particular, if $r\geq 1$, then we can apply \eqref{eq:goodbound} to derive the following variant of \eqref{eq:lemma5bound}:
\[
\|\hat F_{\tau}^{\sigma'} (m'(x))\|_{2,1}\ll [U:\tau]\cdot\dim \tau \cdot \big(1 - D(x)^2/4 \big)^{\ell_1/\Delta'},\qquad x\in G.
\]
For $x\in\Omega$, this yields the following variant of \eqref{eq:thm3-big-ell}:
\begin{equation}\label{eq:thm3-big-ell-variant}
\psi_{\tau}^{U}(x) \ll_{\Re\mu'} 
[U:\tau]\cdot\dim \tau \cdot \left(1+\|\bm\ell/\Delta'\|_{\infty}^{1/2}\cdot\dist(x,K)\right)^{-1/3}.
\end{equation}

Lemmata~\ref{lemma:matrix-coefficients-derivatives} and \ref{lemma:matrix-coefficients-derivatives-2} remain valid if we replace $\|\bm\ell\|_\infty$ by $\|\bm\lambda\|_\infty$. Then, we can prove the following generalization of \eqref{eq:lemma12bound}:
\[\left\|k\mapsto A(x,k,\mu')\right\|_{S_j}\ll_{j,\Re\mu'} [U:\tau]\cdot\dim \tau \cdot (1+\|\bm\lambda\|_{\infty})^{j}.\]
The only nontrivial step is to extend \eqref{eq:sobolev-proof-3} to our current situation:
\[
\left\| \left. \partial^{\beta} \hat{F}_\tau^{\sigma'}(m'(x E(\bt))) \right|_{\bt=\mathbf{0}} \right\|_{2,1} \ll_{\beta} [U:\tau]\cdot\dim\tau \cdot (1+\|\bm\lambda\|_\infty)^{|\beta|}.
\]
For this, we only need to remark that each matrix entry of
\[\varphi_{\omega_{m_p}}^{\delta_{\ell_p}} \left(\begin{pmatrix} a_p(\bt) & b_p(\bt) \\ 0 & d_p(\bt) \end{pmatrix}\right)\]
has $\partial^\gamma$-derivative at $\bt=\mathbf{0}$ of size $O_\gamma(m_p^{|\gamma|})$, which in turn follows from the discussion below \eqref{eq:goodbound}. At the end of the day, we obtain the following extension of \eqref{eq:stationary-phase-upshot}:
\begin{equation}\label{eq:stationary-phase-upshot-extension}
\begin{split}
\psi_\tau^{U}(x) \ll_{\Re\mu'}
&[U:\tau]\cdot\dim \tau \cdot h^{1/3} \cdot \dist(x,K)^{-1/3}\\
& + [U:\tau]\cdot\dim \tau \cdot (1+\|\bm\lambda\|_{\infty})
\left(\|\tilde{\mu}\|_{\infty}^{-1}h^{-1} + \|\tilde{\mu}\|_{\infty}^{-1/2}h^{-1/2}\right).
\end{split}
\end{equation}

We finish the proof by updating the optimization argument in \S\ref{thm3endgame}. If $\|\bm{\ell}\|_{\infty}\leq\|\mu'\|_{\infty}^{6/17}$, then $\|\bm\lambda\|_\infty\ll\|\bm\ell\|_\infty+\|\mu'\|_{\infty}^{1/52}\ll\|\mu'\|_{\infty}^{6/17}$, hence 
choosing $h:=\|\mu'\|_{\infty}^{-3/17}$ in \eqref{eq:stationary-phase-upshot-extension} yields that
\[\psi_\tau^{U}(x) \ll_{\Re\mu'} [U:\tau]\cdot\dim\tau\cdot\|U\|_{\infty}^{-1/17}\cdot\dist(x,K)^{-1/3}.\]
If $\|\bm{\ell}\|_{\infty}>\|\mu'\|_{\infty}^{6/17}$, then \eqref{eq:thm3-big-ell-variant} yields that
\[\psi_{\tau}^{U}(x) \ll_{\Re\mu'} [U:\tau]\cdot\dim \tau \cdot \Delta'^{1/6}\|\bm\ell\|_{\infty}^{-1/6}\cdot\dist(x,K)^{-1/3},\]
hence also that
\[\psi_{\tau}^{U}(x) \ll_{\Re\mu'} [U:\tau]\cdot\dim \tau \cdot \Delta'^{1/6}\|U\|_{\infty}^{-1/17}\cdot\dist(x,K)^{-1/3}.\]
However,
\[\Delta'^{1/6}\ll\Delta^{1/6}\log\Delta\ll\|U\|_\infty^{1/306},\]
hence we have the required conclusion in all cases.
\end{proof}

\begin{remark}\label{rmk-to-thm3b} The bound of Theorem~\ref{thm3b} also holds when $U$ is a discrete series representation $\pi$, and we do not need to restrict $x$ to $\Omega$. This follows (in a much stronger form) from \eqref{eq:goodbound} upon noting that
\[
a^2+b^2+c^2+d^2 - 2 \asymp \dist(x,K)^2,\qquad x\in G.
\]
Alternatively, we can regard $U$ as a degenerate generalized principal series with $P'=M'=G$, and then Theorem~\ref{thm3b} apriori covers discrete series. It is instructive to see what our proof boils down to in this special case. We start from the following variant of \eqref{eq:lemma5bound}:
\[
\|\varphi^{\pi}_{\tau}(x)\|_{2,1} \ll (1-(D(x)^2/4)^{\ell_1/\Delta'}, \qquad x\in G.
\]
The left-hand side is right $K$-invariant, hence following the proof of Lemma~\ref{lemma:thm3-big-ell}, we obtain \eqref{eq:thm3-big-ell-variant}.
\end{remark}

\section{Bounds for spherical functions II: the case $n=3$}\label{section_for_n_equals3}

In this section, we prove Theorem~\ref{thm4}. Building upon \S\ref{oldsubsection}, we shall work with
\[G = \SL_3^{\pm}(\RR),\qquad K=\OO(3),\qquad L=\SO(3).\]
As in \S\ref{thm3-section}, $U=U^{\sigma',\mu'}$ is a generalized principal series representation, $\tau\in\widehat{K}$ is a $K$-type appearing in $U|_K$, and $\Omega\subseteq G$ is a bi-$K$-invariant compact set. Initially, $\tau$ will be the (unique) minimal $K$-type in $U$, while in \S\ref{n3-nonminimal-subsec} we shall allow more general $K$-types $\tau$.

\subsection{Setup and initial bounds}
\label{n3-initial-subsec}
In this subsection, we set up an explicit version of the integral representation \eqref{psitau-integral} below, which will be used throughout \S\ref{section_for_n_equals3}. Then we prove Lemma~\ref{n3-essential-support-lemma}, which shows that the trace function $\psi_{\tau}^U(x)$ is very small unless $x$ is very close to the manifold $S$ defined in Theorem~\ref{thm4}.

From \eqref{eq:formula_nonminimal_spherical_trace_function} and \eqref{eq:T-sigma'}, we recall the integral representation
\begin{equation}
\label{psitau-integral}
\psi_{\tau}^{U}(x)=\int_KT_\tau^{\sigma'}(k^{-1}xk)\,e^{(\mu'-\rho')(H'(k^{-1}xk))}\,dk,
\end{equation}
which is valid for every $\tau\in\widehat{K}$ appearing in the given nonminimal principal series representation $U=U^{\sigma',\mu'}$. We keep the notations $\sigma'=\delta_{\ell}\boxtimes\sgn^u$ and $\tau=\tau_{\lambda}^{\iota}$, where $\lambda\geq \ell\geq 1$ are integers, $u\in\{0,1\}$, and $\iota=(-1)^{\ell+u}$. Here, the trace function
\[ T_\tau^{\sigma'}(x)=T_\tau^{\sigma'}(km'a'n')=\tr\big(\tau(k)\hat{F}^{\sigma'}_{\tau}(m')\big) \]
is given in terms of the $KM'A'N'$ decomposition and described explicitly
by \eqref{n-3-tr-explicit-nonminimal} and \eqref{n-3-tr-explicit-nonminimal-2} for
\begin{equation}\label{kmprime-parametrization}
\begin{alignedat}{3}
k&=\epsilon_1 l\in K,& m'&=\epsilon_2\begin{pmatrix}m''&0\\0&1\end{pmatrix}\in G(2)\times G(1),&\qquad & \epsilon_i\in\{\pm 1\}, \\
l&=k[\alpha,\beta,\gamma]\in L,\qquad& m''&=\begin{pmatrix} a&b\\c&d\end{pmatrix}\in G(2).&&
\end{alignedat}
\end{equation}
and the Euler angles $\alpha,\beta,\gamma$ are as in \eqref{euler-angles}. We can and we shall assume that $\epsilon_2=1$, because we can replace $(\epsilon_1,\epsilon_2)$ by $(\epsilon_1\epsilon_2,1)$. Moreover, by \eqref{transformation-harmony}, we can and we shall assume that $m''\in\SL_2(\RR)$.

In this section, we prove bounds on $\psi_{\tau}^U(x)$ using the integral representation \eqref{psitau-integral}. In \S\ref{n3-initial-subsec}--\S\ref{proof-thm4-subsection}, we address the minimal $K$-type case $\lambda=\ell$ and prove Theorem~\ref{thm4}. For the proof of Theorem~\ref{thm2}, we will also need a version of this estimate for the nonminimal $K$-types $\tau_{\lambda}^{\iota}$ in a suitably wider range of $\lambda-\ell>0$, which we prove in \S\ref{n3-nonminimal-subsec}.

Our starting point is \eqref{n-3-tr-explicit}, which in the present case $\lambda=\ell$ readily implies that
\begin{equation}
\label{n-3-tr-explicit-upper-bound}
|T_{\tau}^{\sigma'}(x)|\leq (2\ell+1)\cos\big(\tfrac12\beta\big)^{2\ell}\Theta(m'')^{\ell},\quad \text{where}\quad \Theta(m''):=\left|\frac{2}{a-ib+ic+d}\right|.
\end{equation}
Writing $m''=k_1 \diag\big(\alpha_{m''},\alpha_{m''}^{-1}\big)k_2$ with $k_1,k_2\in\SO(2)$ and $\alpha_{m''}\in\RR_{\geq 1}$, we have that
\[ \Theta(m'')=\left(\frac12+\frac14\tr\left(m''{}^Tm''\right)\right)^{-1/2}
=\Bigg(1+\left(\frac{\alpha_{m''}-\alpha_{m''}^{-1}}2\right)^2\Bigg)^{-1/2}. \]
From this it is obvious that $\Theta(m'') \leq 1$ as well as
\[ \Theta(m'')^{\ell}\leq\ell^{-A}\qquad\text{unless}\qquad \alpha_{m''}= 1+O_{A}\left(\frac{\mathfrak{L}}{\sqrt{\ell}}\right), \]
where we denote $\mathfrak{L}:=\sqrt{\log(2\ell)}$. The latter condition is equivalent to
\[
m'\in L'+O_{A}\left(\frac{\mathfrak{L}}{\sqrt{\ell}}\right),\qquad \text{where} \qquad L':=\SO(2)\times\{1\}.
\]
Further, for $l=k[\alpha,\beta,\gamma]\in L$, we clearly have that
\[ \cos\left(\tfrac12\beta\right)^{2\ell}\leq\ell^{-A}\qquad\text{unless}\qquad \beta\in2\pi\ZZ+O_{A}\left(\frac{\mathfrak{L}}{\sqrt{\ell}}\right), \]
where the latter condition implies via \eqref{euler-angles} that
\[ l\in L' +O_{A}\left(\frac{\mathfrak{L}}{\sqrt{\ell}}\right). \]
Now let us assume that $x=km'a'n'\in \Omega$. Then, by Lemma~\ref{lemma:iwasawa-coordinates-are-analytic}, the $a'$ and $n'$ components lie in a compact set depending on $\Omega$. From these observations
and \eqref{n-3-tr-explicit-upper-bound} it follows that
\begin{equation}
\label{n3-essential-support-conditions}
\begin{alignedat}{2}
|T_\tau^{\sigma'}(x)|&\leq 2\ell+1,& &\text{and} \\
|T_\tau^{\sigma'}(x)|&\leq \ell^{-A}& \qquad &\text{unless}\qquad x\in \pm L'A'N'+O_{A}\left(\frac{\mathfrak{L}}{\sqrt{\ell}}\right).
\end{alignedat}
\end{equation}
Recall the notation $S$ from Theorem~\ref{thm4}, and observe that
\begin{equation}\label{eq:recallHS}
\pm L'A'N' = \pm A'L'N' = \pm A' \begin{pmatrix} \SO(2) & * \\ & 1 \end{pmatrix}.
\end{equation}
\begin{lemma}
\label{n3-essential-support-lemma}
For all $A>0$ and $x\in\Omega$, we have
\[
\psi_{\tau}^{U}(x)\ll_{\eps,A,\Re\mu',\Omega}\frac{\ell^{1+\eps}}{(1+\sqrt{\ell} \cdot \dist(x,S))^{A}}.
\]
\end{lemma}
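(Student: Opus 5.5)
The plan is to bound $\psi_\tau^U(x)$ via \eqref{psitau-integral} by taking absolute values inside the $K$-integral and using the essential-support information \eqref{n3-essential-support-conditions}. Since $|e^{(\mu'-\rho')(H'(k^{-1}xk))}|=e^{(\Re\mu'-\rho')(H'(k^{-1}xk))}\ll_{\Re\mu',\Omega}1$ for $x\in\Omega$ (by Lemma~\ref{lemma:iwasawa-coordinates-are-analytic} and the compactness of $K\Omega K=\Omega$), we get
\[
|\psi_\tau^U(x)|\ll_{\Re\mu',\Omega}\int_K |T_\tau^{\sigma'}(k^{-1}xk)|\,dk .
\]
Each conjugate $k^{-1}xk$ lies in $\Omega$, so \eqref{n3-essential-support-conditions} applies to it. For every $k$, either $|T_\tau^{\sigma'}(k^{-1}xk)|\leq\ell^{-A'}$, or $k^{-1}xk$ lies within distance $C_{A'}\mathfrak{L}/\sqrt{\ell}$ of $\pm L'A'N'$. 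Here $A'$ is a large auxiliary exponent.

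The key observation is that $S$ is by definition the union of the $K$-conjugates of $\pm L'A'N'$; this is exactly \eqref{eq:recallHS}. The distance $\dist(\cdot,\cdot)$ is bi-$K$-invariant, so $\dist(k^{-1}xk,\pm L'A'N')\geq \dist(k^{-1}xk,S)=\dist(x,S)$. Hence if $\dist(x,S)>C_{A'}\mathfrak{L}/\sqrt{\ell}$, the second alternative never occurs for any $k$. Then $|T_\tau^{\sigma'}(k^{-1}xk)|\leq\ell^{-A'}$ for all $k$, and so $\psi_\tau^U(x)\ll\ell^{-A'}$. If instead $\dist(x,S)\leq C_{A'}\mathfrak{L}/\sqrt{\ell}$, we use the trivial bound $|T|\leq 2\ell+1$ to get $\psi_\tau^U(x)\ll\ell$.

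It remains to match these two cases with the claimed shape $\ell^{1+\eps}(1+\sqrt{\ell}\,\dist(x,S))^{-A}$.

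In the near case, $1+\sqrt{\ell}\,\dist(x,S)\ll_{A'}\mathfrak{L}$. Hence $(1+\sqrt{\ell}\,\dist(x,S))^{A}\ll(\log 2\ell)^{A/2}\ll_{\eps,A}\ell^{\eps}$, and the bound $\ell$ is at most $\ell^{1+\eps}(1+\sqrt\ell\,\dist(x,S))^{-A}$ up to a constant.

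In the far case, $\Omega$ is compact, so $\dist(x,S)\ll_\Omega 1$ and $(1+\sqrt{\ell}\,\dist(x,S))^{A}\ll\ell^{A}$ (at least when $\ell\geq1$). Choosing $A'=A$ then gives $\ell^{-A}\ll\ell^{1+\eps}(1+\sqrt\ell\,\dist(x,S))^{-A}$.

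The only point needing care is the uniformity of the constants. The constant $C_{A'}$ in the neighbourhood $O_{A'}(\mathfrak{L}/\sqrt{\ell})$ of \eqref{n3-essential-support-conditions} must be uniform over $x\in\Omega$. I expect this to be the main (though mild) obstacle: one has to check that the $A'N'$-components of $k^{-1}xk$ stay in a fixed compact set, and that an $O(\delta)$-perturbation of the $K$- and $M'$-components moves the product by $O_\Omega(\delta)$ in Frobenius norm. Both follow from the compactness of $\Omega$ together with the analyticity in Lemma~\ref{lemma:iwasawa-coordinates-are-analytic}.
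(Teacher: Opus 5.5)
Your proof is correct and takes essentially the same approach as the paper. You take absolute values in \eqref{psitau-integral} and use the baseline bound $|T_\tau^{\sigma'}|\leq 2\ell+1$ when $\dist(x,S)\ll_A\mathfrak{L}/\sqrt{\ell}$; otherwise you use the second line of \eqref{n3-essential-support-conditions}, together with the $K$-conjugation invariance and boundedness of $\dist(x,S)$. The uniformity issue you flag is settled in the paper just before the lemma, where Lemma~\ref{lemma:iwasawa-coordinates-are-analytic} shows that the $A'N'$-components of elements of $\Omega$ lie in a compact set.
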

\begin{proof} We start from the integral representation \eqref{psitau-integral}. Keeping in mind that
\[
e^{(\mu'-\rho')(H'(k^{-1}xk))}\ll_{\Re\mu'} 1,\qquad x\in\Omega,
\]
we apply \eqref{n3-essential-support-conditions} twice. In the first round, we conclude the baseline bound
\[
\psi_{\tau}^{U}(x)\ll_{\Re\mu'} \ell,\qquad x\in \Omega.
\]
In the second round, we denote by $C_A$ the implied constant in \eqref{psitau-integral}, and argue as follows. If $\dist(x,S)\leq C_A\mathfrak{L}/\sqrt{\ell}$, then the previous display immediately gives the statement. If $\dist(x,S)> C_A\mathfrak{L}/\sqrt{\ell}$, then we are done by the second line of \eqref{n3-essential-support-conditions}  upon noting that $\dist(x,S)$ is bounded and invariant under $K$-conjugation.
\end{proof}
\begin{remark}\label{rmk:supplement} The proof above also gives the following useful supplement. For $x\in \Omega$, we have
\[
\psi_{\tau}^{U}(x) \ll_{\Re\mu'} \ell^{-A} \qquad \text{unless} \qquad x\in S+O_A\left( \frac{\mathfrak{L}}{\sqrt{\ell}}\right).
\]
\end{remark}

\subsection{Localization within the essential support}
\label{n3-localization-subsec}

In this subsection, we prove Lemma~\ref{subsection52lemma}, which shows that the trace function $\psi_{\tau}^U(x)$ exhibits significant decay (generically of size $\ell^{-1+o(1)}$ smaller than the baseline bound) unless $x$ is very close to $\{\pm\id\}$.

In the integral representation \eqref{psitau-integral}, clearly we may integrate over $L$ instead of $K$. The Haar probability measure on $L$ is given by $dl=(8\pi^2)^{-1}\sin\beta\,d\alpha\,d\beta\,d\gamma$ using the Euler angle parametrization $l=k[\alpha,\beta,\gamma]$ as in \eqref{euler-angles}; see \cite[\S2.2.1]{Buttcane2018}. We observe that, by \eqref{inv}, \eqref{eq:T-sigma'}, and \eqref{endgame},
\begin{alignat*}{2}
T_{\tau}^{\sigma'}(l'^{-1}xl') &= T_{\tau}^{\sigma'}(x),&&\qquad x\in G,\quad l'\in L',\\
H'(l'^{-1}xl') &= H'(x),&&\qquad x\in G,\quad l'\in L'.
\end{alignat*}
We apply this invariance property with $l'=k[0,0,\gamma]\in L'$ to
show that the integrand is constant in $\gamma$, and then divide the $\beta$-integral according to dyadic ranges for $\sin\beta$. We fix a smooth function $f:\RR_{>0}\to\RR_{\geq 0}$ supported in $[1,4]$ such that
\[
\sum_{j\in\ZZ} f(\xi/2^j)=1, \qquad \xi\in\RR_{>0}.
\]
We use this identity for $\xi=\sqrt{\ell}\sin\beta$. Then, with the convenient notations
\[
x^{\alpha,\beta}:=k[\alpha,\beta,0]^{-1}xk[\alpha,\beta,0],\qquad\BB_j:=2^j/\sqrt{\ell},
\]
we can rewrite \eqref{psitau-integral} as
\[\psi_{\tau}^{U}(x)=\sum_{j\in\ZZ}\frac{1}{4\pi}\int_0^{2\pi}\int_0^\pi
T_\tau^{\sigma'}(x^{\alpha,\beta})\,e^{(\mu'-\rho')(H'(x^{\alpha,\beta}))}f\left(\frac{\sin\beta}{\BB_j}\right)\sin\beta\,d\beta\,d\alpha.\]
Note that the double integral is supported on the set
\[S_j:=\left\{(\alpha,\beta)\in[0,2\pi]\times [0,\pi]: \sin\beta\in [\BB_j,4\BB_j]\right\},\]
Hence we can restrict to $\BB_j\leq 1$, and conclude that
\begin{equation}
\label{to-estimate-1}
\psi_{\tau}^{U}(x)=\sum_{j\leq\log\ell/\log 4}I_j(x),\qquad x\in G,
\end{equation}
where
\begin{equation}\label{to-estimate}
I_j(x):=\frac{1}{4\pi}\iint\limits_{(\alpha,\beta) \in S_j}
T_\tau^{\sigma'}(x^{\alpha,\beta})\,e^{(\mu'-\rho')(H'(x^{\alpha,\beta}))}f\left(\frac{\sin\beta}{\BB_j}\right)\sin\beta\,d\alpha\,d\beta.
\end{equation}

Let us restrict to $x\in\Omega$ from now on. By the first line of \eqref{n3-essential-support-conditions}, we have
\begin{equation}\label{eq:trivialforIj}
I_j(x)\ll_{\Re\mu'}\ell\BB_j^2=4^j,\qquad x\in\Omega,
\end{equation}
hence from \eqref{to-estimate-1} we infer that
\[
\psi_{\tau}^U(x)\ll_{\Re\mu'}1+\sum_{0\leq j\leq \log\ell/\log 4}|I_j(x)|,\qquad x\in\Omega.
\]
By the second line of \eqref{n3-essential-support-conditions}, we can restrict the integration in \eqref{to-estimate} to
\[
S_j(x) := \left\{ (\alpha,\beta)\in S_j:x^{\alpha,\beta}\in \pm L'A'N'+O\left(\frac{\mathfrak{L}}{\sqrt{\ell}}\right)\right\}
\]
at the cost of an $O(\ell^{-10})$ error. Therefore, by a third application of \eqref{n3-essential-support-conditions} we conclude that
\begin{equation}
\label{to-estimate-2}
\psi_{\tau}^U(x)\ll_{\Re\mu'} 1+\sum_{\substack{0\leq j\leq \log\ell/\log 4\\S_j(x)\neq\emptyset}}|I_j(x)|,\qquad x\in\Omega.
\end{equation}

We turn to analyzing the condition $S_j(x)\neq\emptyset$ in \eqref{to-estimate-2}. In fact, by Remark~\ref{rmk:supplement}, we only need to carry out this analysis when
\begin{equation}
\label{K-conj}
x=\pm x_0+x_1,\quad x_0=\begin{pmatrix} y\cos\theta&-y\sin\theta&y_{13}\\y\sin\theta&y\cos\theta&y_{23}\\0&0&1/y^2\end{pmatrix},\quad x_1\ll \frac{\mathfrak{L}}{\sqrt{\ell}}
\end{equation}
for $y\asymp 1$, $y_{13},y_{23}\ll 1$ (cf. Lemma~\ref{lemma:iwasawa-coordinates-are-analytic}).
We fix a pair $(\alpha,\beta)\in S_j(x)$. In light of \eqref{K-conj}, the condition that $x^{\alpha,\beta}\in \pm L'A'N'+O(\mathfrak{L}/\sqrt{\ell})$ is equivalent to
\begin{equation}\label{eq:x0conjugate}
k[\alpha,\beta,0]^{-1}x_0k[\alpha,\beta,0]\in \pm L'A'N'+O\left(\frac{\mathfrak{L}}{\sqrt{\ell}}\right).
\end{equation}
Now we compute the left-hand side. First,
\[ k[\alpha,0,0]^Tx_0k[\alpha,0,0]=\begin{pmatrix} y\cos\theta&-y\sin\theta&y_{13\alpha}\\y\sin\theta&y\cos\theta&y_{23\alpha}\\0&0&1/y^2\end{pmatrix}, \]
where
\begin{equation}
\label{y-alpha}
\begin{pmatrix}y_{13\alpha}\\y_{23\alpha}\end{pmatrix}:=\begin{pmatrix}\cos\alpha&\sin\alpha\\-\sin\alpha&\cos\alpha\end{pmatrix}\begin{pmatrix}y_{13}\\y_{23}\end{pmatrix}.
\end{equation}
\begin{equation}
\label{conjugate-expanded}
\begin{aligned}
&k[\alpha,\beta,0]^Tx_0k[\alpha,\beta,0]=\\
&\qquad\begin{pmatrix} y\cos^2\beta\cos\theta-\sin\beta(y_{13\alpha}\cos\beta-(1/y^2)\sin\beta)&-y\cos\beta\sin\theta&\ast\\ y\cos\beta\sin\theta - y_{23\alpha}\sin\beta&y\cos\theta&\ast\\y\cos\beta\cos\theta\sin\beta-\sin\beta((1/y^2)\cos\beta+y_{13\alpha}\sin\beta)&-y\sin\beta\sin\theta&\ast\end{pmatrix}.
\end{aligned}
\end{equation}
Now we use the key observation that
\[\begin{alignedat}{3}
g_{31}        &={} & g_{32}        &={} & 0,\\
g_{22}-g_{11} &={} & g_{12}+g_{21} &={} & 0,
\end{alignedat}
\qquad
(g_{ij})_{1\leq i,j\leq 3}\in\pm L'A'N'.\]
These identities and the evaluation \eqref{conjugate-expanded} show that \eqref{eq:x0conjugate} implies the bounds
\begin{equation}
\label{n3-four-conditions-1}
\begin{alignedat}{5}
\sin\beta\cdot ((y\cos\theta-1/y^2)\cos\beta-y_{13\alpha}\sin\beta)&\ll\frac{\mathfrak{L}}{\sqrt{\ell}},&\qquad&&
\sin\beta \cdot y\sin\theta&\ll\frac{\mathfrak{L}}{\sqrt{\ell}},\\
\sin\beta\cdot ((y\cos\theta-1/y^2)\sin\beta+y_{13\alpha}\cos\beta)&\ll\frac{\mathfrak{L}}{\sqrt{\ell}},&\qquad&&
\sin\beta \cdot y_{23\alpha}&\ll\frac{\mathfrak{L}}{\sqrt{\ell}}.
\end{alignedat}
\end{equation}
By $\sin \beta\asymp \BB_j$, these four conditions are equivalent to
\[
\left\|\begin{pmatrix}\cos\beta&-\sin\beta\\\sin\beta&\cos\beta\end{pmatrix}\begin{pmatrix}y\cos\theta-1/y^2\\y_{13\alpha}\end{pmatrix}\right\|,\,\,y\sin\theta,\,\,y_{23\alpha}\ll\frac{\mathfrak{L}}{\BB_j\sqrt{\ell}}.
\]

Since rotation by $\beta$ is an isometry, the system of four conditions in \eqref{n3-four-conditions-1} is equivalent to
\[ y\cos\theta-1/y^2,\,\,y\sin\theta,\,\,y_{13\alpha},\,\,y_{23\alpha}\ll\frac{\mathfrak{L}}{\BB_j\sqrt{\ell}}. \]
Since rotation by $\alpha$ in \eqref{y-alpha} is also an isometry, \eqref{n3-four-conditions-1} may be equivalently phrased as
\begin{equation}\label{eq:y-theta}
y\cos\theta-1/y^2,\,\,y\sin\theta,\,\,y_{13},\,\,y_{23}\ll\frac{\mathfrak{L}}{\BB_j\sqrt{\ell}}.
\end{equation}
To sum up so far, the assumption $S_j(x)\neq\emptyset$ implies these four inequalities, which can be compactly wrapped up as
\begin{equation}\label{eq:wrap}
\|x_0-(1/y^2)\id\|\ll\frac{\mathfrak{L}}{\BB_j\sqrt{\ell}}.
\end{equation}
Recall that $y\asymp 1$. By the second inequality in \eqref{eq:y-theta}, we see that $\sin\theta\ll \mathfrak{L}/(\BB_j\sqrt{\ell})$. Then by the first inequality in \eqref{eq:y-theta}, we see that $y-1/y^2\ll \mathfrak{L}/(\BB_j\sqrt{\ell})$, which then yields $y-1\ll \mathfrak{L}/(\BB_j\sqrt{\ell})$. Combining this with \eqref{K-conj} and \eqref{eq:wrap}, we finally arrive at
\begin{equation}
\label{n3-final-condition}
\dist(x,\{\pm\id\})\ll\frac{\mathfrak{L}}{\BB_j\sqrt{\ell}}.
\end{equation}
In summary, we have proved that $S_j(x)\neq\emptyset$ implies \eqref{n3-final-condition}. In fact, while we will not use this, it is not difficult to see that the entire chain of deductions from \eqref{eq:x0conjugate} to \eqref{n3-final-condition} can be reversed for any $(\alpha,\beta)\in S_j(x)$ (with adjustments in the values of the implied constants). Hence in fact \eqref{n3-final-condition} with a sufficiently small implied constant forces not only $S_j(x)\neq \emptyset$ but $S_j(x)=S_j$.

Going back to \eqref{to-estimate-2}, we find by \eqref{eq:trivialforIj} and by summing the relevant geometric series that
\begin{equation}\label{eq:geometric-series}
\psi_{\tau}^U(x)\ll_{\Re\mu'} 1 + \sum_{\substack{0\leq j\leq \log \ell / \log 4\\ \text{$\BB_j$ satisfies \eqref{n3-final-condition}}}}\ell\BB_j^2  \ll\frac{\log(2\ell)}{\dist(x,\{\pm\id\})^2}.
\end{equation}

We record this result in the following lemma.
\begin{lemma}\label{subsection52lemma}
For all $x\in\Omega$, we have
\[ \psi_{\tau}^U(x)\ll_{\Re\mu'}\frac{\log(2\ell)}{\dist(x,\{\pm\id\})^2}. \]
\end{lemma}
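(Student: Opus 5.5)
The lemma is essentially a repackaging of the dyadic analysis just carried out, so the plan is to assemble those pieces and control the logarithmic loss.

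\textbf{Setup.} Start from the decomposition \eqref{to-estimate-1} of $\psi_\tau^U(x)$ into the pieces $I_j(x)$, indexed by the dyadic size $\BB_j=2^j/\sqrt{\ell}$ of $\sin\beta$. Two reductions come first.
\begin{itemize}
\item The pieces with $j<0$ contribute $O(1)$ in total by \eqref{eq:trivialforIj}, since $\sum_{j<0}4^j\ll 1$.
\item By the second line of \eqref{n3-essential-support-conditions}, each remaining $I_j(x)$ may be restricted to $S_j(x)$ at a cost of $O(\ell^{-10})$. This yields \eqref{to-estimate-2}.
\end{itemize}
We may also assume $x$ has the shape \eqref{K-conj}. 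Otherwise Remark~\ref{rmk:supplement} gives $\psi_\tau^U(x)\ll\ell^{-A}$, which is certainly dominated by the claimed bound, because $\dist(x,\{\pm\id\})\ll 1$ on $\Omega$.

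\textbf{Key geometric step.} Show that $S_j(x)\neq\emptyset$ forces \eqref{n3-final-condition}. Fix $(\alpha,\beta)\in S_j(x)$ and use the explicit conjugate \eqref{conjugate-expanded}. The linear conditions $g_{31}=g_{32}=g_{22}-g_{11}=g_{12}+g_{21}=0$, which hold on $\pm L'A'N'$, produce the four bounds \eqref{n3-four-conditions-1}. Each carries a factor $\sin\beta\asymp\BB_j$; dividing it out and using that rotation by $\beta$ and by $\alpha$ are isometries gives \eqref{eq:y-theta}. From there one extracts $\sin\theta$, $y-1$, $y_{13}$, $y_{23}$, all $\ll\mathfrak{L}/(\BB_j\sqrt\ell)$. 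Together with the perturbation $x_1\ll\mathfrak{L}/\sqrt\ell\leq\mathfrak{L}/(\BB_j\sqrt\ell)$ (using $\BB_j\leq 1$), this gives $\dist(x,\{\pm\id\})\ll\mathfrak{L}/(\BB_j\sqrt\ell)$.

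Two points need care. First, $y\cos\theta-1/y^2$ small together with $y\sin\theta$ small must force $\theta$ near $0$ rather than near $\pi$. This holds because $y\asymp1$ makes $y\cos\theta\approx 1/y^2>0$. Second, one must pass from $y-1/y^2$ small to $y-1$ small, which follows since $y^3-1$ vanishes simply at $y=1$.

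\textbf{Summation.} Write $d:=\dist(x,\{\pm\id\})$. The surviving indices satisfy $\ell\BB_j^2\ll\mathfrak{L}^2/d^2=\log(2\ell)/d^2$. Summing the bound $|I_j(x)|\ll\ell\BB_j^2=4^j$ from \eqref{eq:trivialforIj} over these $j$ is a geometric series dominated by its largest term, hence $\ll\log(2\ell)/d^2$. Adding the $O(1)$ terms from the setup, which are absorbed since $d\ll1$, gives the lemma as in \eqref{eq:geometric-series}.

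The main obstacle is the geometric step: making the implication from \eqref{eq:x0conjugate} to \eqref{n3-final-condition} uniform in $\alpha$, $\beta$ and the sign ambiguity $\pm$. Everything else is bookkeeping with the trivial bound.
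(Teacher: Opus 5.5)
Your proposal is correct and follows the paper's proof essentially step by step. The shared steps are the dyadic decomposition in $\sin\beta$ with the trivial bound $I_j(x)\ll 4^j$, the restriction to $S_j(x)$, the four linear conditions on $\pm L'A'N'$ forcing \eqref{n3-final-condition} whenever $S_j(x)\neq\emptyset$, and the final geometric series. Your two extra remarks make explicit small steps the paper leaves implicit: ruling out $\theta$ near $\pi$ when $\mathfrak{L}/(\BB_j\sqrt{\ell})$ is small, and passing from $y-1/y^2$ to $y-1$.
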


\subsection{Proof of Theorem~\ref{thm4}}
\label{proof-thm4-subsection}

Theorem~\ref{thm4} follows immediately by combining the statements of Lemmata~\ref{n3-essential-support-lemma} and~\ref{subsection52lemma}.

\subsection{Nonminimal $K$-types}
\label{n3-nonminimal-subsec}

In this subsection, we indicate the adjustments to the arguments in the previous subsections which allow us to prove versions of the estimate on $\psi_{\tau}^U(x)$ in Theorem~\ref{thm4} with
$U=U^{\sigma',\mu'}$, $\sigma'=\delta_{\ell}\boxtimes\sgn^u$, $\tau=\tau_{\lambda}^{\iota}$, $\iota=(-1)^{\ell+u}$, and $\lambda\in[\ell,2\ell]$. For convenience, we denote 
\begin{equation}\label{eq:Deltadef}
\Delta:=\lambda-\ell+2.
\end{equation}
It follows from \eqref{schur-Hom} and the discussion below \eqref{n3-alphaplusgamma} that 
\[[U:\tau]=\sum_{\substack{\ell\leq m\leq\lambda\\m\equiv\ell\pmod*{2}}}1=\lfloor\Delta/2\rfloor,\]
hence the baseline bound \eqref{eq:psi-baseline-bound} gives
\begin{equation}\label{eq:psi-nonminimal-baseline}
\psi_{\tau}^U(x)\ll_{\Re\mu'}\Delta\ell,\qquad x\in\Omega.
\end{equation}

We begin our proof of an improved estimate by collecting the estimates on the trace function $T_{\tau}^{\sigma'}(x)=\tr\bigl(\tau(k)\hat{F}_{\tau}^{\sigma'}(m')\bigr)$ for $x=km'a'n'$ and $k=\pm l$, $m'=\big(\begin{smallmatrix}m''&0\\0&1\end{smallmatrix}\big)$, $l=k[\alpha,\beta,\gamma]\in L$, $m''\in \SL_2(\RR)$ parametrized as in \eqref{kmprime-parametrization} and the discussion below it. Now the discussion below \eqref{eq:goodbound-tau} can be succinctly summarized as follows: the matrix $\hat{F}_{\tau}^{\sigma'}(m')$ is diagonal or anti-diagonal whose entries are $(\dim\tau)/2$ times suitable matrix coefficients of the unitary representation $\sigma'$. Since $\tau$ is also a unitary representation, we have the baseline bound
\begin{equation}
\label{baseline-nonminimal-eq}
|T_{\tau}^{\sigma'}(x)|\leq [U:\tau] \cdot \dim \tau \ll \Delta \ell,\qquad x\in G,
\end{equation}
which also serves as the basis of \eqref{eq:psi-nonminimal-baseline}. Typically this bound is not sharp because the matrix coefficients of $\sigma'$ and $\tau$ are much smaller than $1$ (in absolute value); see \eqref{eq:goodbound} and \eqref{eq:goodbound-tau} in this regard. In fact, plugging the bounds \eqref{eq:goodbound} and \eqref{eq:goodbound-tau} into the identity \eqref{n-3-tr-explicit-nonminimal}, and keeping in mind the notation \eqref{eq:Deltadef}, we obtain
\[
|T_{\tau}^{\sigma'}(x)| \leq(2\lambda+1) e^2\sum_{\substack{\ell\leq m\leq\lambda\\m\equiv\ell\bmod 2}}\Theta(m'')^{2\ell/(\Delta\log(\lambda+2))}(\cos\tfrac12\beta)^{2m/(\Delta\log(2\lambda))}.
\]
where $0<\Theta(m'')\leq 1$ is as in \eqref{n-3-tr-explicit-upper-bound}. From this it follows that
\[ |T_{\tau}^{\sigma'}(x)|\leq\ell^{-A}\qquad\text{unless}\qquad \Theta(m''),\,\cos\tfrac12\beta=1+O_{A}\left(\frac{\mathfrak{L}_{\Delta}^2}{\ell}\right), \]
where we denote $\mathfrak{L}_{\Delta}:=\sqrt{\Delta} \log (2\ell)$. As in \S\ref{n3-initial-subsec}, these latter two conditions in turn imply that $m',l\in L'+O_A(\mathfrak{L}_{\Delta}/\sqrt{\ell})$, whence
\[ |T_{\tau}^{\sigma'}(x)|\leq \ell^{-A}\qquad\text{unless}\qquad x\in \pm L'A'N'+O_A\left(\frac{\mathfrak{L}_{\Delta}}{\sqrt{\ell}}\right). \]
Then, similarly as in Remark~\ref{rmk:supplement}, we obtain for $x\in \Omega$ that
\begin{equation}\label{eq:psi-essential-support-nonminimal}
\psi_{\tau}^{U}(x) \ll_{\Re\mu'} \ell^{-A} \qquad \text{unless} \qquad x\in S+O_A\left( \frac{\mathfrak{L}_{\Delta}}{\sqrt{\ell}}\right).
\end{equation}

From here, the argument in \S\ref{n3-localization-subsec} proceeds as written, with $\mathfrak{L}_{\Delta}$ in place of $\mathfrak{L}$, and with \eqref{baseline-nonminimal-eq} as the baseline estimate (which generalizes the first line of \eqref{n3-essential-support-conditions}). This leads to the following variant of \eqref{eq:geometric-series}:
\begin{equation}\label{eq:geometric-series-variant}
\psi_{\tau}^U(x)\ll_{\Re\mu'}
\Delta+\sum_{\substack{0\leq j\leq\log\ell/\log 4\\ \BB_j\sqrt{\ell}\ll \mathfrak{L}_{\Delta}/\dist(x,\{\pm\id\})}} \Delta\ell\cdot \BB_j^2 \ll
\frac{ \Delta^2 \log^2(2\ell)}{\dist(x,\{\pm\id\})^2}.
\end{equation}
Putting everything together, we obtain the following generalization of Theorem~\ref{thm4}.

\begin{theorem}\label{thm4b}
Let $U=U^{\sigma',\mu'}$ and $\sigma'=\delta_{\ell}\boxtimes\sgn^u$ be as in Theorem~\ref{thm4}. Let $\lambda\in[\ell,2\ell]$, and put $\iota:=(-1)^{\ell+u}$, $\Delta:=\lambda-\ell+2$. Assume that $\tau=\tau_{\lambda}^{\iota}$ occurs in $U$. Then, for all $\eps,A>0$ and $x\in\Omega$, we have
\[\psi^{U}_{\tau} (x)\ll_{\eps, A,\Re\mu',\Omega}
\frac{\ell^{1+\eps}\Delta}{1 + \ell/\Delta \cdot d_{\{\pm\id\}}^2+ (\sqrt{\ell/\Delta} \cdot d_S)^A}, \]
where $d_{\{\pm\id\}}:=\dist(x,\{\pm\id\})$ and $d_S:=\dist(x,S)$.
\end{theorem}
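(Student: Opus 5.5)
The bound in Theorem~\ref{thm4b} will be assembled from three estimates already available in \S\ref{n3-nonminimal-subsec}, by showing that the right-hand side is, up to constants and $\ell^{\eps}$, the minimum of three separate bounds. The denominator satisfies
\[1+\tfrac{\ell}{\Delta}d_{\{\pm\id\}}^2+\bigl(\sqrt{\ell/\Delta}\,d_S\bigr)^A\asymp\max\Bigl(1,\tfrac{\ell}{\Delta}d_{\{\pm\id\}}^2,\bigl(\sqrt{\ell/\Delta}\,d_S\bigr)^A\Bigr).\]
So it suffices to prove the three bounds $\psi_\tau^U(x)\ll\ell^{1+\eps}\Delta$, then $\psi_\tau^U(x)\ll\ell^{\eps}\Delta^2/d_{\{\pm\id\}}^2$, and finally $\psi_\tau^U(x)\ll\ell^{1+\eps}\Delta(\sqrt{\ell/\Delta}\,d_S)^{-A}$.

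The first bound is the baseline estimate \eqref{eq:psi-nonminimal-baseline}.

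The second bound comes from \eqref{eq:geometric-series-variant}, which gives $\Delta^2\log^2(2\ell)/d_{\{\pm\id\}}^2$. Absorbing $\log^2(2\ell)$ into $\ell^{\eps}$ yields exactly $\ell^{1+\eps}\Delta\big/\bigl(\tfrac{\ell}{\Delta}d^2_{\{\pm\id\}}\bigr)$. I would also check that the localization argument of \S\ref{n3-localization-subsec} really goes through with $\mathfrak{L}_\Delta$ in place of $\mathfrak{L}$. In particular, the deduction of \eqref{n3-final-condition} from $S_j(x)\neq\emptyset$ uses only the form of \eqref{conjugate-expanded}, so it holds verbatim with $\mathfrak{L}_\Delta$. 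The trivial bound on $I_j$ becomes $\Delta\ell\BB_j^2$ by \eqref{baseline-nonminimal-eq}.

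The third bound follows the proof of Lemma~\ref{n3-essential-support-lemma}, using \eqref{eq:psi-essential-support-nonminimal}. If $d_S\le C_A\mathfrak{L}_\Delta/\sqrt{\ell}=C_A\sqrt{\Delta}\log(2\ell)/\sqrt\ell$, then $\sqrt{\ell/\Delta}\,d_S\ll_A\log(2\ell)$. In that case the baseline bound $\Delta\ell$ is at most $\ell^{1+\eps}\Delta(\sqrt{\ell/\Delta}\,d_S)^{-A}$ after adjusting $\eps$, since $(\log 2\ell)^A\ll_{\eps,A}\ell^{\eps}$. Otherwise $\psi_\tau^U(x)\ll\ell^{-A'}$ for any $A'$. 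Since $d_S$ is bounded on $\Omega$, we have $(\sqrt{\ell/\Delta}\,d_S)^A\ll\ell^{A/2}$, and choosing $A'=A$ suffices.

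The main point needing care is the uniformity of \eqref{eq:psi-essential-support-nonminimal} when $\Delta$ is as large as $\ell$. There $\mathfrak{L}_\Delta/\sqrt{\ell}$ need not be small, so the linearized description \eqref{K-conj} of $x$ near $\pm L'A'N'$ breaks down. I would handle this regime separately. When $\mathfrak{L}_\Delta/\sqrt{\ell}\gg1$, both $\tfrac{\ell}{\Delta}d^2_{\{\pm\id\}}$ and $\sqrt{\ell/\Delta}\,d_S$ are $O(\log^2(2\ell))$ on $\Omega$, so the baseline bound alone gives the claim up to $\ell^{\eps}$. Consequently the earlier arguments are needed only when $\mathfrak{L}_\Delta/\sqrt{\ell}$ is smaller than a small constant depending on $\Omega$.
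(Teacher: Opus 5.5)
Your proposal is correct and follows the paper's proof essentially verbatim. The paper also splits according to which of the three denominator terms dominates, and uses \eqref{eq:psi-nonminimal-baseline}, \eqref{eq:geometric-series-variant} and \eqref{eq:psi-essential-support-nonminimal} (falling back on the baseline bound when $\sqrt{\ell/\Delta}\,d_S\ll_A\log(2\ell)$) exactly as you describe; your extra treatment of the regime $\mathfrak{L}_\Delta/\sqrt{\ell}\gg 1$, where the baseline bound already suffices up to $\ell^{\eps}$, is a valid clarification of a point the paper leaves implicit.
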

\begin{proof} Let us look at the three terms in the denominator. If the first term dominates, then we are done by \eqref{eq:psi-nonminimal-baseline}. If the second term dominates, then we are done by \eqref{eq:geometric-series-variant}. Now, assume that the third term dominates; then, we only need to prove that
\[\psi^{U}_{\tau} (x)\ll_{\eps, A,\Re\mu',\Omega}\ell^{1+\eps}\Delta(\sqrt{\ell/\Delta} \cdot d_S)^{-A}.\]
By \eqref{eq:psi-essential-support-nonminimal} and the boundedness of $d_S$, we are done unless $\sqrt{\ell/\Delta}\cdot d_S\ll_A\log(2\ell)$;
in this remaining case, we are again done by \eqref{eq:psi-nonminimal-baseline}.
\end{proof}

\section{Paley--Wiener theory for $\SL_n^{\pm}(\RR)$ and beyond}\label{PWsection}

\subsection{A non-smooth Paley--Wiener theorem}
In this subsection, $G$ is a semisimple Lie group belonging to the Harish-Chandra class. In particular, $G$ has finite center and finitely many connected components. We denote by $\Gtemp$ the tempered unitary dual of $G$, by $\Gadm$ the admissible dual of $G$, and by $\Gqsf$ the set of quasisimple representations of $G$ of finite length up to infinitesimal equivalence. Thus
\[\Gtemp\subset\widehat{G}\subset\Gadm\subset\Gqsf.\]

Let $C_c(G)$ be the convolution algebra of continuous, compactly supported functions $f:G\to\CC$. Let $K$ be a maximal compact subgroup of $G$, and let $\tau$ be an irreducible unitary representation of $K$ with normalized character $\chi_{\tau}:=d_{\tau}\tr(\tau)$. We denote by $I_c(G,\tau)$ the convolution (sub)algebra of functions $f\in C_c(G)$ satisfying
\begin{equation}\label{fprop1}
f(k^{-1}xk)=f(x),\qquad x\in G,\quad k\in K,
\end{equation}
and
\begin{equation}\label{fprop2}
\ov{\chi_{\tau}} * f * \ov{\chi_{\tau}} = f.
\end{equation}
In the previous display, the convolution $*$ is meant over $K$ with respect to the Haar probability measure. If $(\varpi,V_\varpi)$ is a Hilbert space representation of $G$, then for any $f\in C_c(G)$ the operator
\[
\varpi(f):=\int_G f(x)\varpi(x)\,dx\in\End(V_{\varpi})
\]
is well-defined. Similarly, the operator
\[
\varpi(\ov{\chi_\tau}):=\int_K \ov{\chi_\tau(k)}\varpi(k)\,dk\in\End(V_{\varpi})
\]
is well-defined, and it projects $V_\varpi$ onto its $\tau$-isotypic subspace $V_\varpi(\tau)$. It follows that if $f\in I_c(G,\tau)$, then $\varpi(f)$ commutes with the action of $K$, it leaves $V_\varpi(\tau)$ invariant, and it annihilates $V_\varpi(\tau)^\perp$.

The next theorem is a variant of \cite[Th.~2]{DFJ}, which allows more general groups and test functions.

\begin{theorem}[Paley--Wiener theorem]\label{Thm5} There exists some $A>0$ depending only on $G$ such that the following holds. Let $R>0$ be arbitrary. Assume $h:\mfa^*_{\CC}\to\CC$ is an entire, Weyl-invariant function satisfying the Paley--Wiener growth condition
\[
h(\Lambda)\ll e^{R\|\Re\Lambda\|} (1+\|\Lambda\|)^{-A},\qquad\Lambda\in\mfa^*_\CC.
\]
Then there exists a unique function $f\in I_c(G,\tau)$ such that
\begin{equation}\label{scalarvalue}
\varpi(f)=h(\Lambda_{\varpi}) \varpi(\ov{\chi_{\tau}}),\qquad \varpi\in\Gqsf,
\end{equation}
where $\Lambda_{\varpi}$ is the infinitesimal character of $\varpi$, and $\varpi(\ov{\chi_{\tau}})$ is the projection of $V_{\varpi}$ onto $V_{\varpi}(\tau)$. This function is supported in a compact set depending only on $R$, and it obeys the inversion formula
\begin{equation}\label{inversion}
f(x)=\int_{\Gtemp(\tau)} h(\Lambda_{\varpi}) \psi_{\tau}^{\varpi}(x^{-1})\,d\varpi,
\end{equation}
where $d\varpi$ is the Plancherel measure of $G$, and $\Gtemp(\tau)$ is the set of those elements of $\Gtemp$ whose $\tau$-isotypic subspace is nontrivial.
\end{theorem}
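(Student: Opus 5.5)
The plan is to deduce the theorem from the smooth Paley--Wiener theorem by approximation. The point is that the hypothesis on $h$ only gives polynomial decay $(1+\|\Lambda\|)^{-A}$, whereas the smooth theory (Arthur's operator-valued Paley--Wiener theorem for $K$-finite $C_c^\infty$ functions on groups of Harish-Chandra class, or \cite[Th.~2]{DFJ} for the $\tau$-spherical transform) requires rapid decay.

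\textbf{Step 1: a smooth regularisation of $h$.} Fix a nonnegative, smooth, bi-$K$-invariant approximate identity $g_\eps$ supported in the $\eps$-ball around $K$, with $\int g_\eps=1$. Let $\hat g_\eps(\Lambda)$ be its spherical transform, i.e. the scalar by which $\varpi(g_\eps)$ acts on the $K$-fixed vectors of the spherical principal series with infinitesimal character $\Lambda$. By the classical spherical Paley--Wiener theorem, $\hat g_\eps$ is entire, Weyl-invariant and of exponential type $\eps$ with rapid decay. Moreover $\hat g_\eps\to 1$ locally uniformly as $\eps\to0$, with $|\hat g_\eps(\Lambda)|\ll e^{\eps\|\Re\Lambda\|}$ uniformly in $\eps$. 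Put $h_\eps:=h\hat g_\eps$.

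\textbf{Step 2: the smooth functions $f_\eps$.} On every minimal (or cuspidal) principal series $U^{\sigma,\mu}$, define the operator-valued function $F_\eps(\sigma,\mu):=h_\eps(\Lambda_{\sigma,\mu})\,U^{\sigma,\mu}(\ov{\chi_\tau})$. We check Arthur's conditions.
\begin{itemize}
\item Holomorphy in $\mu$ is clear, since $U^{\sigma,\mu}(\ov{\chi_\tau})$ is the fixed projection onto the $\tau$-isotypic part of $L^2(K,\sigma)$.
\item The intertwining relations hold automatically. The standard intertwining operators and their residues commute with $K$, hence with the projection. They also preserve the infinitesimal character, and $h_\eps$ is Weyl-invariant, so the scalar factor is unchanged.
\item The growth condition follows from the growth of $h$ times that of $\hat g_\eps$.
\end{itemize}
Arthur's theorem, which is valid in the Harish-Chandra class and so covers the non-connected case, gives a $K$-finite $f_\eps\in C_c^\infty(G)$ supported in a ball of radius $R+\eps$ with $U^{\sigma,\mu}(f_\eps)=F_\eps(\sigma,\mu)$. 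Averaging over $K$-conjugation and convolving with $\ov{\chi_\tau}$ on both sides leaves the transform unchanged, so by uniqueness $f_\eps\in I_c(G,\tau)$. Every $\varpi\in\Gqsf$ is infinitesimally a subquotient of some $U^{\sigma,\mu}$ (subrepresentation theorem), and on the $\tau$-isotypic parts both $\varpi(f_\eps)$ and $\varpi(\ov{\chi_\tau})$ are induced from the principal series. Hence \eqref{scalarvalue} holds for $f_\eps,h_\eps$ at all $\varpi\in\Gqsf$. The Plancherel theorem then yields \eqref{inversion} for $f_\eps$, because for $\varpi$ tempered $\tr(\varpi(x)^{-1}\varpi(\ov{\chi_\tau}))=\psi_\tau^\varpi(x^{-1})$.

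\textbf{Step 3: passage to the limit (the main obstacle).} We need \eqref{inversion} to converge absolutely and uniformly in $x$ for the limiting $h$. Three inputs are required.
\begin{itemize}
\item For tempered unitary $\varpi$ we have $|\psi_\tau^\varpi(x)|\le[\varpi:\tau]\,d_\tau$.
\item By Frobenius reciprocity, $[\varpi:\tau]\le d_\tau\max_{\sigma}[\tau|_M:\sigma]$, which is bounded for fixed $\tau$.
\item The Plancherel density on $\Gtemp(\tau)$ grows at most polynomially in $\|\Lambda_\varpi\|$, with exponent depending only on $G$; this comes from Harish-Chandra's $\mu$-functions. Only finitely many series and finitely many $\sigma$ contribute for fixed $\tau$.
\end{itemize}
Choosing $A$ larger than the Plancherel exponent plus $\dim\mfa$ makes $\int|h(\Lambda_\varpi)|\,d_\tau[\varpi:\tau]\,d\varpi$ finite. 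By dominated convergence, using $|h_\eps|\le C|h|$ on the tempered (imaginary) set, $f_\eps$ converges uniformly to the function $f$ defined by \eqref{inversion}. The limit $f$ is continuous, supported in the radius-$R$ ball, and inherits \eqref{fprop1} and \eqref{fprop2}.

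\textbf{Step 4: identification and uniqueness.} For any $\varpi\in\Gqsf$, the operator $\varpi(f_\eps)$ lives on the finite-dimensional space $V_\varpi(\tau)$, and its matrix coefficients are integrals of $f_\eps$ against continuous functions over a fixed compact set. Therefore $\varpi(f_\eps)\to\varpi(f)$, while $h_\eps(\Lambda_\varpi)\to h(\Lambda_\varpi)$. This proves \eqref{scalarvalue} for $f$. For uniqueness, suppose two such functions exist. Their difference $f$ lies in $C_c(G)\subseteq L^2(G)$ and satisfies $\varpi(f)=0$ for all tempered $\varpi$, so $f=0$ by the Plancherel theorem.

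The delicate points are the uniform polynomial control of the Plancherel density in $\Lambda$ that fixes $A$, and the justification that Arthur's theorem, with its intertwining conditions, applies in this non-connected Harish-Chandra-class setting.
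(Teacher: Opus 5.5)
Your skeleton matches the paper's. You damp $h$ by an entire factor of small exponential type that tends to $1$, get smooth $f_\eps$ from a smooth Paley--Wiener theorem, express $f_\eps$ by Plancherel inversion, dominate uniformly in $\eps$, pass to the limit, and get uniqueness from Plancherel. (Using $\hat g_\eps$ instead of the paper's $k(\eps\Lambda)$ is harmless. The paper's $k$ is a fixed entire Weyl-invariant function of exponential type with $k(0)=1$, which is simpler because it needs no uniform-in-$\eps$ input from the group side.)

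\textbf{Step~3.} This step rests on a false premise: you dominate ``on the tempered (imaginary) set'', but tempered representations need not have imaginary infinitesimal character. For example, $\delta_\ell$ with $\ell\geq 2$ is tempered with real parameter $\pm(\ell-1)/2$. In general a tempered $\varpi\subseteq U^{\sigma',\mu'}$ with $\mu'$ imaginary has $\Re\Lambda_\varpi=\Re\Lambda_{\sigma'}$, which is typically nonzero. The hypothesis only gives $|h(\Lambda)|\ll e^{R\|\Re\Lambda\|}(1+\|\Lambda\|)^{-A}$, and your $\hat g_\eps$ carries a factor $e^{\eps\|\Re\Lambda\|}$. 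So both the integrability of $|h(\Lambda_\varpi)|\,[\varpi:\tau]$ against $d\varpi$ and the domination $|h_\eps|\ll|h|$ require $\Re\Lambda_\varpi$ to stay bounded on $\Gtemp(\tau)$.

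This is exactly the step the paper singles out as its key idea: $\{\Re\Lambda_\varpi:\varpi\in\Gtemp(\tau)\}$ is finite. The argument is:
\begin{itemize}
\item Frobenius reciprocity forces $\sigma'$ to share a $K'$-type with $\tau|_{K'}$;
\item Harish-Chandra's finiteness lemma \cite[Lem.~70]{HC} then leaves only finitely many $\sigma'$;
\item $\Re\Lambda_\varpi=\Re\Lambda_{\sigma'}$.
\end{itemize}
Your remark that ``only finitely many series and finitely many $\sigma$ contribute'' contains the ingredient, but you must draw this consequence. As written, the polynomial domination is unjustified.

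\textbf{Step~2.} Arthur's Paley--Wiener space is not cut out only by the standard intertwining operators and their residues. It is defined by all linear relations among the Fourier transforms, including relations between derivatives in the induction parameter at non-generic points (the Arthur--Campoli relations, or Delorme's intertwining conditions). Multiplying by $h_\eps(\Lambda_{\sigma,\mu})$ brings in derivatives of $h_\eps$ through the Leibniz rule. So ``intertwiners commute with $K$ and preserve the infinitesimal character'' does not verify these relations. An extra argument is needed, for instance matching the finitely many relevant jets of $h_\eps$ by a $W$-invariant polynomial, i.e.\ by an element of $Z(\mathfrak{g})$.

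This is exactly the kind of statement \cite[Th.~2]{DFJ} packages, and the paper simply quotes it. It applies \cite[Th.~2]{DFJ} on the identity component $H=G^0$ to each $L$-type $\upsilon$ of $\tau|_L$. It uses that $\varpi|_H$ is again quasisimple of finite length (Clifford theory) with the same infinitesimal character. It then sets $f_\eps:=\ov{\chi_\tau}\ast\sum_\upsilon f_{\eps,\upsilon}$. This also disposes of the non-connectedness issue you flagged. With these two repairs, your Steps~3--4 coincide with the paper's argument.
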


\begin{proof} Let $H:=G^0$ be the connected component of the identity, and let $L:=K\cap H$ be its maximal compact subgroup. We shall use below that if $\varpi\in\Gqsf$, then $\varpi|_H\in\GGqsf$ by Clifford theory. Moreover, $\varpi$ and $\varpi|_H$ have the same infinitesimal character, because $G$ and $H$ have the same Lie algebra. We shall also utilize the finite set
\[S:=\left\{\upsilon\in\widehat{L}:[\tau:\upsilon]\geq 1\right\}.\]

The key idea of the proof is that
\begin{equation}\label{pl4}
\left\{\Re\Lambda_\varpi:\varpi\in\Gtemp(\tau)\right\}
\end{equation}
is a finite set. Indeed, let $\varpi\in\Gtemp(\tau)$ be arbitrary. By \cite[Cor.~6.2]{Trombi}, $\varpi$ is unitarily equivalent to a subrepresentation of $U^{\sigma',\mu'}$ (cf. \S\ref{P'series}), where $\sigma'\in\widehat{M'}$ is in the discrete series and $\mu'\in i{\mfa'}^*$ is purely imaginary\footnote{Here $M'$, $\sigma'$, $\mu'$ are not the same as for the fixed representation $\pi\in\widehat{G}$. We stick to these symbols for notational simplicity.}. Then, by \eqref{indres} and Frobenius reciprocity,
\[1\leq[\varpi|_K:\tau]\leq[U^{\sigma',\mu'}|_K:\tau]=[\Ind_{K'}^K\sigma'|_{K'}:\tau]=[\sigma'|_{K'}:\tau|_{K'}].\]
Hence $\sigma'$ contains an irreducible component of $\tau|_{K'}$, and so by \cite[Lem.~70]{HC} (see also \cite[Cor.~12.22]{K}), there are only finitely many choices for $\sigma'$. However, $\Re\Lambda_\varpi=\Re\Lambda_{\sigma'}$ by \cite[Prop.~11.43]{KV}, so \eqref{pl4} is a finite set.

We shall see in a similar fashion that
\begin{equation}\label{multiplicitybound}
1\leq[\varpi|_K:\tau]\leq\dim\tau,\qquad\varpi\in\Gtemp(\tau).\end{equation}
Indeed, by the subrepresentation theorem \cite[Th.~8.21]{CM}, there exists $\sigma\in\widehat{M}$ and $\mu\in\mfa_\CC^*$ such that $\varpi$ is infinitesimally equivalent with a subrepresentation of $U^{\sigma,\mu}$ (cf. \S\ref{Pseries}). However, this implies by \eqref{indres1} and Frobenius reciprocity that
\[1\leq[\varpi|_K:\tau]\leq[U^{\sigma,\mu}|_K:\tau]=[\Ind_M^K\sigma:\tau]=[\tau|_M:\sigma]\leq\dim\tau.\]

Now we fix an entire, Weyl-invariant function $k:\mfa_\CC^*\to\CC$ satisfying $k(0)=1$ and
\[k(\Lambda)\ll_N e^{\|\Re\lambda\|}(1+\|\Lambda\|)^{-N},\qquad\Lambda\in\mfa_\CC^*,\quad N\in\NN.\]
For any $\eps\in(0,1)$, we define
\[h_\eps(\Lambda):=h(\Lambda)k(\eps\Lambda),\qquad\Lambda\in\mfa_\CC^*.\]
Hence $h_\eps:\mfa_\CC^*\to\CC$ is an entire, Weyl-invariant function satisfying
\begin{equation}\label{hdecay}
h_\eps(\Lambda)\ll_N e^{(R+\eps)\|\Re\Lambda\|}(1+\|\Lambda\|)^{-A}(1+\|\eps\Lambda\|)^{-N},\qquad\Lambda\in\mfa_\CC^*,\quad N\in\NN,
\end{equation}
and also
\[\lim_{\eps\to 0+}h_\eps(\Lambda)=h(\Lambda),\qquad\Lambda\in\mfa_\CC^*.\]
By \cite[Th.~2]{DFJ} and our initial remarks, for each $\upsilon\in S$, there exists $f_{\eps,\upsilon}\in C_c^\infty(H)$ such that
\[\varpi(f_{\eps,\upsilon})=h_\eps(\Lambda_\varpi)\varpi(\ov{\chi_{\upsilon}}),\qquad\varpi\in\Gqsf.\]
Here $\varpi(\ov{\chi_{\upsilon}})$ is the projection of $V_\varpi$ onto $V_\varpi(\upsilon)$. The sum of these projections can be compared with $\varpi(\ov{\chi_\tau})$, the projection of $V_\varpi$ onto $V_\varpi(\tau)$:
\[V_\varpi(\tau)\subset\bigoplus_{\upsilon\in S}V_\varpi(\upsilon)\qquad\Longrightarrow\qquad
\varpi(\ov{\chi_\tau})\sum_{\upsilon\in S}\varpi(\ov{\chi_{\upsilon}})=\varpi(\ov{\chi_\tau}).\]
Hence the function
\[f_\eps:=\ov{\chi_\tau}\ast\sum_{\upsilon\in S}f_{\eps,\upsilon}\in C_c^\infty(G)\]
satisfies
\[\varpi(f_\eps)=\varpi(\ov{\chi_\tau})\sum_{\upsilon\in S}h_\eps(\Lambda_\varpi)\varpi(\ov{\chi_{\upsilon}})=h_\eps(\Lambda_\varpi)\varpi(\ov{\chi_\tau}),\qquad\varpi\in\Gqsf.\]
By a remark following \cite[Th.~1]{DFJ}, each $f_{\eps,\upsilon}$ is supported in a compact set depending only on $R$, hence the same is true of $f_\eps$.

By \cite[Th.~27.3]{HC3} or the more general \cite[Th.~6.27]{HW},
\[f_\eps(x)=\int_{\Gtemp} \tr(\varpi(f_\eps)\varpi(x^{-1}))\,d\varpi=\int_{\Gtemp(\tau)} h_\eps(\Lambda_\varpi)\,\psi_\tau^\varpi(x^{-1})\,d\varpi.\]
By the bounds \eqref{multiplicitybound}--\eqref{hdecay} and the finiteness of the set \eqref{pl4}, the integrand on the right-hand side is $\ll_{\tau,R}(1+\|\Lambda_\varpi\|)^{-A}$. Now let $A>0$ be sufficiently large in terms of $G$. Then, by \cite[Th.~25.1]{HC3} or the more general \cite[Lem.~3.3]{HW2},
\[f_\eps(x)\ll_{\tau,R}1,\qquad x\in G.\]
Moreover, by the dominated convergence theorem, the pointwise limit
\[f(x):=\lim_{\eps\to 0+}f_\eps(x),\qquad x\in G,\]
exists, and it equals the right-hand side of \eqref{inversion}. It is clear that $f$ is supported in a compact set depending only on $R$, while the continuity of $f$ follows from \eqref{inversion} and the dominated convergence theorem. We can also deduce \eqref{scalarvalue} from the dominated convergence theorem. Indeed, let $\varpi\in\Gqsf$ and $v\in V_\varpi$. Then
\[\|\varpi(f_\eps)v-\varpi(f)v\|\leq\int_G|f_\eps(x)-f(x)|\cdot\|\varpi(x)v\|\,dx\to 0\qquad\text{as}\qquad\eps\to 0+,\]
hence
\[\varpi(f)v=\lim_{\eps\to 0+}\varpi(f_\eps)v=\lim_{\eps\to 0+}h_\eps(\Lambda_\varpi)\varpi(\ov{\chi_\tau})v=h(\Lambda_\varpi)\varpi(\ov{\chi_\tau})v.\]
Finally, the uniqueness of $f$ and the validity of \eqref{fprop1}--\eqref{fprop2} follow from \eqref{scalarvalue} and the Plancherel theorem.
\end{proof}

\subsection{Harish-Chandra parameters}\label{sec62}
Now we return to our concrete situation $G=\SL_n^{\pm}(\RR)$. We let the Weyl group $W=S_n$ act from the right on $\CC^n$ as follows:
\[(z_1,\dotsc,z_n)w:=(z_{w(1)},\dotsc,z_{w(n)}).\]
This induces an action on the trace $0$ subspace
\[
\CC^n_0:=\{(z_1,\dotsc,z_n)\in\CC^n:z_1+\dotsb+z_n=0\}.
\]
We identify $\CC^n$ with its dual via the standard scalar product on $\CC^n$. Correspondingly, we shall think of both $\mfa_{\CC}$ and $\mfa_{\CC}^*$ as $\CC^n_0$. Let $\Lambda_{\pi}$ be the infinitesimal character of $\pi$, the representation generated by $\Phi$. In light of the Harish-Chandra isomorphism, we shall think of $\Lambda_{\pi}$ as an element of $\mfa_{\CC}^*/W=\CC^n_0/W$. In particular, \eqref{eq:def-U-sigma'-mu'} and \cite[Prop.~11.43]{KV} yield that the infinitesimal character of $\pi=U^{\sigma',\mu'}$ is
\begin{equation}\label{picharacter}
\Lambda_{\pi} = \left(\frac{\ell_1-1}{2}+\mu'_1,-\frac{\ell_1-1}{2}+\mu'_1,\dotsc,\frac{\ell_r-1}{2}+\mu'_r,-\frac{\ell_r-1}{2}+\mu'_r,\mu'_{r+1},\dotsc,\mu'_{r+s}\right)W.
\end{equation}
In general, we denote the Harish-Chandra parameter of an arbitrary $\varpi\in\widehat{G}$ by
\[
\Lambda_{\varpi}:=(\bu+i\bv)W\in\CC^n/W,\qquad \bu+i\bv=(u_1+iv_1,\dotsc,u_n+iv_n).
\]
We record that
\[
\sum_{j=1}^n (u_j+it_j)=0,\qquad -\ov{\Lambda_{\varpi}}=\Lambda_{\varpi},
\]
where the first equation is by the zero-trace assumption, and the second equation follows from unitarity.

We introduce a partial ordering on $\RR^n/W$ analogous to \eqref{eq:standard-dominance}. Assume
\begin{equation}\label{eq:aa'}
\ba:=(a_1,\dotsc,a_n)\in\RR^n \qquad \text{and} \qquad \ba':=(a'_1,\dotsc,a'_n)\in\RR^n.
\end{equation}
We say that $\ba'W\preceq \ba W$ if there exists $w\in W$ such that for any $w'\in W$ and $1\leq k\leq n$, we have that
\[
\sum_{j=1}^k a'_{w'(j)} \leq \sum_{j=1}^k a_{w(j)}.
\]
In practice, we rearrange $\ba$ by $W$ to a decreasing sequence $a_1\geq \dotsc \geq a_n$, and then we can use the identity permutation $w={\id}$.
With this notation, we have the following.
\begin{theorem}\label{theorem:K-type-barrier}
Let $\pi\in\widehat{G}$ be generic with (unique) minimal $K$-type $\tau$. Then
\[\Re\Lambda_{\varpi}\preceq\Re\Lambda_{\pi},\qquad\varpi\in\Gtemp(\tau).\]
\end{theorem}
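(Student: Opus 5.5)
Any $\varpi\in\Gtemp(\tau)$ is, by \cite[Cor.~6.2]{Trombi} and the discussion in \S\ref{sec:generalized-spherical-trace-function-SL}, either a discrete series $\delta_k$ (only when $n=2$) or a constituent of a generalized principal series $U^{\sigma'',\nu'}$ with $\Re\nu'=0$. Write $\sigma''=\delta_{k_1}\boxtimes\dotsb\boxtimes\delta_{k_{r''}}\boxtimes(\text{signs})$ with $k_1\geq\dotsb\geq k_{r''}\geq 1$. By \eqref{picharacter} the real part of the infinitesimal character is then
\[
\Re\Lambda_\varpi=\Bigl(\pm\tfrac{k_1-1}{2},\dotsc,\pm\tfrac{k_{r''}-1}{2},0,\dotsc,0\Bigr)W .
\]
Since $|\Re\mu'|<1/2$, the real part of $\Lambda_\pi$ is $\bigl(\tfrac{\ell_p-1}{2}+\Re\mu'_p,\,-\tfrac{\ell_p-1}{2}+\Re\mu'_p,\,\Re\mu'_{r+j}\bigr)W$.

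The plan is to compare these two vectors through the $K$-types. Since $\tau$ occurs in $\varpi$, it occurs in $U^{\sigma'',\nu'}$. The computation of \S\ref{minimalKtype} (the dominance argument around \eqref{eq:highest-weight}) then shows that the highest weight $\bm\lambda$ of $\tau$ dominates the minimal highest weight attached to $\sigma''$,
\[
\bm\lambda_{\sigma''}=(k_1,\dotsc,k_{r''},1,\dotsc,1,0,\dotsc,0),
\]
in the order $\preceq$ of \eqref{eq:standard-dominance}. Meanwhile $\bm\lambda=(\ell_1,\dotsc,\ell_r,1^{u},0^{t-u})$ (after the harmless $\sgn(\det)$ twist).

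From $\bm\lambda_{\sigma''}\preceq\bm\lambda$ I extract the partial-sum inequalities $\sum_{p\leq j}k_p\leq\sum_{p\leq j}\ell_p+(\text{bounded correction from the }1\text{'s})$ for every $j$. Two small facts handle the correction: the block $(k-1)/2$ arises from a $K'$-type of weight $k$, and the trailing $1$'s cost weight but contribute $0$ to $\Re\Lambda$.

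Next I rewrite the target order $\preceq$ on $\RR^n/W$ for vectors symmetric under negation. Here $\Re\Lambda_\varpi$ is a multiset $\{\pm a_p\}\cup\{0\}$, and the partial sums of its decreasing rearrangement are $\sum_{p\leq k}a_p$ for $k\leq r''$, after which they decrease. So I need, for each $k$, the bound $\sum_{p\leq k}\tfrac{k_p-1}{2}\leq$ (the $k$-th partial sum of the decreasing rearrangement of $\Re\Lambda_\pi$). I would verify this by bounding the right side below, choosing for each $p$ the larger of $\tfrac{\ell_p-1}{2}\pm\Re\mu'_p$. Because the sum of the $\Re\mu'$ terms is zero by \eqref{eq:mu'-sum-0}, and the $\mu'$ terms can be arranged to contribute nonnegatively by picking the top $k$ entries, the partial sums of $\Re\Lambda_\pi$ should be at least $\sum_{p\leq k}\tfrac{\ell_p-1}{2}$. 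The remaining step reduces to $\sum_{p\leq k}k_p\leq\sum_{p\leq k}\ell_p$ when $k\leq r$. For indices beyond $r$, the extra $\ell$-weights are $\leq 1$ and yield $k_p-1\leq 0$.

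The main obstacle is this last inequality. The dominance $\bm\lambda_{\sigma''}\preceq\bm\lambda$ only bounds the partial sums of $k$'s padded with $1$'s, and when $r''>r$ the $1$'s of $\bm\lambda$ may absorb some $k_p$'s. I would handle it by noting that any $k_p\geq 2$ contributes positively and must be matched by an $\ell$-entry, while $k_p=1$ contributes $(k_p-1)/2=0$ and is harmless. The inequality $\sum_{p\leq k}(k_p-1)\leq\sum_{p\leq k}(\ell_p-1)$ then follows from the weight dominance by subtracting $1$ from each nonzero entry. Careful bookkeeping of the parity and sign constraints, including the Type II case and the $\det$ twist, is where errors are most likely.
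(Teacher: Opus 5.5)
Your second half, deducing $\Lambda_{\sigma''}\preceq\Lambda_{\sigma'}$ from the $K$-type dominance $\bm\lambda_{\sigma''}\preceq\bm\lambda$, is correct and is what the paper does. Subtracting $1$ from each nonzero entry does preserve $\preceq$, so the step you single out as the main obstacle is not where the difficulty lies.

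The genuine gap is the other comparison, $\Lambda_{\sigma'}\preceq\Re\Lambda_\pi$, i.e.\ your claim that the partial sums of $\Re\Lambda_\pi$ are at least $\sum_{p\leq k}\frac{\ell_p-1}{2}$. You derive it only from $|\Re\mu'|<1/2$ and the global relation \eqref{eq:mu'-sum-0}, and these are not enough. Take $n=3$, $r=s=1$, $\ell=3$ and $\Re\mu'=(-\frac15,\frac25)$. Both conditions hold, yet $\Re\Lambda_\pi=(\frac45,-\frac65,\frac25)W$ has largest entry $\frac45$, which is smaller than the largest entry $1$ of $\Lambda_{\sigma'}=(1,0,-1)W$. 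The tempered representation $U^{\sigma',0}$ lies in $\Gtemp(\tau)$ and has $\Re\Lambda=\Lambda_{\sigma'}$, so the conclusion really fails for such a parameter. Hence no argument using only these two facts can work.

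Locally, two things go wrong. Choosing ``the larger of $\frac{\ell_p-1}{2}\pm\Re\mu'_p$'' is not legitimate: the entries attached to $p$ are $\pm\frac{\ell_p-1}{2}+\Re\mu'_p$, and $\frac{\ell_p-1}{2}-\Re\mu'_p$ need not occur at all. And making the $\mu'$-contribution nonnegative via the global zero sum forces you to pick entries with large $\Re\mu'_p$, which may come from smaller $\ell_p$.

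What is missing is the unitarity of $\pi$ in a refined form, which the paper takes from \cite[Th.~14.10.20]{GH}. For each fixed $\ell$, the real parts of the entries of $\Lambda_\pi$ coming from blocks with $\ell_p=\ell$ form a negation-symmetric multiset. The same holds for the $s$ entries from the one-dimensional blocks. This reflects that complementary series only pair two copies of the same $\delta_\ell$; in the example above it forces $\Re\mu'=0$.

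With this, your averaging idea works group by group:
\begin{itemize}
\item Inside a group of $m$ equal $\ell$'s, the $\Re\mu'_p$ sum to zero. So for $k\leq m$, the $k$ largest of the entries $\frac{\ell-1}{2}+\Re\mu'_p$ sum to at least $k\frac{\ell-1}{2}$.
\item Negation symmetry handles $k>m$ within each group. It therefore also covers the partial sums past $n/2$, which your sketch leaves unaddressed.
\item The $s$-group dominates the zero vector, and dominance of each group passes to the concatenation.
\end{itemize}
This is the paper's route to $\Lambda_{\sigma'}\preceq\Re\Lambda_\pi$, after which $\Re\Lambda_\varpi=\Lambda_{\sigma''}\preceq\Lambda_{\sigma'}\preceq\Re\Lambda_\pi$.
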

\begin{proof} If $\tau$ is trivial, then $\varpi$ is a spherical principal series representation, and the conclusion reduces to the obvious statement $\{(0,\dotsc,0)\}\preceq\Re\Lambda_{\pi}$. From now on we assume that $\tau$ is nontrivial.

First let $n=2$. Then $\pi=\delta_\ell$ for some $\ell\in\ZZ_{\geq 1}$. Hence $\varpi$ is either a spherical principal series representation or $\delta_{\ell'}$ for some integer $1\leq\ell'\leq\ell$. Therefore, the conclusion reduces to the obvious statement
\[\left(\frac{\ell'-1}{2},-\frac{\ell'-1}{2}\right)S_2\preceq\left(\frac{\ell-1}{2},-\frac{\ell-1}{2}\right)S_2,\qquad 1\leq\ell'\leq\ell.\]

Now let $n\geq 3$. By \cite[Th.~14.10.20]{GH}, we can assume that $\pi=U^{\sigma',\mu'}$, where $\sigma'$ satisfies \eqref{sigma'} and \eqref{eq:ordering-ell-s}. Hence
\begin{equation}\label{Lambdasigma'}
\Lambda_{\sigma'} = \left(\frac{\ell_1-1}{2},-\frac{\ell_1-1}{2},\dotsc,\frac{\ell_{r}-1}{2},-\frac{\ell_{r}-1}{2},
0,\dotsc,0\right)W,
\end{equation}
and (as mentioned before)
\[
\Lambda_{\pi} = \left(\frac{\ell_1-1}{2}+\mu'_1,-\frac{\ell_1-1}{2}+\mu'_1,\dotsc,\frac{\ell_r-1}{2}+\mu'_r,-\frac{\ell_r-1}{2}+\mu'_r,\mu'_{r+1},\dotsc,\mu'_{r+s}\right)W.
\]
We claim that $\Lambda_{\sigma'}\preceq\Re\Lambda_{\pi}$. We shall derive this from a refinement of the symmetry $-\ov{\Lambda_{\pi}}=\Lambda_{\pi}$, which refinement in turn is a consequence of \cite[Th.~14.10.20]{GH}. Namely, if we collect the entries of $\Lambda_\pi$ corresponding to the $\ell_j$'s that are equal to a given $\ell\in\ZZ_{\geq 1}$, say
\[\Lambda_{\pi,\ell}=\left(\frac{\ell-1}{2}+\mu'_{j+1},-\frac{\ell-1}{2}+\mu'_{j+1},\dotsc,\frac{\ell-1}{2}+\mu'_{j+m},-\frac{\ell-1}{2}+\mu'_{j+m}\right)S_{2m},\]
then in fact $-\ov{\Lambda_{\pi,\ell}}=\Lambda_{\pi,\ell}$. The same kind of symmetry also holds for the remaining $s$ entries of $\Lambda$ (which do not correspond to any $\ell_j$). From here it is clear that $\Re\Lambda_{\pi,\ell}$ dominates the corresponding part of $\Lambda_{\sigma'}$, and similarly $(\Re\mu'_{r+1},\dotsc,\Re\mu'_{r+s})S_s$ dominates $\{(0,\dotsc,0)\}$. The relation $\Lambda_{\sigma'}\preceq\Re\Lambda_{\pi}$ follows easily.

Now let $\varpi\in\Gtemp(\tau)$. As in the proof of Theorem~\ref{Thm5}, we embed $\varpi$ into a generalized principal series $U^{\sigma'',\mu''}$ with a purely imaginary parameter $\mu''$, so that $\Re\Lambda_\varpi=\Re\Lambda_{\sigma''}$. (In fact $U^{\sigma'',\mu''}$ is irreducible by the main theorem of \cite{Tadic}, but we shall not need this in the present proof.) Without loss of generality,
\[\sigma''=\delta_{\ell'_1}\boxtimes\dotsb\boxtimes\delta_{\ell'_{r'}}\boxtimes\sgn\boxtimes\dotsb\boxtimes\sgn\boxtimes1\boxtimes\dotsb\boxtimes1,\]
where $\ell'_1\geq\dotsb\geq\ell'_{r'}\geq 1$. Then
\begin{equation}\label{Lambdasigma''}
\Lambda_{\sigma''} = \left(\frac{\ell'_1-1}{2},-\frac{\ell'_1-1}{2},\dotsc,\frac{\ell'_{r'}-1}{2},-\frac{\ell'_{r'}-1}{2},
0,\dotsc,0\right)W,
\end{equation}
and it remains to verify that $\Lambda_{\sigma''}\preceq\Lambda_{\sigma'}$. This follows from the explicit knowledge of the minimal $K$-type $\tau$ of $U^{\sigma',\mu'}$ and the minimal $K$-type $\tau'$ of $U^{\sigma'',\mu''}$. By \eqref{eq:highest-weight}, the highest weights of $\tau$ and $\tau'$ are of shape
\[(\ell_1,\dotsc,\ell_r,1,\dotsc,1,0,\dotsc,0)\qquad\text{and}\qquad(\ell'_1,\dotsc,\ell'_{r'},1,\dotsc,1,0,\dotsc,0).\]
By assumption, $\tau'\preceq\tau$, hence we have a dominance relation in $\ZZ_{\geq 0}^{\lfloor n/2\rfloor}$ of shape
\[(\ell'_1,\dotsc,\ell'_{r'},1,\dotsc,1,0,\dotsc,0)\preceq(\ell_1,\dotsc,\ell_r,1,\dotsc,1,0,\dotsc,0).\]
We can change each zero entry on the right-hand side to one. After this, we can change each zero entry on the left-hand side to one:
\[(\ell'_1,\dotsc,\ell'_{r'},1,\dotsc,1)\preceq(\ell_1,\dotsc,\ell_r,1,\dotsc,1).\]
Finally, we apply the map $\lambda\mapsto(\lambda-1)/2$ to each entry, obtaining
\[
\left(\frac{\ell'_1-1}{2},\dotsc,\frac{\ell'_{r'}-1}{2},0,\dotsc,0\right)
\preceq
\left(\frac{\ell_1-1}{2},\dotsc,\frac{\ell_r-1}{2},0,\dotsc,0\right).
\]
We multiply both vectors by $-1$, reverse the order of the entries, and glue them to the original vectors. If $n$ is odd, we insert an extra zero into the middle. This way we obtain a dominance relation in $\left(\frac{1}{2}\ZZ\right)^n$ of shape
\[
\left(\frac{\ell'_1-1}{2},\frac{\ell'_2-1}{2},\dotsc,-\frac{\ell'_2-1}{2},-\frac{\ell'_1-1}{2}\right)
\preceq
\left(\frac{\ell_1-1}{2},\frac{\ell_2-1}{2},\dotsc,-\frac{\ell_2-1}{2},-\frac{\ell_1-1}{2}\right).
\]
By \eqref{Lambdasigma'} and \eqref{Lambdasigma''}, this is precisely the sought relation $\Lambda_{\sigma''}\preceq\Lambda_{\sigma'}$.

To sum up, we have shown that
\[\Re\Lambda_\omega=\Lambda_{\sigma''}\preceq\Lambda_{\sigma'}\preceq\Re\Lambda_{\pi}.\]
The proof is complete.
\end{proof}

We conclude this section by introducing a metric on the set of Harish-Chandra parameters. For two parameters $\Lambda,\Lambda'\in\CC^n_0/W$ we set 
\[d(\Lambda,\Lambda'):= \min\left\{\|(\bu+i\bv)-(\bu'+i\bv')\|: \bu+i\bv \in \Lambda ,\, \bu'+i\bv' \in \Lambda'\right\},\]
where on the right-hand side we take the Euclidean norm. Note that
\begin{equation}\label{eq:parameter-distance-note}
\|(\bu+i\bv)-(\bu'+i\bv')\|\asymp \max_{1\leq j\leq n} \max\left(|u_j-u'_j|, |v_j-v'_j|\right).
\end{equation}
By the rearrangement inequality,
\begin{equation}\label{eq:parameter-distance-note2}
d(\Lambda,\Lambda') \geq d(\Re\Lambda,\Re\Lambda') = \|\bu-\bu'\|,
\end{equation}
where $\bu\in\Re\Lambda$ and $\bu'\in\Re\Lambda'$ are the unique decreasing vector representatives. With this metric at hand, we define the ``etalon ball'' used in \eqref{spectralcontent}:
\begin{equation}\label{eq:def-unit-ball-in-dual}
B(\pi):=\left\{\varpi\in\widehat{G}:d(\Lambda_{\pi},\Lambda_{\varpi}) < n\right\}.
\end{equation}

\subsection{Construction of a spectral localizer}
We first introduce
\[
\eta_j:=2(n-j)+1, \qquad 1\leq j\leq n.
\]

\begin{lemma}\label{lemma:eta} Let $\ba,\ba'\in\RR^n$ as in \eqref{eq:aa'} satisfy $a_1\geq \dotsb \geq a_n$ and $\ba'W\preceq \ba W$. Then we have
\[
\sum_{j=1}^n |a_j-a'_{w(j)}| \leq \sum_{j=1}^n \eta_j (a_j-a'_{w(j)}) ,\qquad w\in W.
\]
\end{lemma}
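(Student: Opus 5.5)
We fix $w\in W$ and set $d_j:=a_j-a'_{w(j)}$. The plan is to split each $|d_j|$ as $d_j+2d_j^-$, where $d_j^-:=\max(-d_j,0)$. The required inequality then becomes
\[
\sum_{j=1}^n d_j+2\sum_{j=1}^n d_j^-\leq\sum_{j=1}^n\eta_jd_j=\sum_{j=1}^n d_j+2\sum_{j=1}^n(n-j)d_j,
\]
that is,
\[
\sum_{j=1}^n d_j^-\leq\sum_{j=1}^{n}(n-j)d_j.
\]
Abel summation gives $\sum_{j=1}^n(n-j)d_j=\sum_{k=1}^{n-1}S_k$, where $S_k:=\sum_{j=1}^k d_j$ are the partial sums. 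So it suffices to show
\[
\sum_{j=1}^n d_j^-\leq\sum_{k=1}^{n-1}S_k.
\]

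\textbf{Step 1: partial sums are nonnegative.} Because $a_1\geq\dots\geq a_n$, we may take the identity permutation in the definition of $\ba'W\preceq\ba W$. Applying that definition with $w'=w$ gives
\[
\sum_{j\leq k}a'_{w(j)}\leq\sum_{j\leq k}a_j\qquad\text{for every }k,
\]
so $S_k\geq0$ for all $k$.

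\textbf{Step 2: each $S_k$ dominates a sum of negative parts.} Fix $k$ and let $J\subseteq\{1,\dots,n\}$ be any set of size $k$. Since $\ba$ is decreasing, $\sum_{j\in J}a_j\leq\sum_{j\leq k}a_j$. Since $\ba'W\preceq\ba W$, applied with a suitable $w'$ that lists $w(J)$ first, $\sum_{j\in J}a'_{w(j)}\leq\sum_{j\leq k}a_j$ as well. Now choose $J=\{k+1,\dots,n\}$, of size $n-k$. Then $\sum_{j\leq n-k}a_j\geq\sum_{j>k}a'_{w(j)}$.

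I expect Step 2 to be the main obstacle, and the specific difficulty is choosing the right set $J$ so that the bound comes out in terms of the negative parts. The step I would try first is the following chain. The total sums of $\ba$ and $\ba'$ should coincide; this holds for the real parts of Harish-Chandra parameters, which lie in $\RR^n_0$, and it is also forced by taking $k=n$ in the dominance relation in both directions. Hence $S_n=0$. Writing $T_k:=\sum_{j>k}d_j=-S_k$, the negative parts in the tail satisfy $\sum_{j>k}d_j^-\geq -T_k=S_k$. This is the wrong direction, so I would instead argue by induction on $n$, as follows.

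\textbf{Step 3: induction on $n$.} Let $m$ be the index with $a'_{w(m)}$ maximal. For $m<n$, compare each $d_j^-$ with $S_{j-1}$ or $S_j$ via $a'_{w(j)}\leq\max a'\leq a_1$ and the decreasing order of $\ba$. The target is the termwise estimate
\[
d_j^-\leq S_{j-1}\quad(j\geq2),\qquad d_1^-=0.
\]
If this holds, summing over $j$ gives $\sum_j d_j^-\leq\sum_{k=1}^{n-1}S_k$ and finishes the proof. The claim $d_1^-=0$ follows from dominance with $k=1$ and arbitrary $w'$, which gives $a'_{w(1)}\leq a_1$. For $j\geq2$, the inequality $d_j^-\leq S_{j-1}$ is equivalent to
\[
a'_{w(j)}-a_j\leq\sum_{i<j}\bigl(a_i-a'_{w(i)}\bigr),
\]
that is,
\[
a'_{w(j)}+\sum_{i<j}a'_{w(i)}\leq\sum_{i\leq j}a_i,
\]
which is exactly the dominance inequality with $k=j$ applied to the indices $w(1),\dots,w(j)$. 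So the termwise bound should follow directly and the induction may be unnecessary. The one thing I would double-check is that the definition of $\preceq$ permits arbitrary index subsets; it does, since $w'$ in that definition is arbitrary.
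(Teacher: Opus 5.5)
Your final argument is correct. It consists of Step 1 together with the termwise bound $d_1^-=0$ and $d_j^-\leq S_{j-1}$ for $j\geq 2$, summed using $\sum_j(n-j)d_j=\sum_{k=1}^{n-1}S_k$. This is essentially the paper's proof with its induction on $n$ unrolled: the paper's inductive step $|a_n-a'_{w(n)}|\leq (a_n-a'_{w(n)})+2\sum_{j<n}(a_j-a'_{w(j)})$ is exactly your $d_n^-\leq S_{n-1}$, and it rests on the same two facts, $S_{n-1}\geq 0$ and $S_n\geq 0$.

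In a write-up, delete Step 2 and the opening of Step 3. In particular, $\ba'W\preceq\ba W$ only gives $\sum_j a'_j\leq\sum_j a_j$; the relation is not symmetric, so equality of the totals is not ``forced''. Your argument only uses $S_k\geq 0$ for $1\leq k\leq n$, and you should state explicitly that $d_j^-\leq S_{j-1}$ needs both $S_{j-1}\geq 0$ and $S_j\geq 0$.
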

\begin{proof} We prove by induction on $n$. For $n=1$, the statement is obvious. So let us assume that $n\geq 2$ and the statement is true for $n-1$ in the role of $n$. That is, we have, for any 
\[
\bb:=(b_1,\dotsc,b_{n-1})\in\RR^{n-1} \qquad \text{and} \qquad \bb':=(b'_1,\dotsc,b'_{n-1})\in\RR^{n-1}
\]
satisfying $b_1\geq\dotsb\geq b_{n-1}$ and $\bb'S_{n-1}\preceq \bb S_{n-1}$,
that
\[
\sum_{j=1}^{n-1} |b_j-b'_{w(j)}| \leq \sum_{j=1}^{n-1} (\eta_j-2) (b_j-b'_{w(j)}),\qquad w\in S_{n-1}.
\]

Now let $\ba,\ba'\in\RR^n$ and $w\in W$ be as in the statement. Obviously $\preceq$ is inherited to the first $n-1$ coordinates of $\ba$ and $\ba'w$, hence by induction (applied to the identity element of $S_{n-1}$),
\begin{equation}\label{eq:in-eta-lemma-0}
\sum_{j=1}^{n-1} |a_j-a'_{w(j)}| \leq \sum_{j=1}^{n-1} (\eta_j-2) (a_j-a'_{w(j)}).
\end{equation}
We claim that
\begin{equation}\label{eq:in-eta-lemma}
|a_n-a'_{w(n)}| \leq (a_n-a'_{w(n)}) + 2 \sum_{j=1}^{n-1} (a_j-a'_{w(j)}).
\end{equation}
Indeed, if $a_n\geq a'_{w(n)}$, then the left-hand side equals the first term of the right-hand side, and we get \eqref{eq:in-eta-lemma} by the nonnegativity of the second term of the right-hand side; while if $a_n<a'_{w(n)}$, then the left-hand side is $a'_{w(n)}-a_n$, and \eqref{eq:in-eta-lemma} follows from $\sum a_j\geq \sum a'_{w(j)}$. Adding \eqref{eq:in-eta-lemma-0} and \eqref{eq:in-eta-lemma}, the desired conclusion follows.
\end{proof}

For $1\leq j\leq n$ we set
\begin{equation}\label{eq:E-def}
E_{j}(z):=e^{2\eta_j z} \cdot \frac{\sinh(z)}{z},\qquad z\in\CC^{\times},
\end{equation}
continued analytically as $E_j(0):=1$. Assume we are given a target Harish-Chandra parameter
\[
\Lambda_0=(\by+i\bt)W \in \CC^n_0/W
\]
such that
\[\by=(y_1,\dotsc,y_n)\in\RR^n,\qquad\bt=(t_1,\dotsc,t_n)\in\RR^n\]
satisfies $y_1\geq\dotsb\geq y_n$. Assume further that
\begin{equation}\label{eq:selfdual}
\Lambda_0=-\ov{\Lambda_0},
\end{equation}
as is the case with Harish-Chandra parameters of irreducible unitary representations. Then we set
\begin{equation}\label{eq:F_w-def}
F_w(\xi):=\prod_{j=1}^n E_{j}(\xi_{w(j)}-y_j-it_j),\qquad \xi\in\CC^n,\quad w\in W.
\end{equation}
Note that this is a function only on $\CC^n$, not on $\CC^n/W$. We immediately have
\begin{equation}\label{eq:F_id=1}
F_{{\id}}(\by+i\bt)=1.
\end{equation}

\begin{lemma}\label{lemma:F_w-bounds} Assume $\bu+i\bv\in\CC^n_0$ satisfies $\bu W\preceq\by W$. Then
\begin{equation}\label{eq:F_w-bound1}
F_w(\bu+i\bv)\ll \exp\left(-\sum_{j=1}^n |y_j-u_{w(j)}|\right),\qquad w\in W,
\end{equation}
and also
\begin{equation}\label{eq:F_w-bound2}
F_w(\bu+i\bv)\ll \|(\by+i\bt)-(\bu+i\bv)w\|^{-1}, \qquad w\in W.
\end{equation}
\end{lemma}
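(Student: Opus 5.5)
Write $z_j:=(\bu+i\bv)_{w(j)}-y_j-it_j$, so that $F_w(\bu+i\bv)=\prod_j E_j(z_j)$ with $\Re z_j=u_{w(j)}-y_j$. The plan is to bound each factor $|E_j(z)|$ by combining the exponential weight with the decay of $\sinh(z)/z$, and then to use Lemma~\ref{lemma:eta} to control the sum of exponents.

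For a single factor, $|e^{2\eta_j z}|=e^{2\eta_j\Re z}$, while
\[
\Bigl|\frac{\sinh z}{z}\Bigr|\ll \frac{e^{|\Re z|}}{1+|z|},
\]
which holds uniformly on $\CC$ (near $0$ the function is bounded, and elsewhere $|\sinh z|\leq e^{|\Re z|}$). Hence
\[
|F_w(\bu+i\bv)|\ll \exp\Bigl(\sum_{j=1}^n\bigl(2\eta_j(u_{w(j)}-y_j)+|u_{w(j)}-y_j|\bigr)\Bigr)\prod_{j=1}^n(1+|z_j|)^{-1}.
\]
Lemma~\ref{lemma:eta} applies with $\ba=\by$, which is decreasing, and $\ba'=\bu$, since $\bu W\preceq\by W$. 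It gives
\[
\sum_j|y_j-u_{w(j)}|\leq\sum_j\eta_j\bigl(y_j-u_{w(j)}\bigr),
\]
so the exponent satisfies
\[
\sum_j\bigl(-2\eta_j(y_j-u_{w(j)})+|y_j-u_{w(j)}|\bigr)\leq -\sum_j\eta_j(y_j-u_{w(j)})+\sum_j|y_j-u_{w(j)}|-\sum_j|y_j-u_{w(j)}|\cdot 1,
\]
that is, it is at most $-\sum_j|y_j-u_{w(j)}|$. Concretely, write $-2S+T$ with $S:=\sum_j\eta_j(y_j-u_{w(j)})\geq T:=\sum_j|y_j-u_{w(j)}|$; then $-2S+T\leq -T$. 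Since every factor $(1+|z_j|)^{-1}\leq 1$, this proves \eqref{eq:F_w-bound1}.

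For \eqref{eq:F_w-bound2}, keep the same exponential bound (now only $\leq 1$ is needed) together with the polynomial factor $\prod_j(1+|z_j|)^{-1}\leq(1+\max_j|z_j|)^{-1}$. Because $\max_j|z_j|\asymp\|(\by+i\bt)-(\bu+i\bv)w\|$, up to the convention relating $w$ acting on the index versus the right action, which should be checked and reconciled (cf. \eqref{eq:parameter-distance-note}), the bound follows with the harmless $1+$ dropped. The only delicate point is the uniform inequality for $\sinh(z)/z$ and matching the indexing of the permutation; the substantive input is entirely Lemma~\ref{lemma:eta}. Note that the self-duality \eqref{eq:selfdual} and the zero-trace condition are not needed for these bounds.
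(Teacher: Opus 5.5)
Your proof is correct, and the route for \eqref{eq:F_w-bound2} differs slightly from the paper's. For \eqref{eq:F_w-bound1} you do what the paper does. You estimate each factor by $|E_j(z_j)|\ll\exp(2\eta_j\Re z_j+|\Re z_j|)$ and then apply Lemma~\ref{lemma:eta} in the form $-2S+T\leq -T$. Your first displayed chain is valid but redundant; the $S,T$ formulation is all you need.

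For \eqref{eq:F_w-bound2} the paper picks the coordinate $k$ that realizes the maximum in \eqref{eq:parameter-distance-note} and splits into two cases:
\begin{itemize}
\item If a real difference $|y_k-u_{w(k)}|$ dominates, it deduces the bound from \eqref{eq:F_w-bound1} via $e^{-x}\ll x^{-1}$.
\item If an imaginary difference $|t_k-v_{w(k)}|$ dominates, it uses $|\sinh z|\leq e^{|\Re z|}$ together with $|z|\geq|\Im z|$ in the $k$-th factor only.
\end{itemize}
You instead put the decay into every factor at once through the uniform estimate $|\sinh z/z|\ll e^{|\Re z|}/(1+|z|)$. This holds because $\sinh z/z$ is bounded near $0$, and elsewhere $|\sinh z|\leq\cosh(\Re z)\leq e^{|\Re z|}$. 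That removes the case distinction. It also gives the slightly stronger bound $(1+\|(\by+i\bt)-(\bu+i\bv)w\|)^{-1}$, which stays meaningful when the distance vanishes. The paper's version needs only the cruder bound $|\sinh z/z|\ll e^{|\Re z|}$ for most factors, at the cost of the case split.

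The indexing issue you flag is not actually an issue. With the right action $(\xi w)_j=\xi_{w(j)}$ fixed in \S\ref{sec62}, your vector $(z_1,\dotsc,z_n)$ is exactly $(\bu+i\bv)w-(\by+i\bt)$. Hence $\max_j|z_j|\asymp\|(\by+i\bt)-(\bu+i\bv)w\|$ holds directly, with no reconciliation required.
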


\begin{proof} First we prove \eqref{eq:F_w-bound1}. Using the standard bound $\left| \sinh(z)/z \right| \ll \exp(|\Re z|)$, we can estimate each factor on the right-hand side of \eqref{eq:F_w-def} as
\[
\begin{split}
|E_{j}(\xi_{w(j)}-y_j-it_j)| & = \left| e^{2\eta_j(\xi_{w(j)}-y_j-it_j)} \frac{\sinh(\xi_{w(j)}-y_j-it_j)}{\xi_{w(j)}-y_j-it_j} \right| \\ & \ll \exp\left( 2\eta_j (\Re\xi_{w(j)} - y_j) + |\Re\xi_{w(j)} - y_j| \right).
\end{split}
\]
Setting $\xi:=\bu+i\bv$, and taking the product over all $1 \leq j \leq n$, we obtain by Lemma~\ref{lemma:eta} that
\[
F_{w}(\bu+i\bv) \ll \exp \left( \sum_{j=1}^n \left( 2\eta_j(u_{w(j)}-y_j) + |y_j - u_{w(j)}| \right) \right) \leq \exp\left(-\sum_{j=1}^n |y_j - u_{w(j)}|\right).
\]
The proof of \eqref{eq:F_w-bound1} is complete.

Now we prove \eqref{eq:F_w-bound2}. Recall \eqref{eq:parameter-distance-note}, and the maximum there is attained by a difference $|y_{k}-u_{w(k)}|$ or a difference $|t_{k}-v_{w(k)}|$ for some fixed index $1\leq k\leq n$, and we go by cases accordingly.

\textbf{Case 1:} The maximal difference is attained in the real part, meaning that
\[
|y_{k} - u_{w(k)}| \gg \|(\by+i\bt)-(\bu+i\bv)w\|.
\]
By \eqref{eq:F_w-bound1}, we have
\[
F_w(\bu+i\bv) \ll \exp\left( - \sum_{j=1}^n |y_j - u_{w(j)}| \right) \leq \exp(-|y_{k} - u_{w(k)}|) \ll |y_{k}-u_{w(k)}|^{-1},
\]
proving \eqref{eq:F_w-bound2} in this case.

\textbf{Case 2:} The maximal difference is attained in the imaginary parts, meaning that
\[
|t_{k} - v_{w(k)}| \gg \|(\by+i\bt)-(\bu+i\bv)w\|.
\]
For the specific index $k$, we utilize $|\sinh(z)|\leq \exp(|\Re z|)$ to deduce the alternative bound
\[
\left|E_k(u_{w(k)}+iv_{w(k)}-y_{k}-it_{k})\right| \leq \frac{1}{|v_{w(k)}-t_{k}|} \exp\left( 2\eta_{k}(u_{w(k)} - y_{k}) + |u_{w(k)} - y_{k}| \right).
\]
Estimating all the other factors as in the proof of \eqref{eq:F_w-bound1}, we obtain
\[ F_w(\bu+i\bv) \ll \|(\by+i\bt)-(\bu+i\bv)w\|^{-1} \exp\left( - \sum_{j=1}^n \left( 2\eta_j(y_j - u_{w(j)}) - |y_j - u_{w(j)}| \right) \right). \]
By Lemma~\ref{lemma:eta}, the $j$-sum is nonnegative, and we arrive at \eqref{eq:F_w-bound2} in this case, too.
\end{proof}

Now we arrive at the main result of this subsection. 

\begin{theorem}\label{theorem:concrete-paley-wiener-function} For any $n\in\ZZ_{\geq 1}$ and $A\in\RR_{>0}$, there exists an $R\in\RR_{>0}$ with the following property. For any $\Lambda_0\in\CC^n_0/W$ with $\Lambda_0=-\ov{\Lambda_0}$, there exists an entire function $h:\CC^n_0/W\to\CC$ satisfying the Paley--Wiener condition
\begin{equation}\label{eq:paley-wiener-growth-condition}
h(\Lambda)\ll_{A,\Lambda_0} e^{R\|\Re\Lambda\|} (1+\|\Lambda\|)^{-A}
\end{equation}
and the bounds
\begin{alignat}{2}
\label{eq:paley-wiener-decay}
0\leq h(\Lambda)&\ll_A (1+d(\Lambda_0,\Lambda))^{-A},&&\qquad\Lambda=-\ov{\Lambda},\quad\Re\Lambda\preceq\Re\Lambda_0,\\
\label{eq:paley-wiener-decay-exp}
0\leq h(\Lambda)&\ll_A \exp(-A\cdot d(\Re\Lambda_0,\Re\Lambda)),&&\qquad\Lambda=-\ov{\Lambda},\quad\Re\Lambda\preceq\Re\Lambda_0,
\end{alignat}
\begin{equation}\label{eq:paley-wiener-pi-amplified}
h(\Lambda_0)\asymp_{A} 1.
\end{equation}
\end{theorem}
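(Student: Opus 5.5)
We build $h$ from the functions $F_w$ of \eqref{eq:F_w-def} by symmetrizing over $W$ and taking a squared modulus (to force nonnegativity), then raising to a suitable power. Concretely, with $\Lambda_0=(\by+i\bt)W$ and $y_1\geq\dotsb\geq y_n$, we set
\[
H(\xi):=\sum_{w\in W}F_w(\xi),\qquad \widetilde H(\xi):=\sum_{w\in W}\ov{F_w(-\ov{\xi})},
\]
both entire functions on $\CC^n_0$. Each is $W$-invariant, because replacing $\xi$ by $\xi w'$ merely permutes the index $w$. We then take
\[
h(\Lambda):=\bigl(H(\xi)\widetilde H(\xi)\bigr)^{N},\qquad \Lambda=\xi W,
\]
with $N$ chosen in terms of $A$ and $n$. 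For $\Lambda=-\ov{\Lambda}$ we may pick the representative with $-\ov{\xi}=\xi w_0$ for some $w_0\in W$. Then $\widetilde H(\xi)=\ov{H(\xi w_0)}=\ov{H(\xi)}$, so $h(\Lambda)=|H(\xi)|^{2N}\geq 0$.

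The Paley--Wiener bound \eqref{eq:paley-wiener-growth-condition} is routine once $N$ is fixed. Each factor $E_j(z)$ is bounded by $e^{(2\eta_j+1)|\Re z|}/(1+|z|)$ up to constants. Hence $F_w(\xi)\ll_{\Lambda_0}e^{c_n\|\Re\xi\|}\prod_j(1+|\xi_{w(j)}-y_j-it_j|)^{-1}$. Combining $\prod_j(1+|z_j|)^{-1}\leq(1+\max_j|z_j|)^{-1}$ with $\max_j|z_j|\gg\|\xi\|-O_{\Lambda_0}(1)$ gives $H(\xi)\ll_{\Lambda_0}e^{c_n\|\Re\xi\|}(1+\|\xi\|)^{-1}$. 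The same bound holds for $\widetilde H$. Raising to the power $2N$ with $2N\geq A$ then yields \eqref{eq:paley-wiener-growth-condition} with $R:=2Nc_n$. This $R$ depends only on $n$ and $A$, as required.

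For the decay bounds \eqref{eq:paley-wiener-decay} and \eqref{eq:paley-wiener-decay-exp}, we fix $\Lambda=-\ov{\Lambda}$ with $\Re\Lambda\preceq\Re\Lambda_0$ and write $\xi=\bu+i\bv$. Then $h(\Lambda)=|H(\xi)|^{2N}\leq\bigl(\sum_w|F_w(\xi)|\bigr)^{2N}$, and Lemma~\ref{lemma:F_w-bounds} applies to every $w$. By \eqref{eq:F_w-bound2}, $F_w(\xi)\ll\|(\by+i\bt)-\xi w\|^{-1}\leq d(\Lambda_0,\Lambda)^{-1}$. Together with $F_w\ll 1$ from \eqref{eq:F_w-bound1}, this gives $|H|\ll(1+d(\Lambda_0,\Lambda))^{-1}$, hence \eqref{eq:paley-wiener-decay} once $2N\geq A$. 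For the exponential bound, \eqref{eq:F_w-bound1} gives $F_w\ll\exp(-\sum_j|y_j-u_{w(j)}|)$. The exponent is at least $d(\Re\Lambda_0,\Re\Lambda)$ by \eqref{eq:parameter-distance-note2}, since that distance is the minimum over permutations. So $|H|\ll e^{-d(\Re\Lambda_0,\Re\Lambda)}$, and $2N\geq A$ gives \eqref{eq:paley-wiener-decay-exp}. We therefore take $N:=\lceil A\rceil$.

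The main obstacle is the lower bound in \eqref{eq:paley-wiener-pi-amplified}. We know $F_{\id}(\by+i\bt)=1$ by \eqref{eq:F_id=1}, but the other terms $F_w(\by+i\bt)$ could in principle cancel it. Our plan is to exploit the weights $e^{2\eta_j z}$. For $w$ with $\by\neq\by w$, Lemma~\ref{lemma:eta} gives $|F_w(\by+i\bt)|\ll e^{-\sum_j|y_j-y_{w(j)}|}$, but that alone is not small. If it proves insufficient, we would modify the construction. One option is to rescale, replacing $E_j(z)$ by $E_j(\kappa z)$ with a large constant $\kappa$; then $|F_w(\by+i\bt)|\leq e^{-\kappa\sum_j|y_j-y_{w(j)}|}$ becomes tiny unless $\by w$ is close to $\by$. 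For $w$ with $y_{w(j)}\approx y_j$, the factor $\sinh(z)/z$ at $z=\kappa((y_{w(j)}-y_j)+i(t_{w(j)}-t_j))$ decays unless $t_{w(j)}\approx t_j$ as well. For near-stabilizers, $F_w(\by+i\bt)$ is close to $1$ with positive real part, since each factor is then close to $E_j(0)=1$. Summing therefore gives $|H(\by+i\bt)|\gg1$ and thus $h(\Lambda_0)\gg_A1$. The upper bound $h(\Lambda_0)\ll_A1$ already follows from \eqref{eq:paley-wiener-decay}. The delicate point is making the near-stabilizer argument uniform: we need $|H(\by+i\bt)|\gg1$ without any dependence on $\Lambda_0$, including when coordinates cluster at mesoscopic separations.
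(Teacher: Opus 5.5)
Your Paley--Wiener bound, the nonnegativity via $H\widetilde H$ (the same device as the paper's final step $\Lambda\mapsto h(\Lambda)\ov{h(-\ov{\Lambda})}$), and the upper bounds \eqref{eq:paley-wiener-decay}--\eqref{eq:paley-wiener-decay-exp} are all fine. The gap is the one you flag yourself, and it is the heart of the statement.

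\textbf{The lower bound is not proved.} Since $h(\Lambda_0)=|H(\by+i\bt)|^{2N}$, you need $\bigl|\sum_{w\in W}F_w(\by+i\bt)\bigr|\gg_n 1$ uniformly in $\Lambda_0$, and nothing in the proposal establishes this. The terms with $w\neq\id$ have modulus $O(1)$ but uncontrolled phases, so they may cancel $F_{\id}(\by+i\bt)=1$.

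A fixed rescaling $E_j(\kappa z)$ does not cure this. The factor $\kappa$ enters $R$, which must not depend on $\Lambda_0$, so $\kappa$ must be bounded in terms of $n$ and $A$. For every fixed $\kappa$ there are self-dual $\Lambda_0$ at the mesoscopic scale. For example, take $\by=0$ with all nonzero differences $t_i-t_j$ of size $\asymp 1/\kappa$. Then $F_w(\by+i\bt)=\prod_j e^{2i\eta_j\kappa(t_{w(j)}-t_j)}\sin(\kappa(t_{w(j)}-t_j))/(\kappa(t_{w(j)}-t_j))$ has modulus $\asymp 1$ and a phase that moves freely with $\bt$. Such $w$ are neither near-stabilizers with $F_w\approx 1$ nor negligible, so the dichotomy you rely on is unavailable.

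\textbf{The paper's missing ingredient is Tur\'an's power sum theorem.} Put the power inside the $W$-sum, $h_k:=\sum_{w\in W}F_w^k$. Then choose $k\in\{A+1,\dotsc,A+n!\}$, depending on $\Lambda_0$, such that $|h_k(\Lambda_0)|\geq 2\bigl(n!/(8e(A+n!))\bigr)^{n!}\max_w|F_w(\by+i\bt)|^k$. This is $\gg_{n,A}1$ because the maximum is at least $1$ by \eqref{eq:F_id=1}. Since $k\leq A+n!$, the radius $R=(2n^2+n)k$ stays bounded in terms of $n,A$. The decay bounds follow from Lemma~\ref{lemma:F_w-bounds} applied to $|F_w|^k$, exactly as in your argument, and only afterwards does one square to obtain nonnegativity.

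\textbf{Your own route can also be repaired}, but only by letting $\kappa$ depend on $\Lambda_0$ within a bounded set.
\begin{itemize}
\item Write $z_j$ for the coordinates of $\by+i\bt$ and set $\rho:=M/\eps$. By pigeonhole, some $\kappa\in\{\rho^{2m}:0\leq m\leq\binom{n}{2}\}$ makes every $|z_i-z_j|$ either $<\eps/\kappa$ or $>M/\kappa$.
\item For this $\kappa$, the relation ``$|z_i-z_j|<\eps/\kappa$'' is an equivalence relation.
\item Each cluster-preserving $w$ gives $F_w=1+O_n(\eps)$.
\item Every other $w$ gives $O_n(1/M)$, by Lemma~\ref{lemma:eta} and the $1/|z|$ factor in $\sinh z/z$.
\end{itemize}
As written, however, the proposal does not prove \eqref{eq:paley-wiener-pi-amplified}.
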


\begin{remark} The proof below makes it clear that the lower bounds in \eqref{eq:paley-wiener-decay}
and \eqref{eq:paley-wiener-decay-exp} only need the assumption $\Lambda=-\ov{\Lambda}$. We display them together with the upper bounds for aesthetic and practical reasons.
\end{remark}

\begin{proof} First we prove a ``non-unitary'' variant of the result, where instead of \eqref{eq:paley-wiener-decay} and \eqref{eq:paley-wiener-pi-amplified} we require
\begin{alignat}{2}
\label{eq:paley-wiener-decay-variant}
h(\Lambda)&\ll_A (1+d(\Lambda_0,\Lambda))^{-A},&&\qquad\Re\Lambda\preceq\Re\Lambda_0,\\
\label{eq:paley-wiener-decay-exp-variant}
h(\Lambda)&\ll_A \exp(-A\cdot d(\Re\Lambda_0,\Re\Lambda)),&&\qquad\Re\Lambda\preceq\Re\Lambda_0,
\end{alignat}
\begin{equation}\label{eq:paley-wiener-pi-amplified-variant}
|h(\Lambda_0)|\asymp_{A} 1.
\end{equation}
To this aim, we define the entire, Weyl-invariant functions
\[
h_k(z):= \sum_{w\in W} (F_w(z))^k, \qquad z\in\CC^n_0,\quad k\in\NN.
\]
By \eqref{eq:E-def} and \eqref{eq:F_w-def}, these functions satisfy the Paley--Wiener condition
\begin{equation}\label{eq:paley-wiener-growth-condition2}
h_k(\Lambda) \ll_{k,\Lambda_0} e^{(2n^2+n)k\|\Re\Lambda\|} (1+\|\Lambda\|)^{-k},\qquad \Lambda\in\CC^n_0/W.
\end{equation}
We quote a consequence of Turán's power sum theorem \cite[Cor. on p.~85]{Turan1984}: ``For integer $m\geq 0$ and complex $\eta_j$ there is an integer $\nu_0$ with
\[
m+1 \leq \nu_0 \leq m+n
\]
so that the inequality
\[
\bigl|\eta_1^{\nu_0}+\dotsb+\eta_n^{\nu_0}\bigr| \geq 2 \left(\frac{n}{8e(m+n)}\right)^n \left(\max_j |\eta_j|\right)^{\nu_0}
\]
holds.'' We apply this result with $(A,n!)$ in the role of $(m,n)$, and with the $n!$ complex numbers $F_w(\Lambda_0)$ in the role of the $\eta_j$'s. We observe that the maximum on the right-hand side is at least $1$ by \eqref{eq:F_id=1}. Therefore, denoting by $k\in\{A+1,\dotsc,A+n!\}$ the yielded $\nu_0$ and putting $h:=h_k$, the bounds \eqref{eq:paley-wiener-decay-variant}, \eqref{eq:paley-wiener-decay-exp-variant}, \eqref{eq:paley-wiener-pi-amplified-variant} follow from Lemma~\ref{lemma:F_w-bounds}, noting that in \eqref{eq:F_w-bound1} the $j$-sum is at least $\|\by-\bu w\|$.
In addition, putting $R:=(2n^2+n)k$, the bound \eqref{eq:paley-wiener-growth-condition} follows from \eqref{eq:paley-wiener-growth-condition2}.

Now we modify our construction to fulfil the original requirements \eqref{eq:paley-wiener-growth-condition}, \eqref{eq:paley-wiener-decay}, \eqref{eq:paley-wiener-decay-exp}, \eqref{eq:paley-wiener-pi-amplified}. Simply, we double $R$ and replace $h$ with the function
\begin{equation}\label{eq:squared-h}
\Lambda \mapsto h(\Lambda)\ov{h(-\ov{\Lambda})},\qquad \Lambda\in\CC^n_0/W,
\end{equation}
which is entire by the reflection principle and nonnegative for ``unitary'' Harish-Chandra parameters:
\[h(\Lambda)\ov{h(-\ov{\Lambda})}=|h(\Lambda)|^2,\qquad\Lambda=-\ov{\Lambda}.\]
We can see that \eqref{eq:paley-wiener-growth-condition} remains true, while \eqref{eq:paley-wiener-decay}, \eqref{eq:paley-wiener-decay-exp}, \eqref{eq:paley-wiener-pi-amplified} follow from \eqref{eq:selfdual}, \eqref{eq:paley-wiener-decay-variant}, \eqref{eq:paley-wiener-decay-exp-variant}, \eqref{eq:paley-wiener-pi-amplified-variant}.
\end{proof}

\section{The amplified pretrace inequality}\label{sec6}

The rest of the paper is devoted to the proofs of Theorems~\ref{thm1} and~\ref{thm2}.

\subsection{Hecke operators}
We follow \cite[Ch.~3]{Shimura} and \cite[\S4]{BM-IMRN} in our exposition of the Hecke operators. To ease the notation, we shall identify
\begin{equation}\label{eq:id-SL-GL}
G \qquad \text{and} \qquad \GL_n(\RR) / \{\diag(a,\dotsc,a):a\in\RR^{\times}_{>0}\}.
\end{equation}
Accordingly, we shall lift any function defined on $G$ to a function on $\GL_n(\RR)$.

For an $n$-tuple
\[
\mbm=(m_1,\dotsc,m_n)\in\ZZ_{\geq 1}^n \qquad \text{with} \qquad m_n\mid\dotsb\mid m_1
\]
we define the Hecke operator
\[
T_\mbm F:=\sum_j F \circ M_j,\qquad F\in L^2(\Gamma \bs G),
\]
where the matrices $M_j\in \ZZ^{n\times n}$ are chosen to satisfy
\[
\Gamma \diag(m_1,\dotsc,m_n) \Gamma = \bigcup_j \Gamma M_j
\]
with a disjoint union on the right-hand side. By \cite[Prop.~3.16]{Shimura} we have the decomposition
\[T_\mbm=\prod_p T_{v_p(\mbm)}(p),\qquad T_{\ba}(p):=T_{p^{a_1},\dotsc,p^{a_n}},\]
so that the (commutative) ring $R$ generated by the $T_\mbm$'s is the tensor product of the (commutative) rings $R_p$ generated by the $T_{\ba}(p)$'s. By \cite[Th.~3.20]{Shimura}, each $R_p$ is generated by the $n$ basic Hecke operators of the form $T(p,\dotsc,p,1,\dotsc,1)$.

For fixed $\ba$ and $p\to\infty$, the eigenvalue of $T_{\ba}(p)$ on constant functions is asymptotically $p^A$, where
\begin{equation}\label{Hecke-norm}
A: = \sum_{j=1}^n (n+1-2j) a_j.
\end{equation}
In contrast, the Ramanujan conjecture predicts that the eigenvalue $\lambda_{\varpi,\ba}(p)$ of $T_{\ba}(p)$ on each irreducible cuspidal subspace $V_\varpi\subseteq L^2(\Gamma \bs G)$ is $O_\ba(p^{A/2})$. This follows from \cite[(4.3)]{BM-IMRN} and the subsequent display in that paper.

Let $L$ be a large parameter to be chosen later, and let $\mcP$ be a set of primes in $[L,2L]$. Then, for $\ell\in\mcP$ and $1\leq j\leq n$ we set
\[
T_{[j]}(\ell):=T_{(j,0,\dotsc,0)}(\ell),\qquad
x_{[j]}(\ell):=\frac{|\lambda_{\pi,(j,0,\dotsc,0)}(\ell)|}{\lambda_{\pi,(j,0,\dotsc,0)}(\ell)},
\]
with the convention $0/0=0$ if the denominator vanishes. Finally, we introduce
\[
\mcT_j:= \sum_{\ell\in\mcP} \frac{x_{[j]}(\ell)}{\ell^{j(n-1)/2}} T_{[j]}(\ell),\qquad 1\leq j\leq n,
\]
and
\[
\mcT:=\sum_{j=1}^n \mcT_j \mcT_j^{\ast}.
\]
By \cite[Cor.~4.3]{BM-IMRN}, $V_{\pi}$ is an eigenspace of $\mcT$ with nonnegative eigenvalue
\begin{equation}\label{eigen-TT}
\lambda_{\mcT}(\pi)\gg (\#\mcP)^2.
\end{equation}
On the other hand, by \cite[Lem.~4.4]{BM-IMRN} we have
\begin{align*}
\mcT = & \sum_{j=1}^{n} \sum_{\substack{\ell_1,\ell_2\in\mcP \\ \ell_1\neq\ell_2}} \frac{x_{[j]}(\ell_1)\ov{{x}_{[j]}(\ell_2)} }{(\ell_1\ell_2)^{j(n-1)/2}}T_{[j]}(\ell_1)T^{\ast}_{[j]}(\ell_2) \\ & + \sum_{j=1}^{n} \sum_{i=0}^j \sum_{\ell \in \mcP} \frac{|x_{[j]}(\ell)|^2 c_{ij}(\ell)}{\ell^{(n-1)(j-i)}}T_{(2j-2i, j-i, \ldots, j-i, 0)}(\ell)
\end{align*}
for certain $c_{ij}(\ell) \ll 1$. 

We now argue as in \cite[\S6]{BM-IMRN}. For an integral matrix $\gamma \in \ZZ^{n\times n}$, let $\Delta_j$ denote the $j$-th determinantal divisor, i.e. the greatest common divisor of all $j\times j$ subdeterminants. For $m, l \in \ZZ_{\geq 1}$ let
\begin{equation}\label{Sml}
S(m, l) := \{\gamma \in \ZZ^{n\times n} : |\det \gamma| = m,\ \Delta_1 = 1,\ \Delta_2 = l \}.
\end{equation}
The group $\Gamma$ acts on this set from the left and the right. 
The significance of this definition is that all matrices in the coset decomposition of $$\Gamma \diag(\ell^{2j-2i}, \ell^{j-i}, \ldots, \ell^{j-i}, 1)\Gamma$$ are in $S(\ell^{n(j-i)}, \ell^{j-i})$ and all matrices in the coset decomposition of $$\Gamma \diag (\ell_1^j, 1, \ldots, 1) \Gamma \cdot \Gamma\diag(\ell_2^j, \ldots, \ell_2^j, 1)\Gamma = \Gamma\diag(\ell_1^j\ell_2^j, \ell_2^j, \ldots, \ell_2^j, 1)\Gamma$$ are in $S(\ell_1^j\ell_2^{j(n-1)}, \ell_2^j)$. We conclude that
\begin{equation*}
\begin{split}
\mcT(F)(x) = & \sum_{j=1}^n \sum_{\substack{\ell_1,\ell_2 \in \mcP \\ \ell_1\neq \ell_2}} \frac{1}{L^{(n-1)j}} \sum_{\gamma \in \Gamma\bs S(\ell_1^j\ell_2^{(n-1)j}, \ell_2^j)}c_j(\gamma, \ell_1, \ell_2) F(\gamma x) \\
&+ \sum_{0 \leq i \leq j \leq n} \sum_{\ell \in \mcP} \frac{1}{L^{(n-1)(j-i)}} \sum_{\gamma\in \Gamma\bs S(\ell^{n(j-i)}, \ell^{j-i})} c_{ij}(\gamma, \ell) F(\gamma x)
\end{split}
\end{equation*}
for certain $c_j(\gamma, \ell_1, \ell_2), c_{ij}(\gamma, \ell) \ll 1$. In the second line, we detach the terms $i = j$ (for which $\gamma=\id$), and include the terms $i<j$ in the first line with $j$ playing the role of $j-i$. We obtain
\begin{equation}\label{TT}
\begin{split}
\mcT(F)(x) = c_0 \cdot \# \mcP \cdot F( x) + \sum_{j=1}^n \sum_{\ell_1 ,\ell_2 \in \mcP} \frac{1}{L^{(n-1)j}} \sum_{\gamma \in \Gamma\bs S(\ell_1^j\ell_2^{(n-1)j}, \ell_2^j)}c_j(\gamma, \ell_1, \ell_2) F(\gamma x) 
\end{split}
\end{equation}
with coefficients $c_0, c_j(\gamma, \ell_1, \ell_2) \ll 1$. 

\subsection{Positivity}\label{subsec:positivity}
We follow \cite[\S3.2]{BHMM} to establish the amplified pre-trace inequality with as little technical effort as possible. In particular, we shall not use the spectral decomposition of $L^2(\Gamma \bs G)$.

We fix some large positive parameter $A>0$ to be specified later. In particular, let $A$ exceed the particular $A$ furnished by Theorem~\ref{Thm5}, and let $R>0$ be given by Theorem~\ref{theorem:concrete-paley-wiener-function} accordingly. We apply Theorem~\ref{theorem:concrete-paley-wiener-function} with these parameters and $\Lambda_0:=\Lambda_{\pi}$, obtaining a suitable $h:\CC^n_0/W\to\CC$. Finally, we let $f \in I_{c}(G,\tau)$ be the corresponding function provided by Theorem~\ref{Thm5}. Then, in particular, $\pi(f)$ acts by the scalar $h(\Lambda_{\pi})\geq 0$ on $V_{\pi}(\tau)$ and annihilates $V_{\pi}(\tau)^{\perp}$. We consider the convolution operator $R(f)$ on $L^2(\Gamma\bs G)$ given by
$$R(f)(\psi)(x): = \int_G f(y) \psi(xy)\, dy = \int_{\Gamma\bs G} k_f(x, y) \psi(y)\,dy$$
for $\psi \in L^2(\Gamma \bs G)$, $x \in G$, where
$$k_f(x, y): = \sum_{\gamma \in \Gamma} f(x^{-1} \gamma y).$$ 
Without loss of generality, $f$ factors as $g\ast\check g$, where $g\in I_c(G,\tau)$ is given by
\[\check g(x):=\ov{g(x^{-1})},\qquad x\in G.\]
For example, one can always replace $f$ by $f\ast\check f$ and $h$ by \eqref{eq:squared-h}. Then $R(f)=R(g)R(g)^\ast$ is a positive operator\footnote{The positivity of $R(f)$ follows more directly from the spectral decomposition of $L^2(\Gamma \bs G)$, without recourse to the factorization $f=g\ast\check g$.}, which commutes with the positive operator $\mcT$ from the previous subsection. We consider now the positive, bounded operator $\mcA := R(f) \mcT$ on $L^2(\Gamma \bs G)$. 

Let $\mfB$ be a finite orthonormal system of eigenfunctions $\phi$ of $\mcA$ with (not necessarily distinct) eigenvalues $(c_{\phi}(\mcA))_{\phi\in\mfB}$. Then, $\mcA$ preserves the orthodecomposition
$L^2(\Gamma\bs G)=\mathrm{Span}(\mfB)\oplus\mathrm{Span}(\mfB)^{\perp}$,
and for any $\psi\in L^2(\Gamma\bs G)$ the corresponding decomposition $\psi=\psi_1+\psi_2$ with
\[\psi_1:=\sum_{\phi\in\mfB}\langle\psi,\phi\rangle\phi\qquad \text{and} \qquad \psi_2:=\psi-\psi_1\]
gives
$$
\langle \mcA\psi,\psi\rangle=\langle \mcA\psi_1,\psi_1\rangle+\langle \mcA\psi_2,\psi_2\rangle
\geq\langle \mcA\psi_1,\psi_1\rangle=\sum_{\phi\in\mfB}c_{\phi}(A)|\langle\psi,\phi\rangle|^2.
$$
In particular, if $\mfB=\{\phi_1,\dotsc,\phi_{\dim\tau_{\pi}}\}$ denotes an orthonormal basis of $V_{\pi,\tau_{\pi}}$, we obtain as in \cite[(3.12)]{BHMM} the following amplified pre-trace inequality where we use \eqref{eq:paley-wiener-pi-amplified} and \eqref{eigen-TT} on the left-hand side and \eqref{TT} on the right-hand side:
\begin{align}
\notag (\#\mcP)^2 \sum_{\phi \in \mfB} | \phi(x)|^2 & \ll \#\mcP \sum_{\gamma \in \Gamma}| f(x^{-1}\gamma x)|+ \sum_{j=1}^n \sum_{\ell_1 ,\ell_2 \in \mcP} \frac{1}{L^{(n-1)j}} \sum_{\gamma\in S(\ell_1^j \ell_2^{(n-1)j}, \ell_2^j) } |f(x^{-1} \gamma x)| \\
& \label{finalpretrace} \ll \#\mcP \| f \|_{\infty}+ \sum_{j=1}^n \sum_{\ell_1 ,\ell_2 \in \mcP} \frac{1}{L^{(n-1)j}} \sum_{\gamma\in S(\ell_1^j \ell_2^{(n-1)j}, \ell_2^j) } |f(x^{-1} \gamma x)|.
\end{align}
Here, by our earlier convention (cf. \eqref{eq:id-SL-GL}),
$f(x^{-1}\gamma x)$ really means $f(x^{-1}\tilde\gamma x)$, where
\begin{equation}\label{tildegamma}
\tilde\gamma:=\gamma/|\det \gamma|^{1/n},
\end{equation}
and we have also used that $f$ (as a function on $G$) is supported in a compact set depending only on $A$. 
Note that $\hat{f}(V)$ in \cite[(3.12)]{BHMM} is the trace of $\pi(f)$ (cf. \cite[(2.18)]{BHMM}) acting on a vector space of dimension $2\ell + 1$, so that $\hat{f}(V)/(2\ell + 1)$ in \cite[(3.12)]{BHMM} plays the role of $h(\Lambda_{\pi})$ in Theorem~\ref{Thm5}.

\subsection{Applying the Plancherel formula}
To use \eqref{finalpretrace} effectively, we need to estimate the function $f$. We accomplish this by combining \eqref{inversion} and \eqref{eq:paley-wiener-decay}. It is convenient to introduce
\[
\mcZ:=\Delta(\pi)\cdot \dim \tau_{\pi}.
\]
Here it is useful to comment on the size of these quantities. First,
the explicit descriptions of the Plancherel measure \cite[(6.3)]{KS} and the infinitesimal character \eqref{picharacter} reveal that $\Delta(\pi)$ is of the same size as the regularized Weyl discriminant of $\pi$. That is, for any $\bz\in\Lambda_\pi$, we have
\begin{equation}\label{eq:Delta-KS}
\Delta(\pi)\asymp\prod_{1 \leq i < j \leq n}(1+ |\bz_i-\bz_j|)\ll (1+\|\pi\|_\infty)^{n(n-1)/2}.
\end{equation}
Second, by the explicit highest weight \eqref{eq:highest-weight} and the dimension formulae \eqref{eq:dimformula_1}, \eqref{eq:dimformula_2}, \eqref{eq:dimformula_3},
\begin{equation}\label{eq:dim-tau-size}
\dim\tau_{\pi}\ll(1+\|\bm\ell\|_{\infty})^{\lfloor(n-1)^2/4\rfloor}.
\end{equation}
These immediately imply that
\begin{equation}\label{eq:Z-equiv}
\log \mcZ \asymp \log (1+\|\pi\|_{\infty}) \asymp \log (1+\|\Lambda_{\pi}\|).
\end{equation}

We start out from \eqref{inversion} to see that
\begin{equation}\label{eq:f-bound-0}
|f(x)| \leq \int_{\Gtemp(\tau_{\pi})} h(\Lambda_{\varpi}) |\psi_{\tau_{\pi}}^{\varpi}(x^{-1})| \,d\varpi.
\end{equation}
Every $\varpi$ here satisfies $\Re\Lambda_{\varpi}\preceq \Re\Lambda_{\pi}$ by Theorem~\ref{theorem:K-type-barrier}. We claim that
\begin{equation}\label{eq:multiplicitybound-final}
[\varpi:\tau_{\pi}] \ll (1+d(\Lambda_{\pi},\Lambda_{\varpi}))^{\lfloor n^2/4\rfloor}.
\end{equation}
For $n=2$, this is trivial as the left-hand side is $0$ or $1$. For $n\geq 3$, we use the discussion at the beginning of \S\ref{sec:generalized-spherical-trace-function-SL} to write $\pi=U^{\sigma',\mu'}$ and $\varpi=U^{\sigma'',\mu''}$ with $|\Re\mu'|<1/2$ and $\Re \mu'' = 0$. Then we apply Lemma~\ref{lemma:multiplicitybound} with $(\sigma'',\mu'',\varpi, \tau_{\varpi},\tau_{\pi})$ in the role of $(\sigma',\mu',U,\tilde{\tau},\tau)$. The bound \eqref{eq:multiplicitybound-final} now follows upon noting that (cf. \eqref{Lambdasigma'}, \eqref{Lambdasigma''}, \eqref{eq:parameter-distance-note2})
\begin{equation}\label{eq:lambda-diff-bound}
\|\bm\lambda - \tilde{\bm\lambda}\|_{\infty} \ll 1+d(\Lambda_{\sigma'},\Lambda_{\sigma''}) \ll 1+d(\Lambda_{\pi},\Lambda_{\varpi}).
\end{equation}
We note for future reference the following similar but simpler inequality:
\begin{equation}\label{eq:normdifference}
\|\pi\|_\infty-\|\varpi\|_\infty\ll 1+d(\Lambda_{\pi},\Lambda_{\varpi}).
\end{equation}
On the other hand, from \cite[(6.3)]{KS} and \eqref{eq:Delta-KS} we also see that
\begin{equation}\label{eq:RN-estimate}
d\varpi \ll \Delta(\varpi)\,d\sigma''\,d\mu''
\ll \Delta(\pi)\cdot(1+d(\Lambda_{\pi},\Lambda_{\varpi}))^{n(n-1)/2}\,d\sigma''\,d\mu'',
\end{equation}
where $d\sigma''$ is the counting measure and $d\mu''$ is an appropriate Lebesgue measure. Now we combine \eqref{eq:f-bound-0}, \eqref{eq:multiplicitybound-final}, \eqref{eq:RN-estimate} in two ways. We shall tacitly assume that the parameter $A>0$ introduced in \S\ref{subsec:positivity} is sufficiently large (in terms of $n$), and use that  $f$ is supported in a compact set depending only on $A$.

In the first round, we apply \eqref{eq:psi-baseline-bound} with $(\varpi,\tau_\pi,(\supp f)^{-1})$ in the role of $(U,\tau,\Omega)$ and \eqref{eq:paley-wiener-decay} with $(\Lambda_\pi,\Lambda_\varpi)$ in the role of $(\Lambda_0,\Lambda)$ to infer that
\begin{equation}\label{trivial-f}
\| f \|_{\infty} \ll \mcZ.
\end{equation}
We note at this point that with the unamplified version of \eqref{finalpretrace} (setting $\mcT:=\id$, $\mcA:=R(f)$ in \S\ref{subsec:positivity} without any Hecke operators\footnote{In order to use \eqref{finalpretrace} literally, take a fixed $L\geq 1$, and let $\mcP$ be the set of primes in $[L,2L]$.}), we arrive at
\[
\sum_{\phi \in \mfB} | \phi(x)|^2 \ll \sum_{\gamma\in \Gamma} |f(x^{-1}\gamma x)|.
\]
Applying \eqref{trivial-f} together with $x\in\Omega$, we finally obtain the trivial bound\footnote{It is remarkable that the so-called trivial bound has only been proven now.} \eqref{triv}.
For the rest of the argument, it is convenient to assume that $\|\pi\|_{\infty}$ is large enough, since Theorems~\ref{thm1} and~\ref{thm2} are immediate from \eqref{triv} if $\|\pi\|_{\infty}\leq C$ for any fixed $C>0$ (which may depend on $I$ in the case of Theorem~\ref{thm2}). Then $\mcZ$ and $\|\Lambda_{\pi}\|$ are also large by \eqref{eq:Z-equiv}.

In the second round, we cut the integral in \eqref{eq:f-bound-0} as follows. First, by \eqref{eq:psi-baseline-bound}, \eqref{eq:paley-wiener-decay}, \eqref{eq:Z-equiv}, we conclude for every $x\in\supp(f)$ that
\[
\int\limits_{\substack{\Gtemp(\tau_{\pi}) \\ d(\Lambda_{\pi},\Lambda_{\varpi}) \geq \|\pi\|_{\infty}^{1/100}}} h(\Lambda_{\varpi}) |\psi_{\tau_{\pi}}^{\varpi}(x^{-1})| \,d\varpi\ll 1.
\]
Secondly, in the range $d(\Lambda_{\pi},\Lambda_{\varpi}) < \|\pi\|_{\infty}^{1/100}$, we apply Theorem~\ref{thm3b} and Remark~\ref{rmk-to-thm3b} with $(\varpi,\tau_\pi,(\supp f)^{-1})$ in the role of $(U,\tau,\Omega)$. Using also \eqref{eq:paley-wiener-decay}, \eqref{eq:Z-equiv}, \eqref{eq:lambda-diff-bound}, \eqref{eq:normdifference}, we conclude for every $x\in\supp(f)$ that
\[
\int\limits_{\substack{\Gtemp(\tau_{\pi}) \\ d(\Lambda_{\pi},\Lambda_{\varpi}) < \|\pi\|_{\infty}^{1/100}}} h(\Lambda_{\varpi}) |\psi_{\tau_{\pi}}^{\varpi}(x^{-1})| \,d\varpi \ll \mcZ(1+\mcZ^c \dist(x,K))^{-1/3},
\]
where $c>0$ is a constant depending only on $n$. The last two displays (valid on the whole support of $f$) then imply that
\begin{equation}\label{eq:f-decay}
f(x)\ll \mcZ \left(1+\mcZ^{c}\dist(x,K)\right)^{-1/3},\qquad x\in G.
\end{equation}
We shall use this bound in the proof of Theorem~\ref{thm1}. In the special case when $n=3$, $r=s=1$, and $\mu'$ lies in a bounded domain $I$, we can say more. First, we observe that $\mcZ\asymp_I \ell^4$. Then we argue as above, but instead of Theorem~\ref{thm3b} we apply Theorem~\ref{thm4b} with $(\varpi,\tau_\pi,(\supp f)^{-1})$ in the role of $(U,\tau,\Omega)$. The result is
\begin{equation}\label{eq:f-decay-2}
f(x) \ll_{\eps,I} \ell^{4+\eps} (1 + \ell \cdot \dist(x, \{\pm \id\})^2 )^{-1}, \qquad x\in G.
\end{equation}
We shall use this bound in the proof of Theorem~\ref{thm2}.

\section{Counting}\label{counting}
In this section we complete the proofs of Theorems~\ref{thm1} and~\ref{thm2}.

\subsection{Proof of Theorem~\ref{thm1}}
This is based on the results in \cite{BM}. Theorem~\ref{thm3b} provides decay of the spherical trace function as soon as $x$ moves away from $K$, which is the same input as in \cite[(2.4)]{BM} (with different constants). We therefore estimate the contribution of matrices $\gamma$ slightly away from $K$ directly by the decay bound in Theorems~\ref{thm3b}, and we use the argument in \cite{BM} to bound effectively the number of $\gamma$ very close to $K$. As in \cite[(2.7)]{BM} we define for $a, b, M\in\ZZ_{\geq 1}$ and a positive matrix $Q \in \text{Pos}_n(\RR)$ the quantity
\begin{equation*} 
\begin{split}
\mcS(Q, a, b, M) := \Big\{\gamma \in \ZZ^{n\times n} : \ &\gamma^T Q \gamma = (ab^{n-1})^{2/n} Q + O\Bigl((ab^{n-1})^{(2-M)/n}\Bigr), \\
& \Delta_1(\gamma) = 1, \, \Delta_2(\gamma) = b\Big\}.
\end{split}
\end{equation*}
For any non-empty set $\mcP \subseteq [L, 2L]$ of primes, we group the $\gamma$'s in the rightmost expression in \eqref{finalpretrace} according to
\[
\dist(x^{-1}\tilde\gamma x,K) > L^{-M} \qquad \text{or} \qquad \dist(x^{-1}\tilde\gamma x,K) \leq L^{-M}.
\]
In the first case, we estimate the number of matrices trivially by applying that each entry is $O(L^n)$ (using that $x\in\Omega$ and $f$ is supported in a fixed compact set) together with \eqref{eq:f-decay} for the contribution of an individual one. In the second case, we estimate an individual contribution trivially by \eqref{trivial-f}. All in all, we arrive at the basic inequality
\begin{equation}\label{basic}
\sum_{\phi\in\mcB}|\phi(x)|^2 \ll \mcZ\left(\frac{1}{\#\mcP} + \mcZ^{-c/3} L^{n^3 +M/3} + \sum_{j=1}^n \frac{1}{(\# \mcP)^2} \sum_{\ell_1, \ell_2 \in \mcP} \frac{\#\mcS(Q, \ell_1^j, \ell_2^j, M)}{L^{j(n-1)}}\right)
\end{equation}
with
\[
Q: = x^{-T} x^{-1} \in \text{Pos}_n(\RR)
\]
lying in a fixed compact set depending on $\Omega$. As we explain below, it follows from the argument in \cite[\S5--\S7]{BM} that there exist some $M\in\ZZ_{\geq 1}$ (depending only on $n$), some $L=\mcZ^{\alpha}$ (depending on $Q$ but satisfying $1\ll \alpha\leq c/(4n^3+M)$), and some prime set $\mcP\subseteq [L,2L]$ (also depending on $Q$) of size $\gg L^{1/2}$ such that
\[
\#\mcS(Q, \ell_1^j, \ell_2^{j}, M) \ll L^{j(n-1)-1/2},\qquad \ell_1,\ell_2\in\mcP,\quad 1 \leq j\leq n.
\]
In the light of \eqref{basic}, this proves Theorem~\ref{thm1} (we provide the details below).

To analyze $\mcS(Q, \ell_1^j, \ell_2^j, M)$, we combine \cite[Lem.~8 and~Prop.~1]{BM} into the following lemma. 
\begin{lemma}\label{BM-lemma} There exist constants $c_1, c_2, c_3 > 0$ depending at most on $n$ with the following property. Let $L_0 > 2$ and let $M, D_1, D_2 \geq 1$ satisfy 
\begin{equation}\label{cond}
D_2 \geq D_1^{n(n+1)/2}, \quad M \geq c_1D_1^{n(n+1)/2} D_2^{n(n+1)/2 + 1}.
\end{equation}
There exist $0 \leq i, k < n(n+1)/2$ and two sets $\mcD, \mcQ \subseteq \ZZ_{\geq 1}$ of cardinality at most $n(n-1)/2$ and $n$, respectively, with the following properties.

Put $\mcL := L_0^{(D_1D_2)^{i+1}}$. Then it holds that $\mcD, \mcQ \subseteq [1, O(\mcL^{c_2 D_1^k})]$. Let $\mcP$ be the set of all primes $p$ in $[\mcL^{D_1^{k+1}}, 2 \mcL^{D_1^{k+1}}]$ coprime to all elements in $\mcQ$ and such that $-d$ is a quadratic non-residue modulo $p$ for each $d \in \mcD$. Then 
\[
\#\mcS(Q, q^j, p^j, M) \ll p^{j(n-2) + \frac{c_3}{D_1}}
\]
holds for all $p, q \in \mcP$ and $1 \leq j \leq n$. 
\end{lemma}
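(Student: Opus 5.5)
Lemma~\ref{BM-lemma} is announced as a combination of \cite[Lem.~8 and Prop.~1]{BM}, so the plan is to splice these two results together rather than to argue from scratch.

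\textbf{Step 1 (reduction).} First I would invoke \cite[Prop.~1]{BM} for our $Q$. That proposition is a Diophantine-approximation dichotomy. Suppose a point of $\mcS(Q,q^j,p^j,M)$ survives at a given precision, with the number of such points exceeding $p^{j(n-2)+c_3/D_1}$. Then $Q$, up to scaling, lies within distance $\mcL^{-(\text{large})}$ of a rational quadratic form of height $\ll \mcL^{c_2D_1^k}$. At the next scale in the chain $\mcL=L_0^{(D_1D_2)^{i+1}}$ the analysis of this rational form produces the finite data $\mcD$ (the discriminant-type invariants of its binary subforms) and $\mcQ$ (the denominators or bad primes). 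The chain of scales is indexed by $i,k<n(n+1)/2$, which is the dimension of $\mathrm{Pos}_n$. The idea is to iterate the pigeonhole step along this chain: each time $Q$ is well approximated by a rational form, one passes to a higher scale. This can happen at most $n(n+1)/2$ times, since the rank of the rational approximation increases each time. That bound gives $i,k<n(n+1)/2$. The constraints in \eqref{cond} are exactly what is needed so that the approximation error $\mcL^{-M}$ at one scale beats the heights produced at the previous scale.

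\textbf{Step 2 (counting).} Next I would apply \cite[Lem.~8]{BM}. For primes $p$ that are coprime to $\mcQ$ and for which every $-d$ ($d\in\mcD$) is a non-residue modulo $p$, the relevant rational form has no nontrivial isotropic reduction modulo $p$. The $\gamma$ in $\mcS$ (with $\Delta_1=1$, $\Delta_2=p^j$) then reduce to lower-dimensional lattice point problems. These are counted by volume plus a boundary error, which gives $\ll p^{j(n-2)+c_3/D_1}$. The $c_3/D_1$ loss comes from $q\le 2p$ and from the relative sizes of $p\asymp\mcL^{D_1^{k+1}}$ versus the heights $\mcL^{c_2D_1^k}$, whose ratio is $p^{O(1/D_1)}$.

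\textbf{Main obstacle.} The delicate point is checking that the quantifiers line up. The parameters $i,k,\mcD,\mcQ$ must be chosen depending only on $Q$, not on $p,q,j$, and the constants $c_1,c_2,c_3$ must be uniform in $n$ alone. Concretely, this means confirming that \cite[Prop.~1]{BM}, applied with precision $M$, yields a scale at which the hypotheses of \cite[Lem.~8]{BM} hold simultaneously for all $1\le j\le n$. I would handle this by running the iteration once with the worst case $j=n$, which has the largest determinant $\ell_1^n\ell_2^{n(n-1)}$, and then noting that smaller $j$ only relax the precision requirement. Since both ingredients are quoted results, the proof would essentially be bookkeeping of exponents along these lines.
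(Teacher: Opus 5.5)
Your plan is the paper's approach: the paper gives no independent argument for this lemma and presents it only as a repackaging of \cite[Lem.~8 and Prop.~1]{BM}, so quoting and combining those two results is the whole proof. The internal mechanics you sketch in Steps~1--2 and under ``Main obstacle'' are unverified guesses about \cite{BM} that carry no weight here: which cited result produces $i,k,\mcD,\mcQ$ and which gives the count, the rank-increase heuristic, and the reduction to $j=n$. In particular, the choice of $i,k,\mcD,\mcQ$ independent of $p,q\in\mcP$ and $1\leq j\leq n$ is inherited from the cited statements rather than something you need to arrange.
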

We can now conclude the proof as in \cite[\S7]{BM}. By the Chinese remainder theorem and quadratic reciprocity, the set $\mcP$ in Lemma~\ref{BM-lemma} can be described by congruence conditions modulo a number $m \ll \mcL^{c_4D_1^k}$ for some $c_4>0$. By a Linnik-type bound for primes in arithmetic progressions \cite[Lem.~9]{BM}, we conclude that $D_1 \geq c_{5}$ implies that
$$\#\mcP \gg \mcL^{D_1^{k+1} - c_{6} D_1^k} \geq \mcL^{\frac{1}{2}D_1^{k+1}}.$$
With this choice of $D_1$ we now specify the other parameters in Lemma~\ref{BM-lemma}. We 
fix some $D_2$ and $M$ satisfying \eqref{cond}, and we put $L_0: = \mcZ^{\eta}$ 
for some small constant $\eta > 0$ to be specified in a moment and define $\mcL: = L_0^{(D_1D_2)^{i+1}}$ as in Lemma~\ref{BM-lemma}. Now we apply \eqref{basic} with
\[
L: = \mcL^{D_1^{k+1}} = \mcZ^{\eta (D_1D_2)^{i+1}D_1^{k+1} } = \mcZ^{\alpha},
\]
say, to obtain
$$\sum_{\phi\in\mcB}|\phi(x)|^2 \ll \mcZ \left(\frac{1}{L^{1/2}} + \mcZ^{-c/3 + \alpha (n^3+M/3)} + \frac{1}{L^{1/2}}\right).$$
Choosing $\eta$ (hence $\alpha$) to be a sufficiently small positive number depending only on $n$, the proof is complete.

\subsection{Proof of Theorem~\ref{thm2}}
In this subsection we specialize to $n=3$ and recall the definition of $S(m, l)$ in \eqref{Sml}. These are matrices with Smith normal form $\diag(m/l, l, 1)$. In particular, we must have $l^2 \mid m$. We also recall our notation \eqref{tildegamma} for integral matrices of non-zero determinant. We start with two counting lemmata. The first one is elementary, the second one uses spectral theory. 

\begin{lemma}\label{count1} Let $m, l \in \ZZ_{\geq 1}$ and $ \Delta \geq0$. Then, for any $\eps>0$, we have
\[\#\{\gamma \in S(m, l) : \dist(\tilde{\gamma}, \{\pm\id\}) \leq \Delta \} \ll_\eps \left(1 + m^{1/3}\Delta\right)^{3} \left(1 + m^{1/3}\Delta + \frac{m^{2/3}\Delta^2}{l}\right)^{3+\eps}.\]
\end{lemma}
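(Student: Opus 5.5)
We count matrices $\gamma\in\ZZ^{3\times 3}$ with $|\det\gamma|=m$, Smith normal form $\diag(m/l,l,1)$, and $\gamma=\pm m^{1/3}(\id+E)$ where $\|E\|\leq\Delta$ (after rescaling, $\dist(\tilde\gamma,\{\pm\id\})\leq\Delta$ means $\|\gamma\mp m^{1/3}\id\|\leq m^{1/3}\Delta$). We may assume $\Delta\ll 1$, since for $\Delta\gg1$ the claimed bound exceeds $(m^{1/3}\Delta)^{6}$, which already dominates the number of integer matrices with entries $\ll m^{1/3}\Delta$ and fixed determinant (entries determine $\gamma$ up to the lattice count $(1+m^{1/3}\Delta)^9$, and the determinant condition with divisor bounds cuts three degrees of freedom). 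Write $\gamma=\pm(N\id+B)$ with $N\in\ZZ$ the nearest integer to $m^{1/3}$ and $B$ integral; then every entry of $B$ is $O(1+m^{1/3}\Delta)$. The plan is to separate the six off-diagonal-type degrees of freedom from the constraint structure.

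First step: choose the first two columns of $B$ freely. This gives the factor $(1+m^{1/3}\Delta)^{6}$ at most, but we want to do better using the rank condition $\Delta_2(\gamma)=l$. The key observation is that every $2\times2$ minor of $\gamma$ is divisible by $l$, so $\gamma$ modulo $l$ has rank at most $1$. Using $\gamma\equiv \pm(N\id+B)$, the $2\times 2$ minors of $N\id+B$ are $N^2+N(b_{ii}+b_{jj})+(b_{ii}b_{jj}-b_{ij}b_{ji})$ and $Nb_{ij}+(\text{quadratic in }B)$. I would count as follows: pick the three diagonal entries of $B$ and three off-diagonal entries (say the upper triangle) freely, giving $(1+m^{1/3}\Delta)^{3}$ choices for the diagonal factor, then observe that each remaining lower entry $b_{ji}$ is determined modulo $l$ (more precisely lies in a set of residues mod $l$ controlled by divisor-type counts) by the minor condition; the remaining entries $b_{ji}$ range over an interval of length $O(1+m^{1/3}\Delta)$, so each contributes $O(1+m^{1/3}\Delta/l\cdot(\ldots))$. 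This is where the shape of the second bracket must come from: the quadratic quantities in $B$ are of size $m^{2/3}\Delta^2$, and the minor constraint forces them to lie in a residue class mod $l$, yielding the count $1+m^{1/3}\Delta+m^{2/3}\Delta^2/l$ for each of three pieces, with divisor-function losses $m^{\eps}$.

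Concretely, I would organize it as: (i) fix the diagonal $b_{11},b_{22},b_{33}$ (factor $(1+m^{1/3}\Delta)^3$); (ii) for each pair $i<j$, the product $b_{ij}b_{ji}$ is then pinned, via the $(i,j)$ principal minor, to a residue class modulo $l$ and lies in an interval of length $O(1+m^{1/3}\Delta+m^{2/3}\Delta^2)$; hence the number of admissible values of $b_{ij}b_{ji}$ is $O(1+m^{1/3}\Delta+m^{2/3}\Delta^2/l)$, and by the divisor bound each value gives $\ll_\eps m^{\eps}$ pairs, except the value $0$, which contributes $O(1+m^{1/3}\Delta)$ pairs and is absorbed. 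Multiplying over the three pairs gives the second bracket cubed; the determinant condition is simply discarded (it only helps). The $\eps$-loss from divisor bounds must be converted into the form $(\cdots)^{\eps}$; since $m^{\eps}$ is not obviously bounded by that bracket when $\Delta$ is tiny, I will handle small $\Delta$ separately: if $m^{1/3}\Delta<1$, then $B$ has bounded entries, and the count is $O(1)$ trivially.

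The main obstacle is step (ii): justifying that the principal $2\times2$ minor condition really localizes $b_{ij}b_{ji}$ modulo $l$ uniformly, since $N$ need not be invertible mod $l$ and the non-principal minors interact. I would first try to use that $l^2\mid m$ and $\det\gamma\equiv0$ to deduce $N^2$-type congruences, and if that fails, pass to the adjugate: $\adj(\gamma)\equiv 0\pmod l$ entrywise, whose diagonal entries are exactly these principal minors, giving the congruences directly without inverting $N$.
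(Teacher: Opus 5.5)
This is essentially the paper's proof: you fix the three diagonal entries and use the principal minors to get $\gamma_{ij}\gamma_{ji}\equiv\gamma_{ii}\gamma_{jj}\pmod{l}$, so nothing needs inverting and your ``main obstacle'' disappears once the diagonal is fixed. You then count the product in that residue class inside an interval of length $O(m^{2/3}\Delta^2)$, treat the zero product separately, recover the pair by the divisor bound, and discard the determinant condition.

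Two small corrections are needed.
\begin{enumerate}
\item Apply the divisor bound to the product itself, which is at most $m^{2/3}\Delta^2$ in absolute value. The loss is then $(1+m^{1/3}\Delta)^{2\eps}$ rather than $m^{\eps}$. Your patch for $m^{1/3}\Delta<1$ would not absorb an $m^{\eps}$ loss when, for example, $m^{1/3}\Delta=2$ and $m\to\infty$.
\item The preliminary reduction to $\Delta\ll 1$, with its loose determinant-counting justification, is unnecessary, because the counting argument never uses $\Delta\ll 1$.
\end{enumerate}
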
 

\begin{proof} Assume that $\gamma\in S(m,l)$ satisfies $\dist(\tilde{\gamma}, \{\pm\id\}) \leq \Delta$. Then, $\gamma = \pm D \cdot \id + \delta $ with $D := m^{1/3}$, a matrix $\delta\in[-D\Delta,D\Delta]^{3\times 3}$, and an appropriate sign $\pm$. We can fix the three diagonal elements $\gamma_{i,i}$ in $\ll (1 + D\Delta)^3$ ways. By the definition of $S(m, l)$, we have the congruence
\[\gamma_{1, 2} \gamma_{2, 1}\equiv\gamma_{1,1}\gamma_{2,2}\pmod{l},\]
and we have $\ll 1 + D\Delta$ choices for $\gamma_{1, 2} \gamma_{2, 1}$ to be zero, and $\ll 1+(D\Delta)^2/l$ choices for $\gamma_{1, 2} \gamma_{2, 1}$ to be non-zero, which determines the pair $(\gamma_{1,2},\gamma_{2,1})$ up to a divisor function. The same argument applies to the products $\gamma_{1, 3} \gamma_{3, 1}$ and $\gamma_{2, 3} \gamma_{3, 2}$.
\end{proof}

Let $\theta \geq 0$ be an admissible exponent for the Ramanujan conjecture on $\GL_3$. (In particular, $\theta/2$ is admissible for $\GL_2$ by the theory of symmetric squares.) For our purposes, the value \cite[Cor. on p.~515]{MR618323}  \begin{equation}\label{1/2}
\theta = 1/2
\end{equation}
suffices.  The following lemma is an application of the (spherical) pretrace formula with Hecke operators.

\begin{lemma}\label{count2} Let $m, l \in \ZZ_{\geq 1}$ and $0 < \Delta \ll 1$. Then, for any $\eps>0$, we have
\[\#\{\gamma \in S(m, l) : \dist(\tilde{\gamma}, K) \leq \Delta\} \ll_\eps (m/l)^{2+\eps} \Delta^5 + (m/l)^{ 1 + \theta  + \eps}.\]
\end{lemma}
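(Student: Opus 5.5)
We bound the count by a spherical pretrace inequality with a Hecke operator, run in reverse. Fix $x=\id$ (or any point of $\Omega$). Pick a nonnegative bi-$K$-invariant test function $k_\Delta$ on $G$ with $k_\Delta\geq 1$ on $\{g:\dist(g,K)\leq\Delta\}$ and $k_\Delta = \kappa\ast\check\kappa$, where $\kappa$ is a spherical bump of radius $\asymp\Delta$, normalized so that $k_\Delta(\id)\asymp\Delta^{-5}$ times a suitable constant. Here $5=\dim(G/K)$ for $\SL_3^\pm$. Its spherical transform $\hat k_\Delta(\mu)=|\hat\kappa(\mu)|^2\geq0$ is essentially $\Delta^{-5}\cdot\Delta^{5}\cdot\ldots$; concretely, after rescaling, $\hat k_\Delta$ is bounded by $\ll\Delta^{5}\,(1+\Delta\|\mu\|)^{-N}$ times the factor making $k_\Delta\geq \mathbf 1_{\dist\le\Delta}$, i.e. we aim for
\[
k_\Delta\geq \mathbf 1_{\{\dist(\cdot,K)\leq\Delta\}},\qquad \hat k_\Delta(\mu)\ll \Delta^{5}(1+\Delta\|\mu\|)^{-N},\qquad \hat k_\Delta\geq0 \text{ on the unitary spectrum}.
\]
Then
\[
\#\{\gamma\in S(m,l):\dist(\tilde\gamma,K)\leq\Delta\}\leq\sum_{\gamma\in S(m,l)}k_\Delta(\tilde\gamma),
\]
and the right side is the kernel $\sum_{\gamma}k_\Delta(x^{-1}\tilde\gamma x)$ at $x=\id$ of $R(k_\Delta)\circ T$. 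Here $T$ is the Hecke operator $T_{(m/l,l,1)}$ (sum over $\Gamma\bs\Gamma\diag(m/l,l,1)\Gamma$), which accounts for $\Gamma$-cosets. The left $\Gamma$-multiplicity is harmless because $S(m,l)$ is exactly this double coset (Smith form).

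Spectrally expand over an orthonormal basis of spherical Hecke eigenforms (cusp forms, residual and Eisenstein spectrum):
\[
\sum_{\gamma}k_\Delta(\tilde\gamma)=\int \hat k_\Delta(\mu_\varpi)\,\lambda_\varpi(m/l,l,1)\,|\phi_\varpi(\id)|^2\,d\varpi .
\]
Split off the constant function. Its Hecke eigenvalue is the degree $\#\Gamma\bs\Gamma\diag(m/l,l,1)\Gamma\ll(m/l)^{2+\eps}$, as in \eqref{Hecke-norm} with $A=2\log_p(m/l)$ up to divisor factors. Its spherical transform is $\hat k_\Delta$ at $\rho$, of size $\asymp\Delta^5$ (volume of the $\Delta$-ball). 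This gives the main term $(m/l)^{2+\eps}\Delta^5$.

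For the rest of the spectrum, bound the Hecke eigenvalue by $(m/l)^{1+\theta+\eps}$, which is the Ramanujan bound $p^{A/2}$ weakened by $\theta$ via \eqref{1/2}; Eisenstein series are tempered at finite places or controlled by $\theta/2$ for $\GL_2$ residual pieces. Then use positivity of $\hat k_\Delta$ together with the local Weyl law $\int\hat k_\Delta(\mu)|\phi_\varpi(\id)|^2\,d\varpi\ll k_\Delta(\id)\cdot\ldots$. More cleanly, apply the same pretrace formula with the identity Hecke operator: $\int\hat k_\Delta|\phi(\id)|^2=\sum_{\gamma\in\Gamma}k_\Delta(\gamma)\ll k_\Delta(\id)$-weighted count $\ll1$, since only $\gamma=\pm\id$ lie near $K$ for small $\Delta$ once $k_\Delta$ is normalized with $\|k_\Delta\|_\infty\ll1$. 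This gives the second term $(m/l)^{1+\theta+\eps}$.

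The main obstacle is the normalization: $k_\Delta\geq$ an indicator with $\|k_\Delta\|_\infty\ll1$ and $\hat k_\Delta\ge0$. Take $\kappa=\mathbf 1_{B(\Delta)}/\mathrm{vol}(B(\Delta))^{1/2}$-type smoothing at scale $\Delta$, so that $\kappa\ast\check\kappa\gg1$ on $B(\Delta)$ and $\ll1$ everywhere, and $\hat k(\rho)=|\hat\kappa(\rho)|^2\asymp\mathrm{vol}(B(\Delta))\asymp\Delta^5$. Handling the continuous spectrum's Hecke eigenvalues is routine.
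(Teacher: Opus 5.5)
Your setup is the paper's. You bound the count by $\sum_{\gamma\in S(m,l)}k_\Delta(\tilde\gamma)$, apply the spherical pretrace formula with the Hecke operator attached to $S(m,l)=\Gamma\diag(m/l,l,1)\Gamma$, and let the constant function produce $(m/l)^{2+\eps}\Delta^5$. Your positivity device for the rest of the spectrum is a valid substitute for the paper's averaged bound \eqref{avsup}: take $k_\Delta=\kappa\ast\check\kappa$ with $\|k_\Delta\|_\infty\ll1$, so that $\int\hat k_\Delta(\mu_\varpi)|\varpi(\id)|^2\,\tilde d\varpi=\sum_{\gamma\in\Gamma}k_\Delta(\gamma)\ll1$, the sum running over the finitely many $\gamma$ near $\Gamma\cap K$.

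The gap is the step where you bound the Hecke eigenvalue by $(m/l)^{1+\theta+\eps}$ uniformly on the non-constant spectrum. This is false for the degenerate maximal parabolic Eisenstein series $E_{\deg}(\ast;t)$, which is induced from the constant function on the $\GL_2$ block. It is neither tempered at finite places nor governed by the $\GL_2$ cuspidal exponent $\theta/2$. Its Satake parameters are $(p^{1/2+it},p^{-1/2+it},p^{-2it})$, and its eigenvalues genuinely reach size $(m/l)^{3/2}$, already for $\diag(p^a,1,1)$ at $t=0$. Fed into your positivity argument, this piece only gives $(m/l)^{3/2+\eps}$. That is $\ll(m/l)^{1+\theta+\eps}$ only when $\theta\geq1/2$, so for an admissible $\theta<1/2$ the lemma as stated does not follow. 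It happens to be harmless for the value $\theta=1/2$ in \eqref{1/2} used for Theorem~\ref{thm2}, but not for the reason you give.

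The missing idea is to isolate $E_{\deg}$ and use that it is a thin one-parameter family with a good pointwise bound at $z=\id$. Here you need the decay $\hat k_\Delta(\mu)\ll\Delta^5(1+\Delta\|\mu\|)^{-A}$, not just positivity. Combine it with the eigenvalue bound \eqref{ramEis} and the convexity bound $E_{\deg}(\id,t)\ll_\eps(1+|t|)^{3/4+\eps}$ from the functional equation of the Epstein zeta function \eqref{maxEis}. The contribution of $E_{\deg}$ is then
\[
\Delta^5(m/l)^{3/2+\eps}\int_{\RR}\frac{(1+|t|)^{3/2+2\eps}}{(1+\Delta|t|)^{A}}\,dt\ll(m/l)^{3/2+\eps}\Delta^{5/2-2\eps}.
\]
Since $(m/l)^{3/2}\Delta^{5/2}=\bigl((m/l)^2\Delta^5\cdot m/l\bigr)^{1/2}\leq\max\bigl((m/l)^2\Delta^5,\,m/l\bigr)$, this is absorbed by the two main terms for every $\theta\geq0$. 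The exponent $3/4$ is essentially exactly what is needed here, so some genuine input on $E_{\deg}(\id,t)$ beyond the local Weyl law is unavoidable. The remaining pieces are fine as you treat them: cusp forms, minimal Eisenstein series, and maximal Eisenstein series attached to $\GL_2$ cusp forms.
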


\begin{proof} Following the proof\footnote{We record a small inaccuracy in the statement and its proof: one needs some compatibility for the family of functions $\{\psi_\delta|_A:\delta>0\}$ such as $\psi_\delta(a)=c_\delta\cdot\psi_1(a^{1/\delta})$ with normalization factor $c_\delta\asymp\delta^{-5}$.} of \cite[Lem.~2]{Blomer-Lutsko}, we can fix a smooth, bi-$K$-invariant function $k_{\Delta}:G\to\RR_{\geq 0}$ with the following properties:
\begin{itemize}
\item $k_{\Delta}(x) = 1$ for $\dist(x, K) \leq \Delta$, and $k_{\Delta}(x) = 0$ for $\dist(x, K) \geq 2\Delta$;
\item the spherical transform of $k_\Delta$ as a function of $\mu \in \mfa^{\ast}_{\CC}/W$ is $\ll _A\Delta^5 (1 + \Delta \| \mu \|)^{-A}$ for any $A > 0$.
\end{itemize}

We apply the (spherical) pretrace formula with the Hecke operator $T(m,l):=T_{m/l,l,1}$ associated to the double coset
\[S(m,l)=\Gamma\diag(m/l, l, 1)\Gamma.\]
We write $\lambda_\varpi(m,l)$ for the scalar by which this Hecke operator acts on an automorphic representation $(\varpi,V_\varpi)$ occurring in the spectral decomposition of $L^2(\Gamma\bs G\slash K)$. In light of the discussion below \eqref{Hecke-norm}, the Ramanujan conjecture predicts that $ \lambda_{\varpi}(m, l) \ll_\eps (m/l)^{1+\eps}$. This way we obtain
\begin{equation}\label{spec}
\sum_{\substack{\gamma \in S(m, l)\\ \dist(\tilde{\gamma}, K) \leq \Delta}} 1 \leq \sum_{\gamma \in S(m, l)} k_{\Delta}(\gamma) \ll_A \Delta^5 \int \frac{|\lambda_{\varpi}(m, l)|\cdot|\varpi(\id)|^2}{(1 + \Delta \| \mu_{\varpi} \|)^{A}} \,\tilde d\varpi,
\end{equation}
where $\mu_{\varpi}\in\mfa^{\ast}_{\CC}/W$ is the Langlands parameter of $\varpi$ at infinity and $\tilde d\varpi$ is the spectral measure (not to be confused with the Plancherel measure $d\varpi$ that we used before).

For a general linear reductive algebraic group over $\QQ$, the spectral decomposition of the automorphic $L^2$-space is given in \cite[Main Theorem]{MR546601}. For our special case of
\[L^2(\Gamma\bs G\slash K)\cong L^2(\SL_3(\ZZ)\bs\SL_3(\RR)\slash\SO(3)),\]
it can be found in \cite[\S6]{MR1823867} or \cite[\S10.13]{MR2254662}.
It reads
\begin{equation}\label{explicitspec}
\int \Phi(\varpi) \,\tilde d\varpi = \mathcal{S} + \mcE_{\min} + \mcE_{\max} + \mcE_{\deg} + \mathcal{C},
\end{equation} 
where up to normalizing factors
\begin{itemize}
\item $\displaystyle \mathcal{S} = \sum_{\varpi \text{ cuspidal}} \Phi(\varpi)$ is the contribution of an orthonomal Hecke basis of cusp forms for the group $\SL_3(\ZZ)$,
\item $\displaystyle \mcE_{\min} = \int_{\RR} \int_{\RR} \Phi(E_{\text{min}}(\ast; t_1, t_2)) \, dt_1\, dt_2$ is the contribution of the   minimal parabolic Eisenstein series $E_{\text{min}}(\ast; t_1, t_2)$
corresponding to the $1+1+1$ parabolic subgroup,
\item $\displaystyle \mcE_{\max} = \int_{\RR}\sum_{u \text{ cuspidal}} \Phi(E_{\text{max}}(\ast;u, t)) \,  dt$ is the contribution of the maximal parabolic Eisenstein series $E_{\text{max}}(\ast; u, t)$ corresponding to the $2+1$ parabolic subgroup with an $L^2$-normalized even Hecke--Maa\ss cusp form $u$ for the group $\SL_2(\ZZ)$ in the $2$-block,
\item $\displaystyle \mcE_{\deg} = \int_{\RR}  \Phi(E_{\deg}(\ast;  t)) \,  dt$  is the contribution of the  degenerate maximal Eisenstein series $E_{\deg}(\ast; t)$ containing the constant function in the $2$-block,
\item $\mathcal{C} = \Phi(\varpi_0)$ is the contribution of the $L^2$-normalized constant function $\varpi_0$. 
\end{itemize}

We collect the following two bounds for $\varpi(z)$, to be used at $z=\id$. On average we have (\cite[Lem.~1]{Blomer-Lutsko})
\[
\int_{B(\mu)} |\varpi(z)|^2 \,\tilde d\varpi\ll_z (1 + \| \mu \|)^3
\]
for a ball $B(\mu) \subseteq \mfa_{\CC}^{\ast}$ of radius $O(1)$ about any $\mu \in \mfa_{\CC}^{\ast}$. This is in fact another application of the (spherical) pretrace formula in the other direction and says that on average automorphic forms are bounded for $z$ in some fixed compact domain. As a consequence, we obtain that
\begin{equation}\label{avsup}
\int_{\|\mu_\varpi\|\leq T} |\varpi(z)|^2 \,\tilde d\varpi \ll_z (1 + T)^5.
\end{equation}
Alternatively, this statement is also the $n=3$ special case of \cite[Prop.~4.17]{MR4720461}.
For the maximal degenerate Eisenstein series $E_{\deg}(z, t)$ we simply use the convexity bound
\begin{equation}\label{maxEis}
E_{\deg}(z, t) \ll_{z,\eps} (1+|t|)^{3/4 +\eps}
\end{equation}
that follows from the functional equation for Epstein zeta functions \cite[Prop.~10.7.5]{MR2254662}. See \cite[Th.~1]{Blomer} for details as well as a subconvexity bound that replaces $3/4$ with $1/2$. 

In addition, we collect the following two bounds for $\lambda_{\varpi}(m,l)$. By the very feature of $\theta$, for $\varpi$ cuspidal we have 
\begin{equation}\label{ram}
\lambda_{\varpi}(m, l) \ll_{\eps} (m/l)^ {1 + \theta  + \eps}.
\end{equation}
This bound also holds (in stronger form) for minimal and maximal Eisenstein series. The degenerate Eisenstein series $E_{\deg}(z, t)$ has Satake parameters $(p^{1/2 + it}, p^{-1/2 + it}, p^{-2it})$ at a finite prime $p$, hence in this case we have the slightly weaker bound
\begin{equation}\label{ramEis}
\lambda_{E_{\deg}(z, t)}(m, l) \ll_{\eps} (m/l)^{3/2 + \eps}.
\end{equation}

After these preparations, we return to \eqref{spec}, decompose the right-hand side according to \eqref{explicitspec} as
$$\Delta^5 \int \frac{|\lambda_{\varpi}(m, l)|\cdot|\varpi(\id)|^2}{(1 + \Delta \| \mu_{\varpi} \|)^{A}} \,\tilde{d}\varpi =  \mathscr{S} + \mathscr{E}_{\min} + \mathscr{E}_{\max} + \mathscr{E}_{\deg} + \mathscr{C},$$
 and estimate each part   separately. For the cuspidal part and the minimal/maximal Eisenstein series we use \eqref{ram} and \eqref{avsup} to obtain 
$$ \mathscr{S} + \mathscr{E}_{\min} + \mathscr{E}_{\max}  \ll_{\eps} \Delta^5 (m/l)^{ 1+\theta +\eps} \Delta^{-5} = (m/l)^{ 1 + \theta + \eps} .$$
For the maximal degenerate Eisenstein series we use \eqref{ramEis} and \eqref{maxEis} to obtain
$$\mathscr{E}_{\deg}  \ll_{\eps} \Delta^5 (m/l)^{3/2 + \eps} \Delta^{-5/2 - \eps} = (m/l)^{3/2 + \eps} \Delta^{5/2 - \eps}.$$
For the constant function we estimate trivially
$$\mathscr{C} \ll_\eps (m/l)^{2+\eps} \Delta^5.$$
The bound for $\mathscr{E}_{\deg}$ above is dominated by our total bound for $\mathscr{S}+\mathscr{E}_{\min}+\mathscr{E}_{\max}+\mathscr{C}$, hence the proof is complete.
\end{proof}

With these two lemmata at hand, we can now complete the proof of Theorem~\ref{thm2}. We return to \eqref{finalpretrace} and use \eqref{eq:f-decay-2}. In this way we obtain for all $x\in\Omega$ that
\begin{equation}\label{n=3bound}
\sum_{\phi\in\mcB} |\phi(x)|^2 \ll_{\eps,I} \frac{\ell^{4+\eps}}{L^2} \left( L + \sum_{j=1}^3 \sum_{\ell_1, \ell_2 \in \mcP} \frac{1}{L^{2j}} \sum_{\substack{ \gamma\in S(\ell_1^j\ell_2^{2j}, \ell_2^j)\\ \| \tilde{\gamma} \| \ll 1}} \frac{1}{1+ \ell \cdot \dist(\tilde{\gamma}, \{\pm\id\})^2} \right),
\end{equation}
where
$\mcP$ is the set of primes in $[L, 2L]$. In the following we estimate the $\ell_1, \ell_2, \gamma$-sum, which we call $\Sigma_j$. We write 
$\delta := \dist(\tilde{\gamma}, \{\pm \id\})$ 
and split the $\gamma$-sum into dyadic intervals $\delta \asymp \Delta \ll 1$. 

By Lemma~\ref{count1}, the number of $\gamma$'s with $\delta\leq L^{-j}$ is $O(1)$, hence their contribution to $\Sigma_j$ is $O(L^{2-2j})=O(1)$. So we can restrict to the dyadic $\Delta$'s satisfying $L^{-j}\ll\Delta\ll 1$. The number of these $\Delta$'s is $O(\log L)$, so it suffices to treat one at a time. By Lemma~\ref{count1}, the contribution of a given $L^{-j}\ll\Delta\ll 1$ to $\Sigma_j$ is
\[\ll_\eps L^{2-2j+\eps} \frac{L^{6j}\Delta^6}{1 + \ell\Delta^2} < \frac{L^{2+4j+\eps}\Delta^4}{\ell}.\]
Let us have a closer look at the case when the right-hand side exceeds $L^{1+\eps}$, that is,
\[\Delta>\ell^{1/4}L^{-1/4-j}.\]
Then we use Lemma~\ref{count2} (which applies a fortiori if $\dist(\tilde{\gamma}, K)$ is replaced with $\dist(\tilde{\gamma}, \id)$) to bound the same contribution of $\Delta$ to $\Sigma_j$ by
\begin{align*}
& \ll_\eps L^{2-2j+\eps} \frac{L^{4j} \Delta^5 + L^{2j(1+\theta)}}{1 + \ell\Delta^2} 
= L^{\eps} \frac{L^{2+2j} \Delta^5 + L^{2+2j\theta}}{1 + \ell\Delta^2} \\
& \ll L^{\eps}\left( \frac{L^{2+2j}\Delta^3}{\ell} + \frac{L^{2+2j\theta}}{\ell^{3/2}L^{-1/2-2j}}\right) 
\ll L^\eps\left(\frac{L^8}{\ell}+\frac{L^{17/2+6\theta}}{\ell^{3/2}}\right).
\end{align*}
We choose $L$ in such a way that the right-hand side is $\ll L^{1+\eps}$; this will guarantee that the contribution of $\Delta$ to $\Sigma_j$ is always $\ll_\eps L^{1+\eps}$. So we are led to
\[\ell\geq\max(L^7,L^{5+4\theta}) = L^7\]
using \eqref{1/2}. 
We  choose   $L:=\ell^{1/7}$, 
substitute all of this back into \eqref{n=3bound} and conclude that
\[\sum_{\phi\in\mcB}|\phi(x)|^2 \ll_{\eps,I}\ell^{4+\eps} L^{-1} = \ell^{4 - \frac{1}{7} + \eps},\qquad x\in\Omega.\]
This completes the proof of Theorem~\ref{thm2}. 
 
\section*{Acknowledgements}
We are grateful to Gergely Zábrádi for sharing his knowledge with us. Important progress on this project was made during our visits to MFO (2023, 2026) and IAS (2024--2025). We thank both institutions for providing stimulating research environments and for their generous hospitality. We used Gemini and ChatGPT for background/literature review and partial proofreading. In addition, we acknowledge that Gemini came up with a conceptual idea in the proof of Lemma~\ref{lemma:Delta-derivatives}.

\bibliographystyle{alphaurl}
\bibliography{references}

\end{document}